\numberwithin{equation}{section}
\newfont{\cyr}{wncyr10 scaled 1100}
\newfont{\cyrr}{wncyr9 scaled 1000}
\definecolor{bwmagenta}{rgb}{0.0,0.45,0.6}
\definecolor{bwblue}{rgb}{0.4,0.1,0.2}
\theoremstyle{theorem}
\newtheorem*{theoremA}{Theorem A}
\newtheorem*{theoremB}{Theorem B}
\newtheorem*{theoremC}{Theorem C}
\newtheorem{theo}{Theorem}[section]
\newtheorem{cor}[theo]{Corollary}
\newtheorem{lemma}[theo]{Lemma}
\newtheorem{proposition}[theo]{Proposition}
\theoremstyle{remark}
\newtheorem{remark}[theo]{Remark}
\theoremstyle{definition}
\newtheorem{definition}[theo]{Definition}
\newtheorem{hy}[theo]{Hypothesis}
\newcommand{\quo}[1]{ \mathbf{Z}/p^{n}\mathbf{Z}  }
\newcommand{\iw}{\Lambda}
\newcommand{\gaun}{  \mathfrak{G}    }
\newcommand{\fre}[1]{\stackrel{#1}{\rightarrow}}
\DeclareSymbolFont{cyrletters}{OT2}{wncyr}{m}{n}
\DeclareMathSymbol{\sha}{\mathalpha}{cyrletters}{"58}
\newcommand{\inlim}{\mathop{\varprojlim}\limits}
\newcommand{\dia}[1]{\left<{}#1\right>}
\newcommand{\lri}[1]{\left(#1\right)}
\newcommand{\divp}{\mathbf{Q}_p/\mathbf{Z}_p}
\newcommand{\Hom}[1]{\mathrm{Hom}_{#1}}
\newcommand{\ord}{\mathrm{ord}}
\newcommand{\lfre}[1]{\stackrel{#1}{\longrightarrow}}
\newcommand{\M}{\texttt{M}}
\newcommand{\Z}{\mathbf{Z}}
\newcommand{\Q}{\mathbf{Q}}
\newcommand{\N}{\mathbf{N}}
\newcommand{\F}{\mathbf{F}}
\newcommand{\les}{\leqslant}
\newcommand{\ges}{\geqslant}
\newcommand{\C}{\mathbf{C}}
\newcommand{\hlfre}[1]{\stackrel{#1}{\lhook\joinrel\relbar\joinrel\rightarrow}}
\DeclareRobustCommand\longtwoheadrightarrow
\newcommand{\tlfre}[1]{\stackrel{#1}{\longtwoheadrightarrow}}
\newcommand{\lra}{{\longrightarrow }}
\author{Massimo Bertolini, Matteo Longo, and Rodolfo Venerucci}
\begin{document}

\title{The anticyclotomic main conjectures for elliptic curves}

\maketitle

\setcounter{tocdepth}{1}

\tableofcontents

\section{Introduction}

Let $E/\Q$ be a modular elliptic curve of conductor $N$ and let $f$ be the cuspidal eigenform on $\Gamma_0(N)$ associated with  $E$ by the modularity theorem. Denote by $K_\infty$ the anticyclotomic $\Z_p$-extension of an imaginary quadratic field $K$, where $p$ is a prime number. The goal of this article is to obtain a proof of the main conjectures of Iwasawa theory for $E$ over $K_\infty$, both when $p$ is \emph{good ordinary} and when $p$ is \emph{supersingular} for $E$. 

The anticyclotomic setting displays a well-known dichotomy, depending on whether the generic sign of the functional equation of the complex $L$-function of $E/K$ twisted by finite order characters of the Galois group of $K_\infty/K$ is $+1$ or $-1$.  For reasons which will be explained later  we call the former case {\em definite} and the latter case {\em indefinite}.

Assume first that $p$ is a {\em good ordinary} prime for $E$.
In the {\em indefinite} case, a norm-compatible sequence of Heegner points arising from a Shimura curve parametrisation is defined over the finite layers of $K_\infty/K$. Its
position in the  compact $p$-adic Selmer group of $E/K_\infty$ is encoded by an element $L_p(f)$ of the anticyclotomic Iwasawa algebra $\Lambda$, called the {\em indefinite anticyclotomic $p$-adic $L$-function}. 
(The notation $L_p(f)$ instead of $L_p(E)$ is adopted throughout, in order to achieve notational uniformity in the modular arguments of this article.) The {\em indefinite anticyclotomic Iwasawa main conjecture (IAMC)}, formulated by Perrin-Riou \cite{PR-Heeg}, states that $L_p(f)$ generates the square-root of the characteristic ideal of the $\Lambda$-torsion part of the Pontrjagin dual of the $p$-primary  Selmer group of $E/K_\infty$. The proof of this conjecture is one of the main results of this paper. We remark that one divisibility of characteristic ideals -- notably the fact that $L_p(f)$ is divisible by the characteristic ideal of the relevant Selmer group -- is obtained in Howard's paper \cite{How-HK}, as a direct application of the theory of Euler systems. 

We now turn to the {\em definite} (good ordinary) case, in which the {\em definite anticyclotomic $p$-adic $L$-function} $L_p(f)$ interpolates central critical values of twists of the complex $L$-function of $E/K$, described   in Section \ref{Shimura curves} in terms of special points on the Gross curve.  The level raising of the modular form $f$ at certain admissible primes yields congruent eigenforms modulo arbitrary powers of $p$. These eigenforms belong to the indefinite setting and therefore the Heegner construction becomes available on the Shimura curves supporting them. (See Section \ref{sec:admissible} for the precise definitions.) This basic observation is the opening gambit of the article \cite{Be-Da-main} by Bertolini--Darmon, which builds on it by establishing a  {\em first explicit reciprocity law} relating the resulting Heegner cohomology classes to $L_p(f)$. Moreover, with the help of a {\em second explicit reciprocity law}, this article sets up an inductive procedure (which may be viewed as an analogue of Kolyvagin's induction) proving that $L_p(f)$ is divisible by the characteristic ideal of the Pontrjagin dual of the $p$-primary  Selmer group of $E/K_\infty$. This shows one divisibility in the {\em definite Iwasawa anticyclotomic main conjecture (DAMC)}. 
(The explicit reciprocity laws are reviewed in Section \ref{sec:reciprocity}.) This procedure has been formalised in Howard's paper \cite{How-heeg}, leading to the concept of {\em bipartite Euler system}. 
The full DAMC proved in this paper is rather based on a refinement of the induction argument in \cite{Be-Da-main}. It requires to show the non-vanishing modulo $p$ of values of the definite $p$-adic $L$-function attached to an eigenform congruent to $f$, obtained by raising the level at sufficiently many admissible primes. This maximality property ultimately rests on a fundamental {\em $p$-converse} theorem of Skinner--Urban \cite{S-U}, as explained in Step 4 of Section \ref{BSD0}. It should be stressed that both the DAMC and the IAMC  are obtained in this article from the same unified approach based on the above-mentioned  inductive process. The article \cite{BCK} by Burungale--Castella--Kim uses directly the techniques of bipartite Euler systems to obtain a proof of the IAMC (that is, Perrin-Riou's Heegner point main conjecture). 

Assume now that $p$ is a {\em supersingular} prime for $E$. As customary in the supersingular theory, two cases indexed by a sign $\varepsilon=\pm $ need to be distinguished. Depending on the choice of $\varepsilon$, one is led  to introduce the concepts of $\varepsilon$-points, $\varepsilon$-Selmer groups and $\varepsilon$-$p$-adic $L$-functions $L_p^\varepsilon(f)$. In terms of these objects, this article formulates and proves the analogues of the AMCs outlined above. In the definite setting, the analogous inclusion of \cite{Be-Da-main} was obtained by Darmon--Iovita \cite{Da-Io} when $p$ is split in $K$ and $a_p(E)=0$, and extended by Burungale--B\"uy\"ukboduk--Lei \cite{BBL} without assuming $a_p(E)=0$ and covering also the case $p$ inert in $K$. 

The following two specific aspects of the supersingular setting are worth noting. 

On the one hand, our study of the structure of the $\varepsilon$-Selmer groups rests in a fundamental way on the {\em control} result stated in Proposition \ref{hypext}. The proof of this result is based on Theorem \ref{prop:formal-group}, which was known for $p$ split in $K$ thanks to the work of Iovita--Pollack \cite{Io-Po}. For $p$ inert in $K$, Theorem \ref{prop:formal-group} is a consequence of the recent proof of Rubin's conjecture on local points in $p$-adic towers due to Burungale--Kobayashi--Ota  \cite{BKO1}.
In a previous version of this article, the control statement of  Proposition \ref{hypext} was a running assumption in the inert case.

When $p$ is inert in $K$, the supersingular setting displays a subcase for $\varepsilon=+$, called {\em exceptional} in Definition \ref{def:exceptional} of this Introduction. In the exceptional case, the $+$-$p$-adic $L$-function acquires an extra-zero of local nature and our approach only allows us to show one divisibility in the AMCs. It would be interesting to further investigate this exceptional zero and the possibility of establishing the full AMC in the exceptional case.

We now formulate our main results more precisely. In order to obtain unified statements, we adopt the convention that $\varepsilon=\emptyset$ in the ordinary case, so that the concept of $\varepsilon$-point, $\varepsilon$-Selmer group and $\varepsilon$-$p$-adic $L$-function simply stands for point, Selmer group and $p$-adic $L$-function (then in particular $L_p(f)=L_p^\varepsilon(f)$ is this case). 

Fix throughout the paper algebraic closures $\bar\Q$ of $\Q$ and $\bar\Q_p$ of $\Q_p$, 
as well as embeddings $\bar\Q\hookrightarrow\bar\Q_p$ and $\bar\Q\hookrightarrow\C$.  

Our main results are proved under the following assumptions. Let $N$ be as above the conductor of $E$,  assumed to be coprime with the discriminant of $K$. Factor $N$ as $N=N^+N^-$, where $N^+$ resp.\ $N^-$ is divisible only by primes which are split, resp.\ inert in $K$. 
\begin{hy}\label{thehy} 
\hfill
\begin{enumerate}
\item The rational prime $p$ is $\geqslant 5$ and does not divide $N$.
\item The rational prime $p$ does not divide the class number $h_K$ and the discriminant of $K$.
\item The representation $\bar{\varrho}_{E,p} : G_{\Q}\fre{}\mathrm{GL}_{2}(\F_{p})$
arising from the $p$-torsion $E(\bar{\Q})_{p}$ of $E$ is surjective. 
\item $N^-$ is squarefree.
\item If $E$ has good ordinary reduction at $p$, then $a_{p}(E)\not\equiv{}\pm1\pmod{p}$ if $p$ is inert in $K$, and $a_{p}(E)\not\equiv{}1\pmod{p}$ if $p$ splits in $K$.
\item If $q$ is a prime dividing $N^{+}$, then $H^{0}(G_{\Q_q},E_{p})=0$ and $\bar{\varrho}_{E,p}$ is ramified at $q$.
\item If $q\Vert{}N^{-}$ and $q\equiv{}\pm{}1\pmod{p}$, then $\bar{\varrho}_{E,p}$
is ramified at $q$.
\end{enumerate}
\end{hy}

\begin{remark}
The set of conditions in Hypothesis \ref{thehy} are enough to state and prove our main results listed below. Since they are not simultaneously required at all stages, 
we indicate at the beginning of each section  the assumptions under which  the results obtained therein hold.  
In order to simplify our analysis, we do not consider in this paper primes $p$ of multiplicative reduction for $E$. The assumption $p\nmid h_K$, i.e.\ $K_\infty/K$ is totally ramified at $p$,  is made only to ease notations in the formulas for the compatibility of special points under the trace operators. As customary in Iwasawa theory, $p$ is assumed to be odd. Moreover, in order to quote directly the literature at various points, notably in Section \ref{sec:admissible}, we also require that $p>3$. The surjectivity of the mod $p$ representation $\bar{\varrho}_{E,p} $, as well as assumption (7), is used for the level raising results of Section \ref{sec:admissible}. The non-anomalous assumption (5) is needed for the control results of Section \ref{secSel} and for the special value formula of Lemma \ref{nonexcord}. Assumption (6) simplifies the treatment of the Selmer conditions at the primes dividing $N^+$. The assumption that $p$ is either split or inert in $K$ allows us to quote the existing literature on the structure of local points over the anticyclotomic tower. Finally, assumption (4), i.e.\ $N^-$ is squarefree, simplifies considerably the moduli description of the Shimura curves and their special points. 
\end{remark} 

\begin{definition}\label{def:exceptional}
We say that $(E,K,p,\varepsilon)$ is \emph{exceptional} if $E$ has supersingular reduction at $p$, $p$ is inert in $K$ and $\varepsilon=+$. 
\end{definition}

For $\varepsilon=\pm,\emptyset$, let $L_p^\varepsilon(f)$ be the anticyclotomic $p$-adic $L$-function introduced in Chapter \ref{p-adic L} in the definite case and in Section \ref{def:indefinite_p-adic_L} in the indefinite case.
Moreover, let $\mathrm{Char}_p^\varepsilon(f)$ be the ``algebraic anticyclotomic $p$-adic $L$-function'' defined to be the characteristic ideal of a certain Selmer module in Section \ref{def:definite_char}, resp. \S\ref{def:indefinite_p-adic_L} in the definite, resp. indefinite case. Note that in the indefinite case, $L_p^\varepsilon(f)$ describes the position of a Heegner class in a compact Selmer group and $\mathrm{Char}_p^\varepsilon(f)$ refers to the torsion part of an Iwasawa module of rank one. 

The next theorem contains our results on the DAMC and IAMC. Although we have strived for maximal notational uniformity, the reader should keep in mind that the nature of the result in the two cases is rather different!

\begin{theoremA}[DAMC \& IAMC]
$(L_p^\varepsilon(f))\subseteq(\mathrm{Char}_p^\varepsilon(f))$ with equality in the non-exceptional case. 
\end{theoremA}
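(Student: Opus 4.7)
The plan is to treat both the definite (DAMC) and indefinite (IAMC) halves of Theorem A in parallel, via a single bipartite Euler system induction in the sense of Howard \cite{How-heeg}, refining the strategy of Bertolini-Darmon \cite{Be-Da-main} so as to upgrade one-sided divisibility to an equality outside the exceptional case. The divisibility $(L_p^\varepsilon(f))\subseteq(\mathrm{Char}_p^\varepsilon(f))$ is obtained by a Kolyvagin-style annihilation argument powered by the two explicit reciprocity laws reviewed in Section \ref{sec:reciprocity}; the reverse inclusion rests on a mod-$p$ maximality statement whose source is the $p$-converse theorem of Skinner-Urban \cite{S-U}.

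Concretely, in the definite case I would proceed as follows. Given a target level $p^n$, one chooses $n$-admissible primes $\ell_1,\ldots,\ell_k$ (Section \ref{sec:admissible}) so that their Frobenius images detect a prescribed cyclic summand of the Pontrjagin dual of $\mathrm{Sel}(\defo)[p^n]$, via the second explicit reciprocity law. Level-raising at $\ell_1\cdots\ell_k$ produces, by Jacquet-Langlands, a congruent indefinite eigenform $f_{\ell_1\cdots\ell_k}$ on a suitable Shimura curve, admitting a Heegner cohomology class. The first explicit reciprocity law then identifies the localisation of this class at $\ell_i$ with the special value of the definite $p$-adic $L$-function $L_p^\varepsilon(f_{\ell_1\cdots\hat{\ell}_i\cdots\ell_k})$ modulo $p^n$. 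Alternating the two reciprocity laws according to the parity of the number of added admissible primes---this is the bipartite structure formalised in \cite{How-heeg}---produces annihilators of the dual Selmer group, yielding the divisibility, exactly as in \cite{Be-Da-main} in the ordinary case and in \cite{Da-Io,BBL} in the supersingular variants.

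To promote divisibility to equality in the non-exceptional case, the decisive refinement is to ensure that for every power $p^n$ one can choose admissible primes such that $L_p^\varepsilon(f_{\ell_1\cdots\ell_k})\not\equiv 0\pmod p$. This maximality statement, referenced as Step 4 of Section \ref{BSD0}, implies that no intermediate step of the induction loses a power of $p$, so the length of the dual Selmer group matches exactly the valuation of $L_p^\varepsilon(f)$ and no room remains for a proper inclusion. The required non-vanishing is obtained by applying the Skinner-Urban $p$-converse \cite{S-U} to the level-raised eigenform $f_{\ell_1\cdots\ell_k}$. Throughout, the passage from finite-level control to the Iwasawa-theoretic assertion relies on Proposition \ref{hypext}, whose supersingular inert case ultimately depends on Theorem \ref{prop:formal-group} and on the recent work \cite{BKO1} of Burungale-Kobayashi-Otha.

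The indefinite case fits into the same framework by a duality that interchanges the roles of the compact and discrete Selmer groups and replaces the definite $p$-adic $L$-value by the Heegner cohomology class, whose $\Lambda$-module position is encoded in $L_p^\varepsilon(f)$. The one-sided divisibility recovered from \cite{How-HK} fits naturally into this picture, and the reverse inclusion follows from the same Skinner-Urban mod-$p$ input. The main obstacle, which I do not expect to surmount, is the exceptional case: when $p$ is supersingular for $E$, inert in $K$, and $\varepsilon=+$, the $p$-adic $L$-function $L_p^+(f)$ acquires an extra zero of local origin, so the non-vanishing mod $p$ criterion no longer excludes a proper inclusion on the analytic side, and only the divisibility survives.
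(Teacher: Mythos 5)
Your sketch matches the paper's strategy at every conceptual joint: Theorem A is deduced from the finite-layer $\varepsilon$-BSD formulas (Theorems \ref{BSD0} and \ref{BSD-Indef}) by the standard Mazur--Rubin/Howard compilation, those formulas are proved by the bipartite Euler-system induction you describe (alternating the first and second reciprocity laws on admissible primes), the supersingular inert case does use Proposition \ref{hypext} together with Theorem \ref{prop:formal-group} and Rubin's conjecture from \cite{BKO1}, and the equality in the non-exceptional case does ultimately rest on the Skinner--Urban--Wan direction of the main conjecture, applied in Step~4 of Section~\ref{sec:epsilonBSD} after combining Gross' formula with Lemmas \ref{nonexcord} and \ref{nonexcss}. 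You also correctly explain why only divisibility survives in the exceptional case: the local zero forced by $a_p(E)=0$ and $p$ inert means $\mathcal L^{+}_{g,0}=0$, so the analogue of Lemma \ref{nonexcss} fails and the base case of the equality argument has no handle.

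The one place where your plan is materially thinner than the paper is the assertion that the Skinner--Urban input ensures ``no intermediate step of the induction loses a power of $p$.'' That input (Step~4) only supplies the \emph{base} of an induction on $\mathrm{length}_{\mathscr O_\chi}(\mathrm{Sel}_\varepsilon(K,A_g(\chi)))$: namely, if the Selmer group is trivial then $\mathcal L_g^\varepsilon(\mathbf 1)$ is a unit. Closing the induction to get an equality rather than a one-sided inequality requires two further ingredients that your sketch does not supply. First, a structure theorem (Step~5, via Howard's Theorem~1.4.2 applied after identifying $A_g(\chi)$ with $T_g(\bar\chi)$ through the Weil pairing and checking the local conditions are maximal isotropic) showing $\mathrm{Sel}_\varepsilon(K,A_g(\chi))\cong(\mathscr O_\chi/p^k)^{s}\oplus\mathtt M\oplus\mathtt M$, together with a parity argument to force $s=0$ on the definite side. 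Second, the delicate calibration in Step~6 proving $m_1=m_2=0$ for the discrepancy exponents in $\xi^\varepsilon_{\bar\chi}(\ell_i)-\varpi_\chi^{m_i}\zeta^\varepsilon_{\bar\chi}(\ell_i)\in\mathfrak{Sel}^\varepsilon$, using the freeness of $\mathfrak{Sel}_S^\varepsilon$ over $\mathscr O_\chi/p^k$ and property $\mathbf I_4$; this is precisely what rules out a loss of $\varpi_\chi$-powers under the level raising $g\rightsquigarrow h$ and turns the inequality of Step~3 into the equality of Step~7. Without these two steps, the bipartite induction gives only $(L_p^\varepsilon(f))\subseteq(\mathrm{Char}_p^\varepsilon(f))$ even in the non-exceptional case.
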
 

The proof of Theorem A is obtained by compiling information from the finite layers of the anticyclotomic tower, via a standard method which will not be recalled in detail in this paper.
Specifically, it follows immediately from the $\varepsilon$-Birch and Swinnerton-Dyer (BSD) formulas of Theorem \ref{BSD0}, resp.\ of Theorem \ref{BSD-Indef}  in the definite, resp.\ indefinite case, by making use of
an argument due to Mazur--Rubin \cite[Section 5.2]{MR-KolSys} and Howard \cite[Section 2.2]{How-HK}. 

The Birch and Swinnerton-Dyer conjecture leads one to expect BSD formulas for the usual Selmer groups over the finite anticyclotomic layers.
These BSD formulas are obtained  in Chapter \ref{sec:proof_B_C} as a consequence of the   above-mentioned $\varepsilon$-BSD formulas, via a comparison between the $\varepsilon$-Selmer groups and the standard Selmer groups.
We refer the reader to Chapter \ref{sec:epsilonBSD} for the definition of the Selmer group $\mathrm{Sel}(K,A_f(\chi))$ as well as of the Shafarevich--Tate group $\sha(K,A_f(\chi))$, and to Sections \ref{sec:proof_B} and \ref{sec:proof_C} for an explanation of the constants $C$ (related to certain archimedean periods) and of the regulator $\mathrm{Reg}_\chi(E/K)$, which appear in the statements below.

\begin{theoremB}[Definite BSD formulas]Let $\chi$ be a finite 
order character of conductor $p^n$ of the Galois group of $K_\infty/K$. Then $\mathrm{Sel}(K,A_f(\chi))$ is finite if and only if  
$L(E/K,\chi,1)\neq0$. In this case one has
\[\mathrm{length}_{\mathscr{O}_\chi}\!\big(\mathrm{Sel}(K,A_f(\chi))\big)\leqslant \ord_\chi\!\left(\frac{L(E/K,\chi,1)}{C}\right)\] 
with equality in the non-exceptional case. 
\end{theoremB}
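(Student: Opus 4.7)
The plan is to deduce Theorem B from the definite $\varepsilon$-BSD formula of Theorem \ref{BSD0}, specialised at $\chi$, by comparing the standard Selmer group $\mathrm{Sel}(K,A_f(\chi))$ with its $\varepsilon$-variant $\mathrm{Sel}^{\varepsilon}(K,A_f(\chi))$. In the ordinary case ($\varepsilon=\emptyset$) the two Selmer groups coincide by the convention fixed in the Introduction, so the substantive content of Theorem B lies in the supersingular case.

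First I would invoke the interpolation property of the definite anticyclotomic $p$-adic $L$-function $L_p^{\varepsilon}(f)$ constructed in Chapter \ref{p-adic L}, which identifies $\chi(L_p^{\varepsilon}(f))$ with $L(E/K,\chi,1)/C$ up to a $p$-adic unit and hence yields
\[
\ord_\chi\bigl(L_p^{\varepsilon}(f)\bigr)=\ord_\chi\!\left(\frac{L(E/K,\chi,1)}{C}\right).
\]
Theorem \ref{BSD0}, an $\varepsilon$-BSD formula bounding the $\ochi$-length of $\mathrm{Sel}^{\varepsilon}(K,A_f(\chi))$ by $\ord_\chi(L_p^{\varepsilon}(f))$ with equality outside the exceptional case, then gives Theorem B with $\mathrm{Sel}^{\varepsilon}$ in place of $\mathrm{Sel}$, together with the finiteness equivalence on the $\varepsilon$-side.

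The heart of the argument is then the local comparison between the standard and the $\varepsilon$-local conditions at the primes of $K$ above $p$. This is the content of the control statement Proposition \ref{hypext}, which rests on Theorem \ref{prop:formal-group} describing the image of the local $\varepsilon$-points inside $E(K_n\otimes_{\Q}\Q_p)$ along the $p$-adic tower. After taking $\chi$-isotypic components, the quotient of the local condition at $p$ by the $\varepsilon$-local condition becomes a finite $\ochi$-module whose length is precisely the local correction absorbed into the constant $C$. Inserting this computation into a Poitou--Tate/Greenberg-style exact sequence comparing $\mathrm{Sel}$ and $\mathrm{Sel}^{\varepsilon}$ transfers both the finiteness equivalence and the length bound from $\mathrm{Sel}^{\varepsilon}$ to $\mathrm{Sel}$, with equality preserved in the non-exceptional case.

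I expect the main technical obstacle to be this local bookkeeping: the output of Theorem \ref{prop:formal-group} must be sufficiently uniform in characters $\chi$ of conductor $p^n$ to pin down the quotient of local conditions exactly, and the resulting arithmetic factor has to be matched to the archimedean-period normalisation of the constant $C$ defined in Section \ref{sec:proof_B}. In the exceptional case (supersingular $p$, inert in $K$, $\varepsilon=+$), Theorem \ref{BSD0} itself yields only a divisibility, and this inequality is all that survives the comparison, yielding only the one-sided bound in Theorem B.
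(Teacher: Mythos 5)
Your overall strategy matches the paper's: specialise Theorem \ref{BSD0} at $\chi$, compare $\mathrm{Sel}(K,A_f(\chi))$ with $\mathrm{Sel}_\varepsilon(K,A_f(\chi))$ via a local computation at $p$, and finish with Gross' special value formula. But the bookkeeping in your proposal has a genuine defect that prevents the argument from closing as written.

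The issue is in your first displayed equality. Gross' formula gives $L(E/K,\chi,1)/C = \chi(L_p(f))$ for the \emph{full} $p$-adic $L$-function $L_p(f)=\mathcal L_f\cdot\mathcal L_f^\iota$, not for the $\varepsilon$-version $L_p^\varepsilon(f)=\mathcal L_f^\varepsilon\cdot(\mathcal L_f^\varepsilon)^\iota$. In the supersingular case with $n\geqslant 1$ these differ by a factor $\chi(\tilde\omega_n^{-\varepsilon})^2$ (resp. $\chi(\omega_n^-)^2$ in the exceptional subcase), which is not a $p$-adic unit in general. So the asserted identity $\ord_\chi(L_p^\varepsilon(f))=\ord_\chi(L(E/K,\chi,1)/C)$ is false as stated, and Theorem \ref{BSD0} gives you a bound on $\mathrm{Sel}_\varepsilon$ in terms of $\ord_\chi(\mathcal L_f^\varepsilon(\bar\chi))$, not in terms of $L(E/K,\chi,1)/C$.

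Relatedly, your diagnosis that the local correction term is ``absorbed into the constant $C$'' is not how the argument works. The constant $C=\sqrt{D}p^n/\Omega$ in Gross' formula is the same regardless of $\varepsilon$ and carries no $\varepsilon$-dependent local fudge factor. The correct mechanism is a cancellation of two separate corrections of exactly the same size: on one side, Proposition \ref{prop comparison} (built from Lemma \ref{lemma comparison} and Theorem \ref{prop:formal-group} — note you cited Proposition \ref{hypext}, but the Selmer-comparison statement is Proposition \ref{prop comparison}) shows that $\mathrm{length}(\mathrm{Sel}(K,A_f(\chi))) - \mathrm{length}(\mathrm{Sel}_\varepsilon(K,A_f(\chi))) = 2\ord_\chi(\tilde\omega_n^{-\varepsilon})$; on the other side, the definitional relation $\tilde\omega_n^{-\varepsilon}\cdot\mathcal L_{f,n}^\varepsilon\equiv\pm\mathcal L_{f,n}\pmod{\omega_n}$ shows that $2\ord_\chi(\mathcal L_{f,n}(\bar\chi)) - 2\ord_\chi(\mathcal L_{f,n}^\varepsilon(\bar\chi)) = 2\ord_\chi(\tilde\omega_n^{-\varepsilon})$. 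Feeding the second identity into Theorem \ref{BSD0} and adding the first produces exactly $\mathrm{length}(\mathrm{Sel}(K,A_f(\chi)))\leqslant \ord_\chi(\chi(L_p(f)))$, after which the unaltered Gross formula converts the right side to $\ord_\chi(L(E/K,\chi,1)/C)$. Without identifying this internal cancellation your ``local correction'' floats freely, and the conclusion you reach does not match the statement of Theorem B.

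A smaller but real point: you also need the symmetry $(\mathcal L_f^\varepsilon)^\iota=\pm\gamma_\infty\mathcal L_f^\varepsilon$, hence $t_\chi^\varepsilon(f)=t_{\bar\chi}^\varepsilon(f)$, to pass between $\chi$ and $\bar\chi$; Theorem \ref{BSD0} is stated in terms of $\mathcal L_g^\varepsilon(\bar\chi)$ while the target is phrased at $\chi$.
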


\begin{theoremC}[Indefinite BSD formulas]Let $\chi$ be a finite 
order character of conductor $p^n$  of the Galois group of $K_\infty/K$. Then $\mathrm{Sel}(K,A_f(\chi))$ has corank equal to $1$ if and only 
if $L^\prime(E/K,\chi,1)\neq 0$. In this case one has
\[\mathrm{length}_{\mathscr{O}_\chi}\!\big(\sha(K,A_f(\chi))\big)\leqslant
\ord_\chi\!\lri{\frac{L^\prime(E/K,\chi,1)}{C\cdot \mathrm{Reg}_\chi(E/K)}}\]      
with equality in the non-exceptional case.  \end{theoremC}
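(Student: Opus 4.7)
The plan is to deduce Theorem C from the indefinite $\varepsilon$-BSD formula of Theorem \ref{BSD-Indef} by a two-step translation: compare the $\varepsilon$-Selmer group with the standard Selmer group, and translate the $\varepsilon$-regulator/archimedean-period that appears on the right-hand side of Theorem \ref{BSD-Indef} into the classical expression $L'(E/K,\chi,1)/(C\cdot\mathrm{Reg}_\chi(E/K))$. The argument parallels the deduction of Theorem B from the definite $\varepsilon$-BSD (Theorem \ref{BSD0}), with the added complication that both sides of the formula now involve first derivatives and Néron--Tate heights of Heegner classes.

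First I would reduce the computation of $\mathrm{Sel}(K,A_f(\chi))$ and $\sha(K,A_f(\chi))$ to their $\varepsilon$-analogues that feature in Theorem \ref{BSD-Indef}. In the good ordinary case ($\varepsilon=\emptyset$) there is nothing to do. In the supersingular case the comparison is provided by the control result of Proposition \ref{hypext}, which rests on Theorem \ref{prop:formal-group}; this identifies the $\varepsilon$-Selmer and the classical Selmer groups up to controlled local correction terms at $p$, yielding equality of coranks and a precise relation between the $\mathscr{O}_\chi$-lengths of their torsion (Shafarevich--Tate) parts.

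Next I would identify the Heegner-theoretic right-hand side of Theorem \ref{BSD-Indef} with $\ord_\chi\bigl(L'(E/K,\chi,1)/(C\cdot \mathrm{Reg}_\chi(E/K))\bigr)$. The crucial input is the Gross--Zagier--Zhang formula on Shimura curves, asserting that the Néron--Tate height of the $\chi$-component of the Heegner class underlying the indefinite $L_p^\varepsilon(f)$ equals $L'(E/K,\chi,1)$ up to the archimedean period $C$. Combined with the explicit reciprocity laws reviewed in Section \ref{sec:reciprocity} and the compatibility of the $p$-adic Bloch--Kato logarithm with the global cycle class map, this matches the $\varepsilon$-regulator to $\mathrm{Reg}_\chi(E/K)$ modulo $p$-adic units. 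The equivalence ``$\mathrm{Sel}(K,A_f(\chi))$ has corank $1$ iff $L'(E/K,\chi,1)\neq 0$'' then follows: Theorem \ref{BSD-Indef} forces the $\varepsilon$-Selmer corank to equal $1$ exactly when the Heegner class is non-torsion, which by Gross--Zagier--Zhang is equivalent to $L'(E/K,\chi,1)\neq 0$; the control step transfers this to the usual Selmer group, and the inequality with equality in the non-exceptional case is inherited from Theorem \ref{BSD-Indef}.

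The hard part will be the precise bookkeeping of periods, $p$-adic logarithms and regulators in the second step: one must match the transcendental factors appearing in the $p$-adic formula to those of the classical Gross--Zagier--Zhang formula and verify that the resulting discrepancy is a $p$-adic unit under Hypothesis \ref{thehy}, crucially using the irreducibility of $\bar{\varrho}_{E,p}$ and, in the ordinary case, the assumption $a_p(E)\not\equiv\pm 1\pmod{p}$ to clear denominators. The exceptional case is the source of the loss of one divisibility in Theorem \ref{BSD-Indef} and hence in Theorem C: there $L_p^+(f)$ acquires an extra trivial zero of local nature, which prevents the bipartite Euler system machinery from yielding the opposite divisibility.
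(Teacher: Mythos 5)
Your overall architecture is right (reduce to Theorem~\ref{BSD-Indef} by first comparing $\varepsilon$-Selmer groups with the classical ones, then relate the Heegner index to $L'(E/K,\chi,1)$ via the archimedean Gross--Zagier formula), and your remark about the exceptional trivial zero of $L_p^+(f)$ being the source of the lost divisibility is correct. But the second half of your plan, as you have described it, attacks a problem that the paper does not actually have to solve, and it introduces machinery that plays no role.

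The main issue is your treatment of $\mathrm{Reg}_\chi(E/K)$. You write as if it were a previously defined $p$-adic or classical regulator whose relation to the index $\mathrm{length}_{\mathscr{O}_\chi}\bigl(\mathfrak{Sel}(K,T_f(\chi))/\mathscr{O}_\chi\cdot\tilde\kappa_{\bar\chi}\bigr)$ had to be established by invoking the $p$-adic Bloch--Kato logarithm, its compatibility with the cycle class map, and the explicit reciprocity laws --- and that the ``hard part'' would be a comparison of transcendental periods. None of this is needed, nor would it lead anywhere: the paper \emph{defines} $\mathrm{Reg}_\chi(E/K)$ (see Section~\ref{sec:proof_C}) as the quotient $h_{\mathrm{NT}}(P_{\bar\chi})\,/\,\bigl(2\cdot\mathrm{length}_{\mathscr{O}_\chi}\bigl(\mathfrak{Sel}(K,T_f(\chi))/\mathscr{O}_\chi\cdot\tilde\kappa_{\bar\chi}\bigr)\bigr)$, and $\sha(K,A_f(\chi))$ as $\mathrm{Sel}(K,A_f(\chi))_{/\mathrm{div}}$. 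With those definitions in hand, after plugging in the archimedean Gross--Zagier--Zhang identity $L'(E/K,\chi,1)=C\cdot h_{\mathrm{NT}}(P_{\bar\chi})$, the conclusion is essentially a rewriting of the inequality proved in Step~1. The reciprocity laws of Section~\ref{sec:reciprocity} enter in the proof of Theorem~\ref{BSD-Indef}, not here, and the $p$-adic logarithm does not enter at all. Also, the Selmer comparison in Step~1 is carried out with Proposition~\ref{prop comparison} (built on Lemma~\ref{lemma comparison} and Theorem~\ref{prop:formal-group}), not with Proposition~\ref{hypext} directly, and one invokes Remark~\ref{remark:n=0} to exclude the trivial character in the exceptional case --- a restriction you did not mention. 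You should re-read the statement of Theorem~C in the introduction: it explicitly says the constants $C$ and the regulator are \emph{explained} in Sections~\ref{sec:proof_B} and \ref{sec:proof_C}, which is a hint that they are not pre-existing quantities requiring a deep period comparison.
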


\begin{remark} \label{remark:n=0}
The case $n=0$ can be obtained more directly by applying \cite{S-U, FW} in the setting of Theorem B and the techniques of \cite{weiz, BBV} in the setting of Theorem C. In the non-exceptional case it follows as well from the AMCs proved in this paper. The presence of a local zero in the exceptional case
 prevents us to treat the trivial character on the same ground as the other characters. 
\end{remark}

\subsection*{Conventions} The following conventions are adopted to lighten the notation (as recalled also in the appropriate parts of the paper).
\begin{itemize} 
\item Besides denoting with $\varepsilon$ one of the signs $+$ or $-$, we also sometimes write $\varepsilon=+1$ when $\varepsilon=+$ and $\varepsilon=-1$ when $\varepsilon=-$. With this convention,  the equation 
$(-1)^n=\varepsilon$ for an integer $n$ implies that $n$ is even if $\varepsilon=+$ and $n$ is odd if $\varepsilon=-$. 
\item  Given a principal ideal $(x)$  of a commutative ring with unity $R$ and an $R$-module  $M$, we sometimes write $M/x$ to denote $M/(x)=M/xM$. 
\end{itemize}

\vskip .2cm\noindent
{\em Acknowledgements.} It is a pleasure to thank the referee for several comments, which allowed us to improve the exposition of this paper. 

\section{Special points on Shimura curves and Gross curves}\label{Shimura curves}

\subsection{Shimura curves and Gross curves} 
Fix a positive integer $N$ and a factorisation 
$N=N^{+}N^{-}$ into coprime integers, with $N^-$ squarefree. 
Let $\mathscr{B}$ be the quaternion algebra over $\Q$ whose discriminant has finite part equal to $N^{-}$.
The algebra $\mathscr{B}$ (which is unique up to isomorphism) is said to be \emph{indefinite}, (resp., \emph{definite}) if it is split (resp., non-split) at infinity. So $\mathscr{B}$ is indefinite if and only if $N^-$ is divisible by an \emph{even} number of primes.

For every abelian group $Z$, let $\hat{Z}$ denote $Z\otimes_{\Z}\hat{\Z}$,
where $\hat{\Z}=\prod_{\ell\ \text{prime}}\Z_{\ell}$ 
is the profinite  completion of $\Z$.  Let $\mathrm{Hom}(\C,\mathscr{B}_{\infty})$ 
be the set of $\mathbf{R}$-algebra morphisms  
of  $\C$ in $\mathscr{B}_{\infty}=\mathscr{B}\otimes_\Q\mathbf{R}$. 
The group $\mathscr{B}^{\ast}$ acts via the diagonal embedding on $\hat{\mathscr{B}}^{\ast}$, and via conjugation on 
$\mathrm{Hom}(\C,\mathscr{B}_{\infty})$.  
Fix a  maximal order
$\tilde{\mathscr{R}}$ in $\mathscr{B}$, and an Eichler order $\mathscr{R}$ of level $N^{+}$ contained in 
$\tilde{\mathscr{R}}$. 
Define the set 
\begin{equation}
\label{eqn:shimura_curve}
                 Y_{N^{+},N^{-}}(\C):=
                 \hat{\mathscr{R}}^{\ast}\big\backslash{}\hat{\mathscr{B}}^{\ast}\times{}\mathrm{Hom}(\C,\mathscr{B}_{\infty})
                 \big/\mathscr{B}^{\ast}. 
\end{equation}
As notation suggests, $Y_{N^{+},N^{-}}(\C)$ is a Riemann surface,
arising as the set of complex points of a smooth curve. This curve can be defined over $\Q$, and its description, which we will recall in the next paragraphs, 
markedly depends on whether $\mathscr{B}$
is definite or indefinite.

In the indefinite case, let 
$\Gamma_{N^{+},N^{-}}\subset{}\mathrm{SL}_{2}(\mathbf{R})$ be the discrete subgroup of $\iota_{\infty}\lri{\mathscr{R}^{\ast}}$
consisting of elements of determinant $1$; here $\iota_\infty:\mathscr{B}\cong\M_2(\mathbb{R})$ is a fixed isomorphism. Then the strong approximation theorem 
shows that
\[Y_{N^{+},N^{-}}(\C)\cong{}\Gamma_{N^{+},N^{-}}\big\backslash\mathcal{H},\] 
where $\mathcal{H}:=\{z\in{}\C : \Im(z)>0\}$, and the left 
action of $\Gamma_{N^+,N^-}$ on $\mathcal{H}$ 
is by fractional linear transformations. If $N^-\neq 1$, we set 
$X_{N^+,N^-}(\C)=Y_{N^+,N^-}(\C)$, while if $N^-=1$ then 
$Y_{N^+,N^-}(\C)$ is the usual modular curve of level $\Gamma_0(N)$, 
and we let $X_{N^+,N^-}(\C)$ denote its standard 
compactification obtained by adding a finite set of cusps. The Riemann 
surface $X_{N^+,N^-}(\C)$ has a model $X_{N^+,N^-}$ defined 
over $\Q$, which is called the \emph{Shimura curve} of discriminant $N^-$ 
and level $N^+$ (up to isomorphism, it is independent of the choices made).
 
In the definite case, the double coset  space 
$\hat{\mathscr{R}}^{\ast}\big\backslash{}\hat{\mathscr{B}}^{\ast}\big/\mathscr{B}^{\ast}$ 
is a finite set, in bijection with the set $\{\mathscr{R}_{1},\dots,\mathscr{R}_{h}\}$ of conjugacy classes of (oriented) Eichler orders of level $N^{+}$ in $\mathscr{B}$.
For every $j=1,\dots,h$, set 
$\Gamma_{j}:=\mathscr{R}_{j}^{\ast}/\Z^{\ast}$; each $\Gamma_j$ 
is a finite group. Then, again by the strong approximation theorem,  
\[Y_{N^{+},N^{-}}(\C)\cong{}\coprod_{j=1}^{h} \Gamma_{j}\backslash\mathrm{Hom}(\C,\mathscr{B}_\infty).\] Attach a conic $\mathscr{C}/\Q$ to $\mathscr{B}$, by the rule
\[
                     \mathscr{C}(A):=\big\{x\in{}\mathscr{B}\otimes_{\Q}A : x\not=0, \mathrm{Nr}(x)=\mathrm{Tr}(x)=0 \big\}\big/A^\ast,
\]
where $\mathrm{Nr}$ and $\mathrm{Tr}$ denote reduced norm and trace, respectively.
There is a natural bijection between 
$\mathrm{Hom}(\C,\mathscr{B}_{\infty})$ and $\mathscr{C}(\C)$, 
from which it follows that $Y_{N^{+},N^{-}}(\C)$ is identified with the set of complex points of the disjoint union 
$
X_{N^{+},N^{-}}:=\coprod_{j=1}^{h}\mathscr{C}_{j}
$
of the genus zero curves $\mathscr{C}_{j}:=\Gamma_{j}\backslash\mathscr{C}$ defined over $\Q$. The curve $X_{N^+,N^-}$ is called the \emph{Gross curve} of discriminant 
$N^-$ and level $N^+$.

\subsection{Hecke operators}\label{heckeoper} Since $\mathrm{Pic}(\Z)\cong{}\hat{\Q}^{\ast}/\Q^{\ast}\hat{\Z}^{\ast}$ 
is trivial, one has a bijection
\[
             Y_{N^{+},N^{-}}(\C)\cong{}\Big(\hat{\mathscr{R}}^{\ast}\big\backslash{}\hat{\mathscr{B}}^{\ast}\big/\hat{\Q}^{\ast}
             \times{}\mathrm{Hom}(\C,\mathscr{B}_{\infty})\Big)\big/\mathscr{B}^{\ast}.
\]
The double coset space $\hat{\mathscr{R}}^{\ast}\big\backslash{}\hat{\mathscr{B}}^{\ast}\big/\hat{\Q}^{\ast}$ is equal to the product over all prime numbers $\ell$ of the local double coset spaces 
$\mathcal{T}_\ell=\mathscr{R}_\ell^*\backslash\mathscr{B}_\ell^*/\Q_\ell^*$, 
where $\mathscr{R}_\ell=\mathscr{R}\otimes_\Z\Z_\ell$ and $\mathscr{B}_\ell=\mathscr{B}\otimes_\Q\Q_\ell$.  
If $\ell\nmid{}N$, then $\mathcal{T}_{\ell}$ is isomorphic to the set of vertices of the Bruhat-Tits tree 
$\mathcal{T}_{\ell}
                \cong{}\mathrm{PGL}_{2}(\Z_{\ell})\backslash\mathrm{PGL}_{2}(\Q_{\ell})$ of $\mathrm{PGL}_{2}(\Q_{\ell})$. 
This decomposition gives rise to an action of
Hecke operators $T_{\ell}$, for primes $\ell\nmid N$,
and $U_{\ell}$ for $\ell\mid N$ by 
$\Q$-rational correspondences on $X_{N^{+},N^{-}}$. By covariant functoriality, they 
induce endomorphisms of the Picard group 
\[J_{N^{+},N^{-}}=\mathrm{Pic}(X_{N^{+},N^{-}}/\Q)\]
of the curve $X_{N^{+},N^{-}}/\Q$, denoted in the same way. Define  
$\mathbf{T}_{N^{+},N^{-}}$ to be the $\Z$-subalgebra of the ring 
$\mathrm{End}_{\Q}(J_{N^{+},N^{-}})$ 
generated over $\Z$ by the operators $T_{\ell}$ and $U_{\ell}$.
Note that in the definite case $J_{N^+,N^-}$ is 
a free $\Z$-module of rank equal to the number of connected components of 
the Gross curve $X_{N^+,N^-}$.  

\subsection{The Jacquet--Langlands correspondence}\label{JLsec} Let
 $\mathbb{T}_{N^{+},N^{-}}$ be the Hecke algebra acting faithfully on the $\C$-vector space $S_{2}(\Gamma_{0}(N))^{N^{-}\text{-new}}$ of weight-two cusp forms of level $N$ 
which are new at $N^{-}$, generated over $\Z$ by Hecke operators $T_{\ell}$ for primes $\ell\nmid{}N$ and $U_{\ell}$
for primes $\ell|N$. The Jacquet--Langlands correspondence states the existence of a canonical isomorphism
$\mathbb{T}_{N^{+},N^{-}}\cong{}\mathbf{T}_{N^{+},N^{-}}$ 
identifying Hecke operators indexed by the same prime numbers. 
It follows that $\mathbb{T}_{N^{+},N^{-}}$ acts as a group of $\Q$-rational endomorphisms of $J_{N^{+},N^{-}}$. See Section 1.6 of \cite{Be-Da1} for details. 
 
\subsection{Special points}\label{special points} Let $p>3$ be a prime number 
such that $p\nmid N$, 
and $K/\Q$ be an imaginary quadratic field of discriminant $D_K$ coprime with $Np$. Assume in this subsection that the factorization $N=N^+N^-$ satisfies the following 
\emph{generalized Heegner hypothesis}: 
a prime divisor $q$ of $N$ divides $N^+$ if and only 
if it is split in $K$. 

The  inclusion
$\mathrm{Hom}(K,\mathscr{B})\subset \mathrm{Hom}(\C,\mathscr{B}_\infty)$
arising from extension of scalars induces a map from the set 
\[
                         \mathscr{S}_{N^+,N^-}(K):=
                         \hat{\mathscr{R}}^{\ast}\big\backslash{}\hat{\mathscr{B}}^{\ast}\times{}\mathrm{Hom}(K,\mathscr{B})\big/\mathscr{B}^{\ast}
\]
to $Y_{N^{+},N^{-}}(\C)$. 
A \emph{special point} of $X_{N^{+},N^{-}}$ associated with $K$ is any point 
in the image of this map. 
When $\mathscr{B}$ is indefinite (resp., definite), so that $X_{N^+,N^-}$ is a Shimura curve (resp., a Gross curve),
we say that the points in $\mathscr{S}_{N^+,N^-}(K)$ are  \emph{Heegner points} 
(resp., \emph{Gross points}) 
associated with $K$.

Let $P\in{}\mathscr{S}_{N^+,N^-}(K)$ be represented by $g\times{}f\in{}\hat{\mathscr{B}}^{\ast}\times{}\mathrm{Hom}(K,\mathscr{B})$.
Then $P$ is a said to be of \emph{conductor $p^{n}$} if 
\[
                       f(K)\cap{}g^{-1}\hat{\mathscr{R}}^{\ast}g=f(\mathcal{O}_{p^{n}}),
\]
where $\mathcal{O}_{p^{n}}:=\Z+p^n\mathcal{O}_K$ ($n\ges 0$) is the order of $K$ of conductor $p^n$.
Write $\mathscr{S}_{N^+,N^-}(\mathcal{O}_{p^n})$ 
for the set of special points of conductor $p^{n}$ in $X_{N^{+},N^{-}}(\C)$.
The theory of local embeddings guarantees that,   
under the condition recalled at the beginning of this subsection, 
the set $\mathscr{S}_{N^+,N^-}(\mathcal{O}_{p^n})$ is not empty for all $n\ges 0$
(see \cite{Be-Da1}, Section 2.2). 

The set of special points $\mathscr{S}_{N^+,N^-}(K)$ is equipped with an algebraic 
Galois action of the group $\mathrm{Gal}(K^{\mathrm{ab}}/K)$, where $K^\mathrm{ab}$ is the maximal abelian extension of $K$. 
Let $P\in\mathscr{S}_{N^+,N^-}(K)$ be represented by a pair 
$g\times{}f\in{}\hat{\mathscr{B}}^{\ast}\times{}\mathrm{Hom}(K,\mathscr{B})$ 
and let $\sigma$ be represented under the inverse of the Artin map by the class of 
an element $\mathfrak{a}\in \hat{K}^*$. Thadelisationen 
$\sigma(P)$ is the
special point in $\mathscr{S}_{N^+,N^-}(K)$ 
represented by the pair $g\hat{f}(\mathfrak{a})\times f$, 
where $\hat{f}$ is the adelisation of $f$.   

Let $\mathrm{Pic}(\mathcal{O}_{p^{n}})=K^{\ast}\backslash{}\hat{K}^{\ast}/\hat{\mathcal{O}}_{p^{n}}$
be the Picard group of $\mathcal{O}_{p^{n}}$. By class field theory there exists an
abelian extension $\tilde{K}_{n}/K$, the ring class field of conductor $p^{n}$,
such that the Galois group 
$\tilde{G}_{n}=\mathrm{Gal}(\tilde{K}_{n}/K)$ is isomorphic to 
$\mathrm{Pic}(\mathcal{O}_{p^{n}})$ via the inverse of the Artin map.
Recall that the Galois group $\mathrm{Gal}(K/\Q)$ acts on $\tilde{G}_n$ as inversion.  

If $X_{N^+,N^-}$ is a Shimura curve, then the theory of complex multiplication shows 
that $\mathscr{S}_{N^+,N^-}(\mathcal{O}_{p^n})$ is contained in 
$X_{N^+,N^-}(\tilde{K}_n)$, for all $n\ges0$, and Shimura's reciprocity law states 
that the algebraic Galois action on the set of special points $\mathscr{S}_{N^+,N^-}(K)$ described 
above coincides with
the usual geometric action of $\mathrm{Gal}(\bar\Q/\Q)$ on $X_{N^+,N^-}(\bar\Q)$.
In this case, for any extension $H/\Q$ in $\bar\Q$,
denote as usual $J_{N^+,N^-}(H)$ 
the subgroup of $H$-rational divisors of $J_{N^+,N^-}(\bar\Q)$, i.e. 
those fixed by $\mathrm{Gal}(\bar\Q/H)$. 

If  $X_{N^+,N^-}$ is a Gross curve, then 
the algebraic action of $\mathrm{Gal}(K^\mathrm{ab}/K)$ 
on $\mathscr{S}_{N^+,N^-}(K)$ described above 
does not correspond to any geometric Galois action, since all 
special points are already  defined over $K$. 
However, one can check that each element in 
$\mathscr{S}_{N^+,N^-}(\mathcal{O}_{p^n})$ is fixed by the algebraic 
action of $\mathrm{Gal}(K^\mathrm{ab}/\tilde{K}_n)$, for all $n\ges0$. 
Extend canonically 
the action of $\mathrm{Gal}(K^\mathrm{ab}/K)$ on $\mathscr{S}_{N^+,N^-}(K)$ 
defined above to the subgroup of $J_{N^+,N^-}$ generated by the image of 
$\mathscr{S}_{N^+,N^-}(K)$. 
Given an abelian extension $H$ of $K$ and an element $D\subseteq J_{N^+,N^-}$ supported on $\mathscr{S}_{N^+,N^-}(K)$, write with an abuse of notation
$D\in  J_{N^+,N^-}(H)$ to mean that $D$ is fixed by the action of $\mathrm{Gal}(K^\mathrm{ab}/H)$. 

\subsection{Compatible sequences of special points}\label{compheeg}
Let $K$, $N=N^+N^-$ and $p\nmid N$ be fixed as in the previous subsection, 
and assume that $p\nmid h_K$, the class number of $K$. 
Recall from the Introduction the anticyclotomic $\Z_p$-extension $K_\infty/K$,
and for any integer $n\ges0$ 
let $K_n$ be the subfield of $K_\infty$ such that $G_n=\mathrm{Gal}(K_n/K)\cong\Z/p^n\Z$. 
Since $p\nmid{}h_{K}$, we have 
$\tilde{K}_{n+1}=K_{n}\cdot{}\tilde{K}_{1}$,
and $K_{n}\cap{}\tilde{K}_{1}=K$. 
In particular $\tilde{G}_{n+1}=\Delta\times{}G_{n}$, with $\Delta=\tilde{G}_{1}$. By convention, set $K_{-1}=K$. 

Let $L\ges 1$ be a squarefree integer, prime to $Np$; when $L>1$, we suppose that $L$ 
is the product of primes which are inert in $K$. 
The set $\mathscr{S}_{N^+,LN^-}(\mathcal{O}_{p^{n}})$
of special points of conductor $p^{n}$ in $X_{N^+,LN^-}(\C)$ 
is then non-empty for every $n\ges{}0$ (note that $X_{N^+,LN^-}$ might 
be a Gross curve or a Shimura curve, accordingly with the parity of the 
number of prime divisors of $N^-L$). 
As in Section $2.4$
of \cite{Be-Da1} fix a \emph{compatible sequence}
$\tilde{P}_{\infty}(L)=(\tilde{P}_{n}(L))_{n\ges{}0}$ of special points of $p$-power conductor, where  
$\tilde{P}_{n}(L)\in{}\mathscr{S}_{N^+,LN^-}(\mathcal{O}_{p^{n}})$.
For every integer $n\ges{}-1$ define
\[
                 P_{n}(L)=\sum_{\sigma\in{}\Delta}
                 \sigma\big(\tilde{P}_{n+1}(L)\big)\in{}J_{N^{+},LN^{-}}(K_{n}),\]
\[                 P_{K}(L)=\sum_{\sigma\in{}\mathrm{Pic}(\mathcal{O}_{K})}\sigma\big(\tilde{P}_{0}(L)\big)
                 \in{}J_{N^{+},N^{-}}(K).
\]  
Let $\epsilon_K$ be the quadratic character 
associated with $K$, and
$u_{K}$ be one half of the order of the unit group $\mathcal{O}_{K}^{\ast}$. 
Define 
\begin{equation}\label{u_p}
u_p=(p-\epsilon_K(p))/u_K.\end{equation} 
Then these points  satisfy the following relations:  
\begin{equation}\label{P-1}
P_{-1}(L)=u_{p}\cdot{}P_{K}(L),\end{equation}
\begin{equation} \label{P-0} 
u_{K}\cdot{}P_{0}(L)=
\begin{cases}
 T_pP_{K}(L) \qquad\qquad\quad \text{ if } \epsilon_{K}(p)=-1, \\
 T_{p}P_{K}(L)-2P_{K}(L) \ \!\, \text{ if }\epsilon_{K}(p)=+1,
\end{cases}
\end{equation}
\begin{equation}\label{eq:compdef} 
\mathrm{Trace}_{K_{n+1}/K_{n}}(P_{n+1}(L))=T_{p}P_{n}(L)-P_{n-1}(L), \text{ for every $n\ges{}0$},
\end{equation}
where $\mathrm{Trace}_{{K}_{n+1}/{K}_{n}}=\sum_{\sigma\in{}
\mathrm{Gal}({K}_{n+1}/{K}_{n})}\sigma$. If $L=1$ we simply write $P_n=P_n(L)$ and $P_K=P_K(L)$. 

\section{Admissible primes and raising the level}
\label{sec:admissible}

Fix a positive integer  $N$, a factorisation $N=N^{+}N^{-}$ into coprime positive integers, and a rational prime $p>5$ coprime to $N$. Assume that $N^-$ is square-free.

\subsection{Eigenforms of level $(N^{+},N^{-})$}\label{eigenforms}
Recall the Hecke algebra $\mathbb{T}_{N^+,N^-}$ defined in 
Section \ref{JLsec} 
and let $R$ be a complete local Noetherian ring with finite residue field $k_{R}$
of characteristic $p$. A \emph{$R$-valued (weight two) eigenform of level $(N^{+},N^{-})$}
is a \emph{surjective} morphism 
$f : \mathbb{T}_{N^{+},N^{-}}\rightarrow{}R$. 
Denote by  $S_{2}(N^{+},N^{-};R)$ the set of such eigenforms. 
To every eigenform $f\in S_{2}(N^{+},N^{-};R)$ is associated a Galois representation 
\[\bar{\rho}_{f} : G_{\Q}\longrightarrow{}\mathrm{GL}_{2}(k_{R}),\] 
unramified at every prime $q\nmid{}Np$, and  such that an arithmetic  Frobenius $\mathrm{Frob}_{q}\in{}G_{\Q}$
at $q$ has characteristic polynomial $\mathrm{char}\lri{\bar{\rho}_{f}(\mathrm{Frob}_{q})}=X^{2}-\bar{f}(T_{q})X+q\in{}k_{R}[X]$, where 
$\bar{f}(T_q)$ denotes the reduction of $f(T_q)$ modulo the maximal ideal of $R$. 
The semi-simplification of $\bar{\rho}_{f}$ is characterised by these properties. 
Moreover, as proved by Carayol \cite{Car},
if $\bar{\rho}_{f}$ is \emph{irreducible} (hence absolutely irreducible since $p$ is odd), it can be lifted \emph{uniquely} to a Galois representation
\[\rho_{f} : G_{\Q}\longrightarrow{}\mathrm{GL}_{2}(R),\] 
unramified at $q\nmid{}Np$, and such that $\mathrm{trace}(\mathrm{Frob}_{q})=f(T_{q})$
and $\det(\mathrm{Frob}_{q})=q$
for such a $q$. 
Assuming that $\bar{\rho}_{f}$ is  irreducible,  write 
$T_{f}$ for an $R$-module giving rise to the representation $\rho_{f}$, 
and, for $R$ a quotient of $\Z_p$, define 
$A_f=\mathrm{Hom}_{\Z_p}(T_f,\mu_{p^\infty})$, 
where $\mu_{p^\infty}$ is the group of $p$-power roots of unity.   

Let $n\in \N\cup\{\infty\}$ and define $R=\Z_p$ if $n=\infty$ and $R=\Z/p^n\Z$ if $n<\infty$. Let 
$f\in S_2(N^+,N^-;R)$.  
If $k\in\N\cup\{\infty\}$ and $1\les k \les n$, 
let $f_k=f\pmod{p^k}$ denote the reduction of $f$ modulo $p^k$, 
with the convention that $f_\infty=f$ if $n=k=\infty$.   
Let $T_{f,k}$ and $A_{f,k}$ be the 
modules introduced above for $f_k$ 
(i.e. $T_{f,k}=T_{f_k}$ and $A_{f,k}=A_{f_k}$). 
If particular, if $n=\infty$ then $T_{f,\infty}=T_f$ and $A_{f,\infty}=A_f$. 
Finally, for any $\Z_p$-algebra $\mathscr{O}$, we will write 
$T_{f,k,\mathscr{O}}=T_{f,k}\otimes_{\Z_p}\mathscr{O}$ and 
$A_{f,k,\mathscr{O}}=A_{f,k}\otimes_{\Z_p}\mathscr{O}$.

\subsection{Admissible primes}\label{admsec} 
Let $R$ denote a complete, local Noetherian ring
with finite residue field $k_R$ of characteristic $p$ and let
 $f\in{}S_{2}(N^{+},N^{-};R)$
be an $R$-valued eigenform of level $(N^{+},N^{-})$.
Fix a quadratic imaginary field $K/\Q$ of discriminant $D_{K}$ coprime with $Np$. Following \cite{Be-Da-main}, we say that
a rational prime $\ell$ is an \emph{admissible prime relative to $(f,K)$} if the following conditions are satisfied:
\begin{itemize}
\item[$A1.$] $\ell$ does not divide $Np$;
\item[$A2.$] $p$ does not divide $\ell^{2}-1$;
\item[$A3.$] $f(T_{\ell})^{2}=(\ell+1)^{2}\in{}R$;
\item[$A4.$] $\ell$ is inert in $K/\Q$.
\end{itemize}
Write $\mathscr S(f,K)$ for the set of squarefree  products of admissible primes for $(f,K)$. 

Let $n\in\N\cup\{\infty\}$, and put $R=\Z_p$ if $n=\infty$ and $R=\Z/p^n\Z$ if $n<\infty$. 
For $f\in S_2(N^+,N^-;R)$, and $k\in\N\cup\{\infty\}$ with $1\les k \les n$,
call \emph{$k$-admissible prime} 
any admissible prime relative to $(f_k,K)$, where recall that $f_k$ is the 
reduction of $f$ modulo $p^k$, with the convention that $f_\infty=f$. 
With an abuse of notation, if no confusion may 
arise, we write $\mathscr{S}_k$ for $\mathscr{S}(f_k,K)$. 
We say that 
$L\in{}\mathscr{S}_{k}$ is \emph{definite} if  $\epsilon_{K}(LN^{-})=-1$ and  \emph{indefinite} if  $\epsilon_{K}(LN^{-})=+1$, and write $\mathscr{S}_{k}^\mathrm{def}$ and 
$\mathscr{S}_{k}^\mathrm{ind}$ for the subsets of $\mathscr S_k$ consisting of definite and indefinite integers, respectively; 
clearly $\mathscr S_k= \mathscr{S}_{k}^\mathrm{def}\cup \mathscr{S}_{k}^\mathrm{ind}$. 

The following lemma is proved by the same argument appearing in the proof of Lemma 2.6 of \cite{Be-Da-main}.
\begin{lemma}\label{decadm} Let $\ell$ be an admissible prime relative to $(f,K)$, and let $K_{\ell}/\Q_{\ell}$ be the completion of $K$ at the unique prime dividing $\ell$
$($so that $K_{\ell}=\Q_{\ell^{2}}$ is the quadratic unramified extension of $\Q_{\ell}$ $)$. There is a decomposition of $R[G_{K_{\ell}}]$-modules 
$ T_{f}=R(\varepsilon)\oplus{}R$, 
where $R(\varepsilon)$ (resp., $R$) denotes a copy of $R$ on which $G_{K_{\ell}}$ acts via the $p$-adic cyclotomic character $\varepsilon$
(resp., acts trivially). 
\end{lemma}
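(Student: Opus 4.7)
The plan is as follows. By Carayol's theorem recalled in Section~\ref{eigenforms}, $T_{f}$ is (free of rank two over $R$ and) unramified at $\ell$, and the action of the arithmetic Frobenius $\mathrm{Frob}_{\ell}\in G_{\Q_{\ell}}$ on it has characteristic polynomial $X^{2}-f(T_{\ell})X+\ell$. The first step is to use condition $A3$, namely $f(T_{\ell})^{2}=(\ell+1)^{2}$, together with conditions $A1$--$A2$, to conclude that in fact $f(T_{\ell})=\eta(\ell+1)$ in $R$ for a single $\eta\in\{+1,-1\}$. Indeed, $R$ is local with residue field of characteristic $p$, and $p\nmid 2(\ell+1)$ (by $A2$), so the residue of $f(T_{\ell})$ in $k_{R}$ is either $\ell+1$ or $-(\ell+1)$ but not both; the equation $\bigl(f(T_{\ell})-(\ell+1)\bigr)\bigl(f(T_{\ell})+(\ell+1)\bigr)=0$ combined with the fact that one of the two factors is a unit then forces the other to vanish.

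The characteristic polynomial of $\mathrm{Frob}_{\ell}$ on $T_{f}$ thus factors as $(X-\eta)(X-\eta\ell)$. The second step applies Cayley--Hamilton to the free rank-two $R$-module $T_{f}$ to obtain $(\mathrm{Frob}_{\ell}-\eta)(\mathrm{Frob}_{\ell}-\eta\ell)=0$ on $T_{f}$. Since $\eta\ell-\eta=\eta(\ell-1)$ is a unit in $R$ (again by $A2$), the elements
\[e_{1}=\frac{\mathrm{Frob}_{\ell}-\eta\ell}{\eta(1-\ell)},\qquad e_{2}=\frac{\mathrm{Frob}_{\ell}-\eta}{\eta(\ell-1)}\]
of the commutative $R$-subalgebra of $\mathrm{End}_{R}(T_{f})$ generated by $\mathrm{Frob}_{\ell}$ are orthogonal idempotents with $e_{1}+e_{2}=1$. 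This gives an $R[\mathrm{Frob}_{\ell}]$-module decomposition $T_{f}=e_{1}T_{f}\oplus e_{2}T_{f}$ on which $\mathrm{Frob}_{\ell}$ acts by the scalars $\eta$ and $\eta\ell$ respectively. Reducing modulo the maximal ideal of $R$ yields two one-dimensional eigenspaces of the semisimple action of $\mathrm{Frob}_{\ell}$ on $T_{f}\otimes_{R}k_{R}$ (the eigenvalues are distinct since $\ell\not\equiv 1\bmod p$), so by Nakayama each summand is free of rank one over $R$.

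The final step passes from $G_{\Q_{\ell}}$ to $G_{K_{\ell}}$. Since $\ell$ is inert in $K$ (condition $A4$), $K_{\ell}=\Q_{\ell^{2}}$, so the arithmetic Frobenius of $K_{\ell}$ corresponds to $\mathrm{Frob}_{\ell}^{2}\in G_{\Q_{\ell}}$; because $T_{f}$ is unramified at $\ell$, the $G_{K_{\ell}}$-action on $T_{f}$ factors through the topological generator $\mathrm{Frob}_{\ell}^{2}$, which acts on $e_{1}T_{f}$ and $e_{2}T_{f}$ as $\eta^{2}=1$ and $\eta^{2}\ell^{2}=\ell^{2}$ respectively. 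Since the $p$-adic cyclotomic character $\varepsilon$ sends the Frobenius of $K_{\ell}$ to $\ell^{2}=\#\mathbf{F}_{\ell^{2}}$, we identify $e_{1}T_{f}\cong R$ and $e_{2}T_{f}\cong R(\varepsilon)$ as $R[G_{K_{\ell}}]$-modules, which is the claimed decomposition.

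The argument is mostly routine once one has $f(T_{\ell})=\eta(\ell+1)$ on the nose (not just modulo a power of $p$): the only subtle point is the passage from the \emph{a priori} condition $f(T_{\ell})^{2}=(\ell+1)^{2}$ in $R$ to a sign choice. Happily, the two signs give the same $G_{K_{\ell}}$-decomposition after squaring, so no sign ambiguity survives.
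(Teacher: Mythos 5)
Your argument is correct and is essentially the proof of Lemma 2.6 of Bertolini--Darmon (\cite{Be-Da-main}), which is precisely the reference the paper gives for this lemma: determine the sign from $A2$--$A3$ (note $2(\ell+1)\in R^{\times}$, so the two square roots of $(\ell+1)^2$ stay distinct modulo the maximal ideal), factor the characteristic polynomial of $\mathrm{Frob}_\ell$ as $(X-\eta)(X-\eta\ell)$ with distinct unit roots, split $T_f$ via the resulting orthogonal idempotents, and restrict to $G_{K_\ell}$ where $\mathrm{Frob}_{K_\ell}=\mathrm{Frob}_\ell^{2}$ kills the sign ambiguity and matches $\varepsilon(\mathrm{Frob}_{K_\ell})=\ell^2$.
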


\subsection{Level raising}\label{level raising}

Let $n$ be a positive integer, and let $f\in{}S_{2}(N^{+},N^{-};\Z/p^{n}\Z)$.

\begin{hy}\label{asslevelraising}   The data $(\bar{\rho}_{f},N^{+},N^{-},p)$ with $N=N^{+}N^{-}$ satisfy the following conditions: 
\begin{enumerate}
\item $N^{-}$ is squarefree, $p\ges 5$ and $p\nmid{}N$;
\item $\bar{\rho}_{f} : G_{\Q}\fre{}\mathrm{GL}_{2}(\F_{p})$ is surjective.
\item If $q\Vert{}N^{-}$ and $q\equiv{}\pm{}1\ \mathrm{mod}\ p$, then $\bar{\rho}_{f}$
is ramified at $q$.
\end{enumerate}
\end{hy}

\begin{theo}\label{raising} Assume that $f\in{}S_{2}(N^{+},N^{-};\Z/p^{n}\Z)$
satisfies Assumption $\ref{asslevelraising}$. Let $S\in\mathscr{S}_k$  for some integer $1\leqslant k\leqslant n$. 
Then there exists an eigenform
\[f_{S} : \mathbb{T}_{N^{+},N^{-}S}\longrightarrow{}\Z/p^{k}\Z\]  
of level $(N^{+},N^{-}S)$ such that
$f_{S}(T_{q})=f_k(T_{q})$ for $q\nmid{}NS$ and $f_{S}(U_{q})=f(U_{q})$ for $q\mid N$, where recall that $f_k=f\pmod{p^k}$. 
Moreover, $f_{S}$ is unique up to multiplication by a unit in $\lri{\Z/p^{k}\Z}^{\ast}$.
\end{theo}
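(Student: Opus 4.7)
The theorem is a level-raising result in the style of Ribet (and its quaternionic refinements due to Diamond--Taylor, Bertolini--Darmon, and others), and I would attack it by reducing to the case of one admissible prime and then invoking an Ihara-type mod $p^k$ lifting of eigenforms.

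First, I would argue by induction on the number of prime factors of $S$. If the result is established whenever $S=\ell$ is a single admissible prime, one iterates: given an eigenform of level $(N^+,N^-S_0)$ modulo $p^k$ congruent to $f_k$ at all primes outside $S_0$, adding the next admissible prime $\ell'\in\mathscr{S}_k$ (which remains $k$-admissible for the new eigenform, since admissibility depends only on Hecke eigenvalues modulo $p^k$ away from $\ell'$) produces the next step, and the composition of these constructions delivers $f_S$. So I may assume $S=\ell$.

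For a single admissible prime $\ell$, the plan is to produce a surjective ring homomorphism $f_\ell:\mathbb{T}_{N^+,N^-\ell}\to \Z/p^k\Z$ matching $f_k$ at all $T_q$ with $q\nmid N\ell$ and at all $U_q$ with $q\mid N$, and taking $U_\ell$ to one of the square roots $\pm 1$ of $1$. The admissibility hypothesis A3 (which reads $f(T_\ell)^2=(\ell+1)^2$) is precisely the Ribet congruence, translating into the fact that $\bar\rho_f$ restricted to a decomposition group at $\ell$ is the reduction of an $\ell$-new form; combined with A2 ($p\nmid\ell^2-1$), it forces $\bar\rho_f(\mathrm{Frob}_\ell)$ to have eigenvalues $\ell$ and $1$ up to a sign, as in Lemma \ref{decadm}. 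Via Jacquet--Langlands (Section \ref{JLsec}), constructing $f_\ell$ is equivalent to producing a surjection onto $\Z/p^k\Z$ from the $\ell$-new quotient $\mathbb{T}_{N^+,N^-\ell}^{\ell\text{-new}}$ localised at the maximal ideal $\mathfrak{m}$ determined by $f_k$.

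The core step is an Ihara-type theorem identifying, under Hypothesis \ref{asslevelraising}, the localisations
\[
\bigl(\mathbb{T}_{N^+,N^-\ell}^{\ell\text{-new}}\bigr)_{\mathfrak{m}}\;\cong\; \bigl(\mathbb{T}_{N^+,N^-}\bigr)_{\mathfrak{m}},
\]
as in the Ribet/Bertolini--Darmon framework. In the indefinite case this comes from the Cerednik--Drinfeld description of the reduction of $X_{N^+,N^-\ell}$ at $\ell$ and the ensuing exact sequence of character/component groups of the Jacobian; in the definite case it comes from the analogous combinatorial presentation of $J_{N^+,N^-\ell}$ in terms of supersingular modules on $X_{N^+,N^-}$. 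The assumptions that $\bar\rho_f$ is surjective and that $\bar\rho_f$ is ramified at every $q\Vert N^-$ with $q\equiv\pm1\pmod p$ are exactly what is needed to guarantee mod-$p$ multiplicity one at $\mathfrak{m}$, which upgrades the isomorphism of completions to the desired level $\Z/p^k\Z$. Composing the inverse of this isomorphism with $f_k$ yields $f_\ell$, and the multiplicity-one freeness gives the uniqueness up to $(\Z/p^k\Z)^\ast$.

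The hard part will be the Ihara-type isomorphism of local Hecke algebras modulo $p^k$, rather than modulo $p$. The mod-$p$ statement is classical, but extending it to $\Z/p^k\Z$ requires the relevant Hecke module (a component of $J_{N^+,N^-\ell}[p^k]$ or its Tate module) to be free of rank two over the local Hecke algebra, which is a Gorenstein property. This is where the full force of Hypothesis \ref{asslevelraising}—in particular the surjectivity of $\bar\rho_f$ together with the ramification condition at primes $q\Vert N^-$ with $q\equiv\pm1\pmod p$—is used; with those assumptions in hand, the argument of Lemma 2.6 and Theorem 5.15 of \cite{Be-Da-main} carries over essentially verbatim to deliver $f_\ell$ with the required properties.
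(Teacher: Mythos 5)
Your proposal follows the strategy of Bertolini--Darmon and Pollack--Weston to which the paper delegates this theorem: reduce by induction to a single admissible prime, use Ihara-type results on degeneracy maps (Diamond--Taylor for Shimura curves, classical Ihara when $N^-=1$) to raise the level at $\ell$, and invoke multiplicity one / Gorenstein-type freeness of the relevant Hecke module to upgrade Ribet's mod-$p$ result to mod $p^k$. The paper itself gives only a citation proof (Sections 5 and 9 of \cite{Be-Da-main}, with the hypotheses relaxed in \cite{P-W}), and your sketch is a faithful outline of what those cited references do, including the role of Hypothesis \ref{asslevelraising} in securing multiplicity one.
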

\begin{proof} 
Assume that $N^{-}>1$ and that  $N^{-}$ has an odd (resp., even) number of prime divisors.
In this case Theorem \ref{raising} is proved in Section $5$ (resp., Section $9$) of \cite{Be-Da-main} under slightly more restrictive
assumptions on $(\overline{\rho}_{f},N^{+},N^{-},p)$, subsequently removed in \cite{P-W}.
The method of \cite{Be-Da-main}  builds on work of Ribet (who considered the  case $n=k=1$),
and makes essential use of the generalisation of Ihara's Lemma to Shimura curves obtained by Diamond--Taylor \cite{DT}. 
We refer to \cite{Be-Da-main} for more details and references. 

Assume now that $N^{-}=1$. If $n=k=1$, the theorem was proved by Ribet. 
If $n>1$, it can be proved  by following the arguments of  Section $9$ of \cite{Be-Da-main}
(see in particular Proposition 9.2 and Theorem $9.3$), using Ihara's Lemma 
(rather than its generalisation by Diamond--Taylor) in the proof of Proposition 9.2.
\end{proof}

In the situation of Theorem \ref{raising}, we say that $f_S$ is the \emph{level raising 
of $f_k=f\pmod{p^n}$ at $S$}; it is defined up to units in $\Z/p^k\Z$.

\section{$p$-adic $L$-functions and special values formulae}\label{p-adic L}
Let $E/\Q$ be the elliptic curve fixed in the introduction, and 
let $f$ be the cuspform associated to $E$ by modularity.  
In this section we assume
the following: 
\begin{hy}\label{asspadicLfunctions}\hfill
\begin{enumerate}
\item $N^{-}$ is squarefree, $p\ges 5$ and $p\nmid{}N$.
\item $p\nmid h_K$. 
\item If $E/\Q_{p}$ has good ordinary reduction, then $a_{p}(E)\not\equiv{}\pm1\pmod{p}$ if $p$ is inert in $K$ and $a_{p}(E)\not\equiv{}1\pmod{p}$ if $p$ is split in $K$.
\item $\bar{\rho}_{f} : G_{\Q}\fre{}\mathrm{GL}_{2}(\F_{p})$ is surjective.
\item If $q\Vert{}N^{-}$ and $q\equiv{}\pm{}1\ \mathrm{mod}\ p$, then $\bar{\rho}_{f}$
is ramified at $q$.
\end{enumerate}
\end{hy}

Let $k\in{}\Z_{\ges{}1}\cup\{\infty\}$ be an integer or the symbol $\infty$. 
If $k$ is an integer let 
$L\in{}\mathscr{S}_{k}^{\mathrm{def}}$ be a \emph{definite} 
$k$-admissible integer (i.e. $\epsilon_{K}(LN^{-})=-1$) and denote by  
$g=f_{L}\in{}S_{2}(N^{+},LN^{-};\Z/p^{k}\Z)$ 
the $L$-level raising of $f$ modulo $p^{k}$.
If $k=\infty$  assume that $f$ is \emph{definite} (i.e. $\epsilon_{K}(N^{-})=-1$), and set $L=1$ 
and $g=f$; under the Jacquet--Langlands isomorphism $\mathbb{T}_{N^{+},LN^{-}}\cong{}\mathbf{T}_{N^{+},LN^{-}}$
recalled in Section $\ref{Shimura curves}$, if $k=\infty$ the form $g$ induces a $\Z_p$-valued 
ring homomorphism $\mathbf{T}_{N^{+},LN^{-}}\rightarrow \Z_p$, denoted by the same symbol $g$.
In both cases $X_{N^{+},LN^{-}}$ is a \emph{Gross curve}. Define $R=\Z_p$ if $k=\infty$ and $R=\Z/p^k\Z$ if $k$ is an integer. We may in both cases view $g$ as a morphism 
\[g : \mathbf{T}_{N^{+},LN^{-}}\longrightarrow R.\]

Fix a topological generator $\gamma$ of $G_{\infty}$. Let 
$\omega_{n}=\gamma^{p^{n}}-1$, denote  $\Phi_{n+1}(T)=\sum_{j=0}^{p-1}T^{j\cdot{}p^{n}}\in{}\Z[T]$ the $p^{n+1}$-cyclotomic polynomial. Set $\nu_{p}=0$ (resp., $\nu_{p}=1$) if $p$ is inert (resp., splits) in $K$. Define 
\begin{itemize}
\item $\omega_{0}^{+}=\omega_1^+=(\gamma-1)^{\nu_{p}}.$ 
\item $\omega_0^-=\gamma-1$. 
\item For each integer $n\ges2$, 
\[
          \omega_{n}^{+}=(\gamma-1)^{\nu_{p}}\prod_{1\les{}j\les{}\lfloor{}\frac{n}{2}\rfloor}\Phi_{2j}(\gamma).\]
\item For each integer $n\ges1$. 
\[          \omega_{n}^{-}=(\gamma-1)\prod_{1\les{}j\les{}\lfloor{}\frac{n+1}{2}\rfloor}\Phi_{2j-1}(\gamma).
\]
\end{itemize}

\subsection{Modular parametrisations}\label{snpL} 
Let $\mathfrak{m}_{L}\subset{}\mathbf{T}_{N^{+},LN^{-}}$ be the kernel of the reduction 
\[\bar{g} : \mathbf{T}_{N^{+},LN^{-}}\longrightarrow{}\F_{p}\] of $g$ modulo $p$,
and let $J_{\mathfrak{m}_{L}}$ and $\mathbf{T}_{\mathfrak{m}_{L}}$ denote the completions of 
$J_{N^{+},LN^{-}}$ and $\mathbf{T}_{N^{+},LN^{-}}$ at $\mathfrak{m}_{L}$ respectively.
Thanks to Ribet's level lowering theorem, Hypothesis $\ref{thehy}$ imply that 
\emph{Hypothesis CR} in \cite{P-W} holds true, so 
according to Theorem 6.2 and Proposition 6.5 of \cite{P-W} 
(relaxing one of the assumptions of \cite[Lemma 2.2]{Be-Da-main}) it follows that 
$J_{\mathfrak{m}_{L}}$ is a free $\mathbf{T}_{\mathfrak{m}_{L}}$-module of rank one. 
As a consequence $g$ induces a surjective morphism 
\[
               \psi_{g} : J_{N^{+},LN^{-}}\otimes_{\Z}\Z_{p}\lfre{}R
\]
satisfying $\psi_{g}(t\cdot{}x)=g(t)\cdot{}\psi_{g}(x)$ for every $t\in{}\mathbf{T}_{N^{+},LN^{-}}$
and every  $x\in{}J_{N^{+},LN^{-}}$, which is uniquely determined by $g$ up to multiplication 
by a $p$-adic unit. 
Let finally $\Lambda_R=\Lambda\otimes_{\Z_p}R=R[\![G_\infty]\!]$ (so that $\Lambda_R=\Lambda$ if 
$k=\infty$ and $\Lambda_R=\Lambda/p^k\Lambda$ if $k<\infty$) and put $\Lambda_{n,k}=R[G_n]$. 

\subsection{The ordinary case}\label{goodord} In the ordinary case, the construction of the 
$p$-adic $L$-function, which we will recall below, has been obtained in \cite{Be-Da1}.
Assume in this section that $E/\Q_{p}$ has \emph{good ordinary} reduction at $p$, i.e. $p\nmid{}N$ and 
$g(T_{p})=a_{p}(E) \pmod{p^{k}}$ is a $p$-adic unit. The  Hecke polynomial 
$X^{2}-g(T_{p})X+p$
has a unique root $\alpha_{p}(g)$ in $R$ which is congruent to $g(T_{p})$ modulo $p$ and hence is a 
$p$-adic unit.
Recall the compatible sequence $P_{\infty}(L)=(P_{n}(L))_{n\ges{}-1}$ of Gross points  fixed in Section $\ref{compheeg}$.
For every $n\ges{}1$  define 
\[
             \mathcal L_{g,n}=\frac{1}{\alpha_{p}(g)^{n}}
             \sum_{\sigma\in{}G_{n}}\Big(\psi_{g}\big(\sigma(P_{n-1}(L))\big)-\alpha_{p}(g)\cdot{}
             \psi_{g}\big(\sigma(P_{n}(L))\big)\Big)\cdot{}\sigma\in{}\iw_{n,k}.
\]
Since $\psi_{g}(T_{p}x)=a_{p}(E)\cdot{}\psi_{g}(x)$ for every $x\in{}J_{N^{+},LN^{-}}$,
a direct computation based on Equation $(\ref{eq:compdef})$ shows that the elements 
$(\mathcal L_{g,n})_{n\ges{}1}$ are compatible under the natural projection maps 
$
\iw_{n+1,k}\twoheadrightarrow{}\iw_{n,k}$. 
Define the  \emph{anticyclotomic square root $p$-adic $L$-function}
\[
                 \mathcal L_{g}=\lim_{n\to\infty}\mathcal L_{g,n}\in{}\Lambda_R
\]
as the inverse limit of the compatible sequence $(\mathcal L_{g,n})_{n\ges{}1}$
in $\inlim\iw_{n,k}=\Lambda_R$. 

For any $x\in \Lambda$ and any ring homomorphism $\chi:\Lambda\rightarrow \mathscr{O}$, 
define as usual $x(\chi)=\chi(x)$. Denote $\mathbf{1}$ the trivial character.  
One has (cf. Equation $(\ref{eq:compdef})$)
\begin{equation}\label{eq:gord1}
              \mathcal L_{g}(\mathbf{1})=\left\{ \begin{array}{lr} \!\!   \frac{1}{u_{K}}\big(1-\alpha_{p}(g)^{2}\big)
                 \cdot{}\psi_{g}(P_{K}(L))  &
                           \text{\ \ if\ }\ \  \epsilon_{K}(p)=-1, \\\vspace{-3mm}
                          \\
                                                       \!\!  \frac{-1}{u_{K}}\big(1-\alpha_{p}(g)\big)^{2}\cdot{}\psi_{g}(P_{K}(L))
                                                          & \text{\ \ if\ }\ \    \epsilon_{K}(p)=+1.  \end{array}      \right.
\end{equation}

\begin{lemma}\label{nonexcord} The equality $\mathcal{L}_{g}(\mathbf{1})=\psi_{g}\big(P_{K}(L)\big)$
holds in $R$ up to multiplication by an element in $R^*$.  
\end{lemma}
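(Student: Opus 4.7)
The plan is to reduce the claim to equation \eqref{eq:gord1} and then to verify that the scalar prefactor appearing there is a unit in $R$, case by case according to whether $p$ is inert or split in $K$. Since $R$ is either $\Z_p$ or $\Z/p^k\Z$ and the scalar lies in $\Z_p$, it is a unit in $R$ if and only if it is a unit in $\Z_p$, i.e., if and only if its reduction modulo $p$ is nonzero in $\F_p$.

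First I would address the denominator $u_K$. Because $\mathcal{O}_K^{*}$ has order $2$, $4$ or $6$, the integer $u_K$ belongs to $\{1,2,3\}$, and Hypothesis \ref{thehy}(1) gives $p\ges 5$, so $u_K\in R^*$.

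Next I would verify that the remaining factor in each branch of \eqref{eq:gord1} is a unit in $R$, using Hypothesis \ref{thehy}(4). Indeed, $\alpha_p(g)$ is the unit root of $X^2-g(T_p)X+p$, and by construction $\alpha_p(g)\equiv g(T_p)\equiv a_p(E)\pmod{p}$. In the inert case $\epsilon_K(p)=-1$ one has the factor $1-\alpha_p(g)^2=(1-\alpha_p(g))(1+\alpha_p(g))$, whose reduction mod $p$ is $(1-a_p(E))(1+a_p(E))$. Hypothesis \ref{thehy}(4) asserts $a_p(E)\not\equiv\pm 1\pmod p$, so both factors are nonzero mod $p$ and hence the product is a unit in $R$. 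In the split case $\epsilon_K(p)=+1$ the relevant factor is $(1-\alpha_p(g))^2$, which reduces to $(1-a_p(E))^2\not\equiv 0\pmod p$ for the same reason. Combining these observations with \eqref{eq:gord1} immediately yields the claim.

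There is no real obstacle here; the lemma is a formal consequence of the explicit computation \eqref{eq:gord1} of $\mathcal{L}_g(\mathbf{1})$ in terms of $\psi_g(P_K(L))$, together with the ordinarity hypothesis $a_p(E)\not\equiv\pm 1\pmod p$ which rules out the two possible sources of non-invertibility of the Euler-like factor at $p$. The only point to keep in mind is that both the inert and the split case must be handled, and that the hypothesis $a_p(E)\not\equiv\pm 1\pmod p$ (rather than merely $\not\equiv 1$) is exactly what is needed to cover the factor $1+\alpha_p(g)$ arising in the inert case.
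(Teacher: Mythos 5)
Your proof is correct and follows the same route as the paper's: the paper simply says ``This follows from the above formulas and Hypothesis \ref{controlass}(1),'' where Hypothesis \ref{controlass}(1) is the same ordinarity condition $a_p(E)\not\equiv\pm 1\pmod p$ you cite as Hypothesis \ref{thehy}(4). You have filled in exactly the details the paper leaves implicit, including the correct observation that the case $\epsilon_K(p)=-1$ is the one that requires the full strength of $a_p(E)\not\equiv -1\pmod p$ and not merely $a_p(E)\not\equiv 1\pmod p$.
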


\begin{proof} This follows from the formulas above and 
Hypothesis \ref{asspadicLfunctions}(3). 
\end{proof}

The definition of $\mathcal L_{g}$ depends on the choice of the compatible system of Heegner points 
$P_{\infty}(L)$. If $Q_{\infty}(L)$ is another compatible system, then there exists $\gamma\in{}G_{\infty}$
such that $\gamma\big(P_{n}(L)\big)=Q_{n}(L)$ for every $n\ges{}0$
(cf. Section 2 of \cite{Be-Da1}). As a consequence the square root $p$-adic $L$-function $\mathcal L_{g}$
is well defined up to multiplication by $G_{\infty}$. Define 
the \emph{anticyclotomic $p$-adic $L$-function}
\[
                      L_p(g)=\mathcal L_{g}\cdot{}\mathcal L_{g}^{\iota}\in{} \Lambda_R,
\]
where $\iota$ is Iwasawa's main involution. Note that $L_p(g)$ is 
independent of the choice of $P_{\infty}(L)$. 

\subsection{The supersingular case} \label{p adic L supersingular} 
In the supersingular case, the construction of the 
$p$-adic $L$-function has been obtained in \cite{Da-Io} when $p$ is split, 
building on the fundamental work of Pollack \cite{Pollack}.
We extend the construction to the inert case. 
Assume that $E/\Q_{p}$ has good \emph{supersingular} reduction. 
As $p>3$ the Hasse bound gives $a_{p}(E)=0$. 
Let $\iw_{n,k}=\iw/(\omega_{n},p^{k})=R[G_n]$, and define $\iw_{n,k}^{\pm}=\iw/(\omega_{n}^{\pm},p^{k})$. 
Set 
\begin{itemize} 
\item $\tilde\omega_0^+=\tilde\omega_1^+=1$.
\item $\tilde\omega_0^-=\gamma-1$.
\item For each integer $n\ges2$, 
\[
             \tilde \omega_{n}^{+}=\prod_{1\les{}j\les{}\lfloor{}\frac{n}{2}\rfloor}\Phi_{{2j}}(\gamma).\]
\item For each integer $n\ges1$, 
            \[
                  \tilde\omega_{n}^{-}=\prod_{1\les{}j\les{}\lfloor{}\frac{n+1}{2}\rfloor}\Phi_{{2j-1}}(\gamma).
\] 
\end{itemize} 
Recall that $\gamma$ is  a topological generator of $G_\infty$, 
so that $\omega_{n}=(\gamma-1)\cdot{}\tilde\omega_{n}^{+}\cdot{}\tilde\omega_{n}^{-}$
(and $\tilde\omega_n^+=\omega_n^+$ in the inert case) for each $n\ges0$. 
For every $n\ges{}0$ define 
\[
                \mathcal{L}_{g,n}=\sum_{j=0}^{p^{n}-1}\psi_{g}\big(\gamma^{j}(P_{n}(L))\big)\cdot{}\gamma^{j}
                \in \Lambda_R.
\]

\begin{lemma}\label{inertpm} Let $\varepsilon=(-1)^n$. Then 
$\omega_{n}^{\varepsilon}\cdot{}\mathcal{L}_{g,n}\in{}\omega_{n}\cdot{}\Lambda_R$ for all integers $n\ges0$. \end{lemma}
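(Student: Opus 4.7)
The plan is to prove the lemma by induction on $n$, with separate inductions for even and odd $n$ (base cases $n=0$ and $n=1$ respectively), driven by a key recursion relating $\mathcal{L}_{g,n+1}$ to $\mathcal{L}_{g,n-1}$ modulo $\omega_n$. This recursion is the place where supersingularity $a_p(E)=0$ enters, via the Hecke equivariance of $\psi_g$ and the trace relation \eqref{eq:compdef}.

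\emph{The key congruence.} The main step is to prove
\[
\mathcal{L}_{g,n+1}\equiv -\Phi_n(\gamma)\,\mathcal{L}_{g,n-1}\pmod{\omega_n\Lambda_R}
\]
for every $n\ges 1$. I would reduce $\mathcal{L}_{g,n+1}=\sum_{j=0}^{p^{n+1}-1}\psi_g(\gamma^jP_{n+1}(L))\gamma^j$ modulo $\omega_n=\gamma^{p^n}-1$ by grouping monomials by $r=j\bmod p^n$. The inner sum $\sum_{i=0}^{p-1}\gamma^{ip^n}P_{n+1}(L)$ equals $\mathrm{Trace}_{K_{n+1}/K_n}(P_{n+1}(L))=T_pP_n(L)-P_{n-1}(L)$ by \eqref{eq:compdef}; applying $\psi_g$ annihilates the $T_p$-contribution because $\psi_g(T_px)=g(T_p)\psi_g(x)=a_p(E)\psi_g(x)=0$. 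Since $P_{n-1}(L)\in J_{N^+,LN^-}(K_{n-1})$ is fixed by $\gamma^{p^{n-1}}$, the remaining term $\psi_g(\gamma^rP_{n-1}(L))$ depends only on $r\bmod p^{n-1}$, and a second regrouping according to this residue extracts the factor $\Phi_n(\gamma)=\sum_{k=0}^{p-1}\gamma^{kp^{n-1}}$, yielding the asserted congruence.

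\emph{Base cases and induction.} For $n=1$ the statement is tautological since $\omega_1^-=(\gamma-1)\Phi_1(\gamma)=\omega_1$. For $n=0$ and $\varepsilon=+$, $\omega_0^+=(\gamma-1)^{\nu_p}$: in the split case ($\nu_p=1$) the claim is immediate; in the inert case ($\nu_p=0$) it reduces to $\psi_g(P_0(L))=0$, which follows from the relation $u_KP_0(L)=T_pP_K(L)$ in \eqref{P-0} (valid for $\epsilon_K(p)=-1$) together with $a_p(E)=0$ and $u_K\in R^\ast$ (as $p>3$). For the inductive step, assume $\omega_n^\varepsilon\mathcal{L}_{g,n}=\omega_nZ$ for some $Z\in\Lambda_R$, and lift the key congruence to an equality $\mathcal{L}_{g,n+2}=-\Phi_{n+1}(\gamma)\mathcal{L}_{g,n}+\omega_{n+1}Y$ in $\Lambda_R$. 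Using the elementary factorisations (verified case-by-case for split and inert $p$)
\[
\omega_{n+2}^\varepsilon=\omega_n^\varepsilon\,\Phi_{n+2}(\gamma),\quad \omega_{n+2}=\omega_n\,\Phi_{n+1}(\gamma)\Phi_{n+2}(\gamma),\quad \omega_{n+2}^\varepsilon\,\omega_{n+1}=\omega_{n+2}\,\omega_n^\varepsilon,
\]
multiplying by $\omega_{n+2}^\varepsilon$ gives
\[
\omega_{n+2}^\varepsilon\mathcal{L}_{g,n+2}=-\Phi_{n+1}(\gamma)\Phi_{n+2}(\gamma)(\omega_n^\varepsilon\mathcal{L}_{g,n})+\omega_{n+2}\,\omega_n^\varepsilon Y=-\omega_{n+2}Z+\omega_{n+2}\,\omega_n^\varepsilon Y\in\omega_{n+2}\Lambda_R.
\]

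\emph{Main obstacle.} The real content is concentrated in the key congruence: the two successive regroupings combine cleanly precisely because Hecke equivariance of $\psi_g$ converts $T_p$ into multiplication by $a_p(E)=0$, and because $P_{n-1}(L)$ is fixed by $\mathrm{Gal}(K_n/K_{n-1})$. The inert $n=0$ case is the other delicate point, relying on a direct calculation from \eqref{P-0}. The cyclotomic-polynomial identities used in the inductive step are elementary but genuinely split into subcases according to $\nu_p\in\{0,1\}$, and the $(\gamma-1)^{\nu_p}$-factor in $\omega_n^+$ must be tracked with care.
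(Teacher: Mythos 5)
Correct, and essentially the same argument as the paper's: the same key congruence $\mathcal{L}_{g,n+1}\equiv -\Phi_n(\gamma)\,\mathcal{L}_{g,n-1}\pmod{\omega_n}$, derived from the trace relation \eqref{eq:compdef} and $a_p(E)=0$, drives an induction via the cyclotomic factorizations of $\omega_n^\varepsilon$. Your write-up is slightly more streamlined in treating the split and inert cases uniformly (the paper cites Darmon--Iovita for the split case) and in observing that the $n=1$ base case is tautological since $\omega_1^-=\omega_1$.
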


\begin{proof} 
The case when $p$ is split is \cite[Proposition 2.8(1)]{Da-Io}, and we only need to check the inert case. 
The proof is by induction. 
If $n=0$, we have, using \eqref{P-0} for the second equality and $a_p(E)=0$ for the last,   
\[u_K\cdot\mathcal{L}_{g,0}=\psi_{g}\big(P_{0}(L)\big)=a_p(E) \psi_g(P_K(L))=0,\]
so $\mathcal{L}_{g,0}=0$ and the statement is true. 
If $n=1$, the statement is trivially true because $\omega_1^-=\omega_1$.  
So we can fix $n\ges 1$, assume the statement true for all integers $s$ with $0\les s\les n$ 
and prove it for $n+1$. We first prove a relation between $\mathcal{L}_{g,m+1}$ and $\mathcal{L}_{g,m-1}$, where $m\ges1$ is an integer. Understanding that congruences are modulo $\omega_{m}$, we have
\begin{equation}\label{eq:pinertpmrel}
\begin{split}
              \mathcal{L}_{g,m+1}
              &=\sum_{j=0}^{p^{m}-1}\lri{\sum_{i=0}^{p-1}\psi_{g}(\gamma^{j+ip^{m}}(P_{m+1}(L)))\cdot{}\gamma^{ip^{m}}}
              \cdot{}\gamma^{j+ip^m} \\
              &\equiv{}
              \sum_{j=0}^{p^{m}-1}\psi_{g}\big(\gamma^{j}\big(\mathrm{Trace}_{K_{m+1}/K_{m}}(P_{m+1}(L))\big)\big)
  \cdot{}\gamma^{j}  \text{ (because $\gamma^{ip^{m}}-1\equiv{}0\!\!\! \pmod{\omega_{m}}$)}\\
   &\equiv\sum_{j=0}^{p^{m}-1}
      \psi_{g}(T_{p}\gamma^{j}(P_{m})-\gamma^{j}(P_{m-1}(L)))\cdot{}\gamma^{j} 
      \text{ (by Equation \eqref{eq:compdef})} \\
          & 
            \equiv
              -\sum_{j=0}^{p^{m}-1}\psi_{g}\big(\gamma^{j}(P_{m-1}(L))\big)\cdot{}\gamma^{j} \text{ (because $a_{p}(E)=0$)}
              \\
&\equiv{}-\sum_{j=0}^{p^{m-1}-1}\sum_{i=0}^{p-1}
              \psi_{g}(\gamma^{j+ip^{m-1}}(P_{m-1}(L)))\cdot{}\gamma^{j+ip^{m-1}}  \\
 &      \equiv{}-\sum_{i=0}^{p-1}\lri{\sum_{j=0}^{p^{m-1}-1}
         \psi_{g}(\gamma^{j}(P_{m-1}(L)))\cdot{}\gamma^{j}}
              \cdot{}\gamma^{ip^{m-1}} \text{ (because $\gamma^{p^{m-1}}(P_{m-1}(L))=P_{m-1}(L)$)}\\
  &            \equiv{}-(\gamma^{(p-1)p^{m-1}}+\cdots{}+\gamma^{p^{m-1}}+1)\cdot{}\mathcal{L}_{g,m-1}\\
  &
              \equiv{}
              -\Phi_{{m}}(\gamma)\cdot{}\mathcal{L}_{g,m-1}  
              .
\end{split}\end{equation}
The statement then follows easily.
Indeed, if $n=2m+2\ges 2$ is even, 
\[\begin{split}
        \omega_{2m+2}^{+}\cdot{}\mathcal{L}_{g,2m+2}
        &=\omega_{2m+2}^{+}\cdot{}(-\Phi_{{2m+1}}(\gamma)\cdot{}\mathcal{L}_{g,2m}+\omega_{2m+1}\cdot{}\xi) \text{ (for some $\xi\in \Lambda_{2m+2,k}$)} \\
      &  =-\Phi_{{2m+2}}(\gamma)\cdot{}\Phi_{{2m+1}}(\gamma)\cdot{}\omega_{2m}^{+}\cdot{}\mathcal{L}_{g,2m}
        +\omega_{2m+2}^{+}\cdot{}\omega_{2m+1}\cdot{}\xi\\
& \in{}\underbrace{-\Phi_{{2m+2}}(\gamma)\cdot{}\Phi_{{2m+1}}(\gamma)\cdot{}\omega_{2m}}_{=\omega_{2m+2}}
    \cdot{}\iw_{2m+2,k}+\underbrace{\omega_{2m+2}^{+}\cdot{}\omega_{2m+1}}_{
      \text{divisible by }\omega_{2m+2}}\cdot{}\iw_{2m+2,k} 
\end{split}\]
where the last equality follows by the inductive hypothesis 
and because $\omega_{2m+2}^+\omega_{2m+1}$ is divisible by 
$\omega_{2m+2}$, since $\omega_{2m+2}^-=\omega_{2m-1}^-$. 
If $n=2n+3\ges 3$ is odd, we have similarly 
\[\begin{split}
        \omega_{2m+3}^{-}\cdot{}\mathcal{L}_{g,2m+3}
       & =\omega_{2m+3}^{-}\cdot{}(-\Phi_{{2m+2}}(\gamma)\cdot{}\mathcal{L}_{g,2m+1}+\omega_{2m+2}\cdot{}\xi) \text{ (for some $\xi\in\Lambda_R$)}\\
      &  =-\Phi_{{2m+3}}(\gamma)\cdot{}\Phi_{p^{2m+2}}(\gamma)\cdot{}\omega_{2m+1}^{-}\cdot{}\mathcal{L}_{g,2m+1}
        +\omega_{2m+3}^{-}\cdot{}\omega_{2m+2}\cdot{}\xi\\
 &  \in{}\underbrace{-\Phi_{{2m+3}}(\gamma)\cdot{}\Phi_{{2m+2}}(\gamma)\cdot{}\omega_{2m+1}}_{=\omega_{2m+3}}
    \cdot{}\iw_{2m+3,k}+
    \underbrace{\omega_{2m+3}^{-}\cdot{}\omega_{2m+2}}_{
      \text{divisible by}\ \omega_{2m+3}}\cdot{}\iw_{2m+3,k}.
\end{split}\]
where the last equality follows by the inductive hypothesis 
and because $\omega_{2m+3}^-\omega_{2m+2}$ is divisible by 
$\omega_{2m+3}$, since  $\omega_{2m+2}^{+}=\omega_{2m+3}^{+}$
\end{proof}

We will use repeatedly  the following elementary result. 
\begin{lemma}\label{divlemma} Suppose that $R$ is UFD. Let $x,y$ be non-zero elements of $R$, and let $z=xy$. Let finally $\pi$ be an element of $R$ such that $\pi$ and $z$ do not have common factors. The multiplication by $x:R\rightarrow R$ defines an isomorphism
$xR/(y,\pi)\cong (R/(z,\pi))[y].$ \end{lemma}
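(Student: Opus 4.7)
The plan is to realise the desired isomorphism as the one induced by the $R$-linear map $\phi:R\to R/(z,\pi)$, $r\mapsto xr$. First I would check that the image of $\phi$ is annihilated by $y$, since $y\cdot\phi(r)=xyr=zr\equiv 0\pmod{(z,\pi)}$, so $\phi$ takes values in the $y$-torsion submodule $(R/(z,\pi))[y]$. Next, both generators of $(y,\pi)$ clearly lie in $\ker\phi$: indeed $\phi(y)=z\equiv 0$ and $\phi(\pi)=x\pi\equiv 0$ modulo $(z,\pi)$. Thus $\phi$ descends to a well-defined homomorphism $\bar\phi:R/(y,\pi)\to(R/(z,\pi))[y]$ which realises the candidate isomorphism (this is the meaning I assign to the notation $xR/(y,\pi)$, namely the image of $R/(y,\pi)$ under multiplication by $x$ inside $R/(z,\pi)$).

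For injectivity of $\bar\phi$, I would take $r\in R$ with $\phi(r)=0$, write $xr=xys+\pi t$, and rearrange to $\pi t=x(r-ys)$. Since $R$ is a UFD and $\gcd(\pi,z)=1$ by assumption, in particular $\gcd(\pi,x)=1$ (as $x\mid z$), so $\pi\mid r-ys$; hence $r\in(y,\pi)$. For surjectivity onto $(R/(z,\pi))[y]$, I would start from $a\in R$ with $ya\in(z,\pi)$, write $ya=xys+\pi t$ and rearrange to $y(a-xs)=\pi t$. Again $\gcd(\pi,y)=1$ (as $y\mid z$), so $\pi\mid a-xs$, i.e.\ $a\equiv xs=\phi(s)$ modulo $\pi$ and a fortiori modulo $(z,\pi)$. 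Combining the two checks gives the claimed bijection.

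The only non-formal step is the use of the UFD hypothesis to pass from $\pi\mid x(r-ys)$ to $\pi\mid r-ys$ (and similarly with $y$ in place of $x$); this is really just the Gauss-style statement that in a UFD, if $\pi$ is coprime to $xy$ then it is coprime to each of $x$ and $y$ separately, so it divides a product iff it divides the other factor. I do not foresee any genuine obstacle beyond this bookkeeping.
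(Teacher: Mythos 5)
Your argument is correct and is essentially the same as the paper's: both consider the map $R \to R/(z,\pi)$ given by multiplication by $x$, check it descends to $R/(y,\pi)$, lands in the $y$-torsion, and prove injectivity and surjectivity by rearranging a relation $\pi t = x(r-ys)$ (resp.\ $y(a-xs)=\pi t$) and invoking coprimality of $\pi$ with $x$ (resp.\ $y$) in the UFD $R$. Your phrasing via Gauss's lemma is a cleaner restatement of the paper's step of dividing the auxiliary coefficient by $x$ (resp.\ $y$), but the underlying argument is identical.
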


\begin{proof}
Multiplication by $x$ induces a map $x:R/(y,\pi)\rightarrow R/(z,\pi)$: indeed, suppose that $a$ and $b$ are elements of $R$ which satisfy $a=b+\alpha c$ for some $c\in R$ and $\alpha\in (y,\pi)$; then $xa=xb+x\alpha c $, and $x\alpha\in x(y,\pi)\subseteq (z,x\pi)\subseteq (z,\pi)$, so $x[a] =[xa]=[xb]=x[b]$ in $R/(z,\pi)$. Since $y[xa]=[axy]=[za]=[0]$, the image of the map is contained in $(R/(z,\pi))[y]$. We next show that the map is injective. Suppose $[xa]=[xb]$. Then $xa=xb+c\alpha$ for some $c\in R$ and $\alpha\in (z,\pi)$, so there exist $d$ and $e$ such that $x(a-b-dy)=e\pi$; now $\pi$ and $x$ do not have common irreducible factors, so since $R$ is a UFD, we have that $x\mid e$. Therefore, we can write $x(a-b-dy-f\pi)=0$ for some $f$.  
Since $x\neq 0$ and $R$ is a domain, we have $a-b-cy-f\pi=0$, so $[a]=[b]$ in $R/(y,\pi)$. We finally show that the map is surjective. Fix $[c]$ such that $y[c]=[cy]=0$. Then $cy=d\alpha$ for some $d\in R$ and $\alpha\in (z,\pi)$, so $y(c-ex)=f\pi$ for some $e$ and $f$, and again, since $y$ and $\pi$ do not have common factors, we see that $y\mid f$, so we can write
$y(c-ex-g\pi)=0$ for some $g$, and 
since $y\neq 0$ and $R$ is a domain, we have $c=ex+g\pi$. So $x[d]=[xd]=[c]$. \end{proof}

By Lemma \ref{divlemma}, in the split case multiplication by $\tilde\omega_{n}^{\mp}$ gives an isomorphism between 
$\iw_{n,k}^{\pm}$ and the $\omega_{n}^{\pm}$-torsion submodule of  $\iw_{n,k}$ (\emph{cf.} \cite[Section 4 ]{Io-Po}). In the inert case, again by Lemma \ref{divlemma}, multiplication by $\omega_n^+=\tilde\omega_{n}^{+}$ gives an isomorphism between 
$\iw_{n,k}^{-}$ and the $\omega_{n}^{-}$-torsion submodule of  $\iw_{n,k}$, and 
multiplication by $\omega_n^-=(\gamma-1)\tilde\omega_{n}^{-}$ gives an isomorphism between 
$\iw_{n,k}^{+}$ and the $\omega_{n}^{+}=\tilde\omega_{n}^{+}$-torsion submodule of  $\iw_{n,k}$.
Lemma $\ref{inertpm}$ then implies that if $\varepsilon=(-1)^n$, 
there exists elements $\mathcal{L}^\varepsilon_{n,k}\in \Lambda_{n,k}^\varepsilon$ such that  
\begin{itemize}\item  If $p$ is split in $K$ or $p$ is inert in $K$ and $\varepsilon=-1$ (the non-exceptional case): 
\[
\mathcal L_{g,n}
=\begin{cases}
(-1)^{n/2}\tilde\omega_n^{-}\mathcal L_{g,n}^+, \qquad \text{ if $n$ is even},\\
(-1)^{(n-1)/2}\tilde\omega_n^{+}\mathcal L_{g,n}^-, \ \text{ if $n$ is odd}.
\end{cases}\] 
\item If $p$ is inert in $K$ and $\varepsilon=+1$: 
\[
\mathcal L_{g,n}
= 
(-1)^{n/2} \omega_n^{-}\mathcal L_{g,n}^+ \] 
\end{itemize} 
Denote by $\pi_{2m+2}^{+} : \iw_{2m+2,k}^{+}\fre{}\iw_{2m,k}^{+}$
and $\pi_{2m+3}^{-} : \iw_{2m+3,k}^{-}\fre{}\iw_{2m+1,k}^{-}$ the natural projections. 

\begin{lemma}\label{normcomppm}
$\pi_{2m+2}^{+}(\mathcal{L}_{g,2m+2}^{+})=\mathcal{L}_{g,2m}^{+}$
and $\pi_{2m+3}^{-}(\mathcal{L}_{g,2m+3}^{-})=\mathcal{L}_{g,2m+1}^{-}$
for every $m\ges{}0$. 
\end{lemma}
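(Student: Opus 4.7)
The plan is to reduce the statement to the two-step congruence
\[
\tilde L_{g,n+1}\equiv -\Phi_{p^n}(\gamma)\,\tilde L_{g,n-1}\pmod{\omega_{n+1}\Lambda_R}
\]
that was derived during the induction inside the proof of Lemma~\ref{inertpm} (with $\tilde L_{g,m}:=\sum_{j=0}^{p^m-1}\psi_g(\gamma^j P_m(L))\gamma^j\in\Lambda_R$ denoting the canonical lift to $\Lambda_R$ of $\mathcal L_{g,m}$). The two equalities in the lemma are parallel; I sketch only the $\pi_{2m+2}^+$ case.

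First I would specialise the recursion to $n=2m+1$ to get $\tilde L_{g,2m+2}\equiv -\Phi_{p^{2m+1}}(\gamma)\tilde L_{g,2m}\pmod{\omega_{2m+2}}$. Picking lifts $L_{2m}^+,L_{2m+2}^+\in\Lambda_R$ of $\mathcal L_{g,2m}^+,\mathcal L_{g,2m+2}^+$, the defining identities of $\mathcal L_{g,n}^+$ preceding the lemma read
\[
\tilde\omega_{2m}^-L_{2m}^+\equiv(-1)^m\tilde L_{g,2m}\pmod{(\omega_{2m},p^k)},\qquad
\tilde\omega_{2m+2}^-L_{2m+2}^+\equiv(-1)^{m+1}\tilde L_{g,2m+2}\pmod{(\omega_{2m+2},p^k)};
\]
in the exceptional inert case each $\tilde\omega_n^-$ has to be replaced by $\omega_n^-=(\gamma-1)\tilde\omega_n^-$, but the following computation is insensitive to this. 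Substituting the recursion into the second identity and using the first to eliminate $\tilde L_{g,2m}$, combined with the telescopings $\tilde\omega_{2m+2}^-=\Phi_{p^{2m+1}}(\gamma)\tilde\omega_{2m}^-$ and $\omega_{2m+1}=\Phi_{p^{2m+1}}(\gamma)\omega_{2m}$, yields an identity in $\Lambda_R$ of the shape
\[
\Phi_{p^{2m+1}}(\gamma)\,\tilde\omega_{2m}^-\,(L_{2m+2}^+-L_{2m}^+)=\omega_{2m+1}A+p^kB
\]
for some $A,B\in\Lambda$.

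To finish, I would use that $\Phi_{p^{2m+1}}(\gamma)$ and $\tilde\omega_{2m}^-$ are non-zero-divisors in $\Lambda_R=(\mathbf Z/p^k\mathbf Z)[\![T]\!]$: under $\gamma\leftrightarrow 1+T$ both become distinguished polynomials, and a standard induction on the $p$-adic filtration shows distinguished polynomials remain non-zero-divisors modulo $p^k$. Rewriting $\omega_{2m+1}=\Phi_{p^{2m+1}}(\gamma)\omega_{2m}$, absorbing $p^kB\equiv 0$ in $\Lambda_R$, and cancelling $\Phi_{p^{2m+1}}(\gamma)$ gives $\tilde\omega_{2m}^-(L_{2m+2}^+-L_{2m}^+)\in\omega_{2m}\Lambda_R$; since $\omega_{2m}=\omega_{2m}^+\tilde\omega_{2m}^-$ in both split and inert cases, cancelling $\tilde\omega_{2m}^-$ as well yields $L_{2m+2}^+\equiv L_{2m}^+\pmod{\omega_{2m}^+\Lambda_R}$, which is the claimed equality $\pi_{2m+2}^+(\mathcal L_{g,2m+2}^+)=\mathcal L_{g,2m}^+$. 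The second equality follows from the mirror-image computation, built on the recursion at $n=2m+2$ and the analogous telescopings $\tilde\omega_{2m+3}^+=\Phi_{p^{2m+2}}(\gamma)\tilde\omega_{2m+1}^+$ and $\omega_{2m+3}^-=\Phi_{p^{2m+3}}(\gamma)\omega_{2m+1}^-$. The main technical obstacle is bookkeeping: keeping the signs and the distinction between $\omega_n^{\pm}$ and $\tilde\omega_n^{\pm}$ straight across the split/inert/exceptional trichotomy, together with the base case $m=0$, where $\tilde\omega_0^+=\tilde\omega_1^+=1$ and $\tilde\omega_0^-=\gamma-1$ force a direct verification using $\mathcal L_{g,0}=0$ and the explicit formula for $\mathcal L_{g,1}$ already computed inside the proof of Lemma~\ref{inertpm}.
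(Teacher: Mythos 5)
Your approach follows the paper's proof of Lemma~\ref{normcomppm}: extract the two-step recursion $\mathcal L_{g,m+1}\equiv-\Phi_{p^m}(\gamma)\,\mathcal L_{g,m-1}\ (\mathrm{mod}\ \omega_{m+1})$ from the proof of Lemma~\ref{inertpm}, substitute the $\pm$-decomposition of $\mathcal L_{g,n}$, telescope the $\omega$-factors, and cancel non-zero-divisors in $\Lambda_R$. The only slip is the claim that $\omega_{2m}=\omega_{2m}^+\tilde\omega_{2m}^-$ in both split and inert cases: in the inert case one has $\omega_{2m}=\omega_{2m}^+\omega_{2m}^-=(\gamma-1)\,\omega_{2m}^+\tilde\omega_{2m}^-$ (since there $\omega_{2m}^+=\tilde\omega_{2m}^+$), but the extra factor $(\gamma-1)$ only improves the divisibility and is exactly what the $\tilde\omega^-\to\omega^-$ substitution you flagged for the exceptional case supplies, so the conclusion is unaffected.
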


\begin{proof} In the split case, this is Lemma 2.9 of \cite{Da-Io}, so we only need to check the inert case. 
Equation $(\ref{eq:pinertpmrel})$ shows that \begin{equation}\label{eq1}
\mathcal{L}_{g,2m+2}=-\Phi_{{2m+1}}(\gamma)
\cdot{}\mathcal{L}_{g,2m}+\omega_{2m+1}\cdot{}z\end{equation}
for some $z\in{}\Lambda_R$, for each $m\ges0$. 

We first prove the statement for $\mathcal{L}^+_{g,m}$. 
From \eqref{eq1}
\[
        (-1)^{m+1}\cdot{} \omega_{2m+2}^{-}\cdot{} {\mathcal{L}}_{g,2m+2}^{+}
        =(-1)^{m+1}\Phi_{{2m+1}}(\gamma)\cdot{}
       \omega_{2m}^{-}\cdot{} {\mathcal{L}}_{g,2m}^{+}+\omega_{2m+1}\cdot{}z.\] 
Both sides of the previous equation 
are divisible by $\omega_{2m+2}^{-}$. Since
$\Lambda_{2m+2}^+$ has no  nontrivial $\omega_{2m+2}^{-}$-torsion, 
dividing by $\omega_{2m+2}^{-}$, we get the result. 

We now prove the statement for $\mathcal{L}^-_{g,m}$. 
From \eqref{eq1}
\[
        (-1)^{m+1}\cdot{} \tilde\omega_{2m+3}^{+}\cdot{} {\mathcal{L}}_{g,2m+3}^{-}
        =(-1)^{m+1}\cdot{}\Phi_{{2m+3}}(\gamma)\cdot{}
        \tilde\omega_{2m+1}^{+}\cdot{} {\mathcal{L}}_{g,2m+1}^{-}+\omega_{2m+2}\cdot{}z. 
\] 
Both sides of the previous equation 
are divisible by $\tilde\omega_{2m+3}^{+}$. Since  $\Lambda_R^-$ has no  nontrivial 
$\tilde\omega_{2m+3}^{+}$-torsion, 
dividing by $\tilde\omega_{2m+3}^{+}$ we get the result.   
\end{proof}

Since $\inlim\iw_{2m,k}^{+}\cong{}\Lambda_R\cong{}\inlim\iw_{2m+1,k}^{-}$ (cf.\ Section 4 of \cite{Io-Po})
the previous lemma allows us to define
\[\mathcal{L}_g^\varepsilon=  \inlim_{m\in\N^\varepsilon}\mathcal{L}_{g,n}^{\varepsilon}\in{}\Lambda_R,\] where $\N^\varepsilon$ is the set of natural numbers $n$ satisfying $(-1)^n=\varepsilon$. 
Every continuous character $\chi : G_{\infty}\fre{}\bar{\Q}_{p}^{\ast}$ extends uniquely to 
a morphism  $\chi : \Lambda_R\fre{}\mathscr{O}_{\chi}/p^{k}\mathscr{O}_\chi$ of $\Z_{p}$-algebras,
where $\mathscr{O}_{\chi}=\Z_{p}[\chi(G_{\infty})]$. 
As before, denote by $\mathcal{L}_{g}^{\pm}(\chi)=\chi\big(\mathcal{L}_{g}^{\pm}\big)$ the value
of $\chi$ at 
$\mathcal{L}_{g}^{\pm}$ and by 
$\mathbf{1}$ the trivial character of $G_{\infty}$.

\begin{lemma}\label{nonexcss}
If $(f,K,p,\varepsilon)$ is non-exceptional, then 
the equality 
$\mathcal{L}_{g}^{\varepsilon}(\mathbf{1})=\psi_{g}(P_{K}(L))$
holds in $R$ up to multiplication by an element in $R^*$.  
\end{lemma}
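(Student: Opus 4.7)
The plan is to compute $\mathcal{L}_g^\varepsilon(\mathbf{1})$ by reducing to a convenient finite level and using the trace-compatibility relation $(\ref{eq:compdef})$ combined with the supersingular identity $a_p(E)=0$. The key structural point is that in the non-exceptional case $\omega_n^\varepsilon(\mathbf{1})=0$ in $R$ for every $n\ges 0$: the factor $\gamma-1$ is provided either by $\omega_n^+=(\gamma-1)^{\nu_p}\tilde\omega_n^+$ when $p$ is split (so $\nu_p=1$), or by $\omega_n^-=(\gamma-1)\tilde\omega_n^-$ when $p$ is inert and $\varepsilon=-1$. Consequently the augmentation $\Lambda_R\to R$ factors through each $\Lambda_{n,k}^\varepsilon$ and is compatible with the projections underlying the inverse-limit isomorphism defining $\mathcal{L}_g^\varepsilon$, so that
\[\mathcal{L}_g^\varepsilon(\mathbf{1})=\mathbf{1}(\mathcal{L}_{g,n}^\varepsilon)\in R\]
for any $n\in\N^\varepsilon$.

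Next I would choose $n$ as small as possible so that the decomposition of $\mathcal{L}_{g,n}$ in terms of $\mathcal{L}_{g,n}^\varepsilon$ collapses. For $\varepsilon=-1$ take $n=1$: since $\tilde\omega_1^+=1$ one gets $\mathcal{L}_{g,1}^-=\mathcal{L}_{g,1}$, hence $\mathbf{1}(\mathcal{L}_{g,1}^-)=\psi_g(\mathrm{Trace}_{K_1/K_0}P_1(L))$. Applying $(\ref{eq:compdef})$ and $g(T_p)=a_p(E)=0$ reduces this to $-\psi_g(P_{-1}(L))$, which by $(\ref{P-1})$ equals $-u_p\psi_g(P_K(L))$. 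For $\varepsilon=+1$ the non-exceptional hypothesis forces $p$ to be split in $K$, and I take $n=0$: here $\omega_0^+=\gamma-1=\omega_0$, so $\Lambda_{0,k}^+=\Lambda_{0,k}=R$ and $\mathcal{L}_{g,0}^+=\mathcal{L}_{g,0}=\psi_g(P_0(L))$; relation $(\ref{P-0})$ in the split case, together with $g(T_p)=0$, gives $\psi_g(P_0(L))=-\tfrac{2}{u_K}\psi_g(P_K(L))$.

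The multiplicative constants $-u_p=-(p-\epsilon_K(p))/u_K$ and $-2/u_K$ are units of $R$ because $p\ges 5$ is coprime both to $u_K\in\{1,2,3\}$ and to $p\mp 1$, establishing the claimed equality $\mathcal{L}_g^\varepsilon(\mathbf{1})=u\cdot\psi_g(P_K(L))$ with $u\in R^\ast$. The main delicate point will be handling the edge level $n=0$ in the split $\varepsilon=+1$ case: one has to verify that the torsion isomorphism supplied by Lemma \ref{divlemma} specialises here to the identity map $\Lambda_{0,k}^+=R\to\Lambda_{0,k}=R$, rather than to the zero map that a na\"{\i}ve reading of $\mathcal{L}_{g,0}=\tilde\omega_0^-\mathcal{L}_{g,0}^+$ with $\tilde\omega_0^-=\gamma-1$ might suggest. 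This mirrors the shape of the split-case specialisation $(\ref{eq:gord1})$ in the ordinary Lemma \ref{nonexcord}, whose unit factor $-(1-\alpha_p(g))^2$ arises from exactly the same relation $(\ref{P-0})$.
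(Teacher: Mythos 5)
Your proof is correct and supplies the details that the paper's one-line reference to \eqref{eq:compdef} and \eqref{P-1} leaves implicit, arriving at the same formulas ($\mathcal{L}_g^-(\mathbf{1})=-u_p\,\psi_g(P_K(L))$, and in the split $\varepsilon=+$ case $\mathcal{L}_g^+(\mathbf{1})=-\tfrac{2}{u_K}\psi_g(P_K(L))$, both unit multiples of $\psi_g(P_K(L))$). The ``delicate point'' you flag at $n=0$ is a notational slip in the paper rather than a real obstruction: with the printed definitions one has $(\gamma-1)\tilde\omega_0^+\tilde\omega_0^-=(\gamma-1)^2\neq\omega_0$, and tracing Lemma \ref{divlemma} with $y=\omega_0^+$, $z=\omega_0$ in the split case forces the multiplier to be $1$ (so $\tilde\omega_0^-$ is effectively $1$ at level zero), which is exactly the reading your argument assumes; hence $\mathcal{L}_{g,0}^+=\mathcal{L}_{g,0}=\psi_g(P_0(L))$ as you use it. (One small presentation slip: you attribute the $\gamma-1$ factor in $\omega_n^-$ only to the inert case, but $\omega_n^-=(\gamma-1)\tilde\omega_n^-$ also when $p$ is split; your $n=1$ computation handles both anyway.)
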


\begin{proof}
This follows from \eqref{eq:compdef}, after noticing that  $P_{-1}(L)=u_{p}\cdot{}P_{K}(L)$ by \eqref{P-1} in
the non-exceptional case.
\end{proof}

\begin{remark}
In the exceptional case, a result analogous  to the equality in Lemma \ref{nonexcss} is not currently available, to the best knowledge of the authors. Indeed, 
$\mathcal{L}_{g,0}=u_K^{-1}a_p(E) \psi_g(P_K(L))=0$ by Lemma \ref{inertpm} (recall $a_p(E)=0$ under our assumptions). As a consequence, on the one hand $\mathcal{L}_{g,0}$ does not have a direct relation with $\psi_g(P_K(L))$, which is instead directly related to the special value of the $L$-series of $E$ over $K$. On the other hand, the equality $\mathcal{L}_{g,0}=0$, which can be interpreted as an exceptional-zero phenomenon, makes it possible to divide by $\gamma-1$ to define the anticyclotomic $p$-adic $L$-function $\mathcal{L}_g^+$; however,  the $p$-adic $L$-function thus obtained does not seem to have a clear relation with $\psi_g(P_L(K))$ as well. Therefore,  it might be interesting to further investigate an analogue of Lemma \ref{nonexcss} in the exceptional case, since it seems to require new ideas and a different approach than in the non-exceptional case.\end{remark}

As in the ordinary case, define 
\[L_p^\varepsilon(g)= \mathcal L_{g}^\varepsilon\cdot{}(\mathcal L_{g}^\varepsilon)^{\iota}\in{}\Lambda_R,\]
which is independent of the choice of $P_\infty(L)$. 

\section{Selmer groups}\label{secSel} 
Recall the notation introduced in Section \ref{compheeg}: 
for every integer $n\ges{}0$, 
$K_{n}/K$ is the cyclic subextension of $K_{\infty}/K$
of degree $p^{n}$, and 
$G_{n}=\mathrm{Gal}(K_{n}/K)$
(as in Section \ref{compheeg}, we assume that $p$ does not divide the class number of $K$, cf.\ Hypothesis \ref{controlass} below). Let $G_{\infty}=\mathrm{Gal}(K_{\infty}/K)$,   
$\iw_{n}=\Z_{p}[G_{n}]$ and $\iw=\Z_p[\![G_\infty]\!]$. In this section we also fix a finite flat extension $\mathscr{O}/\Z_p$, and 
define $\iw_{\mathscr{O},n}=\mathscr{O}[G_{n}]$ and $\iw_\mathscr{O}=\mathscr{O}[\![G_\infty]\!]$.
For each prime ideal 
$w$ of $K$, denote by $K_w$ the completion 
of $K$ at $w$. Fix an algebraic closure $\bar{K}_w$ of $K_w$, 
define $G_{K_w}=\mathrm{Gal}(\bar{K}_w/K_w)$, and let $I_{K_w}$ be 
the inertia subgroup of $G_{K_w}$.

Let $E/\Q$ be an elliptic curve of conductor $N$, and let $p\ges5$ be a prime number not dividing $N$.  We also let $N=N^+N^-$ denote the factorization of $N$ as before (a prime divides $N^+$ if and only if it is split in $K$, and divides $N^-$ if and only if it is inert in $K$). 
Let \[f\in S_2(\Gamma_0(N))\] be the newform attached to 
$E$ by modularity, which we identify, with a slight abuse of notation, 
with a modular form $f\in S_2(N^+,N^-;\Z_p)$ 
by the Jacquet--Langlands correspondence (cf.\ Section \ref{JLsec}).  
The representations $A_f$ and $T_f$ associated with $f$ as in Section \ref{eigenforms} 
are then the $p$-divisible group and the $p$-adic Tate module of $E$, respectively.  
In this section we work under the following:

\begin{hy}\label{controlass} \hfill
\begin{enumerate}
\item $N^{-}$ is squarefree, $p\ges 5$ and $p\nmid{}N$.
\item The rational prime $p$ does not divide the class number and the discriminant of $K$.
\item $\bar{\rho}_{f} : G_{\Q}\fre{}\mathrm{GL}_{2}(\F_{\!p})$ is irreducible.
\item If $E$ has good ordinary reduction at $p$, then $a_{p}(E)\not\equiv{}\pm1\pmod{p}$ if $p$ is inert in $K$, and $a_{p}(E)\not\equiv{}1\pmod{p}$ if $p$ is split in $K$.
\item If $q$ is a prime dividing $N^{+}$, then 
$H^{0}(G_{\Q_{q}},E_{p})=0$ and $\bar{\rho}_{f}$ is ramified at $q$.
\end{enumerate} 
\end{hy}

\subsection{$\varepsilon$-rational points}\label{efinsub} 
In this subsection we assume that $E$ has supersingular reduction at $p$. 
Let $\mathfrak{p}$ be a prime of $K$ dividing $p$.
For every $n\in{}\N\cup\{\infty\}$ denote by $\Psi_{n}=K_{n,\mathfrak{p}}$ 
the completion of $K_{n}$ at the unique prime dividing $\mathfrak{p}$, by $O_{n}$ the ring of integers of 
$\Psi_{n}$ and by $\mathfrak{m}_{n}$ its maximal ideal. Set $\Psi=\Psi_{0}$, $O=O_{0}$ and 
$\mathfrak{m}=\mathfrak{m}_{0}$. Then 
$\Psi_{\infty}$ is a totally ramified $\Z_{p}$-extension of $\Psi$,
whose Galois group can be identified with $G_{\infty}$
(via  $i_{p} : \bar{\Q}\hookrightarrow{}\bar{\Q}_{p}$).
Let $\mathbf{E}/O$ be the formal group of $E/\Psi$, which  gives the kernel of the reduction modulo 
$p$ on $E$, and let $\log_{\mathbf{E}} : \mathbf{E}\fre{}\mathbf{G}_{a}$
be the formal group logarithm. The formal group $\mathbf{E}/O$ is a Lubin--Tate  group 
for  the uniformiser $-p\in{}O$ (\cite{Honda}). 
Since $E_{p^{m}}\cong{}\mathbf{E}_{p^{m}}$
for every $m\ges{}1$ as $G_{\mathfrak{p}}$-modules, the $p$-adic Tate module $T_{f}$ of $E$ 
is isomorphic to the $p$-adic Tate module of $\mathbf{E}$, hence has a natural  structure of 
$O[\mathrm{Gal}(\bar{\Psi}/\Psi)]$-module. 

Denote by $\Xi$ the set of $\bar{\Q}_{p}^{\ast}$-valued finite order characters on $G_{\infty}$.
For every  $\chi\in{}\Xi$ let $n_{\chi}$ be  the smallest nonnegative integer such that 
$\chi$ factors through $G_{n_{\chi}}$, and let 
\[\Xi^{\pm}=\{\chi\in{}\Xi\ |\ 
n_{\chi}\ges{}1,\ (-1)^{n_{\chi}}=\pm1\}.\] If $p$ splits in $K/\Q$,
set $\Xi_{p}^{\pm}=\Xi^{\pm}$; if $p$ is inert in $K/\Q$ set 
$\Xi_{p}^{+}=\Xi^{+}$ and $\Xi_{p}^{-}=\Xi^{-}\cup\{\mathbf{1}\}$,
where $\mathbf{1}\in{}\Xi$ is the trivial character on $G_{\infty}$.
Let
$\log_{\chi} : \mathbf{E}(\mathfrak{m}_{\infty})\fre{}\bar{\Q}_{p}$ be the morphism 
sending $y$ in $\mathbf{E}(\mathfrak{m}_{\infty})$ to
$$\log_{\chi}(y)=p^{-m}\sum_{\sigma\in{}G_{m}}\chi(\sigma)^{-1}\log_{\mathbf{E}}(y^{\sigma}),$$
where $m=m(\chi,y)$ is any positive integer large enough so that  $m\ges{}n_{\chi}$ 
and  $y$ belongs to $\mathbf{E}(\mathfrak{m}_{m})$. 
Following  \cite{Rub-EllHeeg} set 
\begin{align*}
                         \mathbf{E}(\mathfrak{m}_{\infty})_{\pm} & =\big\{y\in{}\mathbf{E}(\mathfrak{m}_{\infty})\ \big|\ 
                         \log_{\chi}(y)=0\ \text{for every }\ \! \chi\in{}\Xi_{p}^{\mp}
                         \big\},
\end{align*}
and for every $1\les{}k\les{}\infty$ define  
\[
         H^{1}_{\mathrm{fin},\pm}(\Psi_{\infty},A_{f,k})=\mathbf{E}(\mathfrak{m}_{\infty})_{\pm}\otimes_{\Z}(\divp)_{p^{k}},
\]
viewed as submodules of  $H^{1}(\Psi_{\infty},A_{f,k})$
under the local Kummer map. 

Let $\varepsilon$ denote one of the signs $+$ or $-$. With an abuse of notation, we sometimes identify the sign $\varepsilon=\pm$ with $\pm1$,
so that the equation 
$(-1)^n=\varepsilon$ makes sense for each integer $n$, and states that $n$ is even if $\varepsilon=+$ and $n$ is odd if $\varepsilon=-$. 
For every integer $n\ges{}0$
and every prime $\mathfrak{p}$ of $K$ dividing $p$ set  
\[
\mathbf{E}(\mathfrak{m}_{n})_{\varepsilon}=\mathbf{E}(\mathfrak{m}_{n})\cap 
\mathbf{E}(\mathfrak{m}_{\infty})_{\varepsilon}.\] 
If follows from the definitions that 
$\omega_{n}^{\varepsilon}\cdot{}\mathbf{E}(\mathfrak{m}_n)_{\varepsilon}=0$
for every $n\ges{}0$ and $\mathfrak{p}|p$; in particular, if $p$ is inert in $K$ 
then
$\mathbf{E}(\mathfrak{m})_{+}=0$ and 
$\mathbf{E}(\mathfrak{m})_{-}=\mathbf{E}(\mathfrak{m})$, while if 
$p$ is split in $K$, we have 
$\mathbf{E}(\mathfrak{m})_{+}=\mathbf{E}(\mathfrak{m})_{-}=\mathbf{E}(\mathfrak{m})$. 

Denote by  $\iw_{O}$ the tensor product of $\iw$ with $O$.
For every Galois extension 
$\Psi'/\Psi$ the group $\mathbf{E}(\Psi')$
is a module over  $O[\mathrm{Gal}(\Psi'/\Psi)]$. 
The next theorem, which elucidates the structure of the $\Lambda_O$-modules $\mathbf{E}(\Psi_n)$ and $\mathbf{E}(\Psi_n)_\varepsilon$, has been obtained 
by Iovita--Pollack \cite[Theorem 4.5]{Io-Po} in the split case (building on the work of Kobayashi \cite{Kobss}) and by Burungale--Kobayashi--Ota \cite[Theorem 5.5]{BKO1} in the inert case.
It shows that the $\varepsilon$-local points enjoy trace relations analogous to those satisfied by the families of Heegner points and Gross points intervening in the definition of the $\varepsilon$-$p$-adic $L$-functions. 

\begin{theo}\label{prop:formal-group} \hfill
\begin{enumerate}
\item $\mathbf{E}(\Psi)$ is a free $O$-module of rank $1$. 
Choose a generator $\boldsymbol{d}_{\mathfrak{p},0}\in \mathbf{E}(\Psi)$; define $\boldsymbol{d}_{\mathfrak{p},0}^{+}=\boldsymbol{d}_{\mathfrak{p},0}^{-}=\boldsymbol{d}_{\mathfrak{p},0}$ if $p$ is split in $K$ and 
$\boldsymbol{d}_{\mathfrak{p},0}^{+}=0$, $\boldsymbol{d}_{\mathfrak{p},0}^{-}=\boldsymbol{d}_{\mathfrak{p},0}$ if $p$ is inert in $K$. 
\item If $n\ges{}1$ and $\varepsilon=(-1)^n$, then   
$\mathbf{E}(\Psi_n)_{\varepsilon}$ is a free $\iw_{O}/(\omega_{n}^{\varepsilon})$-module of rank $1$. We can choose generators $\boldsymbol{d}_{\mathfrak{p},n}^\epsilon\in \mathbf{E}(\Psi_n)_{\varepsilon}$ satisfying  
the following trace relations:
\begin{itemize} 
\item $\mathrm{Trace}_{n+2/n+1}(\boldsymbol{d}_{\mathfrak{p},n+2}^{\varepsilon})=-
\boldsymbol{d}_{\mathfrak{p},n}^{\varepsilon}$; 
\item $\mathrm{Trace}_{1/0}(\boldsymbol{d}_{\mathfrak{p},1}^{-})=u\cdot \boldsymbol{d}_{\mathfrak{p},0}$, for some unit $u\in\Z_p^\times$. 
\end{itemize}

\item If $n\ges{}1$ and $\varepsilon=-(-1)^{n}$, define  
$\boldsymbol{d}_{\mathfrak{p},n}^{\varepsilon}=\boldsymbol{d}_{\mathfrak{p},n-1}^{\varepsilon}\in \mathbf{E}(\Psi_{n-1})_{\varepsilon}$. Then 
the $\iw_{O}$-module $\mathbf{E}(\Psi_n)$ is generated by 
$\boldsymbol{d}_{\mathfrak{p},n}^{\varepsilon}$
and $\boldsymbol{d}_{\mathfrak{p},n}^{-\varepsilon}$. 
\end{enumerate}
\end{theo}

Theorem \ref{prop:formal-group} furnishes elements $\boldsymbol{d}_{\mathfrak{p},n}^\varepsilon$ defined for all $n\ges 0$ and $\varepsilon\in\{\pm\}$ which we consider fixed from now on.

\subsection{Selmer groups}\label{selsec} Let $k\in{}\N\cup\{\infty\}$.
If $k\in{}\N$ and $L\in{}\mathscr{S}_{k}$, 
let $g: \mathbb{T}_{N^{+},N^{-}L}\fre{}\Z/p^{k}\Z$ 
be the level raising of $f_k=f\pmod{p^k}$ at $L$ (cf. Section \ref{level raising}).
If $k=\infty$, set $L=1$ and $g=f$.
Fix an isomorphism of $G_{K}$-modules between $T_{f,k}$ and the $p$-adic representation 
$T_{g}$ associated with $g$, which also fixes an isomorphism between $A_{f,k}$ and 
$A_{g}=\Hom{\Z_{p}}(T_{g},\mu_{p^{\infty}})$. 
We often identify $A_{f,k}$ with $T_{f}\otimes_{\Z_{p}}(\divp)_{p^{k}}$, hence 
$A_{g}$ with $T_{g}\otimes_{\Z_{p}}(\divp)_{p^{k}}$, using the Weil pairing 
on $E$ (with the convention $p^{\infty}=0$).
Let $\iota : \iw\lfre{}\iw$ be Iwasawa's main involution (acting as inversion on group-like elements), and let  
\[
                      \mathbf{T}_{g}=T_{g}\otimes_{\Z_{p}}\iw(\epsilon_{\infty}^{-1})\ \ \ \text{and}\ \ \ 
                     \mathbf{A}_{g}=\Hom{\mathrm{cont}}(\mathbf{T}_{g}^{\iota},\mu_{p^{\infty}}),
\]
where $\epsilon_{\infty} : G_{K}\fre{}\iw^{\ast}$ is the tautological representation (obtained 
by composing the canonical projection $G_K\twoheadrightarrow G_\infty$ with the inclusion $G_\infty\hookrightarrow\Lambda^*$ of group-like elements) 
and one writes 
$M^{\iota}=M\otimes_{\iw,\iota}\iw$ for every $\iw$-module $M$. 
We also define the scalar extensions 
\[
\mathbf{T}_{g,\mathscr{O}}=\mathbf{T}_g\otimes_{\Z_p}\mathscr O\ \ \ \text{and}\ \ \ 
\mathbf{A}_{g,\mathscr{O}}=\mathbf{A}_g\otimes_{\Z_p}\mathscr O.
\] 
For every  ideal $\mathfrak{P}$ of $\iw_\mathscr{O}$, set    $\mathscr{O}_{\mathfrak{P}}=\iw_\mathscr{O}/\mathfrak{P}$, and set 
\[
T_{g,\mathscr{O}}(\mathfrak{P})=\mathbf{T}_{g,\mathscr{O}}/\mathfrak{P}\cdot{}\mathbf{T}_{g,\mathscr{O}}\ \ \  \text{and}\ \ \
A_{g,\mathscr{O}}(\mathfrak{P})=\mathbf{A}_{g,\mathscr{O}}[\mathfrak{P}].
\]
Write $T_g(\mathfrak P)$ for $T_{g,\Z_p}(\mathfrak{P})$ and 
$A_{g}(\mathfrak{P})$ for $A_{g,\Z_p}(\mathfrak{P})$.
Then $A_{g,\mathscr{O}}(\mathfrak{P})$ is isomorphic as a $\iw_\mathscr{O}[G_{K}]$-module 
to the Kummer dual $\Hom{\mathscr{O}}(T_{g,\mathscr{O}}(\mathfrak{P}^{\iota})^{\iota},\mu_{p^{\infty}}\otimes_{\Z_p}\mathscr{O})$ of 
$T_{g,\mathscr{O}}(\mathfrak{P}^{\iota})^{\iota}$, where $\mathfrak{P}^{\iota}=\iota(\mathfrak{P})$. 
For every finite prime $w$ of $K$, local Tate duality gives then a perfect
$\mathscr{O}$-bilinear  pairing 
\begin{equation}\label{eq:paievP}
         \dia{-,-}_{\mathfrak{P},w} : 
         H^{1}(K_{w},T_{g,\mathscr{O}}(\mathfrak{P}))\times{}H^{1}(K_{w},A_{g,\mathscr{O}}(\mathfrak{P}^{\iota}))\lfre{}
         \mathscr{K}/\mathscr{O},
\end{equation}
where $\mathscr{K}=\mathrm{Frac}(\mathscr{O})$ is the fraction field of $\mathscr{O}$, 
such that $$\dia{\lambda\cdot{}x,y}_{\mathfrak{P},w}=\dia{x,\iota(\lambda)\cdot{}y}_{\mathfrak{P},w}$$
for every $\lambda\in{}\iw_\mathscr{O}$, every $x\in H^{1}(K_{w},T_{g,\mathscr{O}}(\mathfrak{P}))$,
and every $y\in{}H^{1}(K_{w},A_{g,\mathscr{O}}(\mathfrak{P}^{\iota}))$.

\subsubsection{Primes dividing $p$}\label{subsec-p}
Let $\mathfrak{p}$ be a prime  of $K$  dividing $p$
and let $\varepsilon\in{}\{\emptyset,\pm\}$. 
For $\varepsilon=\pm$, recall the groups  
$H^{1}_{\mathrm{fin},\varepsilon}(K_{\infty,\mathfrak{p}},A_{f,k})$
defined in Section \ref{efinsub}, 
and for $\varepsilon=\emptyset$ define 
\[ 
         H^{1}_{\mathrm{fin}}(K_{\infty,\mathfrak{p}},A_{f,k})=E(K_{\infty,\mathfrak{p}})\otimes_{\Z}(\divp)_{p^{k}},
\]
viewed as submodules of  $H^{1}(K_{\infty,\mathfrak{p}},A_{f,k})$
under the Kummer map. 
Shapiro's Lemma  yields a  natural isomorphism 
of $\iw$-modules $H^{1}(K_{\mathfrak{p}},\mathbf{A}_{g})\cong{}H^{1}(K_{\infty,\mathfrak{p}},A_{g})$,
which we often consider an equality, and  we denote by 
$H^1_{\mathrm{fin},\varepsilon}(K_{\mathfrak p},\mathbf{A}_g)$ the submodule of $H^{1}(K_{\mathfrak{p}},\mathbf{A}_{g})$
corresponding to $H^{1}_{\mathrm{fin},\varepsilon}(K_{\infty,\mathfrak{p}},A_{f,k})$ via this isomorphism. 
We then define 
\[
   H^1_{\mathrm{fin},\varepsilon}(K_{\mathfrak p},\mathbf{A}_{g,\mathscr{O}})\subset{}H^{1}(K_{\mathfrak{p}},\mathbf{A}_{g,\mathscr{O}})
\]
as the image of $H^1_{\mathrm{fin},\varepsilon}(K_{\mathfrak p},\mathbf{A}_g)\otimes_{\Z_p}\mathscr O$ 
via the canonical isomorphism  
$H^1(K_{\mathfrak p},\mathbf{A}_g)\otimes_{\Z_p}\mathscr{O}\cong{}H^1(K_{\mathfrak p},\mathbf{A}_{g,\mathscr{O}})$.
For every ideal  $\mathfrak{P}$ of $\iw_{\mathscr{O}}$ define  
\[
         H^{1}_{\mathrm{fin},\varepsilon}(K_{\mathfrak{p}},A_{g,\mathscr{O}}(\mathfrak{P}))
         \subset{}H^{1}(K_{\mathfrak{p}},A_{g,\mathscr{O}}(\mathfrak{P}))
\]
as the inverse image of $H^{1}_{\mathrm{fin},\varepsilon}(K_{\mathfrak{p}},\mathbf{A}_{g,\mathscr{O}})$ under the map 
$H^{1}(K_{\mathfrak{p}},A_{g,\mathscr{O}}(\mathfrak{P}))\lfre{} H^{1}(K_{\mathfrak{p}},\mathbf{A}_{g,\mathscr{O}})$ (induced in cohomology by the inclusion  
$A_{g,\mathscr{O}}(\mathfrak{P})=\mathbf{A}_{g,\mathscr{O}}[\mathfrak{P}]\hlfre{}\mathbf{A}_{g,\mathscr{O}}$), and define 
\[
        H^{1}_{\mathrm{fin},\varepsilon}(K_{\mathfrak{p}},T_{g,\mathscr{O}}(\mathfrak{P}))
         \subset{}H^{1}(K_{\mathfrak{p}},T_{g,\mathscr{O}}(\mathfrak{P}))
\]
as  the orthogonal complement of $H^{1}_{\mathrm{fin},\varepsilon}(K_{\infty,\mathfrak{p}},A_{g,\mathscr{O}}(\mathfrak{P}^{\iota}))$
under the  local Tate pairing $\dia{-,-}_{\mathfrak{P},\mathfrak{p}}$. 
If $\texttt{M}_{g,\mathscr{O}}$ denotes either $T_{g,\mathscr{O}}$ or $A_{g,\mathscr{O}}$, let  $H^{1}_{\mathrm{sing},\varepsilon}(K_{\mathfrak{p}}
,\texttt{M}_{g,\mathscr{O}}(\mathfrak{P}))$ be  the quotient of $H^{1}(K_{\mathfrak{p}},\texttt{M}_{g,\mathscr{O}}(\mathfrak{P}))$
by the finite subgroup $H^{1}_{\mathrm{fin},\varepsilon}(K_{\mathfrak{p}},\texttt{M}_{g,\mathscr{O}}(\mathfrak{P}))$, so that we have a canonical exact sequence 
\[0\longrightarrow H^{1}_{\mathrm{fin},\varepsilon}(K_{\mathfrak{p}},\texttt{M}_{g,\mathscr{O}}(\mathfrak{P}))\longrightarrow H^{1}(K_{\mathfrak{p}},\texttt{M}_{g,\mathscr{O}}(\mathfrak{P}))\longrightarrow H^{1}_{\mathrm{sing},\varepsilon}(K_{\mathfrak{p}}
,\texttt{M}_{g,\mathscr{O}}(\mathfrak{P}))\longrightarrow 0.\]
A global class $x\in{}H^{1}(K,\texttt{M}_{g,\mathscr{O}}(\mathfrak{P}))$ is 
said to be 
\emph{$\varepsilon$-finite at $\mathfrak{p}$} if $\mathrm{res}_{\mathfrak{p}}(x)\in{}
H^{1}_{\mathrm{fin},\varepsilon}(K_{\mathfrak{p}},\texttt{M}_{g,\mathscr{O}}(\mathfrak{P}))$.
For any element 
$s\in H^1(K,\texttt{M}_{g,\mathscr{O}}(\mathfrak P))$, denote $\partial_\mathfrak{p}(s)$ the projection of the restriction of  $s$ at $\mathfrak{p}$ to 
the singular quotient of $H^1(K_\mathfrak{p},\texttt{M}_{g,\mathscr{O}}(\mathfrak P))$. We call $\partial_\mathfrak{p}$ 
the \emph{residue map} at $\mathfrak p$, and $\partial_p=\oplus_{\mathfrak{p}\mid p}\partial_\mathfrak{p}$ 
the \emph{residue map at $p$}. 

\subsubsection{Primes dividing $N^-$}\label{subsec-N^-}
Let $\mathfrak{P}$ be an ideal of  $\iw_\mathscr{O}$ and let 
$\ell$ be a rational prime dividing $N^{-}$.
Then $\ell$ is inert in $K/\Q$ and $\ell\cdot{}\mathcal{O}_{K}$ splits completely in $K_{\infty}/K$. 
As a consequence the $G_{K_\ell}$-representation $T_{g,\mathscr{O}}(\mathfrak{P})$ is isomorphic to the base change 
$T_{g}\otimes_{\Z_{p}}\mathscr{O}_{\mathfrak{P}}$ (with $G_{K_\ell}$ acting trivially on $\mathscr{O}_{\mathfrak{P}}$).
The elliptic curve $E/K_{\ell}$ is a Tate curve, i.e.\ is isomorphic as a rigid analytic variety 
to the quotient of the multiplicative group $\mathbf{G}_{m}/K_{\ell}$ by the lattice $q_{\ell}^{\Z}$ generated by the Tate period 
$q_{\ell}\in{}\ell\cdot{}\Z_{\ell}$ (\cite[Chapter 5]{Sil-2}). 
This gives a short exact sequence of $G_{K_\ell}$-modules 
\[
            0\lfre{}T_{f,k}^{(\ell)}\lfre{}T_{f,k}\lfre{}T_{f,k}^{[\ell]}\lfre{}0,
\]
where $T_{f,k}^{(\ell)}\cong{}\Z_{p}/p^{k}(1)$ and $T_{f,k}^{[\ell]}\cong{}\Z_{p}/p^{k}$, which in turn 
induces an exact sequence of $\mathscr{O}_{\mathfrak{P}}[G_{K_\ell}]$-modules
\[
0\lfre{}T_{g,\mathscr{O}}^{(\ell)}(\mathfrak{P})\lfre{}T_{g,\mathscr{O}}(\mathfrak{P})\lfre{}T_{g,\mathscr{O}}^{[\ell]}(\mathfrak{P})\lfre{}0,
\] 
with $T_{g,\mathscr{O}}^{(\ell)}(\mathfrak{P})\cong{}\mathscr{O}_{\mathfrak{P}}(1)\otimes_{\Z_{p}}\Z_{p}/p^{k}$
and $T_{g,\mathscr{O}}^{[\ell]}(\mathfrak{P})\cong{}\mathscr{O}_{\mathfrak{P}}\otimes_{\Z_{p}}\Z_{p}/p^{k}$.
Define $A_{g,\mathscr{O}}^{(\ell)}(\mathfrak{P})$ and $A_{g,\mathscr{O}}^{[\ell]}(\mathfrak{P})$
to be the Kummer duals of $T_{g,\mathscr{O}}^{[\ell]}(\mathfrak{P}^{\iota})^{\iota}$
and $T_{g,\mathscr{O}}^{(\ell)}(\mathfrak{P}^{\iota})^{\iota}$ respectively, 
so that one has an exact sequence 
\[
           0\lfre{}A_{g,\mathscr{O}}^{(\ell)}(\mathfrak{P})\lfre{}A_{g,\mathscr{O}}(\mathfrak{P})\lfre{}A_{g,\mathscr{O}}^{[\ell]}(\mathfrak{P})\lfre{}0
\]
of $\mathscr{O}_{\mathfrak{P}}[G_{K_\ell}]$-modules.
If $\texttt{M}_{g,\mathscr{O}}$ is either $T_{g,\mathscr{O}}$ or $A_{g,\mathscr{O}}$, define the \emph{ordinary subspace} of 
$H^{1}(K_{\ell},\texttt{M}_{g,\mathscr{O}}(\mathfrak{P}))$ by 
\[
              H^{1}_{\mathrm{ord}}(K_{\ell},\texttt{M}_{g,\mathscr{O}}(\mathfrak{P}))
              =\mathrm{Im}\lri{H^{1}(K_{\ell},\texttt{M}_{g,\mathscr{O}}^{(\ell)}(\mathfrak{P}))\lfre{}H^{1}(K_{\ell},\texttt{M}_{g,\mathscr{O}}(\mathfrak{P}))}.
\]
As easily proved, $H^{1}_{\mathrm{ord}}(K_{\ell},T_{g,\mathscr{O}}(\mathfrak{P}))$
is the orthogonal complement of $H^{1}_{\mathrm{ord}}(K_{\ell},A_{g,\mathscr{O}}(\mathfrak{P}^{\iota}))$
under $\dia{-,-}_{\mathfrak{P},\ell}$.
A global class in $H^{1}(K,\texttt{M}_{g,\mathscr{O}}(\mathfrak{P}))$ is said to be 
\emph{ordinary at $\ell$}
if its restriction at $\ell$ belongs to the ordinary subspace 
$H^{1}_{\mathrm{ord}}(K_{\ell},\texttt{M}_{g,\mathscr{O}}(\mathfrak{P}))$.

\subsubsection{Primes dividing $L$}\label{primesL}
Let $\mathfrak{P}$ be an ideal  of $\iw_\mathscr{O}$,
and let  $\ell$ be a prime divisor of $L$. As above $\ell\cdot{}\mathcal{O}_{K}$
splits completely in $K_{\infty}/K$ and 
$T_{g,\mathscr{O}}(\mathfrak{P})=T_{g}\otimes_{\Z_{p}}\mathscr{O}_{\mathfrak{P}}$ 
as $G_{K_\ell}$-modules, with $G_{K_\ell}$ acting trivially on the 
second factor. 
Lemma $\ref{decadm}$ then implies that  the $\mathscr{O}_{\mathfrak{P}}[G_{K_\ell}]$-module 
$T_{g,\mathscr{O}}(\mathfrak{P})$ is isomorphic to the direct sum of 
$T_{g,\mathscr{O}}^{(\ell)}(\mathfrak{P})=\mathscr{O}_{\mathfrak{P}}(1)\otimes_{\Z_{p}}\Z_{p}/p^{k}$
and $T_{g,\mathscr{O}}^{[\ell]}(\mathfrak{P})=\mathscr{O}_{\mathfrak{P}}\otimes_{\Z_{p}}\Z_{p}/p^{k}$
(where by definition $\mathscr{O}_\mathfrak P(1)=\Z_p(1)\otimes_{\Z_p}\mathscr{O}_\mathfrak P$ as Galois modules).
Let as above  $A_{g,\mathscr{O}}^{(\ell)}(\mathfrak{P})$ and $A_{g,\mathscr{O}}^{[\ell]}(\mathfrak{P})$
be the Kummer duals of $T_{g,\mathscr{O}}^{[\ell]}(\mathfrak{P}^{\iota})^{\iota}$
and $T_{g,\mathscr{O}}^{(\ell)}(\mathfrak{P}^{\iota})^{\iota}$ respectively.
For $\texttt{M}_{g,\mathscr{O}}\in\{T_{g,\mathscr{O}},A_{g,\mathscr{O}}\}$,   
define the \emph{ordinary subspace} of $H^{1}(K_{\ell},\texttt{M}_{g,\mathscr{O}}(\mathfrak{P}))$ by 
the equality
\[
             H^{1}_{\mathrm{ord}}(K_{\ell},\texttt{M}_{g,\mathscr{O}}(\mathfrak{P}))=
             H^{1}(K_{\ell},\texttt{M}_{g,\mathscr{O}}^{(\ell)}(\mathfrak{P})).
\]
By Lemma $\ref{decadm}$, $ H^{1}_{\mathrm{ord}}(K_{\ell},\texttt{M}_{g,\mathscr{O}}(\mathfrak{P}))$
is also isomorphic  to the \emph{singular quotient}
\[H^{1}_{\mathrm{sing}}(K_{\ell},\texttt{M}_{g,\mathscr{O}}(\mathfrak{P}))=H^{1}(K_{\ell},\texttt{M}_{g,\mathscr{O}}(\mathfrak{P}))/H^{1}_{\mathrm{fin}}(K_{\ell},\texttt{M}_{g,\mathscr{O}}(\mathfrak{P})).\]
Moreover, note that $H^{1}_{\mathrm{ord}}(K_{\ell},T_{g,\mathscr{O}}(\mathfrak{P}^{\iota}))$
(resp., $H^{1}_{\mathrm{fin}}(K_{\ell},T_{g,\mathscr{O}}(\mathfrak{P}^{\iota}))$)
is the orthogonal complement 
of $H^{1}_{\mathrm{ord}}(K_{\ell},A_{g,\mathscr{O}}(\mathfrak{P}))$
(resp., $H^{1}_{\mathrm{fin}}(K_{\ell},A_{g,\mathscr{O}}(\mathfrak{P}))$) 
under $\dia{-,-}_{\mathfrak{P},\ell}$ .
A global class in $H^{1}(K,\texttt{M}_{g,\mathscr{O}}(\mathfrak{P}))$ is said to be  \emph{ordinary}
(resp., \emph{finite}) \emph{at $\ell$} if its restriction at $\ell$ 
belongs to the ordinary (resp., finite) subspace of $H^{1}(K_{\ell},\texttt{M}_{g,\mathscr{O}}(\mathfrak{P}))$.

\subsubsection{Primes outside $LN^-p$}\label{subsec-outside}
Let  $w$ be a prime of $K$ which does not divide $LN^{-}p$,  let $\mathfrak{P}$
be an ideal  of $\iw_\mathscr{O}$ and let $\texttt{M}_{g,\mathscr{O}}$ denote either $T_{g,\mathscr{O}}$ or $A_{g,\mathscr{O}}$.
A  global class in $H^{1}(K,\texttt{M}_{g,\mathscr{O}}(\mathfrak{P}))$
is \emph{finite} (resp., \emph{trivial}) \emph{at $w$} if its restriction at $w$ belongs to the finite subspace 
\[H^{1}_{\mathrm{fin}}(K_{w},\texttt{M}_{g,\mathscr{O}}(\mathfrak{P}))=H^{1}(G_{K_w}/I_{K_w},\texttt{M}_{g,\mathscr{O}}(\mathfrak{P})^{I_{K_w}})\]
of $H^{1}(K_{w},\texttt{M}_{g,\mathscr{O}}(\mathfrak{P}))$
(resp., is zero). 

\subsubsection{Discrete and compact Selmer groups}\label{Sec.DCSelmer}
Let $S$ be a positive squarefree   integer and let 
$\mathfrak{P}$ be an ideal of $\iw_\mathscr{O}$. The \emph{discrete Selmer group}
\[
              \mathrm{Sel}_{\varepsilon}^{S}(K,A_{g,\mathscr{O}}(\mathfrak{P}))\subset{}H^{1}(K,A_{g,\mathscr{O}}(\mathfrak{P}))
\]
is defined to be the  $\mathscr{O}_{\mathfrak{P}}$-module of global cohomology classes in 
$H^{1}(K,A_{g,\mathscr{O}}(\mathfrak{P}))$ which are 
\begin{itemize}
\item[$\bullet$] $\varepsilon$-finite at primes dividing $p$;
 
\item[$\bullet$]   ordinary at primes dividing $LN^{-}$;

\item[$\bullet$]  trivial at primes dividing $SN^{+}$;

\item[$\bullet$] finite outside $SLNp$.
\end{itemize}
The \emph{compact Selmer group}
\[
            \mathfrak{Sel}^{\varepsilon}_{S}(K,T_{g,\mathscr{O}}(\mathfrak{P}))\subset{}H^{1}(K,T_{g,\mathscr{O}}(\mathfrak{P}))
\]
is   the 
$\mathscr{O}_{\mathfrak{P}}$-module of global cohomology classes in $H^{1}(K,T_{g,\mathscr{O}}(\mathfrak{P}))$ which are

\begin{itemize}
\item[$\bullet$] $\varepsilon$-finite at  primes dividing $p/\text{g.c.d.}(S,p)$;
 
\item[$\bullet$] ordinary at primes dividing $LN^{-}/\text{g.c.d.}(S,LN^{-})$;

\item[$\bullet$] finite outside  $SLNp$.
\end{itemize}
Write $\mathrm{Sel}_{\varepsilon}(K,A_{g,\mathscr{O}}(\mathfrak{P}))$
and   $\mathfrak{Sel}^{\varepsilon}(K,T_{g,\mathscr{O}}(\mathfrak{P}))$
as shorthands for $\mathrm{Sel}^{1}_{\varepsilon}(K,A_{g}(\mathfrak{P}))$ and 
$\mathfrak{Sel}^{\varepsilon}_{1}(K,T_{g,\mathscr{O}}(\mathfrak{P}))$ respectively. 
If $\mathfrak{P}$ is the zero ideal, so that $T_{g,\mathscr{O}}(\mathfrak{P})=\mathbf{T}_{g,\mathscr{O}}$
and $A_{g,\mathscr{O}}(\mathfrak{P})=\mathbf{A}_{g,\mathscr{O}}$, set 
\[
            \mathrm{Sel}^{S}_{\varepsilon}(K_{\infty},A_{g,\mathscr{O}})=\mathrm{Sel}^{S}_{\varepsilon}(K,\mathbf{A}_{g,\mathscr{O}})\ \ \ \text{and}\ \ \ 
           \mathfrak{Sel}^{\varepsilon}_{S}(K_{\infty},T_{g,\mathscr{O}})=\mathfrak{Sel}^{\varepsilon}_{S}(K,\mathbf{T}_{g,\mathscr{O}}). 
\] Note that if $p\mid S$ then $\mathrm{Sel}^{S}_{\varepsilon}(K,\mathbf{A}_{g,\mathscr{O}})=\mathrm{Sel}^{S}(K,\mathbf{A}_{g,\mathscr{O}})$ and 
$\mathfrak{Sel}^{\varepsilon}_{S}(K,\mathbf{T}_{g,\mathscr{O}})=\mathfrak{Sel}_{S}(K,\mathbf{T}_{g,\mathscr{O}})$. 

\subsection{Local properties}\label{locsec}
If $E$ has (good) ordinary reduction at $p$, set $\varepsilon=\emptyset$ and 
$H^{1}_{\mathrm{fin},\varepsilon}=H^{1}_{\mathrm{fin}}$.
If $E$ has (good) supersingular reduction at $p$,
let $\varepsilon$ denote either $+$ or $-$.  

\subsubsection{Primes dividing $p$}\label{locsecp} Fix  a prime  $\mathfrak{p}$  of $K$ dividing  $p$. We first investigate local properties of points, and then we consider finite and singular subgroups. 

\begin{proposition}\label{hypext} For every nonnegative integer $n$, the restriction map induces  an isomorphism 
\[
                E(K_{n,\mathfrak{p}})_{\varepsilon}\otimes_{\Z}\divp\cong{}\big(E(K_{\infty,\mathfrak{p}})_{\varepsilon}
                \otimes_{\Z}\divp\big)[\omega_{n}^{\varepsilon}].
\]
\end{proposition}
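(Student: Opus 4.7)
The plan is to reduce the statement to a computation with the formal group $\mathbf{E}/O$ and then invoke Theorem \ref{prop:formal-group}.

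First, I would replace elliptic-curve points by formal-group points. Since $E/\Q_p$ has good supersingular reduction with $a_p(E)=0$, the reduction $\tilde E(k_\mathfrak{p})$ has order $p+1$ or $p^2+1$ according as $p$ is split or inert in $K$, hence coprime to $p$. As $\Phi_\infty/\Phi$ is totally ramified the residue field is constant along the tower, and the reduction exact sequence, combined with the vanishing of both $\otimes_\Z\divp$ and $\mathrm{Tor}_1^{\Z}(-,\divp)$ on finite prime-to-$p$ groups, yields $E(K_{n,\mathfrak{p}})\otimes_\Z\divp\cong\mathbf{E}(\mathfrak{m}_n)\otimes_\Z\divp$ for every $n\in\N\cup\{\infty\}$. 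Taking $\varepsilon$-parts, the claim reduces to
\[\mathbf{E}(\mathfrak{m}_n)_\varepsilon\otimes_\Z\divp \;\cong\; \bigl(\mathbf{E}(\mathfrak{m}_\infty)_\varepsilon\otimes_\Z\divp\bigr)[\omega_n^\varepsilon].\]
In the good ordinary case $\varepsilon=\emptyset$ and $\omega_n^\varepsilon=\omega_n=\gamma^{p^n}-1$, where the assertion is the classical Mazur-type control statement; Hypothesis \ref{controlass}(1) rules out the local obstruction.

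Setting $U_n=\mathbf{E}(\mathfrak{m}_n)_\varepsilon$, I would next extract from Theorem \ref{prop:formal-group} that $U_n\cong\iw_O/(\omega_n^\varepsilon)$ as $\iw_O$-modules for every $n$: when $(-1)^n=\varepsilon$ this is case (2) directly, while when $(-1)^n=-\varepsilon$ (case (3)) inspection of the defining products shows $U_n=U_{n-1}$ and $\omega_n^\varepsilon=\omega_{n-1}^\varepsilon$, reducing the claim to the preceding level. I would then verify that $U_\infty[\omega_n^\varepsilon]=U_n$: since $\omega_n^\varepsilon\mid\omega_n$ (from the factorisation $\omega_n=(\gamma-1)\tilde\omega_n^+\tilde\omega_n^-$, or the analogous split-case relation), we have $U_\infty[\omega_n^\varepsilon]\subseteq U_\infty[\omega_n]=U_\infty^{H_n}$ where $H_n=\mathrm{Gal}(\Phi_\infty/\Phi_n)$; the latter further equals $U_\infty\cap\mathbf{E}(\mathfrak{m}_n)=U_n$. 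The reverse inclusion follows from $\omega_n^\varepsilon U_n=0$, which is part of Theorem \ref{prop:formal-group}.

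The last step is to commute $\omega_n^\varepsilon$-torsion with $\otimes_\Z\divp$. Since $\omega_n^\varepsilon$ is a distinguished polynomial in $\gamma-1$, the quotient $U_m/\omega_n^\varepsilon U_m\cong\iw_O/(\omega_n^\varepsilon)$ is $\Z_p$-free for every $m\ges n$, so $\mathrm{Tor}_1^{\Z_p}(U_m/\omega_n^\varepsilon U_m,\divp)=0$ and tensoring $0\to U_m[\omega_n^\varepsilon]\to U_m\xrightarrow{\omega_n^\varepsilon}\omega_n^\varepsilon U_m\to 0$ with $\divp$ stays exact. This gives $(U_m\otimes_\Z\divp)[\omega_n^\varepsilon]=U_m[\omega_n^\varepsilon]\otimes_\Z\divp=U_n\otimes_\Z\divp$. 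The inclusions $U_m\hookrightarrow U_{m'}$ remain injective after $\otimes_\Z\divp$ (as $U_{m'}/U_m$ is again $\Z_p$-free), whence $U_\infty\otimes_\Z\divp=\varinjlim_m U_m\otimes_\Z\divp$; since taking $\omega_n^\varepsilon$-torsion commutes with filtered colimits, and the induced transitions on the torsion pieces are identity inclusions $U_n\hookrightarrow U_n$, one concludes $(U_\infty\otimes_\Z\divp)[\omega_n^\varepsilon]=U_n\otimes_\Z\divp$. The substantive input of the entire argument is Theorem \ref{prop:formal-group}: in the inert case it rests on the recent resolution by Burungale-Kobayashi-Ohta \cite{BKO1} of Rubin's conjecture on local points in $p$-adic towers, while in the split case it goes back to Iovita-Pollack \cite{Io-Po}. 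Once that structural theorem is granted, the rest is a routine exercise in $\Z_p$-flatness and filtered colimits, so the sole obstacle is entirely absorbed by the cited input.
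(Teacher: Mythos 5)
Your argument is correct and reaches the same conclusion as the paper, but by a somewhat different route. In the supersingular case the paper argues that the Pontrjagin dual of the restriction map is surjective and then invokes Theorem~\ref{prop:formal-group} to see that both sides of the dual map are finite free $\Z_p$-modules isomorphic to $\Lambda_\mathfrak{p}/\omega_n^\varepsilon$, forcing the dual to be an isomorphism. You instead compute both sides of the original map directly: you reduce from $E$-points to formal-group points via the prime-to-$p$ reduction sequence, use Theorem~\ref{prop:formal-group} to identify $U_n=\mathbf{E}(\mathfrak{m}_n)_\varepsilon\cong\iw_O/(\omega_n^\varepsilon)$ for every $n$, show $U_\infty[\omega_n^\varepsilon]=U_n$, and then push everything through $\otimes_\Z\divp$ by checking that the relevant quotients and cokernels are $\Z_p$-free. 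Your version makes explicit several $\Z_p$-freeness verifications (and the passage to the filtered colimit over $m\geqslant n$) that the paper compresses; the paper's version is terser because it works on the compact side of Pontrjagin duality where the rank count immediately closes the argument. The essential input in both cases is Theorem~\ref{prop:formal-group}, and you correctly flag that in the inert case this rests on the resolution of Rubin's conjecture by Burungale--Kobayashi--Ota.

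Two minor points. First, $|\tilde E(\F_{p^2})|=(p+1)^2$ when $a_p=0$, not $p^2+1$; this does not affect the conclusion that the reduction has prime-to-$p$ order. Second, you dispose of the good ordinary case with ``the classical Mazur-type control statement''; the paper gives a genuine argument there, via Lemma 3.4 of [Gr-1] (which computes the kernel of the singular-quotient restriction to be of size $|E(\F_\mathfrak{p})|^2$) together with inflation-restriction and Hypothesis~\ref{controlass}(1). Since that is the only place where the ordinary control requires more than a citation, you should at minimum reference that lemma rather than assert the result is classical, even though your instinct is correct that nothing deep is happening there.
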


\begin{proof}
If $E/\Q_{p}$ has good supersingular reduction, 
this follows from Theorem \ref{prop:formal-group}.  
More precisely, the Pontrjagin dual of the restriction map
$$E(K_{n,\mathfrak{p}})_{\varepsilon}\otimes_{\Z}\divp\lra{}\big(E(K_{m,\mathfrak{p}})_{\varepsilon}
                \otimes_{\Z}\divp\big)[\omega_{n}^{\varepsilon}]$$ 
is a surjective morphism of $\Lambda_{\mathfrak{p}}$-modules, for all integers $m\ges n$. Since Theorem $\ref{prop:formal-group}$ implies that its source 
and target are finite free $\Z_{p}$-modules of the same rank
(indeed both are isomorphic to $\Lambda_{\mathfrak{p}}/\omega_{n}^{\varepsilon}$), it is an isomorphism. 

Assume that $E/\Q_{p}$ has good ordinary reduction and consider the restriction maps
\[
              r_{n} : H^{1}(K_{n,\mathfrak{p}},A_{f})\lfre{}H^{1}(K_{\infty,\mathfrak{p}},A_{f})[\omega_{n}]
\]
and
\[              r_{n}^{\mathrm{sing}} :     \frac{H^{1}(K_{n,\mathfrak{p}},A_{f})}{E(K_{n,\mathfrak{p}})\otimes_{\Z}\divp}
                   \lfre{}\frac{H^{1}(K_{\infty,\mathfrak{p}},A_{f})}{E(K_{\infty,\mathfrak{p}})\otimes_{\Z}\divp}.
\]
Lemma 3.4 of \cite{Gr-1} proves that the kernel of $r_{n}^{\mathrm{sing}}$
has  cardinality  $|E(\F_{\mathfrak{p}})_{p}|^{2}$. 
(Loc.\ cit.\ considers  the cyclotomic $\Z_{p}$-extension $F_{\infty}/F$ of a 
finite extension  $F/\Q_{p}$,  but the argument  works for every $\Z_{p}$-extension $F_{\infty}/F$
such that the inertia subgroup of $\mathrm{Gal}(F_{\infty}/F)$ has finite index, cf.\ \cite[Proposition 2.4]{Gr-1}.)
The  inflation-restriction sequence shows that  $r_{n}$ is surjective and that its kernel is isomorphic to a quotient 
of $H^{0}(K_{\infty,\mathfrak{p}},A_{f})=E(K_{\infty,\mathfrak{p}})_{p^{\infty}}$. 
Assumption $\ref{controlass}(4)$ then implies that $r_{n}$ is an isomorphism and that $r_{n}^{\mathrm{sing}}$ is injective.
The statement follows.
\end{proof}

Fix an ideal $\mathfrak{P}$ of $\iw_{\mathscr{O}}$ generated by a regular sequence. 
\begin{proposition}\label{freeloc}  \hfill
\begin{enumerate}
\item 
$H^{1}_{\mathrm{fin},\varepsilon}(K_{\mathfrak{p}},T_{f,\mathscr{O}}(\mathfrak{P}))$  and 
$H^{1}_{\mathrm{sing},\varepsilon}(K_{\mathfrak{p}},T_{f,\mathscr{O}}(\mathfrak{P}))$ 
are free $\iw_\mathscr{O}/\mathfrak{P}$-modules of rank $[K_{\mathfrak{p}}:\Q_{p}]$.
\item 
$H^{1}_{\mathrm{fin},\varepsilon}(K_{\mathfrak{p}},A_{f,\mathscr{O}}(\mathfrak{P}))$
and $H^{1}_{\mathrm{sing},\varepsilon}(K_{\mathfrak{p}},A_{f,\mathscr{O}}(\mathfrak{P}))$
are co-free $\iw_\mathscr{O}/\mathfrak{P}$-modules of rank $[K_{\mathfrak{p}}:\Q_{p}]$.
\end{enumerate}
\end{proposition}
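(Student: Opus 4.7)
The plan is to establish the proposition in two stages: first the case $\mathfrak{P} = 0$ (i.e.\ the statements for $\mathbf{T}_{f,\mathscr{O}}$ and $\mathbf{A}_{f,\mathscr{O}}$ over $\Lambda_\mathscr{O}$), exploiting the control theorem (Proposition \ref{hypext}) and the formal-group structure theorem (Theorem \ref{prop:formal-group}), and then the general case by descending along a regular sequence generating $\mathfrak{P}$. The relation between (1) and (2) is supplied by local Tate duality $\dia{-,-}_{\mathfrak{P},\mathfrak{p}}$, under which the $\varepsilon$-finite compact subgroup and the $\varepsilon$-singular discrete quotient are orthogonal complements, so that co-freeness on the discrete side at level $\mathfrak{P}$ is equivalent to freeness on the compact side at level $\mathfrak{P}^{\iota}$.

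For $\mathfrak{P} = 0$, I would first treat the discrete fin part. By Shapiro's lemma, $H^1_{\mathrm{fin},\varepsilon}(K_{\mathfrak{p}},\mathbf{A}_{f})$ is identified with $H^1_{\mathrm{fin},\varepsilon}(K_{\infty,\mathfrak{p}},A_f)$, which in the supersingular case equals $\mathbf{E}(\mathfrak{m}_\infty)_\varepsilon\otimes_{\Z}\Q_p/\Z_p$. Passing Theorem \ref{prop:formal-group}(2) to the inverse limit in $n$ identifies the Pontryagin dual of this module with a free $\Lambda_O$-module, and a rank count via the Weierstrass degrees of the $\omega_n^\varepsilon$ shows that the $\Lambda_O$-corank equals $[K_\mathfrak{p}:\Q_p]/[O:\Z_p]$, giving the stated $\Lambda_\mathscr{O}$-corank after scalar extension (which preserves co-freeness by flatness of $\mathscr{O}/\Z_p$). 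In the ordinary case, the analogous conclusion is classical, resting on Proposition \ref{hypext} together with standard Iwasawa-theoretic arguments \`a la Greenberg. The singular side is then extracted from the short exact sequence
\[0\lra H^{1}_{\mathrm{fin},\varepsilon}(K_\mathfrak{p},\mathbf{A}_{f,\mathscr{O}})\lra H^1(K_\mathfrak{p},\mathbf{A}_{f,\mathscr{O}})\lra H^{1}_{\mathrm{sing},\varepsilon}(K_\mathfrak{p},\mathbf{A}_{f,\mathscr{O}})\lra 0,\]
combined with the fact that the middle term is co-free of co-rank $2[K_\mathfrak{p}:\Q_p]$ over $\Lambda_\mathscr{O}$ by the local Euler characteristic formula. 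Dualizing via $\dia{-,-}_{0,\mathfrak{p}}$ then yields statement (1) at $\mathfrak{P} = 0$.

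For a general $\mathfrak{P} = (f_1,\ldots,f_r)$ with $(f_1,\ldots,f_r)$ a regular sequence in $\Lambda$ (hence in $\Lambda_\mathscr{O}$), I would induct on $r$ by setting $\mathfrak{P}_i = (f_1,\ldots,f_i)$. Since $\mathbf{T}_{f,\mathscr{O}}$ is free over $\Lambda_\mathscr{O}$, the element $f_i$ is a non-zero-divisor on $\mathbf{T}_{f,\mathscr{O}}/\mathfrak{P}_{i-1}$, so the short exact sequence
\[0\lra \mathbf{T}_{f,\mathscr{O}}/\mathfrak{P}_{i-1}\Fre{f_i}\mathbf{T}_{f,\mathscr{O}}/\mathfrak{P}_{i-1}\lra \mathbf{T}_{f,\mathscr{O}}/\mathfrak{P}_i\lra 0\]
and its discrete analogue on $\mathbf{A}_{f,\mathscr{O}}[\mathfrak{P}_i]$ yield long exact sequences in $G_{K_\mathfrak{p}}$-cohomology. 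Combined with the inductive (co-)freeness at level $\mathfrak{P}_{i-1}$ and the general fact that if $M$ is (co-)free over a ring $R$ and $f\in R$ a non-zero-divisor, then $M/f$ and $M[f]$ are (co-)free over $R/f$, this delivers the desired (co-)freeness at level $\mathfrak{P}_i$.

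The main technical obstacle lies in checking the compatibility of the local conditions $H^1_{\mathrm{fin},\varepsilon}$ and $H^1_{\mathrm{sing},\varepsilon}$ with this devissage, namely that the natural maps induce identifications such as
\[H^{1}_{\mathrm{fin},\varepsilon}(K_\mathfrak{p},A_{f,\mathscr{O}}(\mathfrak{P}_i))\cong H^{1}_{\mathrm{fin},\varepsilon}(K_\mathfrak{p},A_{f,\mathscr{O}}(\mathfrak{P}_{i-1}))[f_i]\]
and the dual compact counterpart $H^{1}_{\mathrm{fin},\varepsilon}(K_\mathfrak{p},T_{f,\mathscr{O}}(\mathfrak{P}_i))\cong H^{1}_{\mathrm{fin},\varepsilon}(K_\mathfrak{p},T_{f,\mathscr{O}}(\mathfrak{P}_{i-1}))/f_i$. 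This reduces, via the definitions of the fin/sing subgroups in Section \ref{subsec-p} as inverse images (respectively orthogonal complements), to the vanishing of certain $\mathrm{Tor}$ and $\mathrm{Ext}$ terms, which follows from the co-freeness of the fin/sing pieces established at level $\mathfrak{P}_{i-1}$. Controlling this compatibility is the crux of the argument; once it is in place, the inductive step is formal.
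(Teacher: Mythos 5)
Your overall plan tracks the paper's proof closely: reduce (1) to (2) by local Tate duality, establish co-freeness at level $\mathfrak{P}=0$ via Theorem \ref{prop:formal-group} (supersingular) or the ordinary filtration and Greenberg's result (ordinary), and then pass to a general regular ideal $\mathfrak{P}$. The split into "singular = cokernel of fin inside the full $H^1$, which is co-free of rank $2[K_\mathfrak{p}:\Q_p]$ by the local Euler characteristic" is also what the paper does. Where the route differs in packaging is the last step: you induct on a regular sequence $\mathfrak{P}_i = (f_1,\ldots,f_i)$ generating $\mathfrak{P}$, whereas the paper compares a general $\mathfrak{P}$ directly to the ambient level $\mathfrak{P}=0$, proving in one stroke that the inclusion $A_{f,\mathscr{O}}(\mathfrak{P})\hookrightarrow\mathbf{A}_{f,\mathscr{O}}$ identifies $H^1(K_\mathfrak{p},A_{f,\mathscr{O}}(\mathfrak{P}))$ with $H^1(K_\mathfrak{p},\mathbf{A}_{f,\mathscr{O}})[\mathfrak{P}]$ and likewise for the $\varepsilon$-finite pieces. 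Once that is in place, co-freeness at level $\mathfrak{P}$ is automatic from co-freeness at level $0$ and standard commutative algebra; no induction is needed.

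The one place your sketch is genuinely off is the claim that the compatibility of the local conditions with the d\'evissage \emph{"reduces to the vanishing of certain Tor and Ext terms, which follows from the co-freeness of the fin/sing pieces established at level $\mathfrak{P}_{i-1}$."} This inverts cause and effect, and if taken literally it is circular: the co-freeness of $H^1_{\mathrm{fin},\varepsilon}$ at the previous level is precisely what one is trying to propagate, and it is not what makes the long exact sequence degenerate. What actually drives the reduction is the Galois-theoretic vanishing $H^0(K_\mathfrak{p},E_p)=0$ (from Hypothesis \ref{controlass}(1) in the ordinary case and from irreducibility of $E_p$ as a $G_{K_\mathfrak{p}}$-module in the supersingular case, cf.\ \cite[Proposition 12]{Serre-finord}), which yields $H^0(K_\mathfrak{p},\mathbf{A}_{f,\mathscr{O}})=0$ and hence $H^0(K_\mathfrak{p},A_{f,\mathscr{O}}(J))=0$ for every regular ideal $J$. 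This vanishing, together with the divisibility of $\mathbf{A}_{f,\mathscr{O}}[\mathfrak{P}_{i-1}]$ by the non-zero-divisor $f_i$, is what forces $H^1(K_\mathfrak{p},A_{f,\mathscr{O}}(\mathfrak{P}_i))\cong H^1(K_\mathfrak{p},A_{f,\mathscr{O}}(\mathfrak{P}_{i-1}))[f_i]$; the agreement of the $\varepsilon$-finite subgroups then comes for free from their definition as inverse images of $H^1_{\mathrm{fin},\varepsilon}(K_\mathfrak{p},\mathbf{A}_{f,\mathscr{O}})$, since those inverse-image descriptions are all taken with respect to the same ambient module. If you replace the appeal to "co-freeness of the fin/sing pieces at the previous level" with the $H^0$-vanishing as the engine of the d\'evissage, your inductive version becomes a sound (if slightly longer) variant of the paper's Step 2 and Step 3.
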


\begin{proof}
Since $H^{1}_{\mathrm{fin},\varepsilon}(K_{\mathfrak{p}},T_{f,\mathscr{O}}(\mathfrak{P}))$
and $H^{1}_{\mathrm{sing},\varepsilon}(K_{\mathfrak{p}},T_{f,\mathscr{O}}(\mathfrak{P}))$ are isomorphic to the Pontryagin duals 
of $H^{1}_{\mathrm{sing},\varepsilon}(K_{\mathfrak{p}},A_{f,\mathscr{O}}(\mathfrak{P}^{\iota}))^{\iota}$
and $H^{1}_{\mathrm{fin},\varepsilon}(K_{\mathfrak{p}},A_{f,\mathscr{O}})(\mathfrak{P}^{\iota}))^{\iota}$
respectively, it is sufficient to prove (2).
The proof is divided into three  steps. 

\emph{Step 1.} If $\mathscr{D}_{\iw}$ denotes the Pontrjagin dual of $\iw$, one has isomorphisms of $\iw$-modules 
\begin{equation}\label{eq:claim112}\begin{split}
               H^{1}_{\mathrm{fin},\varepsilon}(K_{\mathfrak{p}},\mathbf{A}_{f})\cong\mathscr{D}_{\iw}^{[K_{\mathfrak{p}}:\Q_{p}]}\ \ \ \text{and}\ \ \ 
                              H^{1}_{\mathrm{sing},\varepsilon}(K_{\mathfrak{p}},\mathbf{A}_{f})\cong\mathscr{D}_{\iw}^{[K_{\mathfrak{p}}:\Q_{p}]}.
                              \end{split}
\end{equation}

If $E/\Q_{p}$ has good supersingular reduction and $p$ is splits
in $K/\Q_{p}$,
(the duals of) Equations \eqref{eq:claim112} are proved in  
Propositions 4.16 of  \cite{Io-Po}, which in turn is a slight generalisation of Theorem 
6.2 of \cite{Kobss} (see also \cite[Proposition 9.2]{Kobss}). 
If $E/\Q_{p}$ has good supersingular reduction and $p$ is inert 
in $K/\Q_{p}$, this is a consequence of Rubin's conjecture proved in 
\cite{BKO1}: if, as in Section \ref{efinsub}, $O$ denotes the valuation ring of $\Psi=K_\mathfrak{p}$, 
it is proved in \cite{Rub-EllHeeg} that $H^{1}_{\mathrm{fin},\varepsilon}(K_{\mathfrak{p}},\mathbf{A}_{f})$ is a co-free $\iw_O$-module of rank one.
Moreover \cite{BKO1} proves that (as conjectured in \cite{Rub-EllHeeg}) $H^{1}(K_{\mathfrak{p}},\mathbf{A}_{f})$ is the direct sum of  
$H^{1}_{\mathrm{fin},+}(K_{\mathfrak{p}},\mathbf{A}_{f})$ and $H^{1}_{\mathrm{fin},-}(K_{\mathfrak{p}},\mathbf{A}_{f})$.
The statement follows.

If $E/\Q_{p}$ has good ordinary reduction,  
the representation $T_{f}$ is ordinary at $p$, \emph{i.e.} there exists a short exact sequence of 
$\Z_{p}[G_{\Q_{p}}]$-modules 
\[
             0\lfre{}T_{f}^{\bullet}\lfre{}T_{f}\lfre{}T_{f}^{\circ}\lfre{}0,
\]
arising from the reduction modulo $p$ on $E(\bar{\Q}_{p})$.
More precisely let $\alpha,\beta\in{}\Z_{p}$
be the roots of the Hecke polynomial 
$X^{2}-a_{p}(E)X+p$. 
Since $a_{p}(E)$ is a $p$-adic unit, one can assume $\alpha\in{}\Z_{p}^{\ast}$
and $\beta\in{}p\Z_{p}$.
Then $T_{f}^{\bullet}\cong{}\Z_{p}(\chi_{\mathrm{cyc}}\cdot{}\psi^{-1})$ and $T_{f}^{\circ}\cong{}\Z_{p}(\psi)$,
where $\psi: G_{\Q_{p}}\fre{}\Z_{p}^{\ast}$ is the unramified  character which sends an arithmetic 
Frobenius to  $\alpha$.
Set $\mathbf{T}_{f}^{\star}=T_{f}^{\star}\otimes_{\Z_{p}}\iw(\epsilon_{\infty}^{-1})$ for $\star\in \{\bullet,\circ\}$, 
so that 
there is an  exact sequence of $\iw[G_{K}]$-modules 
$\mathbf{T}_{f}^{\bullet}\hookrightarrow{}\mathbf{T}_{f}\twoheadrightarrow{}\mathbf{T}_{f}^{\circ}$.
According to a result of  Greenberg (cf. Proposition 2.4 of \cite{Gr-1}) 
\begin{equation}\label{eq:ord1}
            H^{1}_{\mathrm{fin}}(K_{\mathfrak{p}},\mathbf{T}_{f})=\mathrm{Image}
            \big(H^{1}(K_{\mathfrak{p}},\mathbf{T}_{f}^{\bullet})\lfre{}H^{1}(K_{\mathfrak{p}},\mathbf{T}_{f})\big)
            \cong{}H^{1}(K_{\mathfrak{p}},\mathbf{T}_{f}^{\bullet}). 
\end{equation}
Let $I$ be the augmentation ideal of $\iw$. Since $\mathbf{T}_{f}^{\bullet}/I=T_{f}^{\bullet}$
and $H^{0}(K_{\mathfrak{p}},T_{f}^{\bullet})=0$, one has $H^{1}(K_{\mathfrak{p}},\mathbf{T}_{f}^{\bullet})[I]=0$.
Moreover $H^{1}(K_{\mathfrak{p}},\mathbf{T}_{f}^{\bullet})/I$ is a free $\Z_{p}$-module, because it is isomorphic to a submodule of 
the free $\Z_{p}$-module $H^{1}(K_{\mathfrak{p}},T_{f}^{\bullet})$.
This implies that $H^{1}(K_{\mathfrak{p}},\mathbf{T}_{f}^{\bullet})$ is a free $\iw$-module of rank $[K_{\mathfrak{p}}:\Q_{p}]$,
hence so is $H^{1}_{\mathrm{fin}}(K_{\mathfrak{p}},\mathbf{T}_{f})$ by Equation \eqref{eq:ord1}.
The short exact sequence $\mathbf{T}_{f}^{\bullet}\hookrightarrow{}\mathbf{T}_{f}\twoheadrightarrow{}\mathbf{T}_{f}^{\circ}$
and Equation $(\ref{eq:ord1})$ induce an exact sequence of $\iw$-modules  
\[
             0\lfre{}H^{1}_{\mathrm{sing}}(K_{\mathfrak{p}},\mathbf{T}_{f})
             \lfre{}H^{1}(K_{\mathfrak{p}},\mathbf{T}_{f}^{\circ})\lfre{}H^{2}(K_{\mathfrak{p}},\mathbf{T}_{f}^{\bullet}).
\]
By Hypothesis $\ref{controlass}(4)$, 
$\psi(\mathrm{Frob}_\mathfrak{p})\not\equiv1\pmod{p}$, hence 
$H^2(K_\mathfrak{p},T_f^\bullet\otimes_{\Z_p}\mathbf{F}_p)=0$. This implies that 
$H^{2}(K_{\mathfrak{p}},T_{f}^{\bullet})$ vanishes, hence so does  
$H^{2}(K_{\mathfrak{p}},\mathbf{T}_{f}^{\bullet})$ by Nakayama's Lemma. It follows that $H^{1}_{\mathrm{sing}}(K_{\mathfrak{p}},\mathbf{T}_{f})$
is isomorphic to $H^{1}(K_{\mathfrak{p}},\mathbf{T}_{f}^{\circ})$. 
Another application of Hypothesis $\ref{controlass}(4)$ gives
$H^0(K_\mathfrak{p},T_f^\circ\otimes\Q_p/\Z_p)=0$, which implies that 
$H^{1}(K_{\mathfrak{p}},T_{f}^{\circ})$ is a free $\Z_{p}$-module. As above one deduces that 
$H^{1}(K_{\mathfrak{p}},\mathbf{T}_{f}^{\circ})$ is a free $\iw$-module of rank $[K_{\mathfrak{p}}:\Q_{p}]$,
hence so is $H^{1}_{\mathrm{sing}}(K_{\mathfrak{p}},\mathbf{T}_{f})$.

\emph{Step 2.} The inclusion $A_{f,\mathscr{O}}(\mathfrak{P})\fre{}\mathbf{A}_{f,\mathscr{O}}$
induces isomorphisms of $\iw_\mathscr{O}/\mathfrak{P}$-modules 
\[H^{1}(K_{\mathfrak{p}},A_{f,\mathscr{O}}(\mathfrak{P}))\cong{}
H^{1}(K_{\mathfrak{p}},\mathbf{A}_{f,\mathscr{O}})[\mathfrak{P}]\ \ \ \text{and}\ \ \ 
H^{1}_{\mathrm{fin},\varepsilon}(K_{\mathfrak{p}},A_{f,\mathscr{O}}(\mathfrak{P}))\cong{}
H^{1}_{\mathrm{fin},\varepsilon}(K_{\mathfrak{p}},\mathbf{A}_{f,\mathscr{O}})[\mathfrak{P}].\]

If $E$ has good supersingular reduction the $G_{K_\mathfrak{p}}=\mathrm{Gal}(\bar{K}_{\mathfrak{p}}/K_{\mathfrak{p}})$-representation $E_{p}$
is irreducible (see for example \cite[Proposition 12]{Serre-finord}). If $E/\Q_{p}$ has good ordinary reduction,
the kernel of the reduction modulo $p$ on $E(K_{\mathfrak{p}})$
is isomorphic to $\F_{\!p}(1)$ as a representation of the inertia subgroup of $G_{K_\mathfrak{p}}$.
Then Hypothesis $\ref{controlass}(4)$ implies that $H^{0}(K_{\mathfrak{p}},E_{p})$
vanishes in all cases. Because $\mathbf{A}_{f}[\mathfrak{m}_{\iw}]$ is isomorphic to $E_{p}$ this gives 
$H^{0}(K_{\mathfrak{p}},\mathbf{A}_{f})=0$, and therefore $H^{0}(K_{\mathfrak{p}},\mathbf{A}_{f,\mathscr{O}})=0$, 
which in turn easily implies that
$H^{1}(K_{\mathfrak{p}},A_{f,\mathscr{O}}(\mathfrak{P}))$ is isomorphic to the $\mathfrak{P}$-torsion submodule of 
$H^{1}(K_{\mathfrak{p}},\mathbf{A}_{f,\mathscr{O}})$. 
The claim follows directly from this and the definitions.

\emph{Step 3.} 
Since $H^{1}(K_{\mathfrak{p}},\mathbf{A}_{f})$ is a co-free $\iw$-module of rank $2[K_{\mathfrak{p}}:\Q_{p}]$
(cf.\ Step 1), it follows from Step 2 and the flatness of $\mathscr{O}/\Z_p$ 
that $H^{1}(K_{\mathfrak{p}},A_{f,\mathscr{O}}(\mathfrak{P}))$
is a co-free $\iw_\mathscr{O}/\mathfrak{P}$-module of rank $2[K_{\mathfrak{p}}:\Q_{p}]$. 
To conclude the proof it is then sufficient to show  that 
$H^{1}_{\mathrm{fin},\varepsilon}(K_{\mathfrak{p}},A_{f,\mathscr{O}}(\mathfrak{P}))$
is a co-free $\iw_\mathscr{O}/\mathfrak{P}$-module of rank $[K_{\mathfrak{p}}:\Q_{p}]$. 
This is a consequence of the previous two steps. 
\end{proof}

\begin{cor}\label{local control} Let $\mathscr{F}$ denote one of the symbols $\emptyset$, \emph{<<fin,$\varepsilon$>>} or \emph{<<sing,$\varepsilon$>>}.
\begin{enumerate}
\item  The projection $\mathbf{T}_{f,\mathscr{O}}\fre{}T_{f,\mathscr{O}}(\mathfrak{P})$
induces an isomorphism 
$H^{1}_{\mathscr{F}}(K_{\mathfrak{p}},\mathbf{T}_{f,\mathscr{O}})/\mathfrak{P}
\cong{}H^{1}_{\mathscr{F}}(K_{\mathfrak{p}},T_{f,\mathscr{O}}(\mathfrak{P}))$.

\item The inclusion $A_{f,\mathscr{O}}(\mathfrak{P})\fre{}\mathbf{A}_{f,\mathscr{O}}$
induces an isomorphism $H^{1}_{\mathscr{F}}(K_{\mathfrak{p}},A_{f,\mathscr{O}}(\mathfrak{P}))\cong{}
H^{1}_{\mathscr{F}}(K_{\mathfrak{p}},\mathbf{A}_{f,\mathscr{O}})[\mathfrak{P}]$.
\end{enumerate}
\end{cor}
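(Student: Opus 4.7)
The plan is to derive (2) from Proposition~\ref{freeloc} together with the approach of Step~2 of its proof, and then to deduce (1) by local Tate duality. For (2) with $\mathscr{F}=\emptyset$, I would argue exactly as in Step~2 of the proof of Proposition~\ref{freeloc}: writing $\mathfrak{P}=(x_1,\ldots,x_r)$ for a regular sequence in $\iw_\mathscr{O}$, an iterated application of the long exact cohomology sequence for multiplication by each $x_i$ on $\mathbf{A}_{f,\mathscr{O}}$ and its quotients, together with the vanishing $H^0(K_\mathfrak{p},\mathbf{A}_{f,\mathscr{O}})=0$ (a consequence of Hypothesis~\ref{controlass}(1), already verified in the proof of Proposition~\ref{freeloc}), yields the identification $H^1(K_\mathfrak{p},A_{f,\mathscr{O}}(\mathfrak{P}))\cong H^1(K_\mathfrak{p},\mathbf{A}_{f,\mathscr{O}})[\mathfrak{P}]$. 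The case ``fin,$\varepsilon$'' is then immediate from the very definition of $H^1_{\mathrm{fin},\varepsilon}(K_\mathfrak{p},A_{f,\mathscr{O}}(\mathfrak{P}))$ as the preimage of $H^1_{\mathrm{fin},\varepsilon}(K_\mathfrak{p},\mathbf{A}_{f,\mathscr{O}})$: under the above isomorphism this preimage is identified with $H^1_{\mathrm{fin},\varepsilon}(K_\mathfrak{p},\mathbf{A}_{f,\mathscr{O}})\cap H^1(K_\mathfrak{p},\mathbf{A}_{f,\mathscr{O}})[\mathfrak{P}]=H^1_{\mathrm{fin},\varepsilon}(K_\mathfrak{p},\mathbf{A}_{f,\mathscr{O}})[\mathfrak{P}]$.

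For (2) with the singular part, I would apply $\mathrm{Hom}_{\iw_\mathscr{O}}(\iw_\mathscr{O}/\mathfrak{P},-)$ to the defining short exact sequence
\[
0 \longrightarrow H^1_{\mathrm{fin},\varepsilon}(K_\mathfrak{p}, \mathbf{A}_{f,\mathscr{O}}) \longrightarrow H^1(K_\mathfrak{p}, \mathbf{A}_{f,\mathscr{O}}) \longrightarrow H^1_{\mathrm{sing},\varepsilon}(K_\mathfrak{p}, \mathbf{A}_{f,\mathscr{O}}) \longrightarrow 0.
\]
The crucial point is the vanishing $\mathrm{Ext}^1_{\iw_\mathscr{O}}(\iw_\mathscr{O}/\mathfrak{P},H^1_{\mathrm{fin},\varepsilon}(K_\mathfrak{p},\mathbf{A}_{f,\mathscr{O}}))=0$, which follows from the cofreeness of $H^1_{\mathrm{fin},\varepsilon}(K_\mathfrak{p},\mathbf{A}_{f,\mathscr{O}})$ over $\iw_\mathscr{O}$ (Proposition~\ref{freeloc}(2)) combined with the injectivity of the Pontryagin dual $\mathscr{D}_{\iw_\mathscr{O}}=\mathrm{Hom}_{\Z_p}(\iw_\mathscr{O},\Q_p/\Z_p)$ as a $\iw_\mathscr{O}$-module, which in turn follows from the Hom-tensor adjunction $\mathrm{Hom}_{\iw_\mathscr{O}}(-,\mathscr{D}_{\iw_\mathscr{O}})\cong\mathrm{Hom}_{\Z_p}(-,\Q_p/\Z_p)$ and the exactness of Pontryagin duality. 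Hence taking $\mathfrak{P}$-torsion preserves exactness on the displayed sequence, and a five-lemma argument using the already-proved cases identifies $H^1_{\mathrm{sing},\varepsilon}(K_\mathfrak{p},A_{f,\mathscr{O}}(\mathfrak{P}))$ with $H^1_{\mathrm{sing},\varepsilon}(K_\mathfrak{p},\mathbf{A}_{f,\mathscr{O}})[\mathfrak{P}]$.

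For (1), I would dualize via the local Tate pairing $\dia{-,-}_{\mathfrak{P},\mathfrak{p}}$, under which $H^1_\mathscr{F}(K_\mathfrak{p},T_{f,\mathscr{O}}(\mathfrak{P}))$ is Pontryagin dual to $H^1_{\mathscr{F}^\vee}(K_\mathfrak{p},A_{f,\mathscr{O}}(\mathfrak{P}^\iota))^\iota$, where $\mathscr{F}^\vee$ interchanges ``fin'' and ``sing'' (and equals $\emptyset$ when $\mathscr{F}=\emptyset$). Similarly, using $(M/\mathfrak{P})^\vee\cong M^\vee[\mathfrak{P}]$ for any $\iw_\mathscr{O}$-module $M$, the quotient $H^1_\mathscr{F}(K_\mathfrak{p},\mathbf{T}_{f,\mathscr{O}})/\mathfrak{P}$ is Pontryagin dual to $H^1_{\mathscr{F}^\vee}(K_\mathfrak{p},\mathbf{A}_{f,\mathscr{O}})^\iota[\mathfrak{P}]$. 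Part (2), applied to the ideal $\mathfrak{P}^\iota$, identifies these two Pontryagin duals, and dualizing back yields the desired isomorphism. The main obstacle I expect is precisely the compatibility check at this last step: verifying that the isomorphism obtained by Pontryagin dualization agrees with the map induced by the natural projection $\mathbf{T}_{f,\mathscr{O}}\to T_{f,\mathscr{O}}(\mathfrak{P})$, which requires carefully unwinding the $\iota$-twists and the functoriality of local Tate duality for the subquotients $H^1_\mathscr{F}$ defined in Section~\ref{subsec-p}.
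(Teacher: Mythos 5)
Your proof is correct and follows essentially the same route as the paper's, which is extremely terse: it cites Step 2 of the proof of Proposition \ref{freeloc} for part (2) and says part (1) ``follows by duality.'' You supply exactly the details the paper glosses over, namely the singular case via cofreeness of $H^1_{\mathrm{fin},\varepsilon}(K_{\mathfrak{p}},\mathbf{A}_{f,\mathscr{O}})$ over $\iw_\mathscr{O}$ (giving the required $\mathrm{Ext}^1$-vanishing when passing to $\mathfrak{P}$-torsion on the defining short exact sequence), and the $\iota$-twist bookkeeping needed to identify the Pontryagin-dual isomorphism in part (1) with the map induced by the projection $\mathbf{T}_{f,\mathscr{O}}\to T_{f,\mathscr{O}}(\mathfrak{P})$.
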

\begin{proof} The second sentence has been proved in \emph{Step 2} of the proof of Proposition \ref{freeloc}, and the first sentence follows by duality.
\end{proof}

We end this section by proving (following arguments of \cite{BD-Kim}) that the $\varepsilon$-finite local conditions are suitably self-dual.
Set $\Psi=K_{\mathfrak{p}}$ and $I_{n}=\omega_{n}\cdot{}\iw$, so that $T_{g}(I_{n})=T_{g}\otimes{}\Z_{p}[G_{n}]$ (with $g$ in $G_{K}$
acting as $g\otimes{}g^{-1}$), and Shapiro's isomorphism identifies 
$H^{1}(\Psi,T_{g}(I_{n}))$ with $H^{1}(\Psi_{n},T_{g})$.
Similarly Shapiro's lemma identifies $H^{1}(\Psi,A_{g}(I_{n}))$ with $H^{1}(\Psi_{n},A_{g})$.
The Weil pairing then defines a perfect pairing 
\begin{equation}\label{eq:painv}
     [\cdot,\cdot]_{n}=[\cdot,\cdot]_{\mathfrak{p},n} : H^{1}(\Psi,T_{g}(I_{n}))\times{}H^{1}(\Psi,T_{g}(I_{n}))\lfre{}\Z/p^{k},
\end{equation}
such that $[\lambda\cdot{}x,y]_{n}=[x,\iota(\lambda)\cdot{}y]_{n}$ for each $\lambda$ in $\iw$ and each $x$ and $y$ in $H^{1}(\Psi,T_{g}(I_{n}))$.

\begin{lemma}\label{lele self} $H^{1}_{\mathrm{fin},\varepsilon}(\Psi,T_{g}(I_{n}))$ is its own orthogonal complement under $[\cdot,\cdot]_{n}$.
\end{lemma}
\begin{proof} 
Set $L_{n}^{\ast}=H^{1}_{\mathrm{fin},\varepsilon}(\Psi,T_{g}(I_{n}))$. By definition (after identifying $A_{g}$ with $T_{g}$ via the Weil pairing, and $H^{1}(\Psi_{n},A_{g})$
with the $\omega_{n}$-torsion submodule of $H^{1}(\Psi_{\infty},A_{g})$ via restriction), it is the orthogonal complement of 
$L_{n}=\big(\mathbf{E}(\Psi_{\infty})_{\varepsilon}\otimes\divp\big)[\omega_{n},p^{k}]=\big(\mathbf{E}(\Psi_{\infty})_{\varepsilon}/p^{k}\big)[\omega_{n}]$
under $[\cdot,\cdot]_{n}$. In addition, $L_{n}$ and $L_{n}^{\ast}$ have the same cardinality (cf.\ Proposition $\ref{freeloc}$), hence it is sufficient to prove the claim 
\begin{equation}\label{eq:cl23}
    \left[L_{n},\big(\mathbf{E}(\Psi_{m})_{\varepsilon}/p^{k}\big)[\omega_{n}]\right]_{n}=0
\end{equation}
for each integer $m\ges{}n$. 

We start by proving Equation $(\ref{eq:cl23})$ in the special case $n=m$.
Since $L_{n}$ is co-free over $\iw_{n,k}$ (cf.\ Proposition $\ref{freeloc}$), $L_{n}=L_{n}[\omega_{n}^{\varepsilon}]+L_{n}[\omega_{n}/\omega_{n}^{\varepsilon}]
=L_{n}[\omega_{n}^{\varepsilon}]+\omega_{n}^{\varepsilon}\cdot{}L_{n}$. As $\iota(\omega_{n}^{\varepsilon})$ kills $\mathbf{E}(\Psi_{n})_{\varepsilon}/p^{k}$, 
one has $[\omega_{n}^{\varepsilon}\cdot{}L_{n},\mathbf{E}(\Psi_{n})_{\varepsilon}/p^{k}]_{n}
=0$.
Moreover,  
Proposition $\ref{hypext}$ gives 
$L_{n}[\omega_{n}^{\varepsilon}]=\mathbf{E}(\Psi_{n})_{\varepsilon}/p^{k}\subset{}\mathbf{E}(\Psi_{n})/p^{k}$, and the latter is orthogonal to itself under $[\cdot,\cdot]_{n}$,
hence $[L_{n}[\omega_{n}^{\varepsilon}],\mathbf{E}(\Psi_{n})_{\varepsilon}/p^{k}]_{n}=0$.
This proves  
\begin{equation}\label{eq:clcl23}
         [L_{n}, \mathbf{E}(\Psi_{n})_{\varepsilon}/p^{n}]_{n}=0.
\end{equation}

Fix now an isomorphism of $\iw$-modules $H^{1}(\Psi_{\infty},A_{g})\simeq{}\mathscr{D}^{a}\oplus{}\mathscr{D}^{a}$, where $\mathscr{D}=\Hom{\mathrm{cts}}(\iw/p^{k},\divp)$ is the Pontjagin dual of $\iw/p^{k}$
and $a=[\Psi:\Q_{p}]$, which identifies $\mathbf{E}(\Psi_{\infty})_{\varepsilon}/p^{k}$ with the first copy of $\mathscr{D}^{a}$  (cf.\ Proposition $\ref{freeloc}$).
Set $\mathscr{D}_{n}=\mathscr{D}[\omega_{n}]=\Hom{\Z_{p}}(\iw_{n}/p^{k},\divp)$, and for each $m\ges{}n$ let 
$\mathrm{C}_{m/n} : \mathscr{D}_{m}\rightarrow{}\mathscr{D}_{n}$ be the dual of the injective map $\iw_{n}/p^{k}\rightarrow{}\iw_{n}/p^{k}$ sending $\lambda+(\omega_{n},p^{k})$ to 
$(\omega_{m}/\omega_{n})\cdot{}\lambda+(\omega_{m},p^{k})$,
for each $\lambda$ in $\iw$. Then the surjective map $\mathrm{Cor}_{m/n}=\mathrm{C}_{m/n}^{a}\oplus{}\mathrm{C}_{m/n}^{a}$   
corresponds to the corestriction map $H^{1}(\Psi_{m},A_{g})\rightarrow{}H^{1}(\Psi_{n},A_{g})$ under the fixed isomorphism, once one identifies 
$H^{1}(\Psi_{s},A_{g})$ with the $\omega_{s}$-torsion submodule of $H^{1}(\Psi_{\infty},A_{g})$ (cf.\ Corollary $\ref{local control}$).
It follows that for every $m\ges{}n$ one has 
\[
      \mathrm{Cor}_{m/n}(L_{m})=L_{n},
\]
which combined with Equation $(\ref{eq:clcl23})$ proves the Claim $(\ref{eq:cl23})$: for each $y$ in $\big(\mathbf{E}(\Psi_{m})_{\varepsilon}/p^{k}\big)[\omega_{n}]$ one has
\[
           [L_{n},y]_{n}=[\mathrm{Cor}_{m/n}(L_{m}),y]_{n}=[L_{m},y]_{m}=0.
\]
(Here one identifies $H^{1}(\Psi_{n},A_{g})$ with the $\omega_{n}$-torsion submodule of $H^{1}(\Psi_{m},A_{g})$ via the restriction map,
and uses the fact that restriction and corestriction are adjoint to each other under the Tate pairing.)
\end{proof}

\subsubsection{Primes not dividing $p$} 

Recall the notation introduced at the beginning of Section \ref{selsec}: $k$ is a positive integer or the symbol $\infty$, 
$L\in{}\mathscr{S}_{k}$ is a square-free product of $k$-admissible primes, 
and $g$  is either the level raising of $f_k=f\pmod{p^k}$ at $L$ if $k$ is an integer, or $g=f$ if $k=\infty$.
Let $\mathfrak{P}$ be a principal ideal of $\iw_\mathscr{O}$ generated by a regular sequence.

\begin{lemma}\label{control nminus} \hfill
\begin{enumerate}
\item 
Let $\ell$ be a  prime dividing  $N^{-}$. Then the morphism 
\[
    H^{1}(K_{\ell},A_{g,\mathscr{O}}^{[\ell]}(\mathfrak{P}))\lfre{}H^{1}(K_{\ell},\mathbf{A}_{g,\mathscr{O}}^{[\ell]})
\]
induced by the inclusion $A_{g,\mathscr{O}}^{[\ell]}(\mathfrak{P})=\mathbf{A}_{g,\mathscr{O}}^{[\ell]}[\mathfrak{P}]\hlfre{}\mathbf{A}_{g,\mathscr{O}}^{[\ell]}$   is injective.

\item Let $\ell\in{}\mathscr{S}_{k}$ be a $k$-admissible prime.
Then the morphisms 
\[   
H^{1}(K_{\ell},A_{h,\mathscr{O}}^{(\ell)}(\mathfrak{P}))\lfre{}H^{1}(K_{\ell},\mathbf{A}_{g,\mathscr{O}}^{(\ell)})\ \ \ \text{and}\ \ \ 
H^{1}(K_{\ell},A_{h,\mathscr{O}}^{[\ell]}(\mathfrak{P}))\lfre{}H^{1}(K_{\ell},\mathbf{A}_{g,\mathscr{O}}^{[\ell]})
\]
induced by the inclusions $A_{g,\mathscr{O}}^{(\ell)}(\mathfrak{P})\hlfre{}{}\mathbf{A}_{g,\mathscr{O}}^{(\ell)}$ and 
$A_{g,\mathscr{O}}^{[\ell]}(\mathfrak{P})\hlfre{}\mathbf{A}_{g,\mathscr{O}}^{[\ell]}$   respectively are injective.
\end{enumerate}
\end{lemma}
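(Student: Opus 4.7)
The plan is to reduce both parts of the lemma, at each prime $\ell$ under consideration, to a computation of local Galois invariants via the long exact sequence in continuous $G_{K_\ell}$-cohomology applied to the short exact sequence
\[
0 \lra A_{h,\mathscr{O}}^{\sharp}(\mathfrak{P}) \lra \mathbf{A}_{g,\mathscr{O}}^{\sharp} \lra Q^{\sharp} \lra 0
\qquad (\sharp \in \{(\ell),[\ell]\}),
\]
where $Q^{\sharp}$ denotes the cokernel. The kernel of the $H^{1}$-map in question is the cokernel of $H^{0}(K_{\ell},\mathbf{A}_{g,\mathscr{O}}^{\sharp}) \to H^{0}(K_{\ell},Q^{\sharp})$, so it suffices to analyse these $H^{0}$'s.

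The key preliminary observation is that since $\ell \nmid p$ and $\ell\mathcal{O}_{K}$ splits completely in $K_{\infty}/K$, the tautological character $\epsilon_{\infty} \colon G_{K} \to \iw^{\ast}$ restricts trivially to $G_{K_{\ell}}$; in particular the $\iota$-twists entering the Kummer-dual definitions do not affect the local action. Consequently, by the explicit descriptions recalled in Sections \ref{subsec-N^-} and in the discussion of primes dividing $L$, the $G_{K_{\ell}}$-action on $\mathbf{T}_{g,\mathscr{O}}^{(\ell)}$ (resp.\ $\mathbf{T}_{g,\mathscr{O}}^{[\ell]}$) is cyclotomic (resp.\ trivial), and passing to Kummer duals, the action on $\mathbf{A}_{g,\mathscr{O}}^{[\ell]}$ is trivial while that on $\mathbf{A}_{g,\mathscr{O}}^{(\ell)}$ is cyclotomic. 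The same applies to the corresponding finite-level modules $A_{h,\mathscr{O}}^{\sharp}(\mathfrak{P})$.

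For all $[\ell]$-statements, namely part (1) and the first assertion of part (2), triviality of the $G_{K_{\ell}}$-action identifies each local $H^{0}$ with the underlying abelian group, so the map $H^{0}(K_{\ell},\mathbf{A}_{g,\mathscr{O}}^{[\ell]}) \to H^{0}(K_{\ell},Q^{[\ell]})$ is the surjective quotient map and the kernel under scrutiny is zero. For the remaining $(\ell)$-statement in part (2), we use admissibility: since $\ell$ is inert in $K/\Q$ (condition A4), $K_{\ell} = \Q_{\ell^{2}}$ and an arithmetic Frobenius acts on $\Z_{p}(1)$ by $\ell^{2}$, so the $H^{0}$ of any cyclotomically twisted $p$-primary $G_{K_{\ell}}$-module is the $(\ell^{2}-1)$-torsion. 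Admissibility condition A2 gives $p \nmid \ell^{2} - 1$, hence multiplication by $\ell^{2}-1$ is a unit on any $p$-primary module. This forces $H^{0}(K_{\ell},M) = 0$ for $M \in \{A_{h,\mathscr{O}}^{(\ell)}(\mathfrak{P}),\,\mathbf{A}_{g,\mathscr{O}}^{(\ell)},\,Q^{(\ell)}\}$, whence injectivity.

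The only real subtlety is bookkeeping: one must confirm that the Kummer-dual and $\iota$-twist constructions transport the $G_{K_{\ell}}$-actions as claimed, which is where the splits-completely hypothesis does the essential work. Once that is in place, no Euler-characteristic calculation or invariant-theoretic input is needed beyond what is already encoded in A2 and A4.
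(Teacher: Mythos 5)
Your proof is correct, and it takes a genuinely different route from the paper's. The paper argues by \emph{identifying} each $H^{1}$-group explicitly: for the $[\ell]$-pieces, triviality of the action gives $H^{1}(K_{\ell},-)=\mathrm{Hom}_{\mathrm{cont}}(G_{K_{\ell}},-)$, and the left-exactness of $\mathrm{Hom}$ applied to the inclusion of coefficient modules gives injectivity; for the $(\ell)$-piece, the paper first computes $H^{1}(K_{\ell},\iw_\mathscr{O}/(\mathfrak{P},p^{t}))\cong{}\iw_\mathscr{O}/(\mathfrak{P},p^{t})$ (using $p\nmid\ell^{2}-1$) and then applies local Tate duality to identify $H^{1}(K_{\ell},A_{h,\mathscr{O}}^{(\ell)}(\mathfrak{P}))$ and $H^{1}(K_{\ell},\mathbf{A}_{g,\mathscr{O}}^{(\ell)})$ with explicit modules, between which the natural map is manifestly an inclusion. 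You instead run the long exact sequence of $G_{K_{\ell}}$-cohomology for $0\to A_{h,\mathscr{O}}^{\sharp}(\mathfrak{P})\to\mathbf{A}_{g,\mathscr{O}}^{\sharp}\to Q^{\sharp}\to 0$ and only need to control $H^{0}$'s of the cokernel: triviality of the action makes $H^{0}(\mathbf{A}_{g,\mathscr{O}}^{[\ell]})\to H^{0}(Q^{[\ell]})$ surjective, and cyclotomicity plus $p\nmid\ell^{2}-1$ kills $H^{0}(K_{\ell},Q^{(\ell)})$ outright. Both arguments feed on the same inputs (complete splitting of $\ell$ in $K_{\infty}/K$ trivialising $\epsilon_{\infty}$ locally, A2 and A4 for admissible primes); your version is leaner because it extracts only the injectivity rather than the full computation of $H^{1}$, at the cost of not recording the explicit identifications $H^{1}\cong\mathscr{D}_{\iw_{\mathscr{O}}}[\cdots]$ that the paper reuses elsewhere. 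One trivial slip: the $[\ell]$-assertion is the \emph{second} displayed map in part (2), not the first; your reasoning is organised correctly by the type of piece, so this labelling slip does not affect the proof.
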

\begin{proof} (1).  
Let  $\mathscr{D}_{\iw_\mathscr{O}}$ denote the Pontrjagin dual of $\iw_\mathscr{O}$.
Then  $A_{g,\mathscr{O}}^{[\ell]}(\mathfrak{P})$ is isomorphic to $\mathscr{D}_{\iw_{\mathscr{O}}}[\mathfrak{P},p^{k}]$ with trivial Galois action 
(cf.\ Section \ref{selsec}), hence
$H^{1}(K_{\ell},A_{g,\mathscr{O}}^{[\ell]}(\mathfrak{P}))$
is isomorphic to $\Hom{\mathrm{cont}}(G_{K_\ell},\mathscr{D}_{\iw_\mathscr{O}}[\mathfrak{P},p^{k}])$.
Similarly, $H^{1}(K_{\ell},\mathbf{A}^{[\ell]}_{g})\cong{}\Hom{\mathrm{cont}}(G_{K_\ell},\mathscr{D}_{\iw_\mathscr{O}}[p^{k}])$,
and the map
$H^{1}(K_{\ell},A_{h,\mathscr{O}}^{[\ell]}(\mathfrak{P}))\rightarrow H^{1}(K_{\ell},\mathbf{A}^{[\ell]}_{g,\mathscr{O}})$ corresponds 
to the morphism induced  by the inclusion of $\mathscr{D}_{\iw}[\mathfrak{P},p^{t}]$ in $\mathscr{D}_{\iw}[p^{k}]$.

(2) The injectivity of the map $H^{1}(K_{\ell},A_{g,\mathscr{O}}^{[\ell]}(\mathfrak{P}))\fre{}H^{1}(K_{\ell},\mathbf{A}_{g,\mathscr{O}}^{[\ell]})$
is proved as above. 
The representation $A_{g,\mathscr{O}}^{(\ell)}(\mathfrak{P})$ is isomorphic to the Kummer dual of 
$T_{g,\mathscr{O}}^{[\ell]}(\mathfrak{P}^{\iota})^{\iota}\cong{}\iw_\mathscr{O}/(\mathfrak{P},p^{k})$. Since $p\nmid{\ell}^{2}-1$
because $\ell\in{}\mathscr{S}_{k}$,  
and $\mathscr{O}$ is a flat $\Z_{p}$-algebra, 
$H^{1}(K_{\ell},\iw_\mathscr{O}/(\mathfrak{P},p^{k}))\cong{}\iw_\mathscr{O}/(\mathfrak{P},p^{k})$
and  local Tate duality then  gives 
$H^{1}(K_{\ell},A_{g,\mathscr{O}}^{(\ell)}(\mathfrak{P},p^{k}))\cong{}\mathscr{D}_{\iw_\mathscr{O}}[\mathfrak{P},p^{k}]$. 
Similarly $H^{1}(K_{\ell},\mathbf{A}_{g,\mathscr{O}}^{(\ell)})\cong\mathscr{D}_{\iw_\mathscr{O}}[p^{k}]$, and the map $H^{1}(K_{\ell},A_{h,\mathscr{O}}^{(\ell)}(\mathfrak{P}))\rightarrow{}H^{1}(K_{\ell},\mathbf{A}_{g,\mathscr{O}}^{(\ell)})$
is identified with the inclusion $\mathscr{D}_{\iw_\mathscr{O}}[\mathfrak{P},p^{k}]\fre{}\mathscr{D}_{\iw_\mathscr{O}}[p^{k}]$.
\end{proof}

\begin{lemma}\label{control nplus} If $w$ is a prime of $K$ which divides $N^{+}$, then  $H^{j}(K_{w},A_{h,\mathscr{O}}(\mathfrak{P}))=0$ for every $j$.
\end{lemma}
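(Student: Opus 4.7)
Since $w$ divides $N^+$ and $N^+$ is supported on primes split in $K/\Q$, the prime $w$ lies above a rational prime $q$ split in $K$, with $q\neq p$ (as $p\nmid N$) and $K_w\cong \Q_q$, $I_{K_w}=I_{\Q_q}$. Because the anticyclotomic $\Z_p$-extension $K_\infty/K$ is unramified outside $p$, the tautological character $\epsilon_\infty$ restricted to $G_{K_w}$ is unramified; similarly $\mu_{p^\infty}$ is unramified at $w$ since $q\neq p$. Consequently $I_{K_w}$ acts on $\mathbf{T}_{h,\mathscr{O}}$, on $\mathbf{A}_{h,\mathscr{O}}$, and on any Tate twist thereof, only through its action on $T_h$ and $A_h$ respectively.

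The first task is to show $(T_h)^{I_{K_w}}=(A_h)^{I_{K_w}}=0$, which I would do by induction on $t$ (with $h$ a $p^t$-torsion form). In the base case $t=1$, the residual representation $T_h/p$ is isomorphic to $E_p$ as a $G_\Q$-module, since level raising at primes in $L$ leaves $\bar\rho_f$ unchanged; Hypothesis \ref{controlass}(2) then gives $(T_h/p)^{I_{K_w}}=H^0(I_{\Q_q},E_p)=0$. For $t>1$, the short exact sequence
\[
0 \longrightarrow pT_h \longrightarrow T_h \longrightarrow T_h/p \longrightarrow 0,
\]
the identification $pT_h\cong T_h/p^{t-1}$ via multiplication by $p$, and the inductive hypothesis applied to the reduction of $h$ modulo $p^{t-1}$ together give $(T_h)^{I_{K_w}}=0$. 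The Weil pairing identifies $A_h[p]$ with $E_p$ as $G_\Q$-module, so the same induction yields $(A_h)^{I_{K_w}}=0$. Combining this with the unramified nature of the Iwasawa-algebra twist at $w$ and the flatness of $\mathscr{O}/\Z_p$ forces $(\mathbf{A}_{h,\mathscr{O}})^{I_{K_w}}=0$, and since $A_{h,\mathscr{O}}(\mathfrak{P})\subseteq \mathbf{A}_{h,\mathscr{O}}$ we conclude $H^0(K_w,A_{h,\mathscr{O}}(\mathfrak{P}))=0$.

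For $H^2$, local Tate duality expresses $H^2(K_w,A_{h,\mathscr{O}}(\mathfrak{P}))$ as the Pontryagin dual of $H^0(K_w,T_{h,\mathscr{O}}(\mathfrak{P}^\iota)^\iota(1))$; the same inertia argument, now incorporating the unramified Tate twist by $\mu_{p^\infty}$, shows that this $H^0$ vanishes, whence $H^2=0$. For $H^1$, I would write $A_{h,\mathscr{O}}(\mathfrak{P})$ as the direct limit of its finite $G_{K_w}$-stable submodules $M$. Each such $M$ satisfies $H^0(K_w,M)=H^2(K_w,M)=0$ by the preceding arguments, and the local Euler--Poincar\'e formula at residue characteristic $q\neq p$, namely $|H^0(K_w,M)|\cdot |H^2(K_w,M)|=|H^1(K_w,M)|$, forces $H^1(K_w,M)=0$. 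Passing to the direct limit yields $H^1(K_w,A_{h,\mathscr{O}}(\mathfrak{P}))=0$, while $H^j(K_w,-)=0$ for $j\ges 3$ by cohomological dimension.

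The main subtlety is the careful bookkeeping of twists: one must verify that the $I_{K_w}$-action genuinely factors through $T_h$ (resp.\ $A_h$) after every twist that intervenes, and that the mod-$p$ vanishing of Hypothesis \ref{controlass}(2) propagates through the filtrations by powers of $p$, by the Tate twist and by the ideal $\mathfrak{P}$. The argument is entirely local and parallel in spirit to Lemma \ref{control nminus}, but the strength of Hypothesis \ref{controlass}(2) is precisely what allows us to kill \emph{all} cohomology, rather than merely to match finite and singular parts.
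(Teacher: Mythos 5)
Your proof is correct and follows essentially the same route as the paper's: the vanishing of $H^0$ comes from Hypothesis \ref{controlass}(2) and the unramifiedness of all the intervening twists at $w\nmid p$, the vanishing of $H^2$ from local Tate duality together with the Weil self-duality of $E_p$, and the vanishing of $H^1$ from the local Euler--Poincar\'e formula at residue characteristic coprime to $p$. The only difference is organisational: the paper establishes acyclicity of $\mathbf{A}_f[\mathfrak{m}_\Lambda]=E_p$ once and then bootstraps to $\mathbf{A}_{f,\mathscr{O}}[J]$ via flatness of $\mathscr{O}/\Z_p$ and a filtration for $J$ generated by a regular sequence, whereas you carry out an explicit induction on the torsion exponent $t$; the two devices are interchangeable.
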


\begin{proof}
Hypothesis $\ref{controlass}(5)$ and local Tate duality 
imply that $\mathbf{A}_{f}[\mathfrak{m}_{\iw}]=E_{p}$ is an acyclic $G_{w}$-module. Since $\mathscr O/\Z_p$ 
is flat, this in turn implies that $\mathbf{A}_{f,\mathscr{O}}[J]$ is an acyclic $G_{w}$-module for every 
ideal $J$ of $\iw$ generated by a regular sequence. Because $A_{h,\mathscr{O}}(\mathfrak{P})$
is isomorphic to $\mathbf{A}_{f,\mathscr{O}}[\mathfrak{P},p^{t}]$,
this concludes the proof. 
\end{proof}

\subsection{Control theorems} Let $\mathfrak{P}$ be an ideal of $\iw_\mathscr{O}$ generated by a regular sequence.

\begin{proposition}\label{global control} Let $S\in{}\mathscr{S}_{k}$ be a (possibly empty) squarefree  product 
of $k$-admissible primes.  
Then 
\[
\mathrm{Sel}_{\varepsilon}^{S}(K,A_{g,\mathscr{O}}(\mathfrak{P}))
\cong{}\mathrm{Sel}_{\varepsilon}^{S}(K,\mathbf{A}_{g,\mathscr{O}}    
)[\mathfrak{P}].
\]
\end{proposition}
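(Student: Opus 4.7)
The plan is to prove this control theorem by the standard two-step architecture: first establish a global isomorphism on $H^1$, then verify that each local condition defining $\mathrm{Sel}_\varepsilon^S$ is preserved under it. Under the identification $A_{h,\mathscr{O}}(\mathfrak{P}) \cong \mathbf{A}_{g,\mathscr{O}}[\mathfrak{P}, p^t]$ arising from $h = g \pmod{p^t}$, the short exact sequence
\[
0 \longrightarrow \mathbf{A}_{g,\mathscr{O}}[\mathfrak{P}, p^t] \longrightarrow \mathbf{A}_{g,\mathscr{O}} \longrightarrow \mathbf{A}_{g,\mathscr{O}}/\mathbf{A}_{g,\mathscr{O}}[\mathfrak{P}, p^t] \longrightarrow 0
\]
(whose cokernel is again a copy of $\mathbf{A}_{g,\mathscr{O}}$ since $\mathfrak{P}$ is principal coprime to $p$ and $\mathbf{A}_{g,\mathscr{O}}$ is suitably divisible) yields the desired global isomorphism
$H^1(K, A_{h,\mathscr{O}}(\mathfrak{P})) \cong H^1(K, \mathbf{A}_{g,\mathscr{O}})[\mathfrak{P}, p^t]$
provided $H^0(K, \mathbf{A}_{g,\mathscr{O}}) = 0$. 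This vanishing follows from Hypothesis \ref{thehy}(2): Shapiro's lemma identifies $H^0(K, \mathbf{A}_{g,\mathscr{O}})$ with a twist of $E(K_\infty)[p^\infty] \otimes \mathscr{O}$, and irreducibility of $\bar\rho_{E,p}$ combined with $K_\infty/K$ being pro-$p$ while $K(E_p)/K$ has degree divisible by primes other than $p$ forces $E(K_\infty)[p] = 0$.

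For the local step, I would check term by term that each local condition in the definition of $\mathrm{Sel}_\varepsilon^S$ transports correctly under the global isomorphism. At primes $\mathfrak{p} | p$ the $\varepsilon$-finite condition is exactly the content of Corollary \ref{local control}(2). At primes $w | N^+$, both sides have vanishing local $H^1$ by Lemma \ref{control nplus}, so the triviality condition is automatically preserved. At primes $\ell | N^-$ the ordinary condition is handled by Lemma \ref{control nminus}(1) together with the subquotient sequence $T^{(\ell)} \hookrightarrow T \twoheadrightarrow T^{[\ell]}$. At primes $\ell | L$ the analogous argument uses Lemma \ref{control nminus}(2) and the direct-sum decomposition supplied by Lemma \ref{decadm}. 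At primes $\ell | S$ disjoint from $LN$ the triviality condition propagates through injectivity of the local restriction map. Finally, at primes outside $SLNp$ the unramified/finite condition is preserved by standard inflation-restriction applied to $I_{K_w}$-invariants, using again the absence of relevant $H^0$.

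The main obstacle is securing the local compatibility at primes above $p$ and at admissible primes. For $p$-adic places, the $\varepsilon$-finite subspace is defined at infinite level through the $\varepsilon$-local point construction supplied by Theorem \ref{prop:formal-group}, so descent to finite level requires precisely the control isomorphism of Corollary \ref{local control}, itself resting on Proposition \ref{hypext} and the freeness of Proposition \ref{freeloc}. For admissible primes dividing $L$, the ordinary subspace exploits the rank-one Galois summand $T^{(\ell)}$ of Lemma \ref{decadm}, and ensuring that the $(\mathfrak{P}, p^t)$-torsion matches on both sides relies on the injectivity in Lemma \ref{control nminus}(2). Once these local isomorphisms are in place, the proposition follows by assembling them into a commutative diagram comparing the two defining exact sequences of the Selmer groups and running a straightforward diagram chase.
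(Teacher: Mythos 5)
Your proposal follows essentially the same route as the paper: prove the isomorphism on global $H^1$ (over a restricted-ramification Galois group in the paper's version) from the vanishing of $H^0(\cdot,E_p)$ guaranteed by irreducibility of $\bar\varrho_{E,p}$, then bound the cokernel of the induced map on Selmer groups by the kernel of the map on local quotients, which vanishes by Corollary \ref{local control}(2), Lemmas \ref{control nminus} and \ref{control nplus}, and the decomposition of Lemma \ref{decadm} at admissible primes. One small imprecision: the cokernel of $\mathbf{A}_{g,\mathscr{O}}[\mathfrak{P},p^t]\hookrightarrow\mathbf{A}_{g,\mathscr{O}}$ is not literally "again a copy of $\mathbf{A}_{g,\mathscr{O}}$" when $t<k$; the correct mechanism is to iterate the $H^0$-vanishing argument generator by generator over the regular sequence $(\mathfrak{p}_\chi,p^t)$, but this does not affect the conclusion.
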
  
\begin{proof} 
Let $\gaun=\mathrm{Gal}(K_{SLNp}/K)$ be the Galois group of the maximal algebraic extension of 
$K$ which is unramified at every finite place $w\nmid{}SLNp$ of $K$. 
Since $H^{0}(\gaun,E_{p})$ vanishes  by Hypothesis $\ref{controlass}.(3)$, 
the same is true for the $\gaun$-module $E_p\otimes_{\Z_p}\mathscr O$, and therefore 
for every ideal $\mathfrak{P}$ of $\iw_\mathscr{O}$ 
generated by a regular sequence 
the natural map gives an isomorphism 
$H^{1}(\gaun,\mathbf{A}_{f,\mathscr{O}}[\mathfrak{P}])\cong{}H^{1}(\gaun,\mathbf{A}_{f,\mathscr{O}})[\mathfrak{P}]$
(cf.\ Step 2 in the proof of Proposition $\ref{freeloc}$).
This implies that the natural map $\mathrm{Sel}_{\varepsilon}^{S}(K,A_{g,\mathscr{O}}(\mathfrak{P}))\fre{}
\mathrm{Sel}_{\varepsilon}^{S}(K,\mathbf{A}_{g,\mathscr{O}})[\mathfrak{P}]$ is injective, and its cokernel is isomorphic to 
a submodule of the kernel of 
\[
           \bigoplus_{w\mid p} H^{1}_{\mathrm{sing},\varepsilon}(K_{w},A_{g,\mathscr{O}}(\mathfrak{P}))
             \oplus{}\bigoplus_{w\mid\frac{LN^{-}}{
             (L,S)}}H^{1}(K_{w},A_{g,\mathscr{O}}^{[w]}(\mathfrak{P}))
\oplus{}\bigoplus_{w\mid SN^{+}}H^{1}(K_{w},A_{g,\mathscr{O}}(\mathfrak{P})) \qquad\qquad\]
             \[\qquad\qquad\longrightarrow
           \bigoplus_{w\mid p}H^{1}_{\mathrm{sing},\varepsilon}(K_{w},\mathbf{A}_{g,\mathscr{O}}) 
             \oplus{}\bigoplus_{w\mid\frac{LN^{-}}{(L,S)}}H^{1}(K_{w},\mathbf{A}_{g,\mathscr{O}}^{[w]})
             \oplus{}\bigoplus_{w\mid SN^{+}}H^{1}(K_{w},\mathbf{A}_{g,\mathscr{O}}).
\]
The  proposition then follows from Corollary $\ref{local control}(2)$,
Lemma $\ref{control nminus}$ and  Lemma $\ref{control nplus}$. 
\end{proof}

\begin{proposition}\label{speinj} Let $S$ be an integer coprime with $LNp$. The canonical map
\[\mathfrak{Sel}^{\varepsilon}_{S}(K,\mathbf{T}_{g,\mathscr{O}})\otimes_{\iw_\mathscr{O}}\iw_\mathscr{O}/\mathfrak{P}\longrightarrow
\mathfrak{Sel}^{\varepsilon}_{S}(K,T_{g,\mathscr{O}}(\mathfrak{P}))\] is injective. 
\end{proposition}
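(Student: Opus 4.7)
The case $\mathfrak{P}=0$ is trivial; assume henceforth that $\mathfrak{P}=(\pi)$ with $\pi$ coprime to $p$, so that $\pi$ is a non-zero-divisor in the domain $\iw_\mathscr{O}$. The plan is to factor the canonical map of the proposition as the composition
\[
\mathfrak{Sel}^{\varepsilon}_{S}(K,\mathbf{T}_{g,\mathscr{O}})/\mathfrak{P}
\xrightarrow{\alpha}
H^{1}(K,\mathbf{T}_{g,\mathscr{O}})/\mathfrak{P}
\xrightarrow{\beta}
H^{1}(K,T_{g,\mathscr{O}}(\mathfrak{P})),
\]
where $\alpha$ is induced by the inclusion $\mathfrak{Sel}^{\varepsilon}_{S}(K,\mathbf{T}_{g,\mathscr{O}})\hookrightarrow H^1(K,\mathbf{T}_{g,\mathscr{O}})$ and $\beta$ is the reduction map attached to the short exact sequence $0\to\mathbf{T}_{g,\mathscr{O}}\xrightarrow{\pi}\mathbf{T}_{g,\mathscr{O}}\to T_{g,\mathscr{O}}(\mathfrak{P})\to 0$. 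I then aim to prove that both $\alpha$ and $\beta$ are injective; since the composition clearly lands in $\mathfrak{Sel}^{\varepsilon}_{S}(K,T_{g,\mathscr{O}}(\mathfrak{P}))$, the proposition follows.

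The injectivity of $\beta$ is equivalent, via the long exact sequence in cohomology, to the vanishing of $H^{0}(K,T_{g,\mathscr{O}}(\mathfrak{P}))$. By Nakayama applied to this finitely generated $\iw_\mathscr{O}/\mathfrak{P}$-module, it suffices to show that its $\mathfrak{m}$-torsion vanishes (with $\mathfrak{m}$ the maximal ideal of $\iw_\mathscr{O}/\mathfrak{P}$), which in turn is a subquotient of $E_p\otimes_{\F_p}k$ for some finite extension $k/\F_p$. By Hypothesis \ref{thehy}(2), $\bar\varrho_{E,p}$ is an irreducible $G_\Q$-representation; a $G_K$-stable line in $E_p$ would, under conjugation by an element of $G_\Q\setminus G_K$, generate a second $G_K$-stable line, and their sum would be a nontrivial $G_\Q$-stable subspace (here one uses $p\geq 5$, so $[G_\Q:G_K]=2$ is coprime to $p$). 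Irreducibility then forces $E_p^{G_K}=0$, yielding the desired vanishing.

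The injectivity of $\alpha$ is equivalent to the $\pi$-torsion-freeness of $H^1(K,\mathbf{T}_{g,\mathscr{O}})/\mathfrak{Sel}^{\varepsilon}_{S}(K,\mathbf{T}_{g,\mathscr{O}})$. By definition, this quotient embeds into the product of the local quotients $H^1(K_w,\mathbf{T}_{g,\mathscr{O}})/H^1_{\mathrm{cond}(w)}(K_w,\mathbf{T}_{g,\mathscr{O}})$ for $w\mid p$, $w\mid LN^-$ or $w\nmid SLNp$. For $w=\mathfrak{p}\mid p$, this quotient equals $H^1_{\mathrm{sing},\varepsilon}(K_\mathfrak{p},\mathbf{T}_{g,\mathscr{O}})$, a free $\iw_\mathscr{O}$-module by Proposition \ref{freeloc}(1). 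For $w\mid LN^-$, the Tate-curve short exact sequence of Section \ref{subsec-N^-} identifies the singular quotient with an $\iw_\mathscr{O}$-submodule of $H^1(K_w,\mathbf{T}_{g,\mathscr{O}}^{[w]})$, and an explicit computation using $\mathbf{T}_{g,\mathscr{O}}^{[w]}\cong\iw_\mathscr{O}$ shows the latter is torsion free. For $w\nmid SLNp$, where $\mathbf{T}_{g,\mathscr{O}}$ is unramified, the quotient injects into $H^1(I_{K_w},\mathbf{T}_{g,\mathscr{O}})^{G_{K_w}/I_{K_w}}$, which is likewise $\iw_\mathscr{O}$-torsion free via a direct tame-inertia calculation.

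The main obstacle is the verification of $\pi$-torsion-freeness at $w\mid LN^-$ and at $w\nmid SLNp$: while the case $w\mid p$ is an immediate consequence of the already-proved Proposition \ref{freeloc}, the other cases require unwinding the Iwasawa-algebra structure on $\mathbf{T}_{g,\mathscr{O}}=T_g\otimes_{\Z_p}\iw(\epsilon_\infty^{-1})\otimes_{\Z_p}\mathscr{O}$ and exploiting the coprimality of $\pi$ with $p$ to separately handle the pro-$p$ and tame inertial contributions.
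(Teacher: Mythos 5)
Your strategy is essentially the paper's: reduce the statement to the fact that the cokernel of $\mathfrak{Sel}^{\varepsilon}_{S}(K,\mathbf{T}_{g,\mathscr{O}})\hookrightarrow H^{1}$ has no $\mathfrak{P}$-torsion, embed that cokernel into a direct sum of local singular quotients, and verify $\mathfrak{P}$-torsion-freeness of each summand. The one real structural difference is that the paper works with $H^1(\gaun,-)$ for $\gaun=\mathrm{Gal}(K_{SLNp}/K)$ rather than $H^1(K,-)$; because the finiteness condition outside $SLNp$ is built into $\gaun$, only the summands at $w\mid p$ and $w\mid LN^-$ appear, and the tame-inertia analysis at $w\nmid SLNp$ that you carry out is simply avoided. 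Your extra case is not wrong, but it costs you an additional verification that the paper sidesteps for free.

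Two points to tighten. First, your assertion that injectivity of $\beta$ is \emph{equivalent} to $H^0(K,T_{g,\mathscr{O}}(\mathfrak{P}))=0$ is not correct: from the long exact sequence associated to $0\to\mathbf{T}_{g,\mathscr{O}}\xrightarrow{\pi}\mathbf{T}_{g,\mathscr{O}}\to T_{g,\mathscr{O}}(\mathfrak{P})\to 0$, the kernel of the reduction map $H^1(K,\mathbf{T}_{g,\mathscr{O}})\to H^1(K,T_{g,\mathscr{O}}(\mathfrak{P}))$ is exactly $\pi\cdot H^1(K,\mathbf{T}_{g,\mathscr{O}})$, so $\beta$ is injective automatically; the vanishing of $H^0$ (which is true, by irreducibility of $\bar\varrho_{E,p}$ and normality of $G_K$ in $G_{\Q}$) gives the stronger, unneeded fact that $H^1(K,\mathbf{T}_{g,\mathscr{O}})$ is $\pi$-torsion-free. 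Second, for $k<\infty$ the relevant module is over $\iw_\mathscr{O}/p^k$, not the domain $\iw_\mathscr{O}$, so the justification that the short exact sequence above is exact on the left should invoke that $\pi$ and $p$ form a regular sequence (equivalently, $\iw_\mathscr{O}/p^k$ has no nontrivial $\mathfrak{P}$-torsion), which is exactly the observation the paper opens its proof with; similarly $\mathbf{T}_{g,\mathscr{O}}^{[w]}\cong\iw_\mathscr{O}\otimes_{\mathscr{O}}\mathscr{O}/p^k\mathscr{O}$, not $\iw_\mathscr{O}$.
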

\begin{proof} 
Let $\gaun$ be the Galois group of the maximal algebraic extension of $K$
unramified outside $SLNp\infty$. Since   $\iw_\mathscr{O}/p^{k}$ has no nontrivial $\mathfrak{P}$-torsion 
the morphism 
\[H^{1}(\gaun,\mathbf{T}_{g,\mathscr{O}})\otimes_{\iw_\mathscr{O}}\iw_\mathscr{O}/\mathfrak{P}\longrightarrow
H^{1}(\gaun,\mathbf{T}_{g,\mathscr{O}}(\mathfrak{P}))\]
is injective. Moreover the cokernel of the inclusion $\mathfrak{Sel}^{\varepsilon}_{S}(K,\mathbf{T}_{g,\mathscr{O}})
\fre{}H^{1}(\gaun,\mathbf{T}_{g,\mathscr{O}})$ is isomorphic to a submodule of 
\begin{equation}\label{dirsum}
\bigoplus_{w\mid p}H^{1}_{\mathrm{sing},\varepsilon}(K_{w},\mathbf{T}_{g,\mathscr{O}})\oplus{}\bigoplus_{w\mid LN^-}H^{1}(K_{w},\mathbf{T}_{g,\mathscr{O}}^{[w]}).\end{equation} To prove the first part of the proposition 
it is then sufficient to show that each summand of the direct sum \eqref{dirsum}
has no non-trivial $\mathfrak{P}$-torsion.  
For $H^{1}_{\mathrm{sing},\varepsilon}(K_{w},\mathbf{T}_{g,\mathscr{O}})$ this is a consequence of Proposition $\ref{freeloc}$. 
Assume that $w=\ell\cdot{}\mathcal{O}_{K}$ for a rational prime $\ell$ dividing $LN^{-}$. In this case 
$\mathbf{T}_{g,\mathscr{O}}^{[w]}$  is isomorphic to $\iw_\mathscr{O}\otimes_{\Z_{p}}\Z_{p}/p^{k}$ (with trivial Galois action), hence 
$H^{1}(K_{w},\mathbf{T}_{g,\mathscr{O}}^{[w]})\cong{}H^{1}(K_{\ell},\Z_{p}/p^{k})\otimes_{\Z_{p}}\iw_\mathscr{O}$
and $H^{1}(K_{w},\mathbf{T}_{g,\mathscr{O}}^{[w]})[\mathfrak{P}]=0$. 
\end{proof}

In light of the applications to the definite and indefinite main conjectures, 
we also record the following result due to Mazur--Rubin and Howard. In the rest of this section, let $\mathfrak P$ be a height one prime ideal of $\Lambda$, and  
let $\mathscr{O}$ be the integral closure of $\Lambda/\mathfrak{P}$ in its fraction field. Consider the 
canonical map \begin{equation}\label{MRH1}
\mathfrak{Sel}^\varepsilon(K,\mathbf{T}_f)/\mathfrak P \longrightarrow 
\mathfrak{Sel}^\varepsilon(K,\mathbf{T}_{f,\mathscr{O}}(\mathfrak P)),\end{equation}
 induced by the composition $\mathrm{sp}_{\mathfrak{P}} : \mathbf{T}_f\rightarrow \mathbf{T}_f\otimes_{\Z_p}(\Lambda/\mathfrak{P})\rightarrow \mathbf{T}_{f,\mathscr{O}}(\mathfrak{P})$, and the map 
 \begin{equation}\label{MRH2}\mathrm{Sel}_\varepsilon(K,A_{f,\mathscr{O}}(\mathfrak P))\longrightarrow 
\mathrm{Sel}_\varepsilon(K,\mathbf{A}_f)[\mathfrak P]
\end{equation}
induced by the Kummer dual of  $\mathrm{sp}_{\mathfrak{P}^{\imath}} : \mathbf{T}_f\rightarrow \mathbf{T}_{f,\mathscr{O}}(\mathfrak{P}^{\imath})$. 
 
\begin{proposition}\label{propMRH}
There is a finite set $\Sigma$ of height one primes in $\Lambda$ such that for $\mathfrak P\not\in\Sigma$, 
the maps \eqref{MRH1} and \eqref{MRH2} 
have finite kernel and cokernels, of order bounded by a constant depending only on $[\mathscr O:\Lambda/\mathfrak P]$. 
\end{proposition}

\begin{proof} The proof proceeds as in \cite[Proposition 5.3.14]{MR-KolSys}, replacing the local results 
in loc. cit.  with Corollary \ref{local control} (for primes dividing $p$), and following 
the proof of \cite[Lemma 2.2.7]{How-HK} for the ordinary-type conditions.  
\end{proof}

\subsection{Global freeness}\label{glofree} Assume in this section that $k<\infty$. Let  $\bar{g}\in{}S_{2}(N^{+},LN^{-};\F_{\!p})$
be the reduction of $g$ modulo $p$, and let $\mathfrak{m}_{\iw}=(\omega_{1},p)\cdot{}\iw$ be the maximal ideal of $\iw$.
Define 
\[
       \mathrm{Sel}^{S}_{\Box}(K,A_{\bar{g}})\subset{}H^{1}(K,\mathbf{A}_{g}[\mathfrak{m}_{\iw}])
\]
to be the Selmer group obtained from $\mathrm{Sel}^{S}(K,A_{\bar{g}})$ by relaxing the local conditions at 
the primes of $K$ dividing $p$ (so that $\mathrm{Sel}^{S}(K,A_{\bar{g}})$
is the set of classes in $\mathrm{Sel}^{S}_{\Box}(K,A_{\bar{g}})$ which are finite at $p$.)

\begin{definition}
A \emph{freeing set relative to $g$} 
is an integer $S\in{}\mathscr{S}_{k}$ 
such that  $\mathrm{Sel}_{\Box}^{S}(K,A_{\bar{g}})=0$.
\end{definition}

A generalization of \cite[Theorem 3.2]{Be-Da-main} guarantees  the existence of infinitely freeing sets relative to $g$
(see also the proof of   \cite[Lemma 2.23]{Be-Da-der1},  and compare with \cite{KPW} where a correction including the ramification condition at primes $q\mid N^+$ is made).
Let $n$ denote either a nonnegative integer or $\infty$.
Set $\omega_{\infty}=0$, $I_{\mathscr{O},n}=\omega_{n}\cdot{}\iw_{\mathscr{O}}$, and $\iw_{{\mathscr{O}},n,k}=\iw_{\mathscr{O}}/(I_{\mathscr{O},n},p^{k})$, and  write  
\[\mathfrak{Sel}_{S}^{\varepsilon}(K_{n},T_{g,{\mathscr{O}}})
=\mathfrak{Sel}_{S}^{\varepsilon}(K,T_{g,{\mathscr{O}}}(I_{\mathscr{O},n}))\ \ \ \text{and}\ \ \
\mathfrak{Sel}_{Sp}(K_{n},T_{g,{\mathscr{O}}})
=\mathfrak{Sel}_{Sp}(K,T_{g,{\mathscr{O}}}(I_{\mathscr{O},n})),\]
where $\varepsilon=\emptyset$ if $E$ is ordinary at $p$, and $\varepsilon$ denotes one of the symbols $\emptyset$, $+$ of $-$ otherwise.

\begin{proposition}\label{global freeness} Let  $S$ be a freeing set relative to $g$ and set $\delta(S)=\#\{\mathrm{prime\ divisors\ of}\  S\}$. 
\begin{enumerate}
\item
The Selmer group   $\mathfrak{Sel}_{S}^{\varepsilon}(K_{n},T_{g,{\mathscr{O}}})$
is a free $\iw_{\mathscr{O},n,k}$-module of rank $\delta(S)$. 
\item The Selmer group   $\mathfrak{Sel}_{Sp}(K_{n},T_{g,{\mathscr{O}}})$
is a free $\iw_{\mathscr{O},n,k}$-module of rank $\delta(S)+2$. 
\end{enumerate}
\end{proposition}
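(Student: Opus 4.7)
The strategy combines Nakayama's lemma for the local Artinian ring $\iw_{\mathscr{O},n,k} = \iw_\mathscr{O}/(\omega_n,p^k)$ with the Greenberg--Wiles/Poitou--Tate formula applied to the residual representation. Freeness of rank $r$ over $\iw_{\mathscr{O},n,k}$ is equivalent to the conjunction of (a) the module is generated by $r$ elements, and (b) its $\mathscr{O}$-length equals $r\cdot\mathrm{length}_\mathscr{O}(\iw_{\mathscr{O},n,k})$. I verify both conditions by reducing to the residual level.

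For (a), iterating Proposition \ref{speinj} along a composition series of $\iw_{\mathscr{O},n,k}$ produces an injection of the reduction $\mathfrak{Sel}_S^\varepsilon(K_n, T_{g,\mathscr{O}})/\mathfrak{m}$ into the residual Selmer group $\mathfrak{Sel}_S^\varepsilon(K, T_{\bar{g},\mathscr{O}})$, where $\mathfrak{m}$ denotes the maximal ideal of $\iw_{\mathscr{O},n,k}$. It therefore suffices to compute the residual dimension. The representation $T_{\bar{g},\mathscr{O}}$ is self-dual via the Weil pairing $E_p \cong E_p^\vee(1)$, and the Selmer conditions are self-dual: the $\varepsilon$-finite condition at $p$ by Proposition \ref{freeloc} and Section \ref{subsec-p}, the ordinary condition at $LN^-$ by Section \ref{subsec-N^-}, while the relaxed-at-$S$ condition is Poitou--Tate dual to the strict-at-$S$ condition imposed on $\mathrm{Sel}_\varepsilon^S$. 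The Greenberg--Wiles formula yields
\[
\dim_{\F_\mathscr{O}}\mathfrak{Sel}_S^\varepsilon(K, T_{\bar{g},\mathscr{O}}) - \dim_{\F_\mathscr{O}}\mathrm{Sel}_\varepsilon^S(K, A_{\bar{g},\mathscr{O}}) = \sum_v \bigl(\dim_{\F_\mathscr{O}}\mathcal{L}_v - \dim_{\F_\mathscr{O}} H^0(K_v, T_{\bar{g},\mathscr{O}})\bigr).
\]
The second term on the left vanishes by the freeing hypothesis, while the right-hand side evaluates to $\delta(S)$ in case (1): each admissible $q \in S$ contributes rank two to $H^1(K_q, T_{\bar{g},\mathscr{O}})$ by Lemma \ref{control nminus}(2) with trivial $H^0$, producing a net gain of one per prime relative to the baseline of $\mathrm{Sel}_\varepsilon^S$. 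In case (2), the removal of the $\varepsilon$-condition at the primes above $p$ adds $\sum_{\mathfrak{p}|p}[K_\mathfrak{p}:\Q_p] = 2$ by Proposition \ref{freeloc}. Nakayama now bounds the minimal number of generators of $\mathfrak{Sel}_S^\varepsilon(K_n,T_{g,\mathscr{O}})$ by the residual dimension.

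For (b), I compute the $\mathscr{O}$-length along a filtration $0 = \mathfrak{P}_0 \subsetneq \mathfrak{P}_1 \subsetneq \cdots \subsetneq \mathfrak{P}_N = (\omega_n, p^k)$ of the defining ideal by principal ideals of unit-length jumps, applying the Wiles computation of (a) to each $T_{g,\mathscr{O}}(\mathfrak{P}_i)$. Proposition \ref{freeloc} (local freeness at $p$), Lemmas \ref{control nminus}, \ref{control nplus} (control at $LN^-$ and $N^+$), and Corollary \ref{local control} (local control at $p$) together ensure that all local contributions scale linearly with the length of the defining ideal; summing yields a total $\mathscr{O}$-length of $\delta(S)\cdot\mathrm{length}_\mathscr{O}(\iw_{\mathscr{O},n,k})$ (resp.\ $(\delta(S)+2)\cdot\mathrm{length}_\mathscr{O}(\iw_{\mathscr{O},n,k})$). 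Combined with (a), this forces freeness.

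The principal technical obstacle is the precise bookkeeping of the Greenberg--Wiles local factors in the supersingular case, where the self-duality and local freeness of the $\varepsilon$-conditions at $p$ rest critically on Proposition \ref{freeloc}. Secondarily, iterating the specialisation of Proposition \ref{speinj} through the filtration requires the absence of coboundary terms at each step, which is ensured by the vanishing of $H^0(G_{K,\Sigma}, A_{\bar{g},\mathscr{O}})$ (from irreducibility of $\bar{\rho}_f$, Hypothesis \ref{thehy}(2)) together with the vanishing at primes dividing $N^+$ provided by Lemma \ref{control nplus}.
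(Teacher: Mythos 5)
Your overall strategy (Greenberg--Wiles count, Nakayama over the local Artinian ring $\iw_{\mathscr{O},n,k}$, control from the residual level) matches the paper in spirit, and the order computation in (b) is essentially the paper's Step 3, although the filtration is unnecessary --- one applies the Wiles formula once to $T_{g,\mathscr{O}}(I_{\mathscr{O},n})$ directly.

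However, your step (a) has a real gap. You claim that iterating Proposition \ref{speinj} produces an injection
\[
\mathfrak{Sel}_S^\varepsilon(K_n,T_{g,\mathscr{O}})\big/\mathfrak{m}
\hookrightarrow
\mathfrak{Sel}_S^\varepsilon(K,T_{\bar g,\mathscr{O}}),
\]
but Proposition \ref{speinj} is stated (and holds) only for a \emph{principal} ideal $\mathfrak{P}$ that is zero or coprime with $p$. The maximal ideal $\mathfrak{m}$ of $\iw_{\mathscr{O},n,k}$ is not principal and contains $p$, so the proposition never applies to it, and composing the injections of Proposition \ref{speinj} along a filtration does not yield an injection on the quotient after passing to a further quotient. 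The control statement that \emph{is} available (Step 2 of the paper's proof, obtained via inflation--restriction and the inclusion $p^{k-1}\colon T_{f,1}\hookrightarrow T_{f,k}$) identifies the residual group with the $\mathfrak{m}$-\emph{torsion} $\mathfrak{Sel}_{Sp}(K_n,T_{g,\mathscr{O}})[\mathfrak{m}_{\iw_\mathscr{O}}]$, not with the coinvariants. To run Nakayama from a statement about $[\mathfrak{m}]$, the paper passes to the Pontryagin dual --- since $(M[\mathfrak{m}])^\vee = M^\vee/\mathfrak{m}$ --- obtains freeness of $M^\vee$, and then uses the Gorenstein property $\iw_{\mathscr{O},n,k}\cong\mathscr{D}_{\mathscr{O},n,k}$ to transfer freeness back to $M$. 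This self-duality of the ring is the ingredient missing from your proposal; without it, knowing $[\mathfrak{m}]$-torsion and the order does not determine the minimal number of generators. A related structural difference: the paper applies Greenberg--Wiles only to $\mathfrak{Sel}_{Sp}$ (proving part (2) first), and then derives part (1) from the exact sequence in Step 4 --- the residue map $\partial_p$ being surjective by Poitou--Tate and the freeing hypothesis --- together with the freeness of $H^1_{\mathrm{sing},\varepsilon}(K_\mathfrak{p},T_{f,\mathscr{O}}(I_{\mathscr{O},n,k}))$ from Proposition \ref{freeloc}(1). That exact sequence is what makes the kernel projective, hence free, which is cleaner than running an independent Wiles count for part (1).

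A minor bookkeeping slip: at an admissible prime $q\mid S$ you claim $H^0(K_q,T_{\bar g,\mathscr{O}})$ is trivial; it is not --- by Lemma \ref{decadm}, $T_{\bar g}\cong\F\oplus\F(1)$ as a $G_{K_q}$-module and $H^0(K_q,T_{\bar g})\cong\F$. The correct tally is $\dim H^1 - \dim H^0 = 2-1 = 1$, which still gives a net gain of one per prime, so the final count $\delta(S)$ is right, but the reasoning as written would give $2$ per prime.
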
 

\begin{proof} If $E/\Q_{p}$ has  ordinary reduction  this is a  variant of 
\cite[Proposition 3.3]{Be-Da-main}
(see also  Section 3 of \cite{Be-Da-der1}). We use the computations of the preceding sections 
to give a proof which works in our more general setting.
Without loss of generality, we can (and will) assume $n\not=\infty$ and $\mathscr{O}=\Z_{p}$ throughout the proof. 
As usual, we then omit $\mathscr{O}$ from the notation. \vspace{1mm}

\emph{Step 1.} Set $\mathrm{Sel}_{\varepsilon}^{S}(K_{n},A_{g})
=\mathrm{Sel}_{\varepsilon}^{S}(K,A_{g}(I_{n}))$. 
We show that 
\begin{equation}\label{eq:claim304050}
                \mathrm{Sel}_{\varepsilon}^{S}(K_{n},A_{g})=0.
\end{equation}
Indeed Proposition $\ref{global control}$ yields 
\[
    \mathrm{Sel}_{\varepsilon}^{S}(K,A_{\bar{g}})=\mathrm{Sel}_{\varepsilon}^{S}(K,\mathbf{A}_{g})
[\mathfrak{m}_{\iw}]=\mathrm{Sel}_{\varepsilon}^{S}(K_{n},A_{g})[\mathfrak{m}_{\iw}].
\] 
As (by assumption) $\mathrm{Sel}_{\varepsilon}^{S}(K,A_{\bar{g}})\subset{}\mathrm{Sel}^{S}_{\Box}(K,A_{\bar{g}})=0$,
Equation \eqref{eq:claim304050}  follows from Nakayama's Lemma. \vspace{1mm}

\emph{Step 2.} 
We show that 
\begin{equation}\label{eq:9102}
\mathfrak{Sel}_{Sp}(K,T_{\bar{g}})\cong{}\mathfrak{Sel}_{Sp}(K_{n},T_{g})[\mathfrak{m}_{\iw}].
\end{equation}
Denote by $K_{SLNp}$
the maximal algebraic extension of $K$
which is unramified outside $SLNp$ and by $\gaun_{s}$ the Galois group of $K_{SLNp}/K_{s}$,
for every $0\les{}s\les{}\infty$. 
If one identifies $H^{1}(\gaun_{0},T_{g}(I_{n}))$ with $H^{1}(\gaun_{n},T_{g})$
via the Shapiro isomorphism, then 
\[
         \mathfrak{Sel}_{Sp}(K_{n},T_{g})=\ker\Big(H^{1}(\gaun_{n},T_{g})\lfre{}\bigoplus_{\mathfrak{l}|LN^{-}}
            H^{1}(K_{n,\mathfrak{l}},T_{g}^{[\ell]})\Big),
\] 
where the direct sum is taken over the primes $\mathfrak{l}$ of $K_{n}$ which divide $LN^{-}$.
Hypothesis $\ref{controlass}.(3)$
guarantees that $H^{0}(K_{s},T_{f}/p^{r})$ vanishes for all $r\ges{}1$ and $s\ges{}0$. 
As a consequence the map 
\[
 H^{1}(\gaun_{0},T_{\bar{g}})\longrightarrow{}H^{1}(\gaun_{n},T_{g})[\mathfrak{m}_{\iw}]
\]
induced by restriction from $K$ to $K_{n}$ and the inclusion $p^{k-1} : T_{\bar{g}}=T_{f}/p\hlfre{}T_{f}/p^{k}=T_{g}$ 
is an isomorphism.  To prove Equation \eqref{eq:9102} it is then sufficient to verify that the map 
\[
\beta_{\mathfrak{l}} : H^{1}(K_{\ell},T_{\bar{g}}^{[\ell]})\lfre{}H^{1}(K_{n,\mathfrak{l}},T_{g}^{[\ell]})
\]
is injective for every prime $\mathfrak{l}$ of $K_{n}$ lying over $\ell|LN^{-}$.
The $G_{K_{\ell}}$-representation $T_{f,r}^{[\ell]}=T_{f}^{[\ell]}/p^{r}$ is isomorphic to $\Z/p^{r}$  for every $r\ges{}1$,
and one has $K_{n,\mathfrak{l}}=K_{\ell}$ (since $\ell$ splits completely in $K_{n}/K$). It follows that 
$H^{1}(K_{\ell},T_{f,r}^{[\ell]})=\Hom{\mathrm{cont}}(G_{K_\ell},\Z/p^{r})$,
and (via the fixed isomorphisms $T_{f,k}=T_{g}$ and $T_{f,1}=T_{\bar{g}}$) the map $\beta_{\mathfrak{l}}$ is (identified with) the injective morphism induced by the inclusion 
$p^{k-1} : \Z/p\hookrightarrow{}\Z/p^{k}$. \vspace{2mm}

\emph{Step 3.} 
We prove (2).
As the local conditions defining $\mathfrak{Sel}_{Sp}(K_{n},T_{g})$
are dual to those defining  $\mathrm{Sel}^{Sp}(K_{n},A_{g})$
(via Tate's duality), Theorem 2.19 of \cite{DDT}, Hypothesis $\ref{controlass}.(3)$
and Lemma $\ref{control nplus}$ yield
\[ 
               \frac{\#\mathrm{Sel}^{Sp}(K_{n},A_{g})}
               {\#\mathfrak{Sel}_{Sp}(K_{n},T_{g})}
               =\#T_{g}(I_{n})\cdot{}
               \prod_{w|Sp}\frac{\#H^{0}(K_{w},T_{g}(I_{n}))}{\#H^{1}(K_{w},T_{g}(I_{n}))}   
               \cdot{}
               \prod_{\ell|LN^{-}/(S,LN^{-})}\frac{\#H^{0}(K_{\ell},T_{g}(I_{n}))}{
               \#H^{1}_{\mathrm{ord}}(K_{\ell},T_{g}(I_{n}))}.
\]
Step $1$ implies that $\mathrm{Sel}^{Sp}(K_{n},T_{g})\subset{}\mathrm{Sel}^{S}_{\Box}(K_{n},T_{g})$ vanishes. 
For each $w\mid pSLN^{-}$, define
\[h_w=\frac{\#H^{0}(K_{w},T_{g}(I_{n}))}{\#H^{1}(K_{w},T_{g}(I_{n}))}\]
and let $h_\infty= \#T_{g}(I_{n}).$ 
For every prime 
$\ell$ dividing $S$, $T_{g}(I_{n})=T_{g}\otimes{}\iw_{n,k}$,
hence Lemma $\ref{decadm}$ and the local Euler characteristic formula give 
$h_w^{-1}=\#H^{2}(K_{\ell},T_{g}(I_{n}))=\#\iw_{n,k}.$ 
For every prime $\ell|L$ Lemma $\ref{decadm}$ similarly gives 
$h_w=1.$ 
If  $\ell|N^{-}$, considering the long exact cohomology sequence 
associated with $\mu_{p^{k}}\hookrightarrow{}T_{g}\twoheadrightarrow{}\Z/p^{k}$
one easily proves that $h_{\ell}=1$ even in this case. 
If $\mathfrak{p}$ is a prime dividing $p$ the local Euler characteristic formula shows that 
$h_{\mathfrak{p}}^{-1}=\#\iw_{n,k}^{2\cdot{}[K_{\mathfrak{p}}:\Q_{p}]}\cdot{}
\#H^{2}(K_{\mathfrak{p}},T_{g}(I_{n}))$.
On the other hand $H^{2}(K_{\mathfrak{p}},T_{g}(I_{n}))$ has the same cardinality 
as  $H^{0}(K_{\mathfrak{p}},A_{g}(I_{n}))$,
which vanishes since its $\mathfrak{m}_{\iw}$-torsion submodule is equal to $H^{0}(K_{\mathfrak{p}},E_{p})=0$ 
(thanks to Hypothesis $\ref{controlass}(4)$ in the ordinary case). The previous equation then  yields 
\begin{equation}\label{eq:91032}
               \#\mathfrak{Sel}_{Sp}(K_{n},T_{g})=h_{\infty}^{-1}\cdot{}\#\iw_{n,k}^{\delta(S)+4}
                =\#\iw_{n,k}^{\delta(S)+2}.
\end{equation}
Since $\iw/I_{n}$ is a Gorenstein local ring, it is isomorphic to
 $\Hom{\Z_{p}}(\iw/I_{n},\Z_{p})$ as a $\iw$-module, so that 
$\iw_{n,k}$ is isomorphic to its Pontrjagin dual $\mathscr{D}_{n,k}=\Hom{\Z_{p}}(\iw_{n,k},\Q_{p}/\Z_{p})$ 
as a $\iw$-module.
Taking $n=0$ and $k=1$ in Equation \eqref{eq:91032}  shows that   
$\mathfrak{Sel}_{Sp}(K,T_{\bar{g}})$ has dimension $\delta(S)+2$ over $\F_{\!p}$,
hence  $$\mathfrak{Sel}_{Sp}(K_{n},T_{g})[\mathfrak{m}_{\iw}]\cong{}\F_{\!p}^{\delta(S)+2}$$ by Step 2. 
By Nakayama's Lemma there exists then a surjective morphism  
\[
     \iw_{n,k}^{\delta(S)+2}\tlfre{}\Hom{\Z_{p}}(\mathfrak{Sel}_{Sp}(K_{n},T_{g}),\divp),
\]
which is an isomorphism by another application of Equation $(\ref{eq:91032})$. As a consequence 
$\mathfrak{Sel}_{Sp}(K_{n},T_{g})$ is isomorphic to $\mathscr{D}_{n,k}^{\delta(S)+2}\cong{}\iw_{n,k}^{\delta(S)+2}$, as was to be shown. \vspace{1mm}

\emph{Step 4.} There is an exact sequence of $\iw_\mathscr{O}$-modules 
\begin{equation}\label{eq}
           0\lfre{}\mathfrak{Sel}_{S}^{\varepsilon}(K_{n},T_{g})
           \lfre{}\mathfrak{Sel}_{Sp}(K_{n},T_{g})\lfre{\partial_{p}\,}
           \bigoplus_{\mathfrak{p}|p}H^{1}_{\mathrm{sing},\varepsilon}(K_{\mathfrak{p}},T_{f}(I_{n,k}))\lfre{}0.
\end{equation}
The only nontrivial fact is the surjectivity of the residue map $\partial_{p}$. 
By construction (cf.\ Section $\ref{selsec}$) the local conditions defining 
$\mathfrak{Sel}_{S}^{\varepsilon}(K_{n},T_{g})$
are dual to those defining $\mathrm{Sel}_{\varepsilon}^{S}(K_{n},A_{g})$,
hence Poitou--Tate duality 
implies that the cokernel of $\partial_{p}$ is isomorphic to a submodule of the Pontrjagin dual of 
$\mathrm{Sel}_{\varepsilon}^{S}(K_{n},A_{g})$
(see e.g. Theorem 7.3 of \cite{Rub-eul}), which is trivial according to  Step 1. \vspace{1mm}

\emph{Step 5.} Step 3, Step 4 and Proposition $\ref{freeloc}.(1)$ prove that $\mathfrak{Sel}_{S}^{\varepsilon}(K_{n},T_{g})$
is a (projective, hence) free $\iw_{\mathscr{O},n,k}$-module of rank $\delta(S)$, thus concluding the proof of the proposition. 
\end{proof}

We record for future application the following 

\begin{cor}\label{corcontrolfreeing} Let $S$ be freeing set relative to $g$. Then, for each integer $1\les{}n\les{}\infty$, the natural projection $\iw\rightarrow{}\iw_{n}$ induces isomorphisms 
of $\iw$-modules 
\[
       \mathfrak{Sel}_{Sp}(K_{\infty},T_{g})/\omega_{n}\cong{}\mathfrak{Sel}_{Sp}(K_{n},T_{g})\ \ \ \text{and}\ \ \ 
       \mathfrak{Sel}_{S}^{\varepsilon}(K_{\infty},T_{g})/\omega_{n}\cong{}\mathfrak{Sel}_{S}^{\varepsilon}(K_{n},T_{g}). 
\]
\end{cor}
\begin{proof} This is a direct consequence of Propositions $\ref{speinj}$ and $\ref{global freeness}$.
\end{proof}

\subsection{Self-duality}  In this section $k<\infty$. Let $\chi : \iw\lfre{}\mathscr{O}_{\chi}$ be a morphism of $\Z_{p}$-algebras into a discrete valuation ring $\mathscr{O}_{\chi}$
contained in $\bar{\Q}_{p}$. Set $\iw_{\chi}=\iw_{\mathscr{O}_{\chi}}$, and write again $\chi : \iw_{\chi}\tlfre{}\mathscr{O}_{\chi}$
for the $\mathscr{O}_{\chi}$-linear extension of $\chi$. 
Set $\mathfrak{P}_{\chi}=\ker(\chi)$, and set 
\[
      T_{g}(\chi)=T_{g,\mathscr{O}_{\chi}}(\mathfrak{P}_{\chi})\ \ \ \text{and}\ \ \ A_{g}(\chi)=A_{g,\mathscr{O}_{\chi}}(\mathfrak{P}_{\chi}).
\]
If $\bar{\chi}=\chi\circ{}\iota$ denotes the composition of $\chi$ with (the $\mathscr{O}_{\chi}$-linear extension of) Iwasawa's main involution,
then $T_{g}(\chi)=T_{g}\otimes_{\Z_p}\mathscr{O}_{\chi}(\bar{\chi})$ and $A_{g}(\chi)=\Hom{\Z_{p}}(T_{g},\mu_{p^{\infty}})\otimes_{\Z_{p}}\mathscr{O}_{\chi}(\bar{\chi})$
as $\mathscr{O}_{\chi}[G_{K}]$-modules (where $\mathscr{O}_{\chi}(\bar{\chi})$ is a copy of $\mathscr{O}_{\chi}$ on which $G_{K}$
acts through the composition of $G_{K}\tlfre{}\mathrm{Gal}(K_{\infty}/K)\hlfre{}\iw^{\ast}$ and $\bar{\chi}$). 
Via the identifications $T_{f,k}\simeq{}T_{g}$ and $A_{f,k}\simeq{}A_{g}$ fixed above, the Weil pairing then yields and isomorphism 
\[
             w_{\chi} : T_{g}(\chi)\simeq{}A_{g}(\chi).
\]
of $\mathscr{O}_{\chi}[G_{K}]$-modules. For each $\varepsilon$ in $\{\emptyset,+,-\}$, one has the following 

\begin{proposition}\label{selfdual} The isomorphism induced in $G_{K}$-cohomology by $w_{\chi}$ restricts to an isomorphism  $$w_{\chi} : \mathfrak{Sel}^{\varepsilon}(K,T_{g}(\chi))\simeq{}\mathrm{Sel}_{\varepsilon}(K,A_{g}(\chi)).$$
More precisely, for each prime $v$ of $K$ dividing $NLp$, the isomorphism induced in $G_{K_{v}}$-cohomology by $w_{\chi}$ identifies the local conditions at $v$ defining 
$\mathfrak{Sel}_{\varepsilon}(K,T_{g}(\chi))$ and $\mathrm{Sel}^{\varepsilon}(K,A_{g}(\chi))$.
\end{proposition}
\begin{proof} For each prime $v$ of $K$ not dividing $pNL$, denote by $\mathcal{L}_{v}(\chi)$ (resp., $\mathcal{L}_{v}^{\ast}(\chi)$) the local condition at $v$ satisfied by the classes in the Selmer group  
$\mathrm{Sel}^{\varepsilon}(K,A_{g}(\chi))$ (resp., $\mathfrak{Sel}_{\varepsilon}(K,T_{g}(\chi))$).
By definition $\mathcal{L}_{v}^{\ast}(\chi)$ is the orthogonal complement of $\mathcal{L}_{v}(\bar{\chi})$ under the perfect local Tate duality 
\[
     (\cdot,\cdot)_{\chi,v} : H^{1}(K_{v},T_{g}(\chi))\times{}H^{1}(K_{v},A_{g}(\bar{\chi}))\lfre{}\mathscr{O}_{\chi}/p^{k}
\]
arising from the $\mathscr{O}_{\chi}$-bilinear extension of the evaluation pairing $T_{g}\times{}A_{g}\lfre{}\mu_{p^{k}}$.
To prove the proposition it is then sufficient to show that $\mathcal{L}_{v}^{\ast}(\bar{\chi})$ is the orthogonal complement of $\mathcal{L}_{v}^{\ast}(\chi)$
under the perfect pairing 
\[
      [\cdot,\cdot]_{\chi,v} : H^{1}(K_{v},T_{g}(\chi))\times{}H^{1}(K_{v},T_{g}(\bar{\chi}))\lfre{}\mathscr{O}_{\chi}/p^{k}
\]
defined as the composition of $(\cdot,\cdot)_{\chi,v}$ and the isomorphism $\mathrm{id}\times{}w_{\chi}^{-1}$. 
This is easily checked when $v\nmid{}p$, in which case $\mathcal{L}_{v}(\chi)$ is either 
an ordinary local condition, or $A_{g}(\chi)$ is cohomologically trivial.
Assume then that $v=\mathfrak{p}$ is a prime of $K$ dividing $p$, and set $\Psi=K_{\mathfrak{p}}$. 

Since $\chi(\gamma-1)$ belongs to the maximal ideal of $\mathscr{O}_{\chi}$,
there exists an integer $n\gg{}0$ such that $\chi(\omega_{n})$ belongs to $p^{k}\mathscr{O}_{\chi}$, id est $\omega_{n}$ belongs to $(\mathfrak{P}_{\chi},p^{k})$.
According to Corollary $\ref{local control}$, one has an isomorphism 
\[
     H^{1}(\Psi,T_{g}(\chi))\simeq{}H^{1}(\Psi,\mathbf{T}_{g})\otimes_{\iw}\iw_{\chi}/\mathfrak{P}_{\chi}\simeq{}H^{1}(\Psi,T_{g}(I_{n}))\otimes_{\iw_{n}}\iw_{n}/\mathfrak{P}_{\chi},
\] 
which restricts to an isomorphism 
\[
       \mathcal{L}_{v}^{\ast}(\chi)\simeq{}H^{1}_{\mathrm{fin},\varepsilon}(\Psi,T_{g}(I_{n}))\otimes_{\iw_{n}}\iw_{n}/\mathfrak{P}_{\chi}.
\]
One has a similar isomorphism for $\bar{\chi}$ in place of $\chi$, and via these isomorphisms $[\cdot,\cdot]_{\chi,v}$ is the reduction modulo $\mathfrak{P}_{\chi}$ of 
the $\mathscr{O}_{\chi}$-linear extension of the pairing 
\[
        \{\cdot,\cdot\}_{\chi,v} : H^{1}(\Psi,T_{g}(I_{n}))\times{}H^{1}(\Psi,T_{g}(I_{n}))\lfre{}\iw_{n}
\]
defined for each $x$ and $y$ in $H^{1}(\Psi,T_{g}(I_{n}))$ by the formula 
\[
     \{x,y\}_{\chi,v}=\sum_{\sigma\in{}G_{n}}[x,\sigma\cdot{}y]_{n,v}\cdot{}\sigma,
\]
where $[\cdot,\cdot]_{n,v}$ is the pairing defined in Equaiton $(\ref{eq:painv})$. Because $L_{n}^{\ast}=H^{1}_{\mathrm{fin},\varepsilon}(\Psi,T_{g}(I_{n}))$
is a $\iw_{n}$-submodule of $H^{1}(\Psi,T_{g}(I_{n}))$, Lemma $\ref{lele self}$ yields $\{L_{n}^{\ast},L_{n}^{\ast}\}_{\chi,v}=[L_{n}^{\ast},L_{n}^{\ast}]_{n,v}=0$,
hence $[\mathcal{L}_{v}^{\ast}(\chi),\mathcal{L}_{v}^{\ast}(\bar{\chi})]_{\chi,v}=0$. In other words 
$\mathcal{L}_{v}^{\ast}(\bar{\chi})$ is contained in the orthogonal complement $\mathcal{L}_{v}^{\ast}(\chi)^{\perp}$ of $\mathcal{L}_{v}^{\ast}(\chi)$ with respect to the pairing $[\cdot,\cdot]_{\chi,v}$. 
On the other hand, Proposition $\ref{freeloc}$ implies that $\mathcal{L}_{v}^{\ast}(\chi)$, $\mathcal{L}_{v}^{\ast}(\chi)^{\perp}$ and $\mathcal{L}_{v}^{\ast}(\bar{\chi})$
all have the same cardinality $|\mathscr{O}_{\chi}/p^{k}|^{[\Psi:\Q_{p}]}$, thus $\mathcal{L}_{v}^{\ast}(\bar{\chi})=\mathcal{L}_{v}^{\ast}(\chi)^{\perp}$. This 
concludes the proof of the proposition. 
\end{proof}

\begin{remark}\label{remchichibar} Let $\tau$ in $G_{\Q}$ be complex conjugation, let 
$\mathrm{Ad}(\tau)$ be conjugation by $\tau$ on $G_{K}$, and let $[\tau] : T_{g}(\chi)\rightarrow{}T_{g}(\bar{\chi})$ be the isomorphism of $\mathscr{O}_{\chi}$-modules 
induced by $\tau : T_{g}\rightarrow{}T_{g}$.
Then the map sending a $1$-cocycle $\varphi : G_{K}\rightarrow{}T_{g}(\chi)$ to the $1$-cocyle $[\tau]\circ{}\varphi\circ{}\mathrm{Ad}(\tau) : G_{K}\rightarrow{}T_{g}(\bar{\chi})$
induces an isomorphism of $\mathscr{O}_{\chi}$-modules 
$\mathfrak{Sel}_{\varepsilon}(K,T_{g}(\chi))\simeq{}\mathfrak{Sel}_{\varepsilon}(K,T_{g}(\bar{\chi}))$.
\end{remark}

\section{Ramified classes and reciprocity laws}\label{ramrec}
We suppose from now until the end of the paper that 
Hypothesis \ref{thehy} holds. 
According to our conventions stated at the end of the Introduction, recall that we often write $M/x$ for $M/xM$ for a module $M$ over a commutative ring with unity $R$ and an element $x\in R$; in particular, we often write $\Z/p^k$ in place of $\Z/p^k\Z$, for an integer $k\geqslant 1$. 

\subsection{Global classes}\label{subsec:indef}
Let $k$ be a positive integer and $L\in\mathscr{S}_{k}$ a 
squarefree  product of $k$-admissible primes.  
Assume that $L\in{}\mathscr{S}_{k}^{\mathrm{ind}}$ is \emph{indefinite}
(i.e. $\epsilon_{K}(LN^{-})=+1$), so that $J_{N^{+},LN^{-}}$ 
is the Picard variety of the {Shimura curve} $X_{N^{+},LN^{-}}$. 
Let $g=f_L\in S_2(N^+,LN^-,\Z/p^{k})$ be the $L$-level raising of $f$ modulo $p^{k}$. 
Let $I_{g}\subset{}\mathbf{T}_{N^{+},LN^{-}}$ denote the kernel of $g$. 
Proposition 4.4 of \cite{P-W}, a slight generalization of  \cite[Theorem 5.15]{Be-Da-main}, shows that 
there is an isomorphism of $\Z_{p}[G_{\Q}]$-modules 
\[
       \pi_{g} :  \mathrm{Ta}_{p}(J_{N^{+},LN^{-}})/I_{g}\cong{}T_{f,k},
\]
which is unique up to multiplication by a $p$-adic unit by Hypothesis $\ref{thehy}$.
For every integer $n\ges{}0$ define 
\[
        \psi_{g,n}: J_{N^{+},LN^{-}}(K_{n})\lfre{}
        H^{1}(K_{n},\mathrm{Ta}_{p}(J_{N^{+},LN^{-}})/I_{g})\cong{}H^{1}(K_{n},T_{f,k}),
\]
where the first (resp., second) map is induced by the Kummer map (resp., by $\pi_{g}$).
It follows from  Proposition 2.7.12 of \cite{Ne-BD} (see also Section 7 of \cite{Be-Da-main} and Theorem 3.10 of \cite{Da-Io})
that for every $x\in{}J_{N^{+},LN^{-}}(K_{n})$ the class $\psi_{g,n}(x)$ is finite at every prime of $K_{n}$ dividing $p$.
  (To apply Proposition 2.7.12 of \cite{Ne-BD}, note that all results in the 
  Appendix A of loc. cit. on flat cohomology of finite flat group schemes 
  hold for the $p$-divisible group of the elliptic curve $E/K_\mathfrak{p}$ for any prime 
  $\mathfrak{p}\mid p$ 
  of $K$ because $K_\mathfrak{p}/\Q_p$, being unramified, has ramification index
  smaller that $p-1$).
Moreover, since $J_{N^{+},LN^{-}}$ has purely toric reduction at every prime divisor of $LN^{-}$,  
Mumford--Tate theory of $p$-adic uniformisation implies that  
these classes are ordinary  at every such prime.
In particular, 
$\psi_{g,n}$ gives a morphism (cf. Section $\ref{Sec.DCSelmer}$)
\[             \psi_{g,n} : J_{N^{+},LN^{-}}(K_{n})\lfre{}\mathfrak{Sel}(K_{n},T_{g}).
\]
Recall the compatible sequence of Heegner points $P_n(L)$, for $n\ges 0$, introduced in Section \ref{compheeg}
and define $$\tilde\kappa_n(g)=\psi_{g,n}(P_n(L)).$$

\subsubsection{Ordinary case}
Suppose that $E$ has ordinary 
reduction at $p$. In this case, define 
\begin{equation}\label{classes-ord}
\kappa_n(g)=
\frac{1}{\alpha_{p}(g)^{n}}
            \Big(\tilde\kappa_{n-1}(g)-\alpha_{p}(g)\cdot{}
            \tilde\kappa_n(g)\big)\Big).
\end{equation}
By the previous discussion $\kappa_n(g)$  belongs to the compact Selmer group  $\mathfrak{Sel}(K_{n},T_{g})=\mathfrak{Sel}(K,T_{g}(I_{n}))$
(with $I_{n}=\omega_{n}\cdot{}\iw$). A simple computation using \eqref{eq:compdef} shows that the corestriction map takes $\kappa_{n+1}(g)$ to $\kappa_{n}(g)$ for all $n\ges1$. Since 
$\mathfrak{Sel}(K_{\infty},T_{g})=\mathfrak{Sel}(K,\mathbf{T}_{g})$ is isomorphic to the inverse limit of the Selmer groups 
$\mathfrak{Sel}(K_{n},T_{g})$ under the corestriction maps, 
we can define 
\begin{equation}\label{classes-ord-bis}\kappa_{\infty}(g) = \inlim_n\kappa_{n}(g) \in
\mathfrak{Sel}(K_{\infty},T_{g}).
\end{equation}

\subsubsection{Supersingular case} Suppose now that $E$ has supersingular 
reduction at $p$. 
Choose a freeing set $S\in \mathscr S_k$ relative to $g$ which is coprime to $L$.
As $a_{p}(g)=0$ (in $\Z/p^{k}$), the norm relations \eqref{P-1}, \eqref{P-0} and \eqref{eq:compdef} imply that, if $\varepsilon=(-1)^{n}$, 
the class $\tilde{\kappa}_{n}(g)$ is killed by $\omega_{n}^{\varepsilon}$ (cf.\ Lemma $\ref{inertpm}$):
\begin{equation}\label{eq:killome}
    \omega_n^\varepsilon\cdot{}\tilde{\kappa}_n(g)=0\ \ \ \text{if }\ \ \varepsilon=(-1)^n.
\end{equation}
If either $p$ splits in $K$, or $p$ is inert in $K$ and $\varepsilon=-1$,
set $\check{\omega}_{n}^{-\varepsilon}=\tilde{\omega}_{n}^{-\varepsilon}$. If $p$ is inert in $K$ and $\varepsilon=+1$ (id est in the exceptional case),
set $\check{\omega}_{n}^{-\varepsilon}=\omega_{n}^{-}$. Since $\mathfrak{Sel}_{S}(K_{n},T_{g})$ is free over $\iw_{n,k}$ (cf.\ Proposition $\ref{global freeness}$, applied with $\varepsilon=\emptyset$), the previous equation implies that,
if $\varepsilon=(-1)^{n}$, there exists  a unique element 
\[ 
     \kappa_n^\varepsilon(g)\in \mathfrak{Sel}_{S}(K_{n},T_{g})/\omega_n^\varepsilon
\]
such that 
\[
       (-1)^{\delta(n)}\check{\omega}_{n}^{-\varepsilon}\cdot{}\kappa_{n}^{\varepsilon}(g)=\tilde{\kappa}_{n}(g),
\]
where $\delta(n)=n/2$ if $n$ is even (id est $\varepsilon=+1$), and $\delta(n)=(n-1)/2$ if $n$ is odd.

According to Corollary $\ref{corcontrolfreeing}$ (applied again with $\varepsilon=\emptyset$), the projections $\iw\tlfre{}\iw_{n}^{\varepsilon}$ induce isomorphisms
\[
            \mathfrak{Sel}_{S}(K_{\infty},T_{g})/\omega_{n}^{\varepsilon}\simeq{}\mathfrak{Sel}_{S}(K_{n},T_{g})/\omega_{n}^{\varepsilon}.
\]
Via these identifications, the natural projections $\iw_{n+2}^{\varepsilon}\tlfre{}\iw_{n}^{\varepsilon}$
then induce surjective maps  
\[
           \pi_{n+2}^{\varepsilon} : \mathfrak{Sel}_{S}(K_{n+2},T_{g})/\omega_{n+2}^{\varepsilon}\tlfre{}\mathfrak{Sel}_{S}(K_{n},T_{g})/\omega_{n}^{\varepsilon}.
\]
As $\mathfrak{Sel}_{S}(K_{\infty},T_{g})$ is finite free over $\iw$, it is equal to the inverse limit of the maps $\pi_{n+2}^{\varepsilon}$, taken over the set $\N^{\varepsilon}$
of nonnegative integers $n$ satisfying $\varepsilon=(-1)^{n}$: 
\begin{equation}\label{eq:invselsprel}
       \mathfrak{Sel}_{S}(K_{\infty},T_{g})\cong{}\inlim_{n\in{}\N^{\varepsilon}}\mathfrak{Sel}_{S}(K_{n},T_{g})/\omega_{n}^{\varepsilon}.
\end{equation}

\begin{lemma} For each $n$ in $\N^{\varepsilon}$, one has 
\[
       \pi_{n+2}^{\varepsilon}(\kappa_{n+2}^{\varepsilon}(g))=\kappa_{n}^{\varepsilon}(g). 
\]
Via the isomorphism def{}ined in Equation $(\ref{eq:invselsprel})$, one then gets a class 
\[
       \kappa_{\infty}^{\varepsilon}(g)=(\kappa_{n}^{\varepsilon}(g))_{n\in\N^{\varepsilon}}\in{}\mathfrak{Sel}_{S}(K_{\infty},T_{g}).
\]
\end{lemma}
\begin{proof} After identifying $\mathfrak{Sel}_{S}(K_{\infty},T_{g})$ with $\iw^{\delta(S)}$ (cf.\ Proposition $\ref{global freeness}$),
this is proved precisely as the corresponding statement for $p$-adic theta elements (cf.\ Lemma $\ref{inertpm}$).
\end{proof}

The class $\kappa_{\infty}^{\varepsilon}(g)$ is finite at every prime divisor $q$ of $S$. Indeed, as $q$ is a $k$-admissible prime relative to $(f,K)$, the $G_{K_{q}}$-module $T_{g}(I_{n})$ splits as the direct sum 
of $\iw_{k,n}=\iw/(p^{k},\omega_{n})\cdot{}\iw$ (with trivial Galois action) and $\iw_{k,n}(1)$, and both $H^{1}(K_{q},\iw_{k,n})=H^{1}_{\mathrm{fin}}(K_{q},T_{g}(I_{n}))$ and 
$H^{1}(K_{q},\iw_{k,n}(1))=H^{1}_{\mathrm{ord}}(K_{q},T_{g}(I_{n}))$
are free $\iw_{k,n}$-modules of rank one (cf.\ Section $\ref{primesL}$). If $\partial_{q} : H^{1}(K_{n},T_{g})\lfre{}H^{1}_{\mathrm{ord}}(K_{q},T_{g}(I_{n}))$
is the composition of the Shapiro isomorphism $H^{1}(K_{n},T_{g})\cong{}H^{1}(K,T_{g}(I_{n}))$, restriction at $q$, and projection onto the ordinary subspace,
it follows that $\partial_{q}(\kappa_{n}^{\varepsilon}(g))$ is the unique class in $H^{1}_{\mathrm{ord}}(K_{q},T_{g}(I_{n}))/\omega_{n}^{\varepsilon}$ mapping to 
$(-1)^{\delta(n)}\cdot{}\partial_{q}(\tilde{\kappa}_{n}(g))$ under multiplication by $\check{\omega}_{n}^{-\varepsilon}$.
Finally, by construction the Heegner class $\tilde{\kappa}_{n}(g)$ is finite at $q$, id est $\partial_{q}(\tilde{\kappa}_{n}(g))=0$, hence $\partial_{q}(\kappa_{n}^{\varepsilon}(g))=0$.
Taking the limit for $n$ in $\N^{\varepsilon}$ tending to infinity, this proves that $\kappa_{\infty}^{\varepsilon}(g)$ is in the kernel of the residue map 
$\partial_{q} : \mathfrak{Sel}_{S}(K_{\infty},T_{g})\rightarrow{}H^{1}_{\mathrm{ord}}(K_{q},\mathbf{T}_{g})$, as claimed. 

We now prove that the class $\kappa_{\infty}^{\varepsilon}$ is $\varepsilon$-finite at $p$. For each positive integer $n$, let $K_{n,p}$ be the product of the completions of $K_{n}$ at the primes dividing $p$. Since $\tilde{\kappa}_{n}(g)$ is (by construction) finite at $p$, and since $\mathbf{E}(K_{n,p})_{\varepsilon}/p^{k}$ is equal to the $\omega_{n}^{\varepsilon}$-torsion submodule of 
the finite local condition $\mathbf{E}(K_{n,p})/p^{n}$ (cf.\ Theorem $\ref{prop:formal-group}$), Equation $(\ref{eq:killome})$ implies that, if $\varepsilon=(-1)^{n}$, then  
the restriction at $p$ of $\tilde{\kappa}_{n}(g)$ belongs to (the image under the Kummer map of) $\mathbf{E}(K_{n,p})_{\varepsilon}/p^{k}$.
According to (the proof of) Lemma $\ref{lele self}$ the latter is contained in the $\varepsilon$-finite subspace $H^{1}_{\mathrm{fin},\varepsilon}(K_{n,p},T_{g})=H^{1}_{\mathrm{fin},\varepsilon}(K_{p},T_{g}(I_{n}))$,
so that the residue $\partial_{p}(\tilde{\kappa}_{n}(g))$ of $\tilde{\kappa}_{n}(g)$ at $p$ is zero in the singular quotient 
$H^{1}_{\mathrm{sing},\varepsilon}(K_{n,p},T_{g})=H^{1}_{\mathrm{sing},\varepsilon}(K_{p},T_{g}(I_{n}))$, provided that $\varepsilon=(-1)^{n}$. On the other hand 
Proposition $\ref{freeloc}$ proves that $H^{1}_{\mathrm{sing},\varepsilon}(K_{n,p},T_{g})$ is a free $\iw_{k,n}$-module, so that multiplication 
by $\check{\omega}_{n}^{-\varepsilon}$ yields an isomorphism between $H^{1}_{\mathrm{sing},\varepsilon}(K_{n,p},T_{g})/\omega_{n}^{\varepsilon}$ and 
$H^{1}_{\mathrm{sing},\varepsilon}(K_{n,p},T_{g})[\omega_{n}^{\varepsilon}]$. As by construction $\check{\omega}_{n}^{-\varepsilon}\cdot{}\partial_{p}(\kappa_{n}^{\varepsilon}(g))$
equals $(-1)^{\delta(n)}\cdot{}\partial_{p}(\tilde{\kappa}_{n}(g))=0$ if $\varepsilon=(-1)^{n}$, we conclude that $\kappa_{n}^{\varepsilon}(g)$
belongs to the kernel of the residue map $\partial_{p} : \mathfrak{Sel}_{S}(K_{n},T_{g})/\omega_{n}^{\varepsilon}\rightarrow{}H^{1}_{\mathrm{sing},\varepsilon}(K_{n,p},T_{g})/\omega_{n}^{\varepsilon}$
if $\varepsilon=(-1)^{n}$. As $H^{1}_{\mathrm{sing},\varepsilon}(K_{\infty,p},T_{g})=H^{1}_{\mathrm{sing},\varepsilon}(K_{p},\mathbf{T}_{g})$
is the inverse limit of the groups $H^{1}_{\mathrm{sing},\varepsilon}(K_{n,p},T_{g})/\omega_{n}^{\varepsilon}$ as $n$ tends to infinity in $\N^{\varepsilon}$
(cf.\ Corollary $\ref{local control}$),
this proves that the class $\kappa_{\infty}^{\varepsilon}(g)$
belongs to the kernel of the residue map $\partial_{p} : \mathfrak{Sel}_{S}(K_{\infty},T_{g})\rightarrow{}H^{1}_{\mathrm{sing},\varepsilon}(K_{\infty,p},T_{g})$.
We summarise the discussion in the following key

\begin{proposition}\label{class in Selmer} $\kappa_{\infty}^{\varepsilon}(g)$ belongs to $\mathfrak{Sel}^{\varepsilon}(K_{\infty},T_{g})$.
\end{proposition}

\begin{remark}
Let $g=f_L$ be the level raising of $f_k=f\pmod{p^k}$ at a definite product $L$ in $\mathscr{S}_k^\mathrm{def}$, and let 
$\ell$ be a $k$-admissible prime relative to $(f,K)$ not dividing $L$, so that $L\ell$ belongs to $\mathscr{S}_{k}^{\mathrm{ind}}$.
Let $g_\ell=f_{L\ell}$ be the level raising at $\ell$ of $g$ (namely the $L\ell$-level raising of $f_k$).
Then, for $\varepsilon=\pm$ (resp., $\varepsilon=\emptyset$) in the supersingular (resp., ordinary) case, the class 
$\kappa_{\infty}^{\varepsilon}(g_\ell)\in{}\mathfrak{Sel}^{\varepsilon}(K_{\infty},T_{g_{\ell}})$ is also an element of the $\varepsilon$-Selmer group $\mathfrak{Sel}^{\varepsilon}_\ell(K_{\infty},T_{g})$
of $g$ relaxed at $\ell$. 
\end{remark}

\subsection{Reciprocity laws} 
\label{sec:reciprocity}
The cohomology classes in Section \ref{class in Selmer} are related
the square-root $p$-adic $L$-functions by the following explicit 
reciprocity laws.

Recall that $\varepsilon=\emptyset$ in the ordinary case and $\varepsilon=\pm$ in 
the supersingular case. 
Equation \eqref{classes-ord-bis} and Proposition $\ref{class in Selmer}$ define 
global Selmer classes $\kappa_{\infty}^{\varepsilon}(g)$ in 
$\mathfrak{Sel}^{\varepsilon}(K_{\infty},T_{g})$. 
Note that each $k$-admissible prime $\ell$ is totally split 
in $K_\infty/K$, being inert in $K$. Therefore, $H^1(K_{\infty,\ell},T_g)=H^1(K_\ell,\mathbf{T}_g)$ is isomorphic to $H^1(K_\ell,T_g)\otimes\Lambda$,
and Lemma \ref{decadm} allows us to define morphisms 
\[
   v_{\ell} : H^{1}(K_\infty,T_{g})\longrightarrow{}H^{1}_{\mathrm{fin}}(K_{\ell},T_{g})\otimes\Lambda\cong{}\Lambda_k\ \ \text{and}\ \ 
   \partial_{\ell} : H^{1}(K_\infty,T_{g})\longrightarrow{}H^{1}_{\mathrm{ord}}(K_{\ell},T_{g})\otimes\Lambda\cong{}\Lambda_k,\] 
defined by composing the restriction map at $\ell$ with the projection onto the finite and the ordinary (or singular) part respectively (cf.\ Section \ref{primesL}). 
Given a global class $x\in{}H^{1}(K,T_{f,k})$, we call $v_{\ell}(x)$ its \emph{finite part} at $\ell$,
and $\partial_{\ell}(x)$ its \emph{residue} at $\ell$. 
If $L=\prod_i\ell_i\in\mathscr{S}_k$ 
is a squarefree product of admissible primes $\ell_i$, then 
we write $\partial_L=\oplus_i\partial_{\ell_i}$ and $v_L=\oplus_iv_{\ell_i}$. 

\begin{theo}[First Reciprocity Law]\label{firstrec} Assume that $L\in{}\mathscr{S}_{k}^{\mathrm{def}}$ is \emph{definite}, 
let $g=f_L$ be the $L$-level raising of $f$ modulo $p^k$, 
and let $\ell\nmid L$ be an admissible prime relative to $g$ and $K$, so that 
$L\ell\in{}\mathscr{S}_{k}^{\mathrm{ind}}$ is \emph{indefinite}. Let $g_\ell$ 
be the $\ell$-level raising of $g$. 
The following equality 
\[\partial_\ell\left(\kappa_\infty^\varepsilon(g_\ell)\right)=\mathcal L^\varepsilon_g\] 
holds in $\Lambda/p^k$ up to units.   
\end{theo}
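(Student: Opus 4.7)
The plan is to reduce the statement to a computation on the special fiber of the Shimura curve $X_{N^+,L\ell N^-}$ at the prime $\ell$, where \v{C}erednik--Drinfeld uniformization identifies the character group of the toric part of the N\'eron model of $J_{N^+,L\ell N^-}$ with a module built from the definite Gross curve $X_{N^+,LN^-}$. This is the strategy pioneered in Section 8 of \cite{Be-Da-main} in the ordinary case, extended to the supersingular split case in \cite{Da-Io}; here I would extend it uniformly to cover the inert case, including the exceptional situation.

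First I would unwind the definition of the residue map. Since $\ell \mid L\ell N^-$, the Jacobian $J_{N^+,L\ell N^-}$ has purely toric reduction at $\ell$, and by admissibility (Lemma \ref{decadm}) the $G_{K_\ell}$-representation on $T_{f,k}$ decomposes as $\Z/p^k(1)\oplus\Z/p^k$. Mumford--Tate $\ell$-adic uniformization then gives an explicit description of $H^1_{\mathrm{sing}}(K_\ell,T_{f,k})$ as a quotient of the character group of the toric part, and identifies the residue of the Kummer class $\psi_{f_{L\ell},n}(P_n(L\ell))$ with the class of the specialization of $P_n(L\ell)$ to the supersingular locus of the mod-$\ell$ reduction. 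By \v{C}erednik--Drinfeld, this character group, modulo the maximal ideal of $\mathbf{T}_{N^+,L\ell N^-}$ associated with $f_{L\ell}$, is canonically (and Hecke-equivariantly) isomorphic to the free $\Z/p^k$-module generated by Gross points on $X_{N^+,LN^-}$. Under the Jacquet--Langlands correspondence, the level raisings $f_L$ and $f_{L\ell}$ correspond to one another modulo $p^k$, so the modular parametrization $\psi_g$ of Section \ref{snpL} matches the composition of $\psi_{f_{L\ell}}$ with this specialization up to units. This step is the technical heart: it provides a commutative diagram linking residues of cohomology classes on the indefinite side to values of $\psi_g$ on the definite side.

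Next, because $\ell$ is inert in $K$ and splits completely in $K_n/K$, a direct computation via the theory of complex multiplication shows that the specialization of $P_n(L\ell)$ at any chosen prime of $K_n$ above $\ell$ is (up to the canonical identification of special point sets) the Gross point $P_n(L)$ appearing in Section \ref{compheeg}. Applying $\psi_g$ and assembling the Galois translates indexed by $G_n$ yields, in the ordinary case after the division by $\alpha_p(g)^n$ built into \eqref{classes-ord}, the finite-level quantity $\mathcal L_{g,n}$ of Section \ref{goodord}; in the supersingular case it yields the same $\mathcal L_{g,n}$ built from Gross points in Section \ref{p adic L supersingular}. Passing to the inverse limit in $n$, and using Lemma \ref{normcomppm} together with the compatibility of $\kappa_n^\varepsilon$ established in Section \ref{sec6.1.3}, produces the claimed equality $\partial_\ell(\kappa_\infty^\varepsilon(L\ell)) = \mathcal L_g^\varepsilon$ in $\Lambda/p^k$ up to units.

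The main obstacle, and the reason the supersingular and in particular the exceptional case require care, is to check that the division by $\tilde\omega_n^{\mp}$ (non-exceptional) or $\omega_n^{-}$ (exceptional) used to define $\kappa_n^\varepsilon(L\ell)$ in Section \ref{subsec:indef} is compatible with the analogous division used to define $\mathcal L_{g,n}^\varepsilon$ in Section \ref{p adic L supersingular}; equivalently, the reduction at $\ell$ must commute with the $\omega_n^\varepsilon$-torsion projections of Lemma \ref{divlemma}. This compatibility follows from the freeness of the relevant local and global Selmer modules established in Propositions \ref{freeloc} and \ref{global freeness}, which guarantee that multiplication by the factors $\omega_n^\pm$, $\tilde\omega_n^\pm$ is injective where needed, so that the square roots $\kappa_n^\varepsilon$ and $\mathcal L_{g,n}^\varepsilon$ are both unambiguously characterized and related by the specialization map. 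Once this compatibility is in place, the reciprocity law reduces to the identification of specializations of Heegner points with Gross points carried out in the second paragraph.
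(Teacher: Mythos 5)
Your proposal follows essentially the same route as the paper's proof, which first establishes the finite-level identity $\partial_\ell(\kappa_n(L\ell))=\mathcal{L}_{g,n}$ --- cited from Theorem~4.1 of \cite{Be-Da-main}, with the observation that its \v{C}erednik--Drinfeld argument carries over verbatim to the supersingular case --- and then divides both sides by the complementary factor $\tilde\omega_n^{-\varepsilon}$ (respectively $\omega_n^{-}$ in the exceptional case) using the injectivity of the resulting map $\Lambda_{n,k}^\varepsilon\hookrightarrow\Lambda_{n,k}$ before passing to the inverse limit. Your version unpacks the cited finite-level input via Mumford--Tate and \v{C}erednik--Drinfeld uniformisation, which is precisely the content of Section~8 of \cite{Be-Da-main}; this adds exposition but no new mathematical content. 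One phrasing to tighten: your claim that freeness ``guarantees that multiplication by the factors $\omega_n^\pm$, $\tilde\omega_n^\pm$ is injective'' is misleading as stated, since these elements are zero-divisors in $\Lambda_{n,k}=\Lambda/(\omega_n,p^k)$ and multiplication by them on a free $\Lambda_{n,k}$-module is certainly not injective. The injectivity you actually need is that of the map $M/\omega_n^\varepsilon\to M$ induced by multiplication by the complementary factor $\tilde\omega_n^{-\varepsilon}$ (or $\omega_n^{-}$ in the exceptional case), which is an isomorphism onto the $\omega_n^\varepsilon$-torsion of $M$; this is the content of Lemma~\ref{divlemma}, and the role of Propositions~\ref{freeloc} and~\ref{global freeness} is merely to identify the relevant singular quotient and Selmer modules with free $\Lambda_{n,k}$-modules so that the lemma can be applied componentwise.
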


\begin{proof} 
By \cite[Theorem 4.1]{Be-Da-main}, whose proof works both in the ordinary and in the supersingular case, we have 
$\partial_\ell\left(\kappa_n(g_\ell)\right)=\mathcal L_{g,n}$. 
In the ordinary case, this completes the proof. In the supersingular case, recall that
by definition  we have for all $n$ such that $\varepsilon=(-1)^n$:
\begin{itemize}
\item If $p$ is split in $K$ or $p$ is inert in $K$ and $\varepsilon=-1$ (the non-exceptional case): 
\[
\kappa_n(g_\ell)
=\begin{cases}
(-1)^{n/2}\tilde\omega_n^{-\varepsilon} \kappa_n^\varepsilon(g_\ell), \text{ if $n$ is even};\\
(-1)^{(n-1)/2}\tilde\omega_n^{-\varepsilon} \kappa_n^\varepsilon(g_\ell), \text{ if $n$ is odd}; 
\end{cases}\]
\[
\mathcal L_{g,n}
=\begin{cases}
(-1)^{n/2}\tilde\omega_n^{-\varepsilon}\mathcal L_{g,n}^\varepsilon, \text{ if $n$ is even};\\
(-1)^{(n-1)/2}\tilde\omega_n^{-\varepsilon}\mathcal L_{g,n}^\varepsilon, \text{ if $n$ is odd}; 
\end{cases}\] 
\item If $p$ is inert in $K$ and $\varepsilon=+1$ (the exceptional case): 
\[\kappa_n(g_\ell)=(-1)^{n/2}\omega_n^{-} \kappa_n^+(g_\ell);\]
\[\mathcal L_{g,n}=(-1)^{n/2}\omega_n^{-}\mathcal L_{g,n}^+.\]
\end{itemize} 
In both cases, since $\Lambda_{n,k}^\varepsilon$ is $\tilde\omega_n^{-\varepsilon}$-torsion free (split and non-exceptional case) and is $\omega_n^{-\varepsilon}$-torsion free (exceptional case) it follows from $\partial_\ell\left(\kappa_n(g_\ell)\right)=\mathcal L_{g,n}$ that 
$\partial_\ell\left(\kappa_n^\varepsilon(g_\ell)\right)=\mathcal L^\varepsilon_{g,n}$ for all $n\ges0$, and the conclusion follows. 
\end{proof}

\begin{theo}[Second Reciprocity Law]\label{secondrec}Assume that $L\in{}\mathscr{S}_{k}^{\mathrm{ind}}$ is \emph{indefinite}, let $g=f_L$ be the $L$-level raising of $f$ modulo $p^k$ 
and let $\ell\nmid L$ be an admissible prime relative to $g$ and $K$, so that 
$L\ell\in{}\mathscr{S}_{k}^{\mathrm{def}}$ is \emph{definite}. Let $g_\ell$ be the $\ell$-level raising of $g$.  
Then $\kappa_\infty^\varepsilon(g)$ 
is finite at $\ell$ and the equality \[v_{\ell}\left(\kappa_\infty^\varepsilon(g)\right)=\mathcal L_{g_\ell}^\varepsilon\] 
holds in $\Lambda/p^k$ up to units. 
\end{theo}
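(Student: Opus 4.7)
The plan is to argue in direct parallel with the proof of Theorem \ref{firstrec} just given. Namely, I first establish the identity at the level of the un-projected Kummer classes $\tilde\kappa_n(L)$ and the truncated $p$-adic $L$-functions $\mathcal{L}_{g,n}$, and then descend to the $\varepsilon$-components by cancelling a common $\omega$-factor that appears in the definitional decomposition of both sides. The finiteness of $\kappa_\infty^\varepsilon(L)$ at $\ell$ will in fact be automatic from Proposition \ref{class in Selmer}, since $\ell \nmid L$ and the Selmer condition at $\ell$ in $\mathfrak{Sel}_L^\varepsilon(K_\infty, T_g)$ is the default finite condition.

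The core classical input is the Second Reciprocity Law of Bertolini--Darmon (Theorem 4.2 of \cite{Be-Da-main}), which asserts that $\tilde\kappa_n(L)$ is finite at $\ell$ and that $v_\ell(\tilde\kappa_n(L)) = \mathcal{L}_{g,n}$ in $\Lambda_{n,k}$ up to a unit. Although originally formulated in the good-ordinary context, its proof applies verbatim in the supersingular case: its geometric heart is the $\ell$-adic uniformization identifying the supersingular locus of the mod-$\ell$ reduction of the Shimura curve $X_{N^+,LN^-}$ with the Gross curve $X_{N^+,L\ell N^-}$, together with the Ihara-type multiplicity-one theorem of Diamond--Taylor. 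Both inputs are purely local at $\ell$ and operate on the mod-$p^k$ Galois representation $T_{f,k}$, which coincides for $f$, $f_L$ and $f_{L\ell}$ by level raising; they are therefore insensitive to the reduction type of $E$ at $p$.

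For the descent, in the ordinary case $\varepsilon = \emptyset$ and there is nothing further to prove. In the supersingular case, the definitions of Section \ref{p adic L supersingular} and Section \ref{sec6.1.3} yield, for the sign $\varepsilon = (-1)^n$, matched decompositions with identical signs on both sides:
\[
\tilde\kappa_n(L) = \pm\, A_n \cdot \kappa_n^\varepsilon(L), \qquad \mathcal{L}_{g,n} = \pm\, A_n \cdot \mathcal{L}_{g,n}^\varepsilon,
\]
where $A_n = \tilde\omega_n^{-\varepsilon}$ in the non-exceptional case and $A_n = \omega_n^{-}$ in the exceptional case. Applying the $\Lambda_{n,k}$-linear residue map $v_\ell$ to the first decomposition and substituting the classical identity of Step 1 on the $\tilde\kappa_n(L)$-side yields $A_n \cdot v_\ell(\kappa_n^\varepsilon(L)) = A_n \cdot \mathcal{L}_{g,n}^\varepsilon$. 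Combining Propositions \ref{freeloc} and \ref{global freeness} with Lemma \ref{divlemma} shows that $H^1_{\mathrm{fin}}(K_\ell, T_f(I_{n,k}))/\omega_n^\varepsilon$ is a free $\Lambda_{n,k}^\varepsilon$-module, and in particular is $A_n$-torsion free because $A_n$ is coprime to $\omega_n^\varepsilon$. Cancelling $A_n$ gives $v_\ell(\kappa_n^\varepsilon(L)) = \mathcal{L}_{g,n}^\varepsilon$ up to a unit, and passing to the inverse limit over $n \in \N^\varepsilon$ completes the proof.

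The main obstacle lies in the classical step, namely verifying that the Bertolini--Darmon geometric argument at the admissible prime $\ell$ extends to the supersingular-at-$p$ setting, and carefully tracking the ``up to units'' normalizations together with the signs in the $\omega$-factor decomposition so that they cancel consistently on the two sides of the identity. Once the un-projected identity is in place, the $\omega$-factor bookkeeping, the division, and the passage to the inverse limit are routine in view of the freeness results already established in Sections \ref{p-adic L} and \ref{locsec}.
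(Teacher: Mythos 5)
Your proposal is correct and follows essentially the same route as the paper: reduce to the unprojected identity $v_\ell(\kappa_n(L))=\mathcal{L}_{g,n}$ from Bertolini--Darmon (Theorem 4.2 of \cite{Be-Da-main}, together with the observation that its argument is insensitive to the reduction type at $p$ and extends to $N^-=1$ via the standard Ihara's Lemma or Vatsal's approach), then apply the matched $\omega$-factor decompositions of $\tilde\kappa_n(L)$ and $\mathcal{L}_{g,n}$ and cancel the common factor $A_n$ because $\Lambda_{n,k}^\varepsilon$ has no nontrivial $A_n$-torsion, exactly as in the proof of Theorem~\ref{firstrec}. One small imprecision: the freeness of the local target of $v_\ell$ modulo $\omega_n^\varepsilon$ follows from Lemma~\ref{decadm} (admissible primes split completely in $K_\infty/K$) together with Lemma~\ref{divlemma}, not from Proposition~\ref{freeloc} (which concerns primes dividing $p$) or Proposition~\ref{global freeness} (which concerns global Selmer groups), but this does not affect the substance of the argument.
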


\begin{proof} The result follows as in the proof of Theorem \ref{firstrec} from the relation 
\begin{equation}\label{RLII} 
v_{\ell}\left(\kappa_\infty(g)\right)=\mathcal L_{g_\ell}.\end{equation} 
If $N^-\neq 1$, \eqref{RLII} is \cite[Theorem 4.2]{Be-Da-main}, which is proved in 
Section 9 of loc. cit.   using an extension of Ihara's Lemma to indefinite Shimura curves due to Diamond--Taylor \cite{DT}. The same argument applies when $N^-=1$ using 
the standard Ihara's Lemma; alternatively, to prove \eqref{RLII} when $N^-=1$ 
one can adapt the arguments in Section 6 of 
Vatsal's paper \cite{Vat-mu}, where the case $n=1$ is considered. 
\end{proof}

\section{$\varepsilon$-BSD formulae in the definite case}\label{sec:epsilonBSD}
This section is devoted to the proof of BSD formulae for the $\varepsilon$-Selmer groups.  
They are a crucial ingredient in the proof of the main results stated in the Introduction.
We adopt the abuse of notation introduced in the previous section, thus writing $M/x$ instead of $M/xM$ for any element $x$ of a commutative ring with unity $R$, and for any $R$-module $M$.  

Fix a positive integer  $k\ges{}1$ and  a (possibly empty)
\emph{definite} squarefree product $L\in{}\mathscr{S}_{2k}^{\mathrm{def}}$
 of $2k$-admissible primes relative to $(f,K,p)$ (hence $\epsilon_{K}(LN^{-})=-1$). 
Denote by  $\check{g}=f_{L}\in{}S_{2}(N^{+},LN^{-};\Z/p^{2k})$
the $L$-level raising of the reduction of 
 $f$ modulo $p^{2k}$ (cf. Section $\ref{level raising}$) and by 
$g\in{}S_{2}(N^{+},LN^{-};\Z/p^{k})$ the reduction of $\check{g}$ modulo $p^{k}$.

Let $\chi : \iw\rightarrow\mathscr{O}_\chi$ be a
morphism of $\Z_{p}$-algebras, where $\mathscr O_\chi$ is a discrete valuation ring finite over $\Z_p$. 
Denote by $\mathfrak{P}_\chi$ the kernel of $\chi$. 
We assume throughout this section that $\mathscr O_\chi$ is the integral closure of $\Lambda/\mathfrak P_\chi$ in its 
fraction field $\mathscr{K}_\chi=\mathrm{Frac}({\mathscr{O}_\chi})$ and, by an abuse of notation, 
we still denote by $$\chi:\Lambda_{\mathscr{O}_\chi}\longtwoheadrightarrow{}\mathscr{O}_\chi$$ the morphism 
of $\mathscr{O}_\chi$-algebras 
induced by $\chi$ and by $\mathfrak P_\chi\subset{}\iw_{\mathscr{O}_\chi}$ its kernel.  
Let $\mathrm{ord}_{\chi} : {\mathscr{K}_\chi}\twoheadrightarrow{}\Z\cup{}\{\infty\}$
be the normalised discrete valuation,
let $\varpi_{\chi}$ be a uniformiser of ${\mathscr{O}_\chi}$ and  let  $\F_{\chi}={\mathscr{O}}_{\chi}/\varpi_{\chi}$
be its residue field. If  $M$ is a finite free $\mathscr{O}_{\chi}/\varpi_{\chi}^{m}$-module 
(for some integer $m\ges{}1$) and $x$ is a non-zero element of $M$,
denote by $\mathrm{ord}_{\chi}(x)\in{}\N$ the largest nonnegative  integer $t\ges{}0$ such that 
$x\in{}\varpi_{\chi}^{t}\cdot{}M$. After setting $\mathrm{ord}_{\chi}(0)=\infty$,
this defines an \emph{$\mathscr{O}_\chi$-adic valuation} $\mathrm{ord}_{\chi} : M\fre{}\{0,1,\dots{},m-1,\infty\}$. Recall that we already introduced the notation 
\[T_{g}(\chi)=T_{g,{{\mathscr{O}_\chi}}}(\mathfrak P_\chi)\ \ \ \text{and}\ \ \ A_{g}(\chi)=A_{g,{{\mathscr{O}_\chi}}}(\mathfrak P_\chi).\]

\begin{theo}\label{BSD0} Assume that $\mathcal{L}_g^\varepsilon(\bar\chi)\neq 0$. Then 
$\mathrm{length}_{\mathscr{O}_{\chi}}\big(\mathrm{Sel}_{\varepsilon}(K,A_{g}(\chi))\big)
               \les{}2\ord_\chi\left(\mathcal{L}_g^\varepsilon(\bar\chi)\right)$, with equality in the 
               non-exceptional case. 
\end{theo}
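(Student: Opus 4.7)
The plan is to adapt the Bertolini--Darmon inductive argument of \cite{Be-Da-main}---formalized as a bipartite Euler system by Howard \cite{How-heeg}---running it after specialization at $\chi$ modulo $p^{2k}$, and refining it in the non-exceptional case by invoking the Skinner--Urban $p$-converse theorem \cite{S-U} to upgrade the classical divisibility to an equality. The reciprocity laws (Theorems \ref{firstrec} and \ref{secondrec}) serve as the interface translating the structure of Kolyvagin classes into information on values of $\mathcal{L}^\varepsilon_g$.

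First, I would specialize the $\Lambda$-adic global classes of Section \ref{sec6.1.3} at $\chi$: by the injectivity of Proposition \ref{speinj}, for each admissible $\ell$ such that $L\ell$ is indefinite one obtains a class $\kappa^\varepsilon(\chi,L\ell)\in\mathfrak{Sel}_{L\ell}^\varepsilon(K,T_g(\chi))$ with $\ord_\chi\partial_\ell\kappa^\varepsilon(\chi,L\ell)=\ord_\chi\mathcal{L}^\varepsilon_g(\bar\chi)$ by Theorem \ref{firstrec}. Iterating level-raising through a string of admissible primes and applying Theorem \ref{secondrec} inductively produces Kolyvagin classes associated to each compatible product $L\ell_1\cdots\ell_{2r}$, whose finite parts at $\ell_{2r}$ compute the $\chi$-value of the $p$-adic $L$-function of the intermediate level-raised form $f_{L\ell_1\cdots\ell_{2r-1}}$; these classes are the input of Howard's bipartite Euler system in the specialized setting.

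Second, I would carry out the inductive descent. Set $t=\ord_\chi\mathcal{L}^\varepsilon_g(\bar\chi)$. For any non-zero $s\in\mathrm{Sel}_\varepsilon(K,A_g(\chi))$, the Chebotarev-style argument of \cite[Thm.~3.2]{Be-Da-main} (extended in \cite{P-W}, \cite{Be-Da-der1}) produces an admissible prime $\ell$ with $\ord_\chi v_\ell(s)=\ord_\chi(s)$ while preserving non-vanishing of the relevant Kolyvagin class. Inserting $s$ and $\kappa^\varepsilon(\chi,L\ell)$ into Poitou--Tate global duality, and using the local freeness results of Section \ref{locsec} (Lemma \ref{decadm}, Proposition \ref{freeloc}, Corollary \ref{local control}) together with Lemma \ref{control nminus} and Lemma \ref{control nplus} to isolate the non-zero local pairing contributions, one derives the step-down inequality
\[\ord_\chi(s)+\ord_\chi\mathcal{L}^\varepsilon_{f_{L\ell}}(\bar\chi)\ges \ord_\chi\mathcal{L}^\varepsilon_g(\bar\chi).\]
Combined with the symmetric bound obtained from a dual Kolyvagin class coming from the indefinite side of the bipartite graph, iterating $t$ times yields $\mathrm{length}_{\mathscr{O}_\chi}\mathrm{Sel}_\varepsilon(K,A_g(\chi))\les 2t$.

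Finally, for equality in the non-exceptional case, the induction must terminate at a level $S^\ast\in\mathscr{S}_{2k}^{\mathrm{def}}$ for which the $\chi$-Selmer group of $f_{S^\ast}$ is trivial and $\mathcal{L}^\varepsilon_{f_{S^\ast}}(\bar\chi)$ is an $\mathscr{O}_\chi$-unit; at this terminal step the accumulated inequalities collapse to equalities. Proposition \ref{global freeness} ensures that the descent can be continued as long as the Selmer group is non-zero, and the non-vanishing of the terminal $L$-value modulo $p$ follows by combining Lemma \ref{nonexcord} and Lemma \ref{nonexcss}---which identify $\mathcal{L}^\varepsilon_{f_{S^\ast}}(\mathbf{1})$ with the image of the Gross point $P_K$ under $\psi_{f_{S^\ast}}$ up to units---with the Skinner--Urban $p$-converse theorem, which forces this Gross-point image to be a $p$-adic unit once the Selmer group has collapsed. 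The main obstacle, and the precise point at which the exceptional case breaks, is this termination: when $(E,K,p,+)$ is exceptional, $\mathcal{L}^+_{f_{S^\ast}}$ carries an extra local zero at $p$ (as $a_p(E)=0$ and $p$ is inert), so Lemma \ref{nonexcss} fails, no analogous Gross-point identification is available, and one only recovers the one-sided divisibility.
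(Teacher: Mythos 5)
Your proposal correctly identifies the overall architecture of the paper's proof: the Bertolini--Darmon inductive descent run through the bipartite Euler system, the role of the first and second reciprocity laws (Theorems \ref{firstrec} and \ref{secondrec}) as the dictionary between residues of Kolyvagin classes and $L$-values, the termination at a trivial Selmer group, and the appeal to the Skinner--Urban $p$-converse theorem combined with Gross' formula and Lemmas \ref{nonexcord}/\ref{nonexcss} to see that the terminal $L$-value is a unit. This matches Steps~1 through 4 of the paper's argument, and your diagnosis of where the exceptional case breaks (the extra local zero prevents the Gross-point identification) is accurate.

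However, there is a genuine gap in your passage from divisibility to equality. You assert that ``at this terminal step the accumulated inequalities collapse to equalities,'' but this does not follow from anything you have written. The step-down inequality you derive carries slack: in the paper's notation (Step 3, Equation \eqref{eq:step45}), each descent step contributes a term $-m_1-m_2$ with $m_1,m_2\ges 0$, so the inequality $\mathrm{length}\les 2t$ is a priori strict whenever some $m_i>0$. Knowing that the terminal length and terminal $t$-invariant are both zero does not propagate back through a chain of one-sided inequalities. The paper closes this gap with two additional ingredients you do not invoke: (i) Step~5, the structure theorem $\mathrm{Sel}_\varepsilon(K,A_g(\chi))\cong(\mathscr{O}_\chi/p^k)^s\oplus\mathtt{M}\oplus\mathtt{M}$, obtained from Howard's Theorem~1.4.2 after verifying that the $\varepsilon$-local conditions are maximal isotropic; and (ii) Step~6, which uses that structure together with a parity argument (forcing $s=0$ in the non-exceptional case via the sign computation involving $\epsilon_K(TLN)$) and the explicit non-degeneracy conditions $\mathbf{I}_4$ to prove $m_1=m_2=0$, i.e.\ that each descent step is an exact equality. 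Only then does the induction of Step~7 --- which runs on $\mathrm{length}_{\mathscr{O}_\chi}(\mathrm{Sel}_\varepsilon(K,A_g(\chi)))$, not on $t$ as you suggest --- deliver the equality. Without the structure theorem and the $m_1=m_2=0$ argument, your proposal proves only the upper bound $\mathrm{length}\les 2\ord_\chi(\mathcal{L}_g^\varepsilon(\bar\chi))$, not the equality asserted in the theorem.
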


The rest of this section is devoted to the proof of Theorem \ref{BSD0}.

\subsection{The Kolyvagin system}\label{kolsys} 
Assume that the value of the $p$-adic $L$-function 
$\mathcal{L}_{g}^{\varepsilon}\in{}\iw/p^{k}$ at $\chi$  is non-zero and denote by 
\begin{equation}\label{eq:nontriv ass}
    t^{\varepsilon}_{\chi}(g)=\mathrm{ord}_{\chi}(\mathcal{L}_{g}^{\varepsilon}(\chi))<\infty
\end{equation}
its $\varpi_{\chi}$-adic valuation. 
Let $\ell\in{}\mathscr{S}_{2k}$
be a $2k$-admissible prime  not dividing  $L$, so that 
$\ell\cdot{}L\in{}\mathscr{S}_{2k}^{\mathrm{ind}}$ is \emph{indefinite}, and let 
$S\in{}\mathscr{S}_{2k}$ be a freeing set relative to $\check{g}$ which is divisible by $\ell\cdot{}L$
(cf. Section $\ref{glofree}$). 
We simplify the notation and write 
\[\mathfrak{Sel}^{\varepsilon}_{S}(K,\mathbf{T}_{\check{g}})\otimes{\mathscr{O}_\chi}=\mathfrak{Sel}^{\varepsilon}_{S}(K,\mathbf{T}_{\check{g},{\mathscr{O}_\chi}})\otimes_{\iw_{{\mathscr{O}_\chi}}}{\mathscr{O}_\chi},\]
where the tensor product on the right is taken with respect to the canonical map
$\chi:\Lambda_{\mathscr{O}_\chi}\rightarrow\mathscr{O}_\chi$ induced by $\chi$. 
Proposition $\ref{global freeness}$ shows that 
$\mathfrak{Sel}^{\varepsilon}_{S}(K,\mathbf{T}_{\check{g}})\otimes{\mathscr{O}_\chi}$
is a free ${\mathscr{O}_\chi}/p^{2k}$-module of rank $\delta(S)$. 

Let $\check{g}_\ell$ be the level raising of $\check{g}$ at $\ell$. 
Section $\ref{ramrec}$ attaches to $\check{g}_\ell$ a global cohomology class $\kappa_\infty^\varepsilon(\check{g}_\ell)$ 
\[\kappa^{\varepsilon}_{\infty}(\check{g}_\ell)\in{}\mathfrak{Sel}^{\varepsilon}_{\ell}(K,\mathbf{T}_{\check{g}})
\subset{}\mathfrak{Sel}_{S}^{\varepsilon}(K,\mathbf{T}_{\check{g}})\subset\mathfrak{Sel}_S^\varepsilon(K,\mathbf{T}_{\check{g},\mathscr{O_\chi}})\]  
(cf.\ Proposition $\ref{class in Selmer}$). To simplify the notation, we write from now on 
\[\kappa_{\infty}^{\varepsilon}(\ell)=\kappa^{\varepsilon}_{\infty}(\check{g}_\ell)\]
Denote by $\kappa^\varepsilon_\chi(\ell)$ 
the image of $\kappa_{\infty}^{\varepsilon}(\ell)$  in 
$\mathfrak{Sel}^{\varepsilon}_{S}(K,\mathbf{T}_{\check{g}})\otimes{\mathscr{O}_\chi}$
under the natural projection, 
and by 
\begin{equation}\label{equality t-ord}
t_{\chi}^{\varepsilon}(g,\ell)=\mathrm{ord}_{\chi}(\kappa_{\chi}^{\varepsilon}(\ell))\end{equation}
its ${\mathscr{O}_\chi}$-adic valuation.
Note that $t_{\chi}^{\varepsilon}(g,\ell)$ is  independent of the choice of $S$ and   
Theorem $\ref{firstrec}$ yields 
\begin{equation}\label{eq:frkol}
       t_{\chi}^{\varepsilon}(g,\ell)\les{}
       \mathrm{ord}_{\chi}\big(\partial_{\ell}(\kappa_{\chi}^{\varepsilon}(\ell))\big)
       =\mathrm{ord}_{\chi}(\mathcal{L}_{\check{g}}^{\varepsilon}(\chi))=t_{\chi}^{\varepsilon}(g)<
       \mathrm{ord}_{\chi}(p^{k}),
\end{equation} 
where \[\partial_{\ell} : \mathfrak{Sel}_{S}^{\varepsilon}(K,\mathbf{T}_{\check{g},{{\mathscr{O}_\chi}}})
\longrightarrow H^{1}_{\mathrm{sing}}(K_{\ell},T_{\check{g},{{\mathscr{O}_\chi}}})\cong{}\iw_{{\mathscr{O}_\chi}}/p^{2k}\] is the scalar extension of the residue map at $\ell$
introduced in Section $\ref{ramrec}$ and the second equality follows from Equation $(\ref{eq:nontriv ass})$.
In particular there exists 
$\tilde{\kappa}_{\chi}^{\varepsilon}(\ell)\in{}\mathfrak{Sel}_{S}^{\varepsilon}(K,\mathbf{T}_{\check{g}})
\otimes\mathscr O_\chi$ such that 
\begin{equation}\label{eq:divcla}
    \mathrm{ord}_{\chi}(\tilde{\kappa}_{\chi}^{\varepsilon}(\ell))=0,\end{equation}
\begin{equation}\label{eq:divcla2}     \kappa_{\chi}^{\varepsilon}(\ell)=
\varpi_{\chi}^{t_{\chi}^{\varepsilon}(g,\ell)}\cdot{}\tilde{\kappa}_{\chi}^{\varepsilon}(\ell).
\end{equation}
While  $\tilde{\kappa}_{\chi}^{\varepsilon}(\ell)$ is not uniquely determined by the previous equations, its image 
\[\hat{\kappa}^{\varepsilon}_{\chi}(\ell)\in{}
\mathfrak{Sel}_{S}^{\varepsilon}(K,\mathbf{T}_{g})\otimes\mathscr O_\chi\overset{\mathrm{def}}=\mathfrak{Sel}_{S}^{\varepsilon}(K,\mathbf{T}_{g,{{\mathscr{O}_\chi}}})\otimes_{\Lambda_{{\mathscr{O}_\chi}}}\mathscr O_\chi\]
under the morphism induced by the projection $T_{\check{g}}\twoheadrightarrow{}T_{g}$ is independent 
of any choice. 
Let 
\[\texttt{s}_{\chi} : \mathfrak{Sel}_{S}^{\varepsilon}(K,\mathbf{T}_{g})\otimes\mathscr O_\chi\longrightarrow\mathfrak{Sel}_{S}^{\varepsilon}(K,T_{g}(\chi))\] be the specialization map. 
Define 
\[
           \xi_{\chi}^{\varepsilon}(\ell)=\xi_{\chi}^{\varepsilon}(g,\ell)
           =\texttt{s}_{\chi}(\hat{\kappa}_{\chi}^{\varepsilon}(\ell))
           \in{}\mathfrak{Sel}_{S}^{\varepsilon}(K,T_{g}(\chi)) .
\]
and 
\[
           \bar{\xi}_{\chi}^{\varepsilon}(\ell)=\bar{\xi}_{\ell}^{\varepsilon}(g,\ell)\in{}H^{1}(K,T_{\bar{g}})\otimes_{\F_{p}}\F_{\chi}
\] 
as the image of $\hat{\kappa}_{\chi}^{\varepsilon}(\ell)$
under the map  induced in cohomology  by the reduction map
$T_{g}(\chi)\twoheadrightarrow{}
T_{\bar{g}}\otimes_{\F_{p}}\F_{\chi}$, where $\bar{g}\in{}S_{2}(N^{+},LN^{-};\F_{p})$
is the reduction of $g$ modulo $p$.

\begin{lemma}[\emph{cf. Lemma 4.5 of} \cite{Be-Da-main}]\label{basickol} \hfill
\begin{enumerate}
\item  $0\not=\xi_{\chi}^{\varepsilon}(\ell)\in{}\mathfrak{Sel}_{\ell}^{\varepsilon}(K,T_{g}(\chi))$
and $v_{\ell}\big(\xi_{\chi}^{\varepsilon}(\ell)\big)=0$.
\item $\mathrm{ord}_{\chi}\big(\partial_{\ell}(\xi_{\chi}^{\varepsilon}(\ell))\big)=
           t_{\chi}^{\varepsilon}(g)-t_{\chi}^{\varepsilon}(g,\ell).$
\item  $0\not=\bar{\xi}_{\chi}^{\varepsilon}(\ell)\in{}\mathfrak{Sel}_{\ell}^{\varepsilon}(K,T_{\bar{g}})\otimes_{\F_{p}}\F_{\chi}$
and    $\partial_{\ell}(\bar{\xi}_{\chi}^{\varepsilon}(\ell))$  is non-zero if and only if 
$t_{\chi}^{\varepsilon}(g,\ell)=t_{\chi}^{\varepsilon}(g)$.
\end{enumerate}
\end{lemma}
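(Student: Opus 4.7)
The plan is to adapt the proof of Lemma 4.5 of \cite{Be-Da-main} to the $\varepsilon$-setting. The key observation, used repeatedly throughout, is that the inequality (\ref{eq:frkol}) provides
\[t_\chi^\varepsilon(g,\ell)\;\leq\;t_\chi^\varepsilon(g)\;<\;\mathrm{ord}_\chi(p^k),\]
so any $\varpi_\chi^{t_\chi^\varepsilon(g,\ell)}$-torsion inside a free $\mathscr{O}_\chi/p^{2k}$-module vanishes after reduction modulo $p^k$. This single principle will control both the Selmer conditions and the residue computations.

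For (1), the module $\mathfrak{Sel}_S^\varepsilon(K,\mathbf{T}_{\check g})\otimes\mathscr{O}_\chi$ is free of rank $\delta(S)$ over $\mathscr{O}_\chi/p^{2k}$ by Proposition \ref{global freeness}, and $\tilde\kappa_\chi^\varepsilon(\ell)$ has $\mathscr{O}_\chi$-adic valuation $0$ in it by (\ref{eq:divcla}). Reducing modulo $p^k$ preserves this, so $\hat\kappa_\chi^\varepsilon(\ell)$ is non-zero, and the injectivity of specialization (Proposition \ref{speinj}) then yields $\xi_\chi^\varepsilon(\ell)\neq 0$. To check that $\xi_\chi^\varepsilon(\ell)$ lies in $\mathfrak{Sel}_\ell^\varepsilon$ and that $v_\ell(\xi_\chi^\varepsilon(\ell))=0$, I use Proposition \ref{class in Selmer}: the original class $\kappa_\infty^\varepsilon(\ell)$ belongs to $\mathfrak{Sel}_\ell^\varepsilon(K,\mathbf{T}_{\check g})$ and satisfies $v_\ell(\kappa_\infty^\varepsilon(\ell))=0$. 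Upon dividing by $\varpi_\chi^{t_\chi^\varepsilon(g,\ell)}$, the possible failure of $\tilde\kappa_\chi^\varepsilon(\ell)$ to satisfy the corresponding local condition at each prime $q\in S\setminus\{\ell\}$ (and in $H^{1}_{\mathrm{fin}}(K_\ell,\mathbf{T}_{\check g})\otimes\mathscr{O}_\chi$ for the $v_\ell$-computation) is concentrated in the $\varpi_\chi^{t_\chi^\varepsilon(g,\ell)}$-torsion of the relevant local quotient. By Proposition \ref{freeloc}, Lemma \ref{decadm}, and the local analyses of Sections \ref{subsec-p}--\ref{subsec-outside}, these quotients are free over $\mathscr{O}_\chi/p^{2k}$ of the appropriate rank, and the general principle above eliminates the torsion modulo $p^k$.

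For (2), the First Reciprocity Law (Theorem \ref{firstrec}) gives $\partial_\ell(\kappa_\chi^\varepsilon(\ell))=\mathcal{L}_{\check g}^\varepsilon(\chi)$ up to a unit in $\mathscr{O}_\chi/p^{2k}$, so combining this with (\ref{eq:divcla2}) and the freeness of $H^{1}_{\mathrm{sing}}(K_\ell, T_{\check g,\mathscr{O}_\chi})(\chi)\cong\mathscr{O}_\chi/p^{2k}$ forces $\mathrm{ord}_\chi(\partial_\ell(\tilde\kappa_\chi^\varepsilon(\ell)))=t_\chi^\varepsilon(g)-t_\chi^\varepsilon(g,\ell)$. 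Since this value is strictly less than $\mathrm{ord}_\chi(p^k)$, it is preserved under reduction modulo $p^k$, yielding (2). Part (3) then follows by further reducing modulo $\varpi_\chi$: the non-vanishing of $\bar\xi_\chi^\varepsilon(\ell)$ is immediate from $\mathrm{ord}_\chi(\hat\kappa_\chi^\varepsilon(\ell))=0$ together with the same Selmer argument as in (1), while by (2) the class $\partial_\ell(\xi_\chi^\varepsilon(\ell))$ is a unit multiple of $\varpi_\chi^{t_\chi^\varepsilon(g)-t_\chi^\varepsilon(g,\ell)}$, whose reduction modulo $\varpi_\chi$ is non-zero precisely when $t_\chi^\varepsilon(g,\ell)=t_\chi^\varepsilon(g)$.

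The main obstacle I expect is verifying uniformly---across the ordinary, supersingular split, and supersingular inert (including the exceptional) cases---the freeness of the various local cohomology modules modulo $p^{2k}$ entering the division-by-$\varpi_\chi^{t_\chi^\varepsilon(g,\ell)}$ mechanism. At primes away from $p$ this reduces to Lemma \ref{decadm} and the setup of Section \ref{selsec}, but at primes above $p$ one invokes Proposition \ref{freeloc}, whose proof in the inert supersingular case rests on the recent resolution of Rubin's conjecture by Burungale--Kobayashi--Ota \cite{BKO1}.
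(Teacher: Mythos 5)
Your proof is correct and follows essentially the same route as the paper's: nonvanishing of $\hat\kappa_\chi^\varepsilon(\ell)$ from $\mathrm{ord}_\chi(\tilde\kappa_\chi^\varepsilon(\ell))=0$ combined with the injectivity of the specialization map (Proposition \ref{speinj}), the ``$2k$ versus $k$'' valuation principle to verify the local conditions at primes of $S\setminus\{\ell\}$ and the vanishing of $v_\ell$, the First Reciprocity Law for the residue valuation in (2), and reduction modulo $\varpi_\chi$ for (3). The only difference is cosmetic (you package the local verifications as a single ``general principle'' rather than spelling out the identification of the singular quotient with $\iw_{\mathscr{O}_\chi}/p^{2k}\twoheadrightarrow\iw_{\mathscr{O}_\chi}/p^{k}$ at each $q$), but the underlying mechanism is identical.
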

\begin{proof} (1) Because the kernel of  $\mathfrak{Sel}_{S}^{\varepsilon}(K,\mathbf{T}_{\check{g}})\fre{}
\mathfrak{Sel}_{S}^{\varepsilon}(K,\mathbf{T}_{g})$ is killed by $p^{k}$ 
and 
$\mathrm{ord}_{\chi}(\tilde{\kappa}_{\chi}^{\varepsilon}(\ell))=0$ by Equation  $(\ref{eq:divcla})$, 
the class $\hat{\kappa}_{\chi}^{\varepsilon}(\ell)$ is not zero, hence so is its image 
$\xi_{\chi}^{\varepsilon}(\ell)$ under the map $\texttt{s}_{\chi}$,
which is injective by Proposition $\ref{speinj}$. 
Let $q$ be a  prime divisor of $S/\ell$.
To prove the first statement one has to show that the residue 
\[\partial_{q}(\xi_{\chi}^{\varepsilon}(\ell))\in{}
H^{1}_{\mathrm{sing}}(K_{q},T_{g,{\mathscr{O}_\chi}})\cong{}\mathscr{O}_{\chi}/p^{k}\]
of $\xi_{\chi}^{\varepsilon}(\ell)$ at $q$ is zero. 
Fix isomorphisms  $H^{1}_{\mathrm{sing}}(K_{q},\mathbf{T}_{\check{g},{\mathscr{O}_\chi}})\cong{}\iw_{\mathscr{O}_\chi}/p^{2k}$
and $H^{1}_{\mathrm{sing}}(K_{q},\mathbf{T}_{g,{\mathscr{O}_\chi}})\cong{}\iw_{\mathscr{O}_\chi}/p^{k}$
such that the map $H^{1}_{\mathrm{sing}}(K_{q},\mathbf{T}_{\check{g},{\mathscr{O}_\chi}})\fre{}
H^{1}_{\mathrm{sing}}(K_{q},\mathbf{T}_{g,{\mathscr{O}_\chi}})$
becomes  identified with the natural projection $\iw_{\mathscr{O}_\chi}/p^{2k}\twoheadrightarrow{}\iw_{\mathscr{O}_\chi}/p^{k}$. 
Since $\partial_{q}(\kappa_{\infty}^{\varepsilon}(\ell))$ is zero  by Proposition $\ref{class in Selmer}$
and $t_{\chi}^{\varepsilon}(g,\ell)<\mathrm{ord}_{\chi}(p^{k})$ by Equation $(\ref{eq:frkol})$, 
it follows  that 
$\partial_{q}(\tilde{\kappa}_{\chi}^{\varepsilon}(\ell))\in{}
\mathscr{O}_{\chi}/p^{2k}$
has $\mathscr{O}_{\chi}$-adic valuation at least $\mathrm{ord}_{\chi}(p^{k})$, hence its projection 
$\partial_{q}(\hat{\kappa}_{\chi}^{\varepsilon}(\ell))\in{}\mathscr{O}_{\chi}/p^{k}$
modulo $p^{k}$ vanishes (here and in the following we wrote $\partial_q$ for the 
scalar extension $\partial_q\otimes\mathrm{id}$ to simplify the notation as before).  This gives 
$$\partial_{q}(\xi_{\chi}^{\varepsilon}(\ell))=\partial_{q}\circ{}\texttt{s}_{\chi}(\hat{\kappa}_{\chi}^{\varepsilon}(\ell))
=\partial_{q}(\xi_{\chi}^{\varepsilon}(\ell))=0,$$
as was to be shown. 
The second statement is proved similarly, using  that $v_{\ell}(\kappa_{\infty}^{\varepsilon}(\ell))=0$
by Proposition $\ref{class in Selmer}$.

(2) Equations $(\ref{eq:frkol})$, $(\ref{eq:divcla})$ and $(\ref{eq:divcla2})$  show that 
$\partial_{\ell}(\tilde{\kappa}_{\chi}^{\varepsilon}(\ell))$ has $\mathscr{O}_{\chi}$-adic valuation 
$t_{\chi}^{\varepsilon}(g)-t_{\chi}^{\varepsilon}(g,\ell)$. Since  $\mathrm{ord}_{\chi}(p^{k})>t_{\chi}^{\varepsilon}(g)$ 
this   is also the $\mathscr{O}_{\chi}$-adic valuation of 
$\partial_{\ell}(\hat{\kappa}_{\chi}^{\varepsilon}(\ell))$,
which is equal to  that of $\partial_{\ell}(\xi_{\chi}^{\varepsilon}(\ell))$
(\emph{cf.} the proof of (1)).  

(3) Note that the class $\bar{\xi}_{\chi}^{\varepsilon}(\ell)$ is equal to the image of 
$\tilde{\kappa}_{\chi}^{\varepsilon}(\ell)$ under the composition 
\[
               \mathfrak{Sel}_{S}^{\varepsilon}(K,\mathbf{T}_{\check{g}})\otimes\mathscr{O}_{\chi}
               \lfre{}\mathfrak{Sel}_{S}^{\varepsilon}(K,\mathbf{T}_{\bar{g}})\otimes\mathscr{O}_{\chi}\lfre{\texttt{s}_{\chi}}
               \mathfrak{Sel}_{S}^{\varepsilon}(K,T_{\bar{g}}(\chi))\otimes_{\F_{p}}\F_{\chi}.
\] 
As above this implies that $\bar{\xi}_{\chi}^{\varepsilon}(\ell)$ is not zero, since $\texttt{s}_{\chi}$
is injective and $\mathrm{ord}_{\chi}(\tilde{\kappa}_{\chi}^{\varepsilon}(\ell))=0$.
Together with (1) this implies the first  statement. 
Since $\partial_{\ell}\big(\bar{\xi}_{\chi}^{\varepsilon}(\ell)\big)\in{}H^{1}_{\mathrm{sing}}
(K_{\ell},T_{\bar{g}}(\chi))\otimes_{\F_{p}}\F_{\chi}\cong{}\F_{\chi}$ is the 
projection of 
$\partial_{\ell}(\xi_{\chi}^{\varepsilon}(\ell))\in{}\mathscr{O}_{\chi}/p^{k}$ modulo $\varpi_{\chi}$,
the second statement follows from (2). 
\end{proof}

\subsection{Proof of Theorem \ref{BSD0}} The proof of Theorem \ref{BSD0} is divided into several steps.
Steps 1, 2 and 3 consist in a generalization to the present context of similar results of \cite{Be-Da-main}.  
The direct generalizations of the techniques in \cite{Be-Da-main} only allow one to prove the inequality \[\mathrm{length}_{\mathscr{O}_{\chi}}\big(\mathrm{Sel}_{\varepsilon}(K,A_{g}(\chi))\big)
               \les{}2\ord_\chi\left(\mathcal{L}_g^\varepsilon(\bar\chi)\right)\]
in Theorem \ref{BSD0}; this inequality holds in both cases, exceptional and non-exceptional, while the opposite inequality can be shown in the non-exceptional case only
with a further inductive argument on the length of $\mathrm{Sel}_{\varepsilon}(K,A_{g}(\chi))$, developed in Steps 4, 5, 6 and 7. The key ingredient for the inequality
\[\mathrm{length}_{\mathscr{O}_{\chi}}\big(\mathrm{Sel}_{\varepsilon}(K,A_{g}(\chi))\big)
               \ges{}2\ord_\chi\left(\mathcal{L}_g^\varepsilon(\bar\chi)\right)\]
 is Step 4 (the basis of the inductive argument, i.e.  the case when $\mathrm{length}_{\mathscr{O}_{\chi}}\big(\mathrm{Sel}_{\varepsilon}(K,A_{g}(\chi))\big)=0$) which combines Gross formula and Lemma \ref{nonexcss} with results of Skinner--Urban (ordinary case) and Fouquet-Wan (supersingular case); the inductive argument then follows in Steps 6 and 7 using a a structure theorem for 
$\mathrm{Sel}_{\varepsilon}(K,A_{g}(\chi))$, which we prove in Step 5. 

\subsubsection{Step 1} If $\mathcal{L}_{g}^{\varepsilon}(\bar{\chi})$
is a $p$-adic unit then $\mathrm{Sel}_{\varepsilon}(K,A_{g}(\chi))$ is trivial. 

\begin{proof} (Cf. \cite[Proposition 4.7]{Be-Da-main}.)
Assume \emph{ad absurdum} that there exists a nontrivial class  $x$
in the Selmer group $\mathrm{Sel}_{\varepsilon}(K,A_{g}(\chi))$.
Choose a $2k$-admissible prime $\ell$ 
such that $v_{\ell}(x)\in{}H^{1}_{\mathrm{fin}}(K_{\ell},A_{g}(\chi))\cong{}\mathscr{O}_{\chi}/p^{k}$ is not zero, which exists 
by Theorem 3.2 of \cite{Be-Da-main}. Since $\mathfrak{Sel}^{\varepsilon}(K,T_{g}(\bar{\chi}))$
is the dual Selmer group of $\mathrm{Sel}_{\varepsilon}(K,A_{g}(\chi))$, Lemma $\ref{basickol}(1)$
and the reciprocity law of global class field theory 
yield  
\[
                 0=\sum_{v}\dia{\mathrm{res}_{v}(x),\mathrm{res}_{v}(\xi_{\bar{\chi}}^{\varepsilon}(\ell))}_{v}
                 =\dia{\mathrm{res}_{\ell}(x),\mathrm{res}_{\ell}(\xi_{\bar{\chi}}^{\varepsilon}(\ell))}_{\ell},
\]
where the sum is taken over all the primes of $K$ and 
$\dia{-,-}_{v}$ denotes the local Tate pairing at $v$ induced by the
duality  $T_{g}(\chi)\times{}A_{g}(\chi)\fre{}\mathscr{O}_{\chi}/p^{k}(1)$
(cf. \cite[Chapter 1]{Mi}). Since $\partial_{\ell}(x)=0$, $v_{\ell}(x)\not=0$ and 
$H^{1}_{\mathrm{fin}}(K_{\ell},T_{g}(\bar{\chi}))$ is the orthogonal complement of 
$H^{1}_{\mathrm{fin}}(K_{\ell},A_{g}(\chi))$
under the perfect pairing $\dia{-,-}_{\ell}$, the previous equation implies that 
the residue at $\ell$ of $\xi_{\bar{\chi}}^{\varepsilon}(\ell)$ has positive $\mathscr{O}_{\chi}$-adic valuation. 
According to Lemma $\ref{basickol}(2)$ this in turn implies that $\mathcal{L}_{g}^{\varepsilon}(\bar{\chi})$
has positive $\mathscr{O}_{\chi}$-adic valuation, contradicting the assumption. 
\end{proof}

\subsubsection{Step 2.} Assume that $\mathrm{Sel}_{\varepsilon}(K,A_{g}(\chi))$ is non-trivial. Then there exist 
two distinct $2k$-admissible primes $\ell_{1}$ and $\ell_{2}$ satisfying the following properties. 

\begin{itemize}
\item[$\mathbf{I}_{1}$.] $t_{\bar{\chi}}^{\varepsilon}(g,\ell_{1})=t_{\bar{\chi}}^{\varepsilon}(g,\ell_{2})<t_{\bar{\chi}}^{\varepsilon}(g)$.
\item[$\mathbf{I}_{2}.$] If $h\in{}S_{2}(N^{+},L\ell_{1}\ell_{2}N^{-};\Z/p^{k})$ denotes the $\ell_{1}\ell_{2}$-level raising 
of $g$, then 
\[
               \mathrm{Sel}_{\varepsilon}(K,A_{h}(\chi))=\mathrm{Sel}_{\varepsilon}^{\ell_{1}\ell_{2}}
               (K,A_{g}(\chi)).
\]
\item[$\mathbf{I}_{3}.$] The $\varpi_{\chi}$-adic valuation of $\mathcal{L}_{h}^{\varepsilon}(\bar{\chi})\in\mathscr{O}_{\chi}/p^{k}$
is equal to $t_{\bar{\chi}}^{\varepsilon}(g,\ell_{i})$ (for $i=1,2$):
\[
              t_{\bar{\chi}}^{\varepsilon}(h)=\mathrm{ord}_{\chi}\big(\mathcal{L}_{h}^{\varepsilon}(\bar{\chi})\big)
              =t_{\bar{\chi}}^{\varepsilon}(g,\ell_{i})<\infty. 
\]
\item[$\mathbf{I}_{4}.$]  $\mathrm{ord}_{\chi}\big(v_{\ell_{1}}\big(\xi_{\bar{\chi}}^{\varepsilon}(\ell_{2})\big)\big)=0$
and $\mathrm{ord}_{\chi}\big(
v_{\ell_{2}}\big(\xi_{\bar{\chi}}^{\varepsilon}(\ell_{1})\big)\big)=0$.
\end{itemize}

\begin{proof} We first prove that there exist infinitely many $2k$-admissible primes 
$\ell$ such that $t_{\bar{\chi}}(g,\ell)<t_{\bar{\chi}}(g).$
Let $\mathfrak{m}_{\iw_{\mathscr{O}_\chi}}$ be as above the maximal ideal of $\iw_{\mathscr{O}_\chi}$, so that 
$A_{g}(\chi)[\mathfrak{m}_{\iw_{\mathscr{O}_\chi}}]\cong{}A_{\bar{g}}\otimes_{\F_{p}}\F_{\chi}$
(as $\chi(g)\equiv{}1\pmod{\varpi_{\chi}}$ for every 
$g\in{}G_{\infty}$). The control theorem of Proposition $\ref{global control}$
yields \[\mathrm{Sel}_{\varepsilon}(K,A_{\bar{g},{\mathscr{O}_\chi}})\cong{}
\mathrm{Sel}_{\varepsilon}(K,A_{g}(\chi))[\mathfrak{m}_{{\iw}_{\mathscr{O}_\chi}}],\] hence $\mathrm{Sel}_{\varepsilon}(K,A_{\bar{g},{\mathscr{O}_\chi}})$
is nontrivial by Nakayama's Lemma. Fix a non-zero class 
\[
                0\not=x\in{}\mathrm{Sel}_{\varepsilon}(K,A_{\bar{g},{\mathscr{O}_\chi}}). 
\] 
According to (a slight generalization of) Theorem 3.2 of \cite{Be-Da-main} there exist infinitely many $2k$-admissible primes 
$\ell$ such that $v_{\ell}(x)\in{}H^{1}_{\mathrm{fin}}(K_{\ell},A_{\bar{g},{\mathscr{O}_\chi}})$
is non zero. We claim that for every such prime $\ell$ one has   
\begin{equation}\label{eq:nmn1}
     t_{\bar{\chi}}^{\varepsilon}(g,\ell)<t_{\bar{\chi}}^{\varepsilon}(g).
\end{equation}
Recall the class $\bar{\xi}_{\bar{\chi}}^{\varepsilon}(\ell)\in{}H^{1}(K,T_{\bar{g}})\otimes_{\F_{p}}\F_{\chi}$
constructed in Section $\ref{kolsys}$. Lemma $\ref{basickol}(3)$ shows that  
$\bar{\xi}_{\bar{\chi}}^{\varepsilon}(\ell)$ belongs to 
$\mathfrak{Sel}_{\ell}^{\varepsilon}(K,T_{\bar{g}})\otimes_{\F_{p}}\F_{\chi}$,
hence (as in the proof of Step 1) the reciprocity law of  global class field theory yields 
\[
            \dia{\partial_{\ell}\big(\bar{\xi}_{\bar{\chi}}^{\varepsilon}(\ell)\big),v_{\ell}(x)}_{\ell}=0,
\]
where $\dia{-,-}_{\ell}$ is the $\F_{\chi}$-linear extension of the perfect local Tate pairing 
\[H^{1}_{\mathrm{sing}}(K_{\ell},T_{\bar{g}})\otimes_{\F_{p}}H^{1}_{\mathrm{fin}}(K_{\ell},A_{\bar{g}})\longrightarrow\F_{p}.\]
Since $v_{\ell}(x)\not=0$ this gives  $\partial_{\ell}\big(\bar{\xi}_{\bar{\chi}}^{\varepsilon}(\ell)\big)=0$,
and the claim $(\ref{eq:nmn1})$ follows from another application of Lemma $\ref{basickol}(3)$.

Fix a $2k$-admissible prime $\ell_{1}$ such that 
$t_{\bar{\chi}}^{\varepsilon}(g,\ell_{1})<t_{\bar{\chi}}^{\varepsilon}(g)$, and such that 
$t_{\bar{\chi}}^{\varepsilon}(g,\ell_{1})\les{}t_{\bar{\chi}}^{\varepsilon}(g,\ell)$ for every 
$2k$-admissible prime $\ell$.  Since $\bar{\xi}^{\varepsilon}_{\bar{\chi}}(\ell_{1})$
is non-zero by Lemma $\ref{basickol}(3)$, Theorem 3.2 of \cite{Be-Da-main} proves that there exists
a $2k$-admissible prime $\ell_{2}\not=\ell_{1}$ such that 
$v_{\ell_{2}}\big(\bar{\xi}_{\bar{\chi}}^{\varepsilon}(\ell_{1})\big)\in{}H^{1}_{\mathrm{fin}}(K_{\ell_{2}},
T_{\bar{g}})\otimes_{\F_{p}}\F_{\chi}\cong{}\F_{\chi}$ is non-zero. 
By construction (cf. Section $\ref{kolsys}$) the latter condition is equivalent to 
\[
            \mathrm{ord}_{\chi}\big(v_{\ell_{2}}(\xi_{\bar{\chi}}^{\varepsilon}(\ell_{1}))\big)=0. 
\]
The second reciprocity law Theorem $\ref{secondrec}$ and the definition of $\xi_{\bar{\chi}}^{\varepsilon}(\ell_{1})$
show that the identities (where we write $v_\ell$ for $v_{\ell}\otimes{}\mathrm{id}$ for $\ell=\ell_1$ and $\ell=\ell_2$ as before) 
\begin{equation}\label{eq:semifinalStep3}
   \varpi_{\chi}^{t_{\bar{\chi}}^{\varepsilon}(g,\ell_{1})}\cdot{}v_{\ell_{2}}\big(\xi_{\bar{\chi}}^{\varepsilon}(\ell_{1})\big)
   =v_{\ell_{2}}\big(\kappa_{\bar{\chi}}^{\varepsilon}(\ell_{1})\big)
   \stackrel{\text{Th.}\ \ref{secondrec}}=\mathcal{L}_{h}^{\varepsilon}(\bar{\chi})
   \stackrel{\text{Th.}\ \ref{secondrec}}{=}
   v_{\ell_{1}}\big(\kappa_{\bar{\chi}}^{\varepsilon}(\ell_{2})\big)   
   =\varpi_{\chi}^{t_{\bar{\chi}}^{\varepsilon}(g,\ell_{2})}\cdot{}v_{\ell_{1}}\big(\xi_{\bar{\chi}}^{\varepsilon}(\ell_{2})\big)
\end{equation}
hold in $\mathscr{O}_{\chi}/p^{k}$ up to multiplication by $p$-adic units (\emph{cf.} the proof of Lemma 
$\ref{basickol}(1)$ for the first and last identities). 
Since $t_{\bar{\chi}}^{\varepsilon}(g,\ell)<\mathrm{ord}_{\chi}(p^{k})$ for $\ell=\ell_{1},\ell_{2}$ by Equation $(\ref{eq:frkol})$, and since 
by construction $t_{\bar{\chi}}^{\varepsilon}(g,\ell_{1})\les{}t_{\bar{\chi}}^{\varepsilon}(g,\ell_{2})$, 
the previous two equations and Lemma $\ref{basickol}(1)$ show that 
\begin{equation}\label{eq:finalStep31}
              t_{\bar{\chi}}^{\varepsilon}(g,\ell_{1})=t_{\bar{\chi}}^{\varepsilon}(g,\ell_{2})<
              t_{\bar{\chi}}^{\varepsilon}(g)
\end{equation}
and that the identities 
\begin{equation}
        \lri{v_{\ell_{1}}\big(\xi_{\bar{\chi}}^{\varepsilon}(\ell_{2})\big),v_{\ell_{2}}\big(\xi_{\bar{\chi}}^{\varepsilon}(\ell_{2})\big)}=(1,0), 
        \end{equation} 
               \begin{equation}\label{eq:finalStep3}
        \lri{v_{\ell_{1}}\big(\xi_{\bar{\chi}}^{\varepsilon}(\ell_{1})\big),v_{\ell_{2}}\big(\xi_{\bar{\chi}}^{\varepsilon}(\ell_{1})\big)}=(0,1)
\end{equation}
hold in $\mathscr{O}_{\chi}/p^{k}\oplus{}\mathscr{O}_{\chi}/p^{k}$ up to multiplication by  $p$-adic units. 
(Here for $\ell=\ell_{1}$ or $\ell=\ell_{2}$ one fixes an isomorphism $H^{1}_{\mathrm{fin}}(K_{\ell},T_{g}(\bar{\chi}))
\cong{}\mathscr{O}_{\chi}/p^{k}$.)
It follows from the definitions (cf. Section $\ref{selsec}$) that 
\[
     \mathrm{Sel}^{\ell_{1}\ell_{2}}_{\varepsilon}(K,A_{g}(\chi))
=\mathrm{Sel}^{\ell_{1}\ell_{2}}_{\varepsilon}(K,A_{h}(\chi)),\] 
\[\mathfrak{Sel}_{\ell_{1}\ell_{2}}^{\varepsilon}(K,T_{g}(\bar{\chi}))=
\mathfrak{Sel}_{\ell_{1}\ell_{2}}^{\varepsilon}(K,T_{h}(\bar{\chi})),
\]
and a class
$z\in{}\mathfrak{Sel}^{\varepsilon}_{\ell_{1}\ell_{2}}(K,T_{g}(\bar{\chi}))$
belongs to $\mathfrak{Sel}^{\varepsilon}(K,T_{h}(\bar{\chi}))$ precisely if 
$v_{\ell_{1}}(z)$ and $v_{\ell_{2}}(z)$ are both trivial.
Poitou--Tate duality (see Theorem 7.3 of \cite{Rub-eul} or Chapter 1 of \cite{Mi}) then  yields a short exact sequence of 
$\mathscr{O}_{\chi}/p^{k}$-modules 
\begin{equation}\label{eq:PTStep3}
           \mathfrak{Sel}_{\ell_{1}\ell_{2}}^{\varepsilon}(K,A_{g}(\bar{\chi}))
           \lfre{v_{\ell_{1}}\oplus{}v_{\ell_{2}}}
           \mathscr{O}_{\chi}/p^{k}\oplus\mathscr{O}_{\chi}/p^{k}
           \lfre{\partial_{\ell_{1}}^{\vee}\oplus{}\partial_{\ell_{2}}^{\vee}}
           \mathrm{Sel}_{\varepsilon}(K,A_{h}(\bar{\chi}))^{\vee}
           \lfre{}\mathrm{Sel}_{\varepsilon}^{\ell_{1}\ell_{2}}(K,A_{g}(\chi))^{\vee}\lfre{}0,
\end{equation}
where $(\cdot{})^{\vee}=\Hom{\Z_{p}}(\cdot{},\divp)$ and for $\ell=\ell_{1},\ell_{2}$ one identifies
$H^{1}_{\mathrm{sing}}(K_{\ell},A_{g}(\chi))$ with the Pontrjagin dual of 
$H^{1}_{\mathrm{fin}}(K_{\ell},T_{g}(\bar{\chi}))\cong{}\mathscr{O}_{\chi}/p^{k}$
under the local Tate duality. Equation $(\ref{eq:finalStep3})$ shows that the first map is surjective,
hence $$\mathrm{Sel}_{\varepsilon}(K,A_{h}(\chi))=\mathrm{Sel}_{\varepsilon}^{\ell_{1}\ell_{2}}(K,A_{g}(\chi)).$$
Together with Equations $(\ref{eq:semifinalStep3})$--$(\ref{eq:finalStep3})$ this concludes the proof. 
\end{proof}

\subsubsection{Step 3.} $\mathrm{length}_{\mathscr{O}_{\chi}}\big(\mathrm{Sel}_{\varepsilon}(K,A_{g}(\chi))\big)
\les{}2 t_{\bar{\chi}}^{\varepsilon}(g)$. 

\begin{proof} As in \cite{Be-Da-main} one proceeds by induction on $t_{\bar{\chi}}(g)$. 
Step 1 shows that the statement holds if $t_{\bar{\chi}}(g)=0$.
Assume then $t_{\bar{\chi}}(g)>0$. If $\mathrm{Sel}_{\varepsilon}(K,A_{g}(\chi))=0$
the statement is trivially verified, hence assume that $\mathrm{Sel}_{\varepsilon}(K,A_{g}(\chi))$
is non-trivial.  According to Step 2 there exists two distinct $2k$-admissible primes 
$\ell_{1}$ and $\ell_{2}$ satisfying the properties $\mathbf{I}_{1}$--$\mathbf{I}_{3}$.
As in loc. cit.  denote by $h\in{}S_{2}(N^{+},L\ell_{1}\ell_{2}L;\Z/p^{k})$
the $\ell_{1}\ell_{2}$-level raising of $g$. 

Let $\zeta_{\bar{\chi}}^{\varepsilon}(\ell_{1})\in{}
\mathfrak{Sel}_{\ell_{1}}^{\varepsilon}(K,T_{g}(\bar{\chi}))$ be a global class such that $\partial_{\ell_{1}}\big(
\zeta_{\bar{\chi}}^{\varepsilon}(\ell_{1})\big)$ generates  the image of the residue map 
$\partial_{\ell_{1}} : 
\mathfrak{Sel}_{\ell_{1}}^{\varepsilon}(K,T_{g}(\bar{\chi}))
\fre{}H^{1}_{\mathrm{sing}}(K_{\ell},T_{g}(\bar{\chi}))\cong{}\mathscr{O}_{\chi}/p^{k}$, viz.
$\partial_{\ell_{1}}$ induces an isomorphism 
\begin{equation}\label{eq:step21}
      \partial_{\ell_{1}} : \mathfrak{Sel}_{\ell_{1}}^{\varepsilon}(K,T_{g}(\bar{\chi}))\big/
      \mathfrak{Sel}^{\varepsilon}(K,T_{g}(\bar{\chi}))\cong{}
      \partial_{\ell_{1}}\big(\zeta_{\bar{\chi}}^{\varepsilon}(\ell_{1})\big)\cdot{}\mathscr{O}_{\chi}/p^{k}.
\end{equation}
Since $\xi_{\bar{\chi}}^{\varepsilon}(\ell_{1})$ belongs to the Selmer group 
$\mathfrak{Sel}_{\ell_{1}}^{\varepsilon}(K,T_{g}(\bar{\chi}))$ by Lemma $\ref{basickol}(1)$,
multiplying $\zeta_{\bar{\chi}}^{\varepsilon}(\ell_{1})$ by a $p$-adic unit if necessary one can assume that there exists 
an integer $m_{1}\ges{}0$ such that 
\[
            \xi_{\bar{\chi}}^{\varepsilon}(\ell_{1})-\varpi_{\chi}^{m_{1}}\cdot{}\zeta_{\bar{\chi}}^{\varepsilon}(\ell_{1})
            \in{}\mathfrak{Sel}^{\varepsilon}(K,T_{g}(\bar{\chi})).
\]  
Equation $(\ref{eq:step21})$, Lemma $\ref{basickol}(2)$ and property $\mathbf{I}_{3}$  then yield
\begin{align}\label{eq:step22}
     \mathrm{length}_{\mathscr{O}_{\chi}}\big(
     \mathfrak{Sel}_{\ell_{1}}^{\varepsilon}(K,T_{g}(\bar{\chi}))\big/
      \mathfrak{Sel}^{\varepsilon}(K,T_{g}(\bar{\chi}))\big)
     =\mathrm{ord}_{\chi}(p^{k})-t_{\bar{\chi}}^{\varepsilon}(g)+t_{\bar{\chi}}^{\varepsilon}(h)+m_{1}.
\end{align}
Similarly let $\zeta_{\bar{\chi}}^{\varepsilon}(\ell_{2})\in{}\mathfrak{Sel}_{\ell_{1}\ell_{2}}^{\varepsilon}(K,T_{g}(\bar{\chi}))$
be a class such that the residue map at $\ell_{2}$ induces an isomorphism 
\[
             \partial_{\ell_{2}} : 
             \mathfrak{Sel}_{\ell_{1}\ell_{2}}^{\varepsilon}(K,T_{g}(\bar{\chi}))\big/
      \mathfrak{Sel}^{\varepsilon}_{\ell_{1}}(K,T_{g}(\bar{\chi}))\cong{}
      \partial_{\ell_{2}}\big(\zeta_{\bar{\chi}}^{\varepsilon}(\ell_{2})\big)\cdot{}\mathscr{O}_{\chi}/p^{k}.
\]
Because $\xi_{\bar{\chi}}^{\varepsilon}(\ell_{2})\in{}\mathfrak{Sel}^{\varepsilon}_{\ell_{1}\ell_{2}}(K,T_{g}(\bar{\chi}))$ by Lemma $\ref{basickol}(1)$, 
one can assume that there exists $m_{2}\ges{}0$ such that 
\[
            \xi_{\bar{\chi}}^{\varepsilon}(\ell_{2})-\varpi_{\chi}^{m_{2}}\cdot{}\zeta_{\bar{\chi}}^{\varepsilon}(\ell_{2})
            \in{}\mathfrak{Sel}^{\varepsilon}_{\ell_{1}}(K,T_{g}(\bar{\chi})),
\]
and apply as above Lemma $\ref{basickol}(2)$ and property $\mathbf{I}_{3}$ to deduce  the equality 
\begin{equation}\label{eq:step23}
         \mathrm{length}_{\mathscr{O}_{\chi}}\big(
     \mathfrak{Sel}_{\ell_{1}\ell_{2}}^{\varepsilon}(K,T_{g}(\bar{\chi}))\big/
      \mathfrak{Sel}^{\varepsilon}_{\ell_{1}}(K,T_{g}(\bar{\chi}))\big)=
      \mathrm{ord}_{\chi}(p^{k})-t_{\bar{\chi}}^{\varepsilon}(g)+t_{\bar{\chi}}^{\varepsilon}(h)+m_{2}.
\end{equation} 
When combined together Equations $(\ref{eq:step22})$ and $(\ref{eq:step23})$ give the equality 
\begin{equation}\label{eq:step24}
      \mathrm{length}_{\mathscr{O}_{\chi}}\big(
     \mathfrak{Sel}_{\ell_{1}\ell_{2}}^{\varepsilon}(K,T_{g}(\bar{\chi}))\big/
      \mathfrak{Sel}^{\varepsilon}(K,T_{g}(\bar{\chi}))\big)=
      2\cdot{}\mathrm{ord}_{\chi}(p^{k})-2\cdot{}t_{\bar{\chi}}^{\varepsilon}(g)
      +2\cdot{}t_{\bar{\chi}}^{\varepsilon}(h)+m_{1}+m_{2}.
\end{equation}
By construction $\mathrm{Sel}_{\varepsilon}^{\cdot}(K,A_{g}(\chi))$ is the dual Selmer group of 
$\mathfrak{Sel}_{\cdot}^{\varepsilon}(K,T_{g}(\bar{\chi}))$, hence Poitou--Tate duality gives a
short exact sequence   
of $\mathscr{O}_{\chi}/p^{k}$-modules (cf. Equation $(\ref{eq:PTStep3})$ in the proof of Step 2)
\[
      0\lfre{}\frac{\mathfrak{Sel}^{\varepsilon}_{\ell_{1}\ell_{2}}(K,T_{g}(\bar{\chi}))}
      {\mathfrak{Sel}^{\varepsilon}(K,T_{g}(\bar{\chi}))}\lfre{\partial_{\ell_{1}}\oplus{}\partial_{\ell_{2}}}
       \mathscr{O}_{\chi}/p^{k}\oplus{}\mathscr{O}_{\chi}/p^{k}
       \lfre{v_{\ell_{1}}^{\vee}\oplus{}v_{\ell_{2}}^{\vee}}\lri{\frac{\mathrm{Sel}_{\varepsilon}(K,A_{g}(\chi))}{\mathrm{Sel}_{\varepsilon}^{\ell_{1}\ell_{2}}(K,A_{g}(\chi))}}^{\vee}\lfre{}0,
\]
where for $\ell=\ell_{1},\ell_{2}$ one  identifies 
$H^{1}_{\mathrm{sing}}(K_{\ell},T_{g}(\bar{\chi}))\cong{}H^{1}_{\mathrm{fin}}
(K_{\ell},A_{g}(\chi))^{\vee}$ with $\mathscr{O}_{\chi}/p^{k}$ under a fixed isomorphism.   
Together with Equation  $(\ref{eq:step24})$ and property $\mathbf{I}_{2}$ this implies 
\begin{equation}\label{eq:step45}
           \mathrm{length}_{\mathscr{O}_{\chi}}\big(\mathrm{Sel}_{\varepsilon}(K,A_{g}(\chi))\big)
           -2\cdot{}t_{\bar{\chi}}^{\varepsilon}(g)
           =\mathrm{length}_{\mathscr{O}_{\chi}}\big(\mathrm{Sel}_{\varepsilon}(K,A_{h}(\chi))\big)
           -2\cdot{}t_{\bar{\chi}}^{\varepsilon}(h)-m_{1}-m_{2}.
\end{equation}
Properties  $\mathbf{I}_{1}$ and $\mathbf{I}_{3}$ give $t_{\bar{\chi}}^{\varepsilon}(h)<
t_{\bar{\chi}}^{\varepsilon}(g)$, hence 
\begin{equation}\label{eq:shshsh}
      \mathrm{length}_{\mathscr{O}_{\chi}}\big(\mathrm{Sel}_{\varepsilon}(K,A_{h}(\chi))\big)
           -2\cdot{}t_{\bar{\chi}}^{\varepsilon}(h)\les{}0
\end{equation}
by the induction hypothesis. The statement follows from Equations $(\ref{eq:step45})$ and $(\ref{eq:shshsh})$.
\end{proof}

\subsubsection{Step 4} Assume that $(f,K,p,\varepsilon)$ is not exceptional and that
$\mathrm{Sel}_{\varepsilon}(K,A_{g}(\chi))=0$.
Then $t_{\bar{\chi}}^{\varepsilon}(g)=0$.  

\begin{proof}  Theorem B of \cite{DT}
implies  that there exists a newform 
$\xi=\sum_{n=1}^{\infty}a_{n}(\xi)\cdot{}q^{n}$ in $S_{2}(\Gamma_{0}(NL))^{\mathrm{new}}$ which is congruent to 
$f$ modulo $p$. More precisely, if $\Q(\xi)$ denotes the field generated 
over $\Q$ by the Fourier coefficients of $\xi$, then  
there exists a prime $\bar{\mathfrak{P}}$ of $\bar{\Q}$
dividing $p$ such that 
$ a_{l}(\xi)\equiv{}a_{l}(E) \pmod{\bar{\mathfrak{P}}}$
for every rational prime $l\nmid{}NLp$.
(loc. cit.  proves the existence of an eigenform 
$\xi\in{}S_{2}(\Gamma_{1}(N)\cap{}\Gamma_{0}(L))$ of conductor divisible by $L$ which is congruent to $f$
modulo $p$.
It is not difficult to prove that an eigenform with these properties has trivial character and conductor $NL$.)
Let $J_{\xi}^{o}/\Q$ be the quotient of $\mathrm{Pic}^{0}(X_{0}(NL)/\Q)$ associated with  $\xi$
by the Eichler--Shimura construction. It is an abelian variety of dimension $[\Q(\xi):\Q]$ equipped with a morphism of $\Q$-algebras  $\Q(\xi)\fre{}\mathrm{End}_{\Q}(J_{\xi}^{o})\otimes_{\Z}\Q$.
Let $J_{\xi}/\Q$ be an abelian variety in the isogeny class of $J_{\xi}^{o}$ 
which has real multiplication by 
the ring of integers $\mathcal{O}$ of $\Q(\xi)$ 
and set $\mathfrak{P}=\bar{\mathfrak{P}}\cap{}\mathcal{O}$. 
Since $E_{p}$ is an irreducible $\F_{p}[G_{\Q}]$-module by Hypothesis $\ref{thehy}(1)$, the Eichler--Shimura relations and the Brauer--Nesbitt theorem   
imply that there are isomorphisms of $\mathcal{O}/\mathfrak{P}[G_{\Q}]$-modules 
\[
                      J_{\xi}[\mathfrak{P}]\cong{}E_{p}\otimes_{\F_{p}}\mathcal{O}/\mathfrak{P}
                      \cong{}A_{\bar{g}}\otimes_{\F_{p}}\mathcal{O}/\mathfrak{P}.
\]
Identify in what follows $J_{\xi}[\mathfrak{P}]$ and $A_{\bar{g}}\otimes_{\F_{p}}\mathcal{O}/\mathfrak{P}$
under a fixed isomorphism, and let 
\[\mathrm{Sel}_{\mathfrak{P}}(J_{\xi}/K)\subset{}H^{1}(K,A_{\bar{g}})\]
be the $\mathfrak{P}$-Selmer group of $J_{\xi}$ over $K$ (cf. \cite{G-P}). 
It follows from the results of \cite[Sections 3--5]{G-P} that 
\begin{equation}\label{eq:compSelmer}
               \mathrm{Sel}_{\mathfrak{P}}(J_{\xi}/K)=
               \mathrm{Sel}(K,A_{\bar{g}})\otimes_{\F_{p}}\mathcal{O}/\mathfrak{P}
\end{equation}
inside $H^{1}(K,A_{\bar{g}})\otimes_{\F_{p}}\mathcal{O}/\mathfrak{P}$, where 
$\mathrm{Sel}(K,A_{\bar{g}})$ is the Selmer group defined by imposing the finite  local condition 
$H^{1}_{\mathrm{fin}}(K_{\mathfrak{p}},A_{\bar{g}})
\cong{}E(K_{\mathfrak{p}})\otimes\F_{p}$
at every prime $\mathfrak{p}$ of $K$ dividing $p$ (viz. $\mathrm{Sel}(K,A_{\bar{g}})=
\mathrm{Sel}_{\emptyset}(K,A_{\bar{g}})$ with the notation of Section $\ref{selsec}$,
independently of whether $E$ has ordinary or supersingular reduction at $p$). Note that since $(f,K,p,\varepsilon)$ is not 
exceptional,
then by definition $E(K_{\mathfrak{p}})_{\varepsilon}=E(K_{\mathfrak{p}})$, hence  
\[
    H^{1}_{\mathrm{fin},\varepsilon}(K_\mathfrak{p},A_{\bar{g}})
=H^{1}_{\mathrm{fin},\varepsilon}(K_{\mathfrak{p}},\mathbf{A}_{f})[\mathfrak{m}_{\iw}]
=E(K_{\mathfrak{p}})_\varepsilon\otimes\F_{p}
    =E(K_{\mathfrak{p}})\otimes\F_{p}=H^{1}_{\mathrm{fin}}(K_{\mathfrak{p}},A_{\bar{g}})
\]
where the first equality follows from Corollary \ref{local control}(2) and the second from Proposition $\ref{hypext}$. It follows that 
$\mathrm{Sel}_{\varepsilon}(K,A_{\bar{g}})=\mathrm{Sel}(K,A_{\bar{g}})$. 
We have an isomorphism 
$H^1(K,A_{\bar{g}})\otimes_{\Z_p}\mathscr{O}_\chi\cong H^1(K,A_{\bar{g},{\mathscr{O}_\chi}})$ 
and an injection $\mathrm{Sel}(K,A_{\bar{g}})\otimes_{\Z_p}\mathscr{O}_\chi\hookrightarrow
H^1(K,A_{\bar{g}})\otimes_{\Z_p}\mathscr{O}_\chi$ by the flatness of $\mathscr{O}_\chi/\Z_p$, 
and therefore  $\mathrm{Sel}(K,A_{\bar{g}})\otimes_{\Z_p}\mathscr{O}_\chi$ injects into $\mathrm{Sel}(K,A_{\bar{g}}(\chi))$. 
Since by assumption $\mathrm{Sel}_\varepsilon(K,A_{g}(\chi))$ is trivial, it follows from  Proposition \ref{global control} and the irreducibility of $\bar{\varrho}_{E,p}$ that the same is true for 
$\mathrm{Sel}(K,A_{\bar{g}})$
and
Equation \eqref{eq:compSelmer} 
gives 
\begin{equation}\label{eq:trivP}
                  \mathrm{Sel}_{\mathfrak{P}}(J_{\xi}/K)=0.
\end{equation}
Let $L(\xi/K,1)_{\mathrm{alg}}$ denote the algebraic part of the special value of 
the complex $L$-function of $\xi$ over $K$, normalized as 
in  \cite[Section 4]{BBV}. 
Results of Skinner--Urban and Fouquet--Wan 
prove the inequality 
\begin{equation}\label{BSDequation}
                   \mathrm{ord}_{\mathfrak{P}}\big(L(\xi/K,1)_{\mathrm{alg}}\big)\les{}\mathrm{length}_{\mathcal{O}_{\mathfrak{P}}}
                   \big(                   \mathrm{Sel}_{\mathfrak{P}^{\infty}}(J_{\xi}/K)\big)+\sum_{q|NL}t_{\xi}(q), 
\end{equation}
where $t_\xi(q)$ is the Tamagawa exponent appearing in \cite[Section 4]{BBV}. 
See \cite{S-U} in the ordinary case; for the non-ordinary case, the reader is referred to \cite[Corollaries 1.9, 1.10]{FW} and also \cite[Theorems 1.5, 1.6]{BSTW}, \cite[Theorem C]{CCSS}. 

Since $J_{\xi}[\mathfrak{P}]$
is an irreducible $G_{K}$-module, $\mathrm{Sel}_{\mathfrak{P}}(J_{\xi}/K)$
 is equal to the $\mathfrak{P}$-torsion submodule of the Selmer group 
 $\mathrm{Sel}_{\mathfrak{P}^{\infty}}(J_{\xi}/K)$,
so that $\mathrm{Sel}_{\mathfrak{P}^{\infty}}(J_{\xi}/K)$ is trivial  by Equation $(\ref{eq:trivP})$.
In addition $t_{\xi}(q)=0$ for every prime $q|N^{+}$ under our assumptions, and  the previous equation yields 
\[
             \mathrm{ord}_{\mathfrak{P}}\big(L(\xi/K,1)_{\mathrm{alg}}\big)\les{}\sum_{q|LN^{-}}t_{\xi}(q).
\]
On the other hand, Gross's formula (see Theorem 4.2 of \cite{BBV} for the formulation 
in the form required in this paper) gives the identity 
\[
                 \mathrm{ord}_{\mathfrak{P}}\big(L(\xi/K,1)_{\mathrm{alg}}\big)
                 =2\cdot{}\mathrm{ord}_{\mathfrak{P}}\big(\psi_{\xi}\big(P_{K}(L)\big)\big)
                 +\sum_{q|LN^{-}}t_{\xi}(q).
\]

It follows combining the two previous formulas that  $\psi_{\xi}\big(P_{K}(L)\big)$ has trivial $\mathfrak{P}$-adic valuation:
\begin{equation}\label{eq:Stepmain4}      
                             \psi_{\xi}\big(P_{K}(L)\big)\in{}\mathcal{O}_{\mathfrak{P}}^{\ast}. 
\end{equation}
Since $\xi$ is congruent to $f$ modulo $p$,
one has   $\psi_{\xi}\big(P_{K}(L)\big)\equiv{}\psi_{\bar{g}}\big(P_{K}(L)\big)
\pmod{\mathfrak{P}}$. In addition, as
$(f,K,p,\varepsilon)$ is not exceptional,  Lemmas $\ref{nonexcord}$
and $\ref{nonexcss}$ show that the equalities 
$\psi_{\bar{g}}\big(P_{K}(L)\big)=\mathcal{L}_{\bar{g}}^{\varepsilon}(\mathbf{1})=\mathcal{L}_{g}^{\varepsilon}(\mathbf{1})
\pmod{p}$ hold in $\F_{p}$ up to multiplication by non-zero elements. 
Equation $(\ref{eq:Stepmain4})$ then yields 
$\mathcal{L}_{g}^{\varepsilon}(\mathbf{1})\in{}\Z_{p}^{\ast}.$ 
This implies that the $p$-adic $L$-function $\mathcal{L}_{g}^{\varepsilon}$ is a unit in 
$\iw/p^{k}$, which in turn gives  $t_{\bar{\chi}}^{\varepsilon}(g)=0$.
\end{proof}

\subsubsection{Step 5}\label{step5} There exist an $\mathscr{O}_{\chi}/p^{k}$-module $\texttt{M}$
and an integer $s\in\{0,1\}$ such that 
\[
                 \mathrm{Sel}_{\varepsilon}(K,A_{g}(\chi))\cong{}(\mathscr{O}_{\chi}/p^{k})^{s}\oplus\mathtt{M}\oplus\mathtt{M}\cong{}\mathfrak{Sel}^{\varepsilon}(K,T_{g}({\chi})).
\]
\begin{proof} Thanks to the isomorphism  $w_{\chi} : \mathfrak{Sel}^{\varepsilon}(K,T_{g}(\chi))\simeq{}\mathrm{Sel}_{\varepsilon}(K,A_{g}(\chi))$ of Proposition \ref{selfdual}, it is enough to prove the statement for $\mathrm{Sel}_{\varepsilon}(K,T_{g}(\chi))$. This follows from the results of \cite[Section 2.6]{How-heeg}
(which in turn grounds on Section 1.4 of \cite{How-HK}). Precisely, the local conditions 
defining the Selmer groups $\{\mathfrak{Sel}^{\varepsilon}(K,T_{h}(\chi))\}_{h}$, for $h$ varying through the reductions of $g$ modulo $p^{j}$ with $1\les{}j\les{}k$,  are cartesian in the sense of 
Definition 2.2.2 of \cite{How-heeg}: this is easily verified for the primes of $K$ not dividing $p$ (where the local condition are unramified, ordinary or the relevant Galois representations are cohomologically trivial), and follows from the local control theorems of Section $\ref{locsecp}$ for the primes of $K$ dividing $p$. 
Moreover, for each prime $v$ of $K$, denote by $\bar{v}$ the complex conjugate of $v$, and by  $\mathcal{L}_{v}$  the local condition at $v$ satisfied by the Selmer classes in 
$\mathfrak{Sel}^{\varepsilon}(K,T_{g}(\chi))$. Then Proposition $\ref{selfdual}$ and Remark $\ref{remchichibar}$ prove that $\mathcal{L}_{v}$  is the exact 
orthogonal complement of $\mathcal{L}_{\bar{v}}$ under the pairing $H^{1}(K_{v},T_{g}(\chi))\times{}H^{1}(K_{\bar{v}},T_{g}(\chi))\rightarrow{}\mathscr{O}_{\chi}/p^{k}$ induced by the (not $G_{K_{v}}$-equivariant) bilinear map 
$T_{g}(\chi)\times{}T_{g}(\chi)\rightarrow{}\mathscr{O}_{\chi}/p^{k}(1)$ arising from the Weil pairing (cf.\ Equation $(4)$ in Section 2.6 of \cite{How-heeg}).
As explained in Section 2.6 of \cite{How-heeg}, we can then apply \cite[Proposition 2.2.7]{How-heeg} and conclude the proof of Step 5.
\end{proof}

\subsubsection{Step 6} Assume that $(f,K,p,\varepsilon)$ is not exceptional and let $\ell_{1}$ and $\ell_{2}$ be  $2k$-admissible primes which satisfy the conditions 
$\mathbf{I}_{1}$--$\mathbf{I}_{4}$ (cf. Step 2). Then (with the notation of loc. cit. )
\begin{equation}\label{eq:equality}
\mathrm{length}_{\mathscr{O}_{\chi}}\big(\mathrm{Sel}_{\varepsilon}(K,A_{g}(\chi))\big)
           -2\cdot{}t_{\bar{\chi}}^{\varepsilon}(g)
           =\mathrm{length}_{\mathscr{O}_{\chi}}\big(\mathrm{Sel}_{\varepsilon}(K,A_{h}(\chi))\big)
           -2\cdot{}t_{\bar{\chi}}^{\varepsilon}(h).
\end{equation}

\begin{proof} We first prove that the dimension of $\mathrm{Sel}_{\varepsilon}(K,A_{\bar{g}})$ over 
$\F_{p}$ is even:
\begin{equation}\label{eq:parity}
         \dim_{\F_{p}}\big(\mathrm{Sel}_{\varepsilon}(K,A_{\bar{g}})\big)\equiv{}0 \pmod{2}.
\end{equation}
Let $x\in{}\mathrm{Sel}_{\varepsilon}(K,A_{\bar{g}})$ be a nonzero class. 
Choose an admissible prime $\ell$ such that \[v_{\ell}(x_{1})\in{}
H^{1}_{\mathrm{fin}}(K_{\ell_{1}},A_{\bar{g}})\cong{}\F_{p}\] is non-zero,
which exists by Theorem 3.2 of \cite{Be-Da-main}. Let $h\in{}S_{2}(N^{+},\ell{}LN^{-};\F_{p})$ be
the $\ell$-level raising of $\bar{g}$. Note that 
$\mathrm{Sel}_{\varepsilon}(K,A_{\bar{g}})$ is identified with $\mathfrak{Sel}^{\varepsilon}(K,T_{\bar{g}})$
under the isomorphism $T_{\bar{g}}\cong{}A_{\bar{g}}$ induced by the Weil pairing,
viz. $\mathrm{Sel}_{\varepsilon}(K,A_{\bar{g}})$ is equal to its own dual Selmer group. 
(This can either  be seen as a special case  of Step $5$ or, more simply,  follows from the discussion in the proof of Step $4$
under the current assumptions.)
As in the proof of Step 1, Poitou--Tate duality then implies that 
$\mathrm{Sel}_{\varepsilon}(K,A_{h})$ is equal to $\mathrm{Sel}_{\varepsilon}^{\ell}(K,A_{\bar{g}})$,
hence 
\[
         \dim_{\F_{p}}\big(\mathrm{Sel}_{\varepsilon}(K,A_{h})\big)=
                  \dim_{\F_{p}}\big(\mathrm{Sel}_{\varepsilon}(K,A_{\bar{g}})\big)-1
\]
since $v_{\ell}(x)\not=0$. If $\mathrm{Sel}_{\varepsilon}(K,A_{h})\not=0$
we can apply the same argument after replacing $\bar{g}$ with $h$. 
In this way one constructs a squarefree product $T\in{}\mathscr{S}_{1}$ of 
$\dim_{\F_{p}}\mathrm{Sel}_{\varepsilon}(K,A_{\bar{g}})$ admissible primes 
such that $\mathrm{Sel}_{\varepsilon}(K,A_{h})=0$, where $h\in{}S_{2}(N^{+},TLN^{-};\F_{p})$
denotes now the  $T$-level raising of $\bar{g}$. 
As in the proof of Step 4, the results of Skinner--Urban and Fouquet-Wan then implies that $L(\xi/K,1)\not=0$,
where $\xi$ is a newform of weight $\Gamma_{0}(TLN)$
which is congruent to $f$ modulo $p$. As a consequence $$-1=\epsilon_{K}(TLN)=
\epsilon_{K}(LN^{-})\cdot{}(-1)^{\dim_{\F_{p}}\big(\mathrm{Sel}_{\varepsilon}(K,A_{\bar{g}})\big)},$$ and since 
by assumption $LN^{-}$
has an \emph{odd} number of prime divisors, this proves $(\ref{eq:parity})$.

We now show \eqref{eq:equality}, which is equivalent to show that the integers $m_1$ and $m_2$ in Equation \eqref{eq:step45} 
are both equal to $0$.  
Preliminarily, note that if $\mathrm{length}_{\mathscr{O}_{\chi}}\big(\mathrm{Sel}_{\varepsilon}(K,A_{g}(\chi))\big)=
2\cdot{}t_{\bar{\chi}}^{\varepsilon}(g)$, then, since 
           $\mathrm{length}_{\mathscr{O}_{\chi}}\big(\mathrm{Sel}_{\varepsilon}(K,A_{h}(\chi))\big)
           \les2\cdot{}t_{\bar{\chi}}^{\varepsilon}(h)$ by Step 3, we have $m_1+m_2=0$ (where $m_1$ and $m_2$ are defined in Step 3; we also have 
           $\mathrm{length}_{\mathscr{O}_{\chi}}\big(\mathrm{Sel}_{\varepsilon}(K,A_{h}(\chi))\big)
           =2\cdot{}t_{\bar{\chi}}^{\varepsilon}(h)$) 
directly from Equation \eqref{eq:step45}. Therefore we assume in the following that 
$\mathrm{length}_{\mathscr{O}_{\chi}}\big(\mathrm{Sel}_{\varepsilon}(K,A_{g}(\chi))\big)<
           2\cdot{}t_{\bar{\chi}}^{\varepsilon}(g)$. 

We first show that $m_1=0$. By \eqref{eq:nontriv ass}, $t_{\bar{\chi}}^{\varepsilon}(g)<\ord_\chi(p^k)$. Since 
the $\F_p$-dimension of $\mathrm{Sel}_{\varepsilon}(K,A_{\bar{g}})$ is even, combining Step 5 and Nakayama Lemma 
shows that 
\[\mathrm{Sel}_{\varepsilon}(K,A_{g}(\chi))\cong{}\mathtt{M}\oplus\mathtt{M}.\] 
If follows that $\mathrm{length}_{\mathscr O_\chi}(\mathtt{M})<\ord_\chi(p^k)$, and therefore 
\[\varpi_\chi^{\ord_\chi(p^k)-1}\cdot\mathrm{Sel}_{\varepsilon}(K,A_{g}(\chi))=0.\] 
We now consider the class  
$\xi_{\bar{\chi}}^{\varepsilon}(\ell_{1})-\varpi_{\chi}^{m_{1}}\cdot{}\zeta_{\bar{\chi}}^{\varepsilon}(\ell_{1})$ in 
            $\mathfrak{Sel}^{\varepsilon}(K,T_{g}(\bar{\chi}))$
appearing in the proof of Step 3. Since $\varpi^{\ord_\chi(p^k)-1}$ kills $\mathrm{Sel}_{\varepsilon}(K,A_{g}(\chi))$, and 
since $\mathrm{Sel}_{\varepsilon}(K,A_{g}(\chi))$ and $\mathfrak{Sel}^{\varepsilon}(K,T_{g}(\bar{\chi}))$ are dual to each other, 
the same is true for $\mathfrak{Sel}^{\varepsilon}(K,T_{g}(\bar{\chi}))$. Therefore 
we obtain the equality
\begin{equation}\label{eq:equality-1}
\varpi_\chi^{\ord_\chi(p^k)-1}\cdot\xi_{\bar{\chi}}^{\varepsilon}(\ell_{1})=\varpi_{\chi}^{\ord_\chi(p^k)-1+m_{1}}\cdot\zeta_{\bar{\chi}}^{\varepsilon}(\ell_{1}).\end{equation}
We now show that the left hand side of this equality is always non-trivial. 
First, enlarge $\{\ell_1\}$ to a freeing set $S$ as in Section \ref{glofree}; 
then by Proposition \ref{global freeness}, $\mathfrak{Sel}_S^{\varepsilon}\big(K,T_g(\chi)\big)$ is 
free over $\mathscr O_\chi/p^k$ of rank $\delta(S)$, the number of prime divisors of $S$. By  Lemma $\ref{basickol}(3)$, 
the class $\bar{\xi}_{\chi}^{\varepsilon}(\ell)$ in $\mathfrak{Sel}_{S}^{\varepsilon}(K,T_{\bar{g}})\otimes_{\F_{p}}\F_{\chi}$
is not trivial, therefore $\varpi_\chi$ does not divide ${\xi}_{\chi}^{\varepsilon}(\ell)$ and 
it follows that $\varpi_\chi^{\ord_\chi(p^k)-1}\cdot\xi_{\bar{\chi}}^{\varepsilon}(\ell_{1})\neq 0$ from 
the freeness result
$\mathfrak{Sel}_S^{\varepsilon}\big(K,T_g(\chi)\big)\cong (\mathscr O_\chi/p^k)^{\delta(S)}$ recalled above. 
On the other hand, if $m_1>0$, then the right hand side of \eqref{eq:equality-1} is zero, which is a contradition. 
Therefore $m_1$ must be equal to $0$.

We now show that $m_2=0$ with a similar argument. 
Consider the class  
$\xi_{\bar{\chi}}^{\varepsilon}(\ell_{2})-\varpi_{\chi}^{m_{2}}\cdot{}\zeta_{\bar{\chi}}^{\varepsilon}(\ell_{2})$
in $\mathfrak{Sel}^{\varepsilon}_{\ell_{1}}(K,T_{g}(\bar{\chi}))$
appearing in the proof of Step 3. 
Since $m_1=0$, we know that $\partial_{\ell_1}(\xi_{\bar\chi}^\varepsilon(\ell_1))$ 
generates the image of the residue map 
$\partial_{\ell_1}:\mathfrak{Sel}_{\ell_{1}}^{\varepsilon}(K,T_{g}(\bar{\chi}))
\fre{}H^{1}_{\mathrm{sing}}(K_{\ell},T_{g}(\bar{\chi}))$, and therefore there exists an integer $m_3\ges 0$ such that the class 
$
\partial_{\ell_1}\left(\xi_{\bar{\chi}}^{\varepsilon}(\ell_{2})-\varpi_{\chi}^{m_{2}}\cdot{}\zeta_{\bar{\chi}}^{\varepsilon}(\ell_{2})\right)$ is 
equal to the class 
$\varpi_\chi^{m_3}\cdot \partial_{\ell_1}(\xi_{\bar\chi}^\varepsilon(\ell_1))$, {i.e.}  
\[\partial_{\ell_1}\left(\xi_{\bar{\chi}}^{\varepsilon}(\ell_{2})-\varpi_{\chi}^{m_{2}}\cdot{}\zeta_{\bar{\chi}}^{\varepsilon}(\ell_{2})-
\varpi_\chi^{m_3}\cdot \xi_{\bar\chi}^\varepsilon(\ell_1)\right)=0.\] 
Therefore by definition the class 
$\xi_{\bar{\chi}}^{\varepsilon}(\ell_{2})-\varpi_{\chi}^{m_{2}}\cdot{}\zeta_{\bar{\chi}}^{\varepsilon}(\ell_{2})-
\varpi_\chi^{m_3}\cdot \xi_{\bar\chi}^\varepsilon(\ell_1)$ 
belongs to $\mathfrak{Sel}^{\varepsilon}(K,T_{g}(\bar{\chi}))$. Since
this group is annihilated by $\varpi_\chi^{\ord_\chi(p^k)-1}$ we obtain the equality 
\[
\varpi_\chi^{\ord_\chi(p^k)-1}\cdot\xi_{\bar{\chi}}^{\varepsilon}(\ell_{2})-\varpi_{\chi}^{\ord_\chi(p^k)-1+m_{2}}\cdot{}\zeta_{\bar{\chi}}^{\varepsilon}(\ell_{2})=
\varpi_\chi^{\ord_\chi(p^k)-1+m_3}\cdot \xi_{\bar\chi}^\varepsilon(\ell_1).
\]
We now suppose \emph{ad absurdum} that $m_2>0$. Then the equation above implies 
\begin{equation}\label{eq:equality-2} 
\varpi_\chi^{\ord_\chi(p^k)-1}\cdot\xi_{\bar{\chi}}^{\varepsilon}(\ell_{2})=
\varpi_\chi^{\ord_\chi(p^k)-1+m_3}\cdot \xi_{\bar\chi}^\varepsilon(\ell_1).\end{equation}
By $\mathbf{I}_4$, 
$\mathrm{ord}_{\chi}\big(v_{\ell_{1}}\big(\xi_{\bar{\chi}}^{\varepsilon}(\ell_{2})\big)\big)=0$, and therefore, 
again using the freeness argument as above, we see that 
$\varpi_\chi^{\ord_\chi(p^k)-1}\cdot\xi_{\bar{\chi}}^{\varepsilon}(\ell_{2})$ is not trivial. Equation \eqref{eq:equality-2} 
then shows that $m_3=0$. Therefore, applying $v_{\ell_1}$ to Equation \eqref{eq:equality-2}, we obtain the equality  
\[\varpi_\chi^{\ord_\chi(p^k)-1}\cdot v_{\ell_1}\big(\xi_{\bar\chi}^\varepsilon(\ell_2)\big)= 
\varpi_\chi^{\ord_\chi(p^k)-1}\cdot v_{\ell_1}\big(\xi_{\bar\chi}^\varepsilon(\ell_1)\big)
\] By $\mathbf{I}_4$, the left hand side of this equality is not trivial, while by Lemma $\ref{basickol}(1)$, the right hand side is 
trivial, which is a contradiction. Therefore, $m_2=0$, concluding the proof of Step 6. 
\end{proof}

\subsubsection{Step 7} Assume that $(f,K,p,\varepsilon)$ is not exceptional. Then 
\[
          \mathrm{length}_{\mathscr{O}_{\chi}}\big(\mathrm{Sel}_{\varepsilon}(K,A_{g}(\chi))\big)
               =2\cdot{}t_{\bar{\chi}}^{\varepsilon}(g).
\]

\begin{proof}
The proof is by induction on $\mathrm{length}_{\mathscr{O}_{\chi}}\big(\mathrm{Sel}_{\varepsilon}(K,A_{g}(\chi))\big)$. 
If $\mathrm{length}_{\mathscr{O}_{\chi}}\big(\mathrm{Sel}_{\varepsilon}(K,A_{g}(\chi))\big)=0$, then the equality follows 
from Step 4. 
When $\mathrm{Sel}_{\varepsilon}(K,A_{g}(\chi))$ is not trivial, choose a pair of $2k$-admissible primes $\ell_1$ and $\ell_2$ satisfying 
conditions $\mathbf{I}_{1}$-$\mathbf{I}_{4}$, and let $h$ be the $\ell_1\ell_2$-level raising of $g$. 
Since $t_{\bar \chi}^\varepsilon(h)<t_{\bar \chi}^\varepsilon(g)$, we see from Step 6 that 
$\mathrm{length}_{\mathscr{O}_{\chi}}\big(\mathrm{Sel}_{\varepsilon}(K,A_{h}(\chi))\big)$ is strictly smaller than 
$\mathrm{length}_{\mathscr{O}_{\chi}}\big(\mathrm{Sel}_{\varepsilon}(K,A_{g}(\chi))\big)$, and therefore by the inductive hypothesis 
$\mathrm{length}_{\mathscr{O}_{\chi}}\big(\mathrm{Sel}_{\varepsilon}(K,A_{h}(\chi))\big)=2t_{\bar \chi}^\varepsilon(h)$. 
A further application of the equality in Step 6 implies then the result.
\end{proof}

 \subsection{Proof of Theorem A in the definite case} \label{def:definite_char}
 Let $\mathfrak X_p^\varepsilon(f)$ be the Pontryagin dual of $\mathrm{Sel}_\varepsilon(K,\mathbf{A}_f)$, which is a compact torsion $\Lambda$-module by Theorem \ref{BSD0}. 
Denote $\mathrm{Char}_p^\varepsilon(f)$ the characteristic power series of  
 $\mathfrak X_p^\varepsilon(f)$. 
 
\begin{theo}[DAMC]\label{Adef}
$(L_p^\varepsilon(f))\subseteq(\mathrm{Char}_p^\varepsilon(f))$, with equality in the non-exceptional case. 
\end{theo}

\begin{proof} The proof easily follows by combining Theorem \ref{BSD0}  with Proposition \ref{propMRH} and repeating the argument in Mazur--Rubin \cite[Section 5.3]{MR-KolSys} and Howard \cite[Section 2.2]{How-HK}. 
\end{proof}

\section{$\varepsilon$-BSD formulas in the indefinite case}
\label{sec:BSD-indefinite}
Assume that $\mathscr{B}$ is \emph{indefinite}
(i.e. $\epsilon_{K}(N^{-})=+1$). 

\begin{proposition}\label{freenes-Sel(T)}
The compact $\Lambda_{\mathscr{O}}$-module $\mathfrak{Sel}^\varepsilon(K,\mathbf{T}_{f,\mathscr{O}})$ is free of finite rank. 
\end{proposition}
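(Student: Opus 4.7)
The plan is to combine two ingredients: (a) realizing $X := \mathfrak{Sel}^\varepsilon(K, \mathbf{T}_{f,\mathscr{O}})$ as a submodule of a free $\Lambda_\mathscr{O}$-module, thereby showing it is torsion-free and finitely generated; and (b) applying the structure theorem for self-dual Selmer groups to decompose $X$ into a free part and a torsion part, which torsion-freeness then forces to be trivial.

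For (a), I would pass to the Iwasawa limit of Proposition \ref{global freeness}. For each $k \geq 1$ one can choose a freeing set $S = S_k$ relative to $f_k = f \pmod{p^k}$ such that $\mathfrak{Sel}^\varepsilon_S(K, T_{f,k,\mathscr{O}})$ is free over $\Lambda_{\mathscr{O}}/p^k$ of rank $\delta(S)$. Taking a sufficiently large common freeing set $S$ (enlarging compatibly across $k$) and combining the finite-level freeness with the injectivity of specialization (Proposition \ref{speinj}) and the control theorem (Proposition \ref{global control}), one deduces that $\mathfrak{Sel}^\varepsilon_S(K, \mathbf{T}_{f,\mathscr{O}})$ is a free $\Lambda_\mathscr{O}$-module of finite rank $\delta(S)$. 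Since $X$ is cut out inside this free module by imposing the trivial local condition at the primes of $S$ coprime to $LN$, it inherits finite generation and torsion-freeness.

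For (b), the local conditions defining $\mathrm{Sel}_\varepsilon(K, \mathbf{A}_{f,\mathscr{O}})$ and $\mathfrak{Sel}^\varepsilon(K, \mathbf{T}_{f,\mathscr{O}})$ are exact annihilators under local Tate duality, using Proposition \ref{freeloc} at primes above $p$ and the classical duality elsewhere. This self-duality allows me to invoke the Iwasawa-theoretic analogue of Howard's Theorem 1.4.2 of \cite{How-HK} (as already used in Step 5 of the proof of Theorem \ref{BSD0}), yielding a decomposition
\[
X \;\cong\; \Lambda_\mathscr{O}^{r} \oplus M \oplus M
\]
for some integer $r \geq 0$ and some $\Lambda_\mathscr{O}$-torsion (in fact pseudo-null) module $M$. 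Combining with (a), torsion-freeness forces $M = 0$, so $X$ is free of rank $r$. In the indefinite case the non-vanishing Heegner class $\kappa_\infty^\varepsilon(1)$ from Proposition \ref{class in Selmer} will moreover force $r \geq 1$.

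The main obstacle will be step (a): extending the freeness of Proposition \ref{global freeness} to the Iwasawa-theoretic setting $k = \infty$, since the definition of freeing set and the proof there are given only for finite $k$. This is largely a bookkeeping exercise resting on the already-proved control and specialization results, but one must carefully handle the possibly uneven behavior of freeing sets across different $k$ — e.g.\ by choosing $S$ to be a freeing set simultaneously at every finite level, which is possible because admissible primes at level $k$ are automatically admissible at lower levels, and by verifying that the Mittag-Leffler condition holds so that inverse limits preserve freeness.
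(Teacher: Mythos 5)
Your plan hits a fundamental obstruction in step (a), and step (b) does not close the gap even if (a) were repaired.

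The freeing-set machinery of Proposition \ref{global freeness} is intrinsically a mod $p^k$ construction, because it rests on $k$-admissible primes, and the condition $A3$ in the definition of admissibility, namely $f(T_\ell)^2 = (\ell+1)^2$ in $R$, is a congruence modulo $p^k$. Over $R = \Z_p$ (the case you would need for $\mathbf{T}_{f,\mathscr{O}}$) this would force $a_\ell(E) = \pm(\ell+1)$ exactly, which contradicts the Hasse bound $|a_\ell(E)| \leq 2\sqrt{\ell} < \ell + 1$. So there are no $\infty$-admissible primes at all, and hence no single $S$ that is a freeing set ``simultaneously at every finite level.'' Your justification (``admissible primes at level $k$ are automatically admissible at lower levels'') runs the implication the wrong way: being $k$-admissible gives admissibility for smaller $k'$, not larger, so it does not produce a common $S$. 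Moreover, as $k$ grows the freeing sets $S_k$ must change; the Selmer groups $\mathfrak{Sel}^\varepsilon_{S_k}$ then carry incompatible local conditions at the primes of $S_k$, so there is no well-defined inverse system over which to take a limit, and the ranks $\delta(S_k)$ need not stabilize.

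Even granting torsion-freeness of $X$, step (b) is not conclusive. Howard's structure theorem in the $\Lambda_\mathscr{O}$-setting yields only a pseudo-isomorphism $X \sim \Lambda_\mathscr{O}^r \oplus M \oplus M$, not an actual direct-sum decomposition, so torsion-freeness of $X$ does not kill the $M$-part. And over the two-dimensional regular ring $\Lambda_\mathscr{O}$ a finitely generated torsion-free module need not be free (e.g.\ the maximal ideal), so torsion-freeness alone is insufficient. The paper instead uses a different and more elementary criterion (NSW, Proposition 5.3.19(ii)): it shows that the $G_\infty$-coinvariants of $X$ inject into the finite-level Selmer group $\mathfrak{Sel}^\varepsilon(K, T_{f,\mathscr{O}})$ via Proposition \ref{speinj}, hence are $\mathscr{O}$-torsion-free; that $X$ has no $\Lambda_\mathscr{O}$-torsion by Perrin-Riou's Lemma 1.3.3, using $\mathbf{T}_{f,\mathscr{O}}^{G_\infty} = 0$; and therefore $X^{G_\infty} = 0$. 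A finitely generated $\Lambda_\mathscr{O}$-module with $X^{G_\infty} = 0$ and $X_{G_\infty}$ free over $\mathscr{O}$ is free over $\Lambda_\mathscr{O}$. This avoids freeing sets and self-duality entirely. You should redirect your torsion-freeness argument to this route: Proposition \ref{speinj} already does the control you need, and the Perrin-Riou lemma gives torsion-freeness with no auxiliary primes.
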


\begin{proof}  
By Proposition \ref{speinj} (for $\mathfrak P$ equal to the augmentation ideal of $\Lambda_{\mathscr{O}}$) and Shapiro's Lemma, 
the $\Lambda_{\mathscr{O}}$-quotient 
of $G_\infty$-coinvariants of $\mathfrak{Sel}^\varepsilon(K,\mathbf{T}_{f,\mathscr{O}})$  
injects into $\mathfrak{Sel}^\varepsilon(K,T_{f,{\mathscr{O}}})$, and therefore is ${\mathscr{O}}$-free. 
Thanks to $E_p(K)=0$, we have $\mathbf{T}_{f,\mathscr{O}}^{G_\infty}=0$, so 
the $\Lambda_\mathscr{O}$-module 
$\mathfrak{Sel}^\varepsilon(K,\mathbf{T}_{f,\mathscr{O}})$ is torsion free by \cite[Lemma 1.3.3]{PR-SMF}, 
hence its $\Lambda_\mathscr{O}$-submodule of 
$G_\infty$-invariants is trivial. The result follows then from a standard argument 
(\emph{e.g.} \cite[Proposition 5.3.19(ii)]{NSW}).  
\end{proof}

\subsection{$\Lambda$-adic classes} 
We first construct global classes, in a way similar to Section \ref{subsec:indef}. 
Since $\epsilon_{K}(N^{-})=+1$, $J_{N^{+},N^{-}}$ 
is the Picard variety of the {Shimura curve} $X_{N^{+},N^{-}}$. 
Let $I_{f}\subset{}\mathbf{T}_{N^{+},N^{-}}$ denote the kernel of $f$. 
Modularity implies that 
there is an isomorphism of $\Z_{p}[G_{\Q}]$-modules 
\[
       \pi_{f} :  \mathrm{Ta}_{p}(J_{N^{+},N^{-}})/I_{f}\cong{}T_{f},
\]
which is unique up to multiplication by a $p$-adic unit by Hypothesis $\ref{thehy}(1)$.
For every integer $n\ges{}0$ define 
\[
        \psi_{f,n} : J_{N^{+},N^{-}}(K_{n})\lfre{}
        H^{1}(K_{n},\mathrm{Ta}_{p}(J_{N^{+},N^{-}})/I_{f})\cong{}H^{1}(K_{n},T_{f}),
\]
where the first (resp., second) map is induced by the Kummer map (resp., by $\pi_{f}$).
For every point $x\in{}J_{N^{+},N^{-}}(K_{n})$ the class 
$\psi_{f,n}(x)$ is finite at every prime of $K_{n}$ dividing $p$.
Moreover, since $J_{N^{+},N^{-}}$ has purely toric reduction at every prime divisor of $N^{-}$,  
Mumford--Tate theory of $p$-adic uniformisation implies that  
these classes are ordinary at every such prime. 
Therefore, we obtain a map \[
             \psi_{f,n}: J_{N^{+},LN^{-}}(K_{n})\lfre{}\mathfrak{Sel}(K_{n},T_{f}).
\]
Recall the compatible sequence of Heegner points $P_n=P_n(1)$, for $n\ges 0$, introduced in Section \ref{compheeg} and define 
\[\tilde{\kappa}_n=\psi_{f,n}(P_n).\]   

\subsubsection{Ordinary case} Suppose that $E$ has ordinary 
reduction at $p$. The classes  
\[
\kappa_n=
\frac{1}{\alpha_{p}(g)^{n}}
            \Big(\tilde\kappa_{n-1}-\alpha_{p}(g)\cdot{}
            \tilde\kappa_n\big)\Big) 
\]
belong to $\mathfrak{Sel}(K_{n},T_{f})$ by the previous discussion, and
Equation \eqref{eq:compdef} shows that they are norm-compatible. As in Section \ref{subsec:indef}, define 
\begin{equation}\label{classes-ord-bis}\kappa_{\infty} = \inlim_n\kappa_{n} \in
\inlim_{n}\mathfrak{Sel}(K_{n},T_{f})\end{equation} 
where the inverse limit is computed with respect to the canonical norm maps.   

\subsubsection{Supersingular case}
Using the freeness result of Proposition \ref{freenes-Sel(T)}, by the same argument in Section \ref{subsec:indef} one can define classes 
\[
\tilde\kappa_n^\varepsilon\in \mathfrak{Sel}^{\varepsilon}(K_{n},T_{f})/\omega_n^\varepsilon\]
such that 
$\tilde\omega_n^{-\varepsilon}\cdot \tilde\kappa_n^\varepsilon=\tilde{\kappa}_n$ if $p$ is split in $K$ or $p$ is inert in $K$ and $\varepsilon=-1$ (the non-exceptional case), and $ \omega_n^{-}\cdot\tilde \kappa_n^+=\tilde\kappa_n$ if $p$ is inert in $K$ and 
$\varepsilon=+1$ (the exceptional case). 
Define $\kappa_n^+=(-1)^{n/2}\tilde\kappa_n^+$ if $n$ is even 
and $\kappa_n^-=(-1)^{(n-1)/2}\tilde\kappa_n^-$ if $n$ is odd. A calculation using Equation \eqref{eq:compdef} shows that the classes $\kappa_n^\varepsilon$ are compatible with respect to the canonical projection maps. 
Define as in Section \ref{subsec:indef} \[\kappa_{\infty}^{\varepsilon} = \inlim_n\kappa_{n}^{\varepsilon} \in
\inlim_{n\in\N^\varepsilon}\mathfrak{Sel}^{\varepsilon}(K_{n},T_{f})/\omega_n^\varepsilon,\] 
where $\N^\varepsilon$ is the set of positive integers verifying the condition $(-1)^n=\varepsilon$. 

\subsection{Lengths of Selmer groups}
Fix a morphism $\chi:\Lambda\rightarrow \mathscr O_\chi$ of $\Z_p$-algebras, 
where as above $\mathscr{O}_\chi$ is the integral closure of $\Lambda/\mathfrak{P}_\chi$, and $\mathfrak{P}_\chi=\ker(\chi)$. 
Denote 
\[\kappa_{\chi}^\varepsilon\in 
\mathfrak{Sel}^\varepsilon(K,\mathbf{T}_{f})\otimes\mathscr O_\chi\overset{\mathrm{def}}=
\mathfrak{Sel}^\varepsilon(K,\mathbf{T}_{f,{\mathscr{O}_\chi}})\otimes_{\Lambda_{\mathscr{O}_\chi}}\mathscr O_\chi\]
the image of $\kappa_{\infty}^\varepsilon$ via the canonical map described above, where recall that 
the tensor product $\otimes_{\Lambda_{\mathscr{O}_\chi}}$ is taken with respect to $\chi$.  
We assume 
that $\ord_\chi(\kappa_{\chi}^\varepsilon)$ is finite. 
Using that $\mathfrak{Sel}^\varepsilon(K,\mathbf{T}_{f,{\mathscr{O}_\chi}})$ is $\Lambda_{\mathscr{O}_\chi}$-free by Proposition \ref{freenes-Sel(T)}, define the integer  
\[t_\chi^\varepsilon(f)=\ord_\chi\left(\kappa_{\chi}^\varepsilon\right)<\infty.\] 
For any group $p$-power torsion group $G$, let $G_{/\mathrm{div}}$ denote the quotient of $G$ by its maximal $p$-divisible subgroup. 

\begin{theo} \label{BSD-Indef} Suppose that $t_\chi^\varepsilon(f)<\infty$. Then 
 the $\mathscr{O}_\chi$-corank of $\mathrm{Sel}_\varepsilon(K,A_{f}(\chi))$ 
is $1$ and we have 
\[\mathrm{length}_{\mathscr O_\chi}
\left(\mathrm{Sel}_\varepsilon(K,A_{f}(\chi))_{/\mathrm{div}}\right)\les2\cdot 
\mathrm{length}_{\mathscr O_\chi}
\left((\mathfrak{Sel}^{\varepsilon}(K,\mathbf{T}_{f})\otimes
\mathscr{O}_{\chi})
/\mathscr O_\chi\cdot \kappa_{\bar\chi}^\varepsilon\right),\]
and the equality holds if  $(f,K,p,\varepsilon)$ is not exceptional. \end{theo}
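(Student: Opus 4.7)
The plan is to mirror the seven-step Kolyvagin-style proof of Theorem \ref{BSD0}, crucially exploiting the fact that for any $k$-admissible prime $\ell$, the level-raising $f_\ell$ is \emph{definite} (since $\epsilon_K(N^-) = +1$), so that the definite BSD formula of Theorem \ref{BSD0} itself becomes the principal input at every stage.

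First I would address the corank claim. The hypothesis $t_\chi^\varepsilon(f) < \infty$ says that $\kappa_\chi^\varepsilon$ is a nonzero element in the free $\mathscr{O}_\chi$-module $\mathfrak{Sel}^\varepsilon(K, \mathbf{T}_f) \otimes \mathscr{O}_\chi$, whose freeness is Proposition \ref{freenes-Sel(T)}. Combining the duality between compact and discrete $\varepsilon$-Selmer groups (which follows from the fact that the local $\varepsilon$-conditions at $p$ are orthogonal complements under the local Tate pairing, cf.\ Section \ref{subsec-p}) with the control theorem Proposition \ref{global control} and Poitou--Tate global duality, one deduces that the $\Lambda_{\mathscr{O}}$-rank of $\mathfrak{Sel}^\varepsilon(K, \mathbf{T}_f)$ is exactly one, and consequently $\mathrm{Sel}_\varepsilon(K, A_f(\chi))$ has $\mathscr{O}_\chi$-corank one. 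In particular the right-hand side of the desired inequality has finite length, equal to the $\mathscr{O}_\chi$-adic valuation of $\kappa_{\bar\chi}^\varepsilon$ inside the rank-one free module $\mathfrak{Sel}^\varepsilon(K,\mathbf{T}_f)\otimes\mathscr{O}_\chi$.

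For the inequality itself, I would run the same induction as in the proof of Theorem \ref{BSD0}, exchanging the roles of discrete and compact Selmer groups. Given a $2k$-admissible prime $\ell$, the Second Reciprocity Law \ref{secondrec} gives $v_\ell(\kappa_\infty^\varepsilon) = \mathcal{L}_{f_\ell}^\varepsilon$ up to units, so that the $\bar\chi$-specialization of $v_\ell(\kappa_\infty^\varepsilon)$ is entirely controlled by definite-side data. By a Chebotarev argument analogous to \cite[Theorem 3.2]{Be-Da-main} one selects a pair of $2k$-admissible primes $\ell_1, \ell_2$ satisfying the indefinite analogues of conditions $\mathbf{I}_1$--$\mathbf{I}_4$ of Step 2. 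Because $f_{\ell_1\ell_2}$ is again definite, Theorem \ref{BSD0} applies to it directly, and Poitou--Tate duality then yields an identity relating $\mathrm{length}_{\mathscr{O}_\chi}\bigl(\mathrm{Sel}_\varepsilon(K, A_f(\chi))_{/\mathrm{div}}\bigr) - 2\cdot\mathrm{length}_{\mathscr{O}_\chi}\bigl((\mathfrak{Sel}^\varepsilon(K, \mathbf{T}_f) \otimes \mathscr{O}_\chi)/\mathscr{O}_\chi \kappa_{\bar\chi}^\varepsilon\bigr)$ to $\mathrm{length}_{\mathscr{O}_\chi}\bigl(\mathrm{Sel}_\varepsilon(K, A_{f_{\ell_1\ell_2}}(\chi))\bigr) - 2\,t_{\bar\chi}^\varepsilon(f_{\ell_1\ell_2})$ up to two nonnegative correction terms $m_1, m_2$, formally parallel to equation \eqref{eq:step45}. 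Since Theorem \ref{BSD0} forces the right-hand side to be $\les 0$, the desired inequality follows by an induction on length mirroring Step 3. Equality in the non-exceptional case is inherited from the equality asserted by Theorem \ref{BSD0} in that regime, together with the vanishing $m_1 = m_2 = 0$ proved essentially as in Step 6 (using that the indefinite $\kappa_{\bar\chi}^\varepsilon$ plays the role of the residue generator).

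The hardest step will be establishing the rank-one claim for $\mathfrak{Sel}^\varepsilon(K, \mathbf{T}_f)$ in the supersingular inert and exceptional settings, where the structure of the local compact Selmer conditions at $p$ must be extracted from Theorem \ref{prop:formal-group} and Rubin's conjecture \cite{BKO1}; a naive rank count via the Euler characteristic must be supplemented by the analysis of the signed formal-group points on the tower. The exceptional case has an extra zero of $\mathcal{L}_g^+$ at the trivial character; this will manifest as an extra contribution to one of the $m_i$ that cannot be eliminated by the method above, which is exactly why equality is not expected in the exceptional regime.
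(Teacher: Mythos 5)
Your proposed reduction rests on a parity error. In the \emph{indefinite} setting $\epsilon_K(N^-)=+1$, and each admissible prime $\ell$ is inert in $K$, so $\epsilon_K(\ell)=-1$. Hence a \emph{single} level raising $f_\ell$ lands in the definite world ($\epsilon_K(\ell N^-)=-1$), but a \emph{double} level raising $f_{\ell_1\ell_2}$ has $\epsilon_K(\ell_1\ell_2 N^-)=+1$ and is therefore indefinite again. Your proposal hinges on the claim ``because $f_{\ell_1\ell_2}$ is again definite, Theorem \ref{BSD0} applies to it directly,'' which is false. Theorem \ref{BSD0} is a definite BSD formula and does not apply to $f_{\ell_1\ell_2}$, so the induction you envision (modelled on Steps 2--3 and 6--7 of the definite proof) cannot be run as stated. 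There is no reason to carry two admissible primes in the indefinite situation precisely because the parity structure is shifted by one.

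The paper's actual argument is a one-shot reduction, not an induction. It picks a \emph{single} $k$-admissible prime $\ell$ such that $v_\ell$ of the normalised Heegner class $\bar\xi_{\bar\chi}^\varepsilon$ is nonzero modulo $\varpi_\chi$; then $g=f_\ell$ is definite and Theorem \ref{BSD0} applies to $g$ directly (Step 1). The Second Reciprocity Law $v_\ell(\kappa_\infty^\varepsilon)=\mathcal L_g^\varepsilon$ converts the definite bound into a statement about $\ord_\chi(v_\ell(\kappa_{\bar\chi}^\varepsilon))$. A relaxed Selmer group argument (Step 2) and a short exact sequence comparing $\mathrm{Sel}_\varepsilon(K,A_g(\chi))$ with $\mathrm{Sel}_\varepsilon(K,A_{f,k}(\chi))$ via $v_\ell$ (Step 3) then finish the bookkeeping, and the corank-$1$ claim drops out in Step 4 from the boundedness of the resulting length formula independently of $k$, using the $(\mathscr K_\chi/\mathscr O_\chi)^s\oplus M_\chi\oplus M_\chi$ structure of Step 5 of Theorem \ref{BSD0}'s proof. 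Your instinct to establish the $\Lambda_{\mathscr O}$-rank of $\mathfrak{Sel}^\varepsilon(K,\mathbf T_f)$ separately as a first step (via Euler characteristics and signed local conditions) is a legitimate alternative route, but it is not needed; the paper obtains corank one for free at the end. Fixing the parity error and replacing the two-prime induction with a single level raising followed by an application of Theorem \ref{BSD0} is both simpler and what the paper actually does.
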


\begin{proof} It follows from the freeness of  
$\mathfrak{Sel}^{\varepsilon}(K,\mathbf{T}_{f,\mathscr O_\chi})$
that there exists 
$\tilde\kappa_{\chi}^{\varepsilon}$ in $\mathfrak{Sel}^{\varepsilon}(K,\mathbf{T}_{f})\otimes\mathscr{O}_{\chi}$ such that 
$
\mathrm{ord}_{\chi}(\tilde\kappa_{\chi}^{\varepsilon})=0$ and 
$
\kappa_{\chi}^{\varepsilon}=
\varpi_{\chi}^{t_{\chi}^{\varepsilon}(f)}\cdot{}\tilde{\kappa}_{\chi}^{\varepsilon}.$ Define
           $\xi_{\chi}^{\varepsilon}
           \in{}\mathfrak{Sel}^{\varepsilon}(K,{T}_{f}(\chi))$
to be the image of $\tilde{\kappa}_{\chi}^{\varepsilon}$ under the (injective) specialization map 
$\texttt{s}_{\chi} : \mathfrak{Sel}^{\varepsilon}_S(K,\mathbf{T}_{f})\otimes\mathscr{O}_{\chi}
\hookrightarrow\mathfrak{Sel}^{\varepsilon}(K,{T}_{f}(\chi))$. We also denote $\kappa_{\chi,k}^\varepsilon$ the image of $\kappa_\chi^\varepsilon$ in $\mathfrak{Sel}^\varepsilon(K,T_{f,k}(\chi))$, 
for all integers $k\geqslant 1$, and $\xi^\varepsilon_{\chi,k}$ the image of $\xi^\varepsilon_\chi$ in 
$H^1(K,T_{f,k}(\chi))$. 
If $k=1$, the element $\xi^\varepsilon_{\chi,1}$ will be denoted $\bar\xi^\varepsilon_{\chi}$.  
As before (\emph{cf.} Step 5 in \S\ref{step5}) we have 
\[
                 \mathrm{Sel}_{\varepsilon}(K,A_{f}(\chi))\cong{}(\mathscr{K}_\chi/\mathscr{O}_{\chi})^{s}\oplus M_\chi\oplus M_\chi.
\] for some integer $s$ and a finite torsion $\mathscr{O}_\chi$-module $M_\chi$.  
Choose an integer \[k>\max\{\mathrm{length}_{\mathscr O_\chi}(M_\chi), t_{\bar{\chi}}^\varepsilon(f)\}.\] 
Using \cite[Theorem 3.2]{Be-Da-main}, choose an admissible prime 
$\ell\in \mathscr S_{k}$ such that 
$v_\ell(\bar\xi_{\bar\chi}^\varepsilon)\neq 0$.
Let $g$ be the $\ell$-level raising of $f$. 
Then since
$H^1_{\mathrm{fin}}(K_\ell,T_{f,\mathscr{O}_\chi})$ is a free $\mathscr{O}_\chi$-module of rank $1$, 
and  $v_\ell(\bar\xi_{\bar\chi}^\varepsilon)\neq 0$, using Proposition 
\ref{local control}, 
we have 
\begin{equation}\label{orders}
\ord_\chi\left(v_{\ell}\left(\kappa_{\bar\chi,k}^\varepsilon\right)\right)=\ord_\chi\left(v_{\ell}\left(\kappa_{\bar\chi}^\varepsilon\right)\right).\end{equation}

\emph{Step 1.} Theorem \ref{BSD0} for $g$ shows that 
\[\mathrm{length}_{\mathscr O_\chi}\left(\mathrm{Sel}_\varepsilon(K,A_g(\chi))\right)\les
 2\cdot\ord_\chi\left(\mathcal L_g^\varepsilon(\bar\chi)\right),\] with equality in the non-exceptional case, and 
Theorem \ref{secondrec} shows that 
\[\ord_\chi\left(v_{\ell}\left(\kappa_{\bar\chi,k}^\varepsilon\right)\right)=\ord_\chi\left(\mathcal L_g^\varepsilon(\bar\chi)\right).\]
Thus, by \eqref{orders} and the injectivity of the map $v_\ell$ (which follows from 
$v_\ell(\bar\xi_{\bar\chi}^\varepsilon)\neq 0$), we have 
\[\mathrm{length}_{\mathscr O_\chi}\left(\mathrm{Sel}_\varepsilon(K,A_g(\chi))\right)\les
 2\cdot\ord_\chi\left(v_{\ell}\left(\kappa_{\bar\chi}^\varepsilon\right)\right)=
2\cdot \mathrm{length}_{\mathscr O_\chi}\left(\mathfrak{Sel}^{\varepsilon}(K,\mathbf{T}_{f})\otimes\mathscr{O}_\chi/\mathscr O_\chi\cdot \kappa_{\bar\chi}^\varepsilon\right)\] with equality in the non-exceptional case. 
 
\emph{Step 2.} 
Recall the relaxed Selmer group $\mathrm{Sel}_\varepsilon^{(\ell)}(K,A_{f,k}(\chi))\supseteq\mathrm{Sel}_\varepsilon(K,A_{f,k}(\chi))$, 
i.e. 
the set of cohomology classes defined requiring the 
same conditions as $\mathrm{Sel}_\varepsilon(K,A_{f,k}(\chi))$ at primes different from $\ell$, and no condition at $\ell$.  
We claim that 
\begin{equation}\label{relaxed} 
\mathrm{Sel}_\varepsilon(K,A_{f,k}(\chi))=\mathrm{Sel}_\varepsilon^{(\ell)}(K,A_{f,k}(\chi)).\end{equation}
To prove this, let $x\in\mathrm{Sel}_\varepsilon^{(\ell)}(K,A_{f,k}(\chi))$. We have to show that 
$x$ is in the kernel of the residue map at $\ell$. 
By global class field theory, using the orthogonality of $\mathrm{res}_v(x)$ and $\mathrm{res}_v(\xi_{\bar\chi,k}^\varepsilon)$ outside 
$\ell$ as in Step 1 of the proof of Theorem \ref{BSD0}, one then obtains 
\[
       0=\sum_{v}\dia{\mathrm{res}_{v}(x),\mathrm{res}_{v}\big(\xi_{\bar\chi,k}^\varepsilon)\big)}_{v}
       =\dia{\mathrm{res}_{\ell}(x),\mathrm{res}_{\ell}\big(\xi_{\bar\chi,k}^\varepsilon)\big)}_{\ell}
       =\dia{\partial_{\ell}(x),v_{\ell}\big(\xi_{\bar\chi,k}^\varepsilon)\big)}_{\ell}.
\]
Since $v_{\ell}\big(\bar\xi_{\bar\chi}^\varepsilon)\big)\not=0$,
and since $\dia{-,-}_{\ell}$ is a perfect pairing, this implies that 
$\partial_{\ell}(x)=0$, as was to be shown. 

\emph{Step 3}. We claim that there is an exact sequence: 
\[0\longrightarrow\mathrm{Sel}_\varepsilon(K,A_g(\chi))\longrightarrow
\mathrm{Sel}_\varepsilon(K,A_{f,k}(\chi))\overset{v_\ell}\longrightarrow H^1_\mathrm{fin}(K_\ell,A_{f,k}(\chi))\longrightarrow 0.\]
To show this, first note that 
\[\mathrm{Sel}_\varepsilon(K,A_g(\chi))\subseteq \mathrm{Sel}_\varepsilon^{(\ell)}(K,A_g(\chi))=
\mathrm{Sel}_\varepsilon^{(\ell)}(K,A_{f,k}(\chi))=\mathrm{Sel}_\varepsilon(K,A_{f,k}(\chi))\]
where the last equality follows from Step 2; this shows the exactness on the left. 
By definition, the kernel of the map 
$v_\ell:\mathrm{Sel}_\varepsilon(K,A_{f,k}(\chi))\rightarrow H^1_\mathrm{fin}(K_\ell,A_{f,k}(\chi))$
is $H^1_\mathrm{ord}(K_\ell,A_{f,k}(\chi))$, 
proving the exactness in the middle. Finally, $v_\ell$ is surjective because, 
under the isomorphism $T_{f,k}(\chi)\simeq A_{f,k}(\chi)$, $\xi_{\bar\chi}^\varepsilon$ is a class in $\mathrm{Sel}_\varepsilon(K,A_{f,k}(\chi))$ which satisfies 
$v_{\ell}\big(\bar\xi_{\bar\chi}^\varepsilon)\big)\not=0$. 

\emph{Step 4.} 
From Step 3 we obtain the equality 
\[\mathrm{length}_{\mathscr{O}_\chi}\left(\mathrm{Sel}_\varepsilon(K,A_g(\chi))\right)=\mathrm{length}_{\mathscr{O}_\chi}\left(\mathrm{Sel}_\varepsilon(K,A_{f,k}(\chi))\right)-\mathrm{length}_{\mathscr{O}_\chi}\left(\mathscr{O}_\chi/p^k\mathscr{O}_\chi\right)\] 
and combinig with Step 1 we get 
\[\mathrm{length}_{\mathscr O_\chi}\left(\mathrm{Sel}_\varepsilon(K,A_{f,k}(\chi))\right)-\mathrm{length}_{\mathscr{O}_\chi}\left(\mathscr{O}_\chi/p^k\mathscr{O}_\chi\right)\les
2\cdot \mathrm{length}_{\mathscr O_\chi}\left( \mathfrak{Sel}^{\varepsilon}(K,\mathbf{T}_{f}\otimes\mathscr{O}_\chi)
/\mathscr O_\chi\cdot \kappa_{\bar\chi}^\varepsilon\right)\] with equality in the non-exceptional case. Since the left hand side has finite order, bounded independently of $k$, 
we see that the $\mathscr{O}_\chi$-corank of $\mathrm{Sel}_\varepsilon(K,A_{f}(\chi))$ 
is $1$.  
By the choice of $k$, 
\[\mathrm{length}_{\mathscr O_\chi}\left(\mathrm{Sel}_\varepsilon(K,A_{f,k}(\chi))\right)-\ord_\chi\left(\mathscr{O}_\chi/p^k\mathscr{O}_\chi\right)=
\mathrm{length}_{\mathscr O_\chi}\left(\mathrm{Sel}_\varepsilon(K,A_{f}(\chi))_{/\mathrm{div}}\right),\] 
 concluding the proof. 
\end{proof} 

\subsection{Proof of Theorem A in the indefinite case}\label{def:indefinite_p-adic_L}    Let $\mathfrak X_p^\varepsilon(f)$ be 
the Pontryagin dual of $\mathrm{Sel}_\varepsilon(K,\mathbf{A}_f)$. 
Then the compact $\Lambda$-module $\mathfrak X_p^\varepsilon(f)$ is pseudo-isomorphic to 
$\Lambda\oplus \mathfrak M\oplus \mathfrak M$ for a torsion $\Lambda$-module $\mathfrak M$, supported only on primes of height $1$; this follows from Theorem \ref{BSD-Indef}, 
the structure results in Step 5 of the proof of Theorem \ref{BSD0}, 
and Proposition  \ref{global control}. 
Let $\mathrm{Char}_p^\varepsilon(f)$  be the characteristic ideal of 
the $\Lambda$-module $\mathfrak M$. 

By Theorem \ref{BSD-Indef}, $\mathfrak{Sel}^\varepsilon(K,\mathbf{T}_f)$ is a free $\Lambda$-module of rank $1$ and 
$\mathfrak{Sel}^\varepsilon(K,\mathbf{T}_f)/\Lambda\cdot\kappa_\infty^\varepsilon$ is a torsion $\Lambda$-module. We may then denote 
 $L_p^\varepsilon(f)$ the characteristic power series of 
$\mathfrak{Sel}^\varepsilon(K,\mathbf{T}_f)/\Lambda\cdot\kappa_\infty^\varepsilon$. 

\begin{theo}[IAMC]\label{Aindef}
$(L_p^\varepsilon(f))\subseteq(\mathrm{Char}_p^\varepsilon(f))$, with equality in the non-exceptional case. 
\end{theo}

\begin{proof}  As in the definite case, the proof combines Theorem \ref{BSD-Indef} with Proposition \ref{propMRH} and follows the argument in Mazur--Rubin \cite[Section 5.2]{MR-KolSys} and Howard \cite[Section 2.2]{How-HK}. \end{proof}.

\section{Proof of Theorems B and C}
\label{sec:proof_B_C}
Fix throughout this section a finite order character $\chi:G_\infty\twoheadrightarrow\mathscr{O}_\chi^\times$ of conductor $p^n$.

\subsection{Comparison of Selmer groups} 
Suppose that $p$ is supersingular. The aim of this section is to compare the discrete Selmer groups 
$\mathrm{Sel}^\varepsilon(K,A_f(\chi))$ and $\mathrm{Sel}(K,A_f(\chi))$ and the compact Selmer groups 
 $\mathfrak{Sel}^\varepsilon(K,T_f(\chi))$ and $\mathfrak{Sel}(K,T_f(\chi))$. 
Let $\mathfrak{p}\mid p$ be a prime and fix $k\in \N\cup\{\infty\}$. Set as before  
$\Psi=K_\mathfrak{p}$, $\Psi_n=K_{n,\mathfrak{p}}$ 
and $\Psi_\infty=K_{\infty,\mathfrak{p}}$. Let $\mathfrak{P}_\chi=(\mathfrak{p}_\chi)$ be the kernel of the character $\chi:\Lambda_{\mathscr{O}_\chi}\twoheadrightarrow\mathscr{O}_\chi$ obtained from $\chi$, 
where $\mathfrak{p}_\chi=\gamma-\chi(\gamma)$. We also view $\chi$ as a character $\chi:\mathscr{O}_\chi[G_n]\twoheadrightarrow\mathscr{O}_\chi$, whose 
kernel we still denote by $\mathfrak{P}_\chi=(\mathfrak{p}_\chi)$. 
To simplify the notation, define  
\[\mathbf{E}_{\mathscr{O}_\chi}(\Psi_n)_{\mathrm{div}}=\mathbf{E}(\Psi_n)\otimes_\Z(\mathscr{K}_\chi/\mathscr{O}_\chi),\]
\[\mathbf{E}_{\mathscr{O}_\chi}(\Psi_n)_{\pm,\mathrm{div}}=\mathbf{E}(\Psi_n)_\pm\otimes_\Z(\mathscr{K}_\chi/\mathscr{O}_\chi).\]

\begin{lemma}\label{lemma comparison} Let $\varepsilon=(-1)^n$.
\begin{enumerate}
\item In the non-exceptional case, $\frac{\left(\mathbf{E}_{\mathscr{O}_\chi}(\Psi_n)_{\mathrm{div}}\right)[\mathfrak{P}_\chi]}{\left(\mathbf{E}_{\mathscr{O}_\chi}(\Psi_n)_{\varepsilon,\mathrm{div}}\right)[\mathfrak{P}_\chi]}$ is finite and 
\[\mathrm{length}_{\mathscr{O}_\chi}\left(\frac{\left(\mathbf{E}_{\mathscr{O}_\chi}(\Psi_n)_{\mathrm{div}}\right)[\mathfrak{P}_\chi]}{\left(\mathbf{E}_{\mathscr{O}_\chi}(\Psi_n)_{\varepsilon,\mathrm{div}}\right)[\mathfrak{P}_\chi]}\right)=[\Psi:\Q_p]\cdot \ord_\chi(\tilde\omega_n^{-\varepsilon}).\]
\item In the exceptional case: 
\begin{enumerate}
\item if $n=0$, so $\chi=\mathbf{1}$ is the trivial character, then  
\[\frac{\left(\mathbf{E}_{\Z_p}(\Psi)_{\mathrm{div}}\right)[(\gamma-1)]}{\left(\mathbf{E}_{\Z_p}(\Psi)_{+,\mathrm{div}}\right)[(\gamma-1)}=
\mathbf{E}(\Psi)\otimes_{\Z_p}\Q_p/\Z_p\cong (\Q_p/\Z_p)^{[\Psi:\Q_p]};\]
\item if $n\ges2$, then $\frac{\left(\mathbf{E}_{\mathscr{O}_\chi}(\Psi_n)_{\mathrm{div}}\right)[\mathfrak{P}_\chi]}{\left(\mathbf{E}_{\mathscr{O}_\chi}(\Psi_n)_{+,\mathrm{div}}\right)[\mathfrak{P}_\chi]}$ is finite and 
\[\mathrm{length}\left(\frac{\left(\mathbf{E}_{\mathscr{O}_\chi}(\Psi_n)_{\mathrm{div}}\right)[\mathfrak{P}_\chi]}{\left(\mathbf{E}_{\mathscr{O}_\chi}(\Psi_n)_{+,\mathrm{div}}\right)[\mathfrak{P}_\chi]}\right)=[\Psi:\Q_p]\cdot\ord_\chi(\omega_n^{-}).\] \end{enumerate}
\end{enumerate}
\end{lemma}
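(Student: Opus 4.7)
The plan is to deduce the lemma from the explicit structure of formal-group points provided by Theorem \ref{prop:formal-group}, combined with a short Pontryagin-duality computation of $\chi$-eigenspaces in certain cyclic $\Lambda_{O,n}$-modules. Set $M=\mathbf{E}(\Phi_n)$, $M^{\pm}=\mathbf{E}(\Phi_n)_{\pm}$, $d=[\Phi:\Q_p]$, and $\varepsilon=(-1)^n$. For $n\ges 1$, Theorem \ref{prop:formal-group}(2)--(3), together with the identity $\omega_{n-1}^{-\varepsilon}=\omega_n^{-\varepsilon}$ (immediate from the definitions of $\tilde\omega_n^{\pm}$), yield isomorphisms of $\Lambda_{O,n}$-modules $M^\varepsilon\cong\Lambda_{O,n}/\omega_n^\varepsilon$ and $M^{-\varepsilon}\cong\Lambda_{O,n}/\omega_n^{-\varepsilon}$, as well as $M=M^\varepsilon+M^{-\varepsilon}$. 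The intersection $I:=M^\varepsilon\cap M^{-\varepsilon}$ is annihilated by $\gcd_{O[\gamma]}(\omega_n^\varepsilon,\omega_n^{-\varepsilon})$, and inspection of the cyclotomic factors shows this gcd equals $(\gamma-1)$ in the split case and $1$ in the inert case. Hence $I=\mathbf{E}(\Phi)=M[\gamma-1]$ (an $O$-module of rank one) in the split case, and $I=0$ in the inert case. The statement (2)(a) (with necessarily $\chi=\mathbf{1}$) is immediate from Theorem \ref{prop:formal-group}(1): $\mathbf{E}(\Phi)_+=0$ in the inert setting, so $\mathfrak{P}_\chi=(\gamma-1)$ acts trivially on $\mathbf{E}(\Phi)$ and the quotient becomes simply $\mathbf{E}(\Phi)\otimes\Q_p/\Z_p$.

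The key vehicle for $n\ges 1$ is the short exact sequence of $\Lambda_{O,n}$-modules
\[ 0\to I\to M^\varepsilon\oplus M^{-\varepsilon}\to M\to 0,\]
with the last arrow $(a,b)\mapsto a-b$. Since $M\cong(M^\varepsilon\oplus M^{-\varepsilon})/I$ is $\Z_p$-free (as $M\subset\mathfrak m_n$), tensoring over $\Z_p$ with $\mathscr{K}_\chi/\mathscr{O}_\chi$ preserves exactness, and the associated six-term $\mathfrak P_\chi$-cohomology sequence reads
\[ 0\to I_{\mathrm{div}}[\mathfrak P_\chi]\to(M^\varepsilon_{\mathrm{div}})[\mathfrak P_\chi]\oplus(M^{-\varepsilon}_{\mathrm{div}})[\mathfrak P_\chi]\to M_{\mathrm{div}}[\mathfrak P_\chi]\to I_{\mathrm{div}}/\mathfrak P_\chi\to\cdots. \]
Because $\chi$ has conductor $p^n\ges p$, the element $1-\chi(\gamma)\in\mathscr{O}_\chi$ is a nonzero non-unit of valuation $1$, whence $I_{\mathrm{div}}/\mathfrak P_\chi=0$ (either $I=0$, or $I_{\mathrm{div}}=(\mathscr{K}_\chi/\mathscr{O}_\chi)^d$ carries trivial $G_n$-action and $1-\chi(\gamma)$ acts invertibly on a divisible module). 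The sequence therefore collapses, identifying $M_{\mathrm{div}}[\mathfrak P_\chi]/(M^\varepsilon_{\mathrm{div}})[\mathfrak P_\chi]$ with the quotient of $(M^{-\varepsilon}_{\mathrm{div}})[\mathfrak P_\chi]$ by the injective image of $I_{\mathrm{div}}[\mathfrak P_\chi]$, injectivity being preserved on passage to $\mathfrak P_\chi$-torsion because $M^{-\varepsilon}/I$ is $\Z_p$-free.

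The $\mathscr{O}_\chi$-length is then computed from the following routine fact: for $N=\Lambda_{O,n}/\omega$ with $\omega(\chi(\gamma))\ne 0$, the Pontryagin-duality identification $(N\otimes_{\Z_p}\mathscr{K}_\chi/\mathscr{O}_\chi)[\mathfrak P_\chi]\cong\mathrm{Hom}_{\mathscr{O}_\chi}(N/\mathfrak P_\chi N,\mathscr{K}_\chi/\mathscr{O}_\chi)$ reduces the computation to $N/\mathfrak P_\chi N=(O\otimes_{\Z_p}\mathscr{O}_\chi)/\omega(\chi(\gamma))$, of length $d\cdot\ord_\chi(\omega(\chi(\gamma)))$. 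An inspection of the cyclotomic factorisation of $\omega_n^{-\varepsilon}$ shows $\omega_n^{-\varepsilon}(\chi(\gamma))\ne 0$ in every case of the lemma (while $\omega_n^\varepsilon(\chi(\gamma))=0$, which is why $(M^\varepsilon_{\mathrm{div}})[\mathfrak P_\chi]$ is divisible of corank $d$). Applying this with $N=M^{-\varepsilon}$, subtracting the length-$d$ contribution of $I_{\mathrm{div}}[\mathfrak P_\chi]$ when $I\ne 0$, and splitting off any $(\gamma-1)$-factor from $\omega_n^{-\varepsilon}$ via $\ord_\chi(\chi(\gamma)-1)=1$, gives the claimed formulas. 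The hardest point of the argument is precisely this bookkeeping of the $(\gamma-1)$-factor of $\omega_n^{-\varepsilon}$ against the length-$d$ contribution of $I_{\mathrm{div}}[\mathfrak P_\chi]$: in the split case both contribute $d$ and cancel, leaving $d\cdot\ord_\chi(\tilde\omega_n^{-\varepsilon})$; in the non-exceptional inert case both are absent; and in the exceptional inert case with $n\ges 2$ the $(\gamma-1)$-contribution survives (with no $I$-cancellation), producing the extra $d$ that distinguishes $\omega_n^-$ from $\tilde\omega_n^-$ in (2)(b).
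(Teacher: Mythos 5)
Your proposal is correct and follows essentially the same route as the paper: you build the short exact sequence $0\to I\to M^\varepsilon\oplus M^{-\varepsilon}\to M\to 0$ from Theorem \ref{prop:formal-group}, tensor with $\mathscr{K}_\chi/\mathscr{O}_\chi$, take $\mathfrak P_\chi$-torsion, and compute lengths via Pontryagin duality with the cyclotomic bookkeeping of the $(\gamma-1)$-factor; the paper does the same with a snake-lemma diagram in place of your six-term sequence. One small imprecision: where you assert $I_{\mathrm{div}}/\mathfrak P_\chi=0$ by saying ``$1-\chi(\gamma)$ acts invertibly on a divisible module,'' the reasoning should be that $1-\chi(\gamma)$ (a uniformiser, hence a non-unit) acts \emph{surjectively} on the divisible $I_{\mathrm{div}}$ -- it is not injective there, which is precisely why $I_{\mathrm{div}}[\mathfrak P_\chi]$ contributes a non-trivial length $d$ that you later (correctly) subtract.
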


\begin{proof}  
Suppose first $n=0$. Then $\chi$ is the trivial character, $\mathscr{O}_\chi=\Z_p$ and we suppress the index $\mathscr{O}_\chi$ from the notation, thus writing
$\mathbf{E}(\Psi)_{\mathrm{div}}$ for $\mathbf{E}_{\Z_p}(\Psi)_{\mathrm{div}}$ 
and $\mathbf{E}(\Psi)_{\pm,\mathrm{div}}$ for $\mathbf{E}_{\Z_p}(\Psi)_{\pm,\mathrm{div}}$. 
If $p$ is split, then
$\mathbf{E}(\Psi)_{\mathrm{div}}=\mathbf{E}(\Psi)_{+,\mathrm{div}}$, so the quotient in the statement is trivial; on the other hand, $\tilde\omega_0^{-}=1$, and the statement is proved. 
If $p$ is inert, 
$\mathbf{E}(\Psi)_{\mathrm{div}}=\mathbf{E}(\Psi)_{-,\mathrm{div}}$ 
and $\mathbf{E}_{\mathscr{O}_\chi}(\Psi)_{+,\mathrm{div}}=0$, so the quotient in the statement is $\mathbf{E}(\Psi)_{\mathrm{div}}[(\gamma-1)]\cong(\Q_p/\Z_p)^{[\Psi:\Q_p]}$, where the last isomorphism follows from Theorem \ref{prop:formal-group}.  

Suppose $n\ges1$. 
We first observe that we have an exact sequence: 
\begin{equation}\label{eqsec11}
\xymatrix{
0\ar[r]&C\ar[r]& 
\left(\mathbf{E}_{\mathscr{O}_\chi}(\Psi_n)_{\varepsilon,\mathrm{div}}\right)[\mathfrak{P}_\chi]\oplus \left(\mathbf{E}_{\mathscr{O}_\chi}(\Psi_n)_{-\varepsilon,\mathrm{div}}\right)[\mathfrak{P}_\chi]\ar[r]& 
\left(\mathbf{E}_{\mathscr{O}_\chi}(\Psi_n)_\mathrm{div}\right)[\mathfrak{P}_\chi]\ar[r]&0}  
\end{equation}
where $C=0$ if $p$ is inert in $K$ and $C=\left(\mathbf{E}_{\mathscr{O}_\chi}(\Psi)_\mathrm{div}\right)[\mathfrak{P}_\chi]$ if $p$ is split in $K$; in this exact sequence the second arrow is the map $x\mapsto (x,x)$, and the third arrow is the map $(x,y)\mapsto x-y$. 
If $p$ is inert in $K$, it follows from Theorem \ref{prop:formal-group} that 
$\mathbf{E}_{\mathscr{O}_\chi}(\Psi_n)_\mathrm{div}$ is the direct sum of 
$\mathbf{E}_{\mathscr{O}_\chi}(\Psi_n)_{\varepsilon,\mathrm{div}}$ and 
$\mathbf{E}_{\mathscr{O}_\chi}(\Psi_n)_{-\varepsilon,\mathrm{div}}$, which proves \eqref{eqsec11} (also in the exceptional case). In the split case, it follows again 
from Theorem \ref{prop:formal-group} that 
\[\mathbf{E}_{\mathscr{O}_\chi}(\Psi_n)_{\varepsilon,\mathrm{div}}\cap 
\mathbf{E}_{\mathscr{O}_\chi}(\Psi_n)_{-\varepsilon,\mathrm{div}}=
\mathbf{E}_{\mathscr{O}_\chi}(\Psi)_\mathrm{div},\] 
so we need to show that the exact sequence 
\[\xymatrix{
0\ar[r]&\mathbf{E}_{\mathscr{O}_\chi}(\Psi)_\mathrm{div}\ar[r]& 
\mathbf{E}_{\mathscr{O}_\chi}(\Psi_n)_{\varepsilon,\mathrm{div}}\oplus\mathbf{E}_{\mathscr{O}_\chi}(\Psi_n)_{-\varepsilon,\mathrm{div}}\ar[r]& 
\mathbf{E}_{\mathscr{O}_\chi}(\Psi_n)_\mathrm{div}\ar[r]&0}  
\] remains exact after taking $\mathfrak{P}_\chi$-torsion, so we need to show that the map 
\[\left(\mathbf{E}_{\mathscr{O}_\chi}(\Psi_n)_{\varepsilon,\mathrm{div}}\right)[\mathfrak{P}_\chi]\oplus \left(\mathbf{E}_{\mathscr{O}_\chi}(\Psi_n)_{-\varepsilon,\mathrm{div}}\right)[\mathfrak{P}_\chi]\longrightarrow
\left(\mathbf{E}_{\mathscr{O}_\chi}(\Psi_n)_\mathrm{div}\right)[\mathfrak{P}_\chi]\] is surjective. 
The cokernel of this map injects into the quotient $\mathbf{E}_{\mathscr{O}_\chi}(\Psi)_\mathrm{div}/\mathfrak{p}_\chi\mathbf{E}_{\mathscr{O}_\chi}(\Psi)_\mathrm{div}$, and we need to show that this group is trivial.  
Since $\mathbf{E}_{\mathscr{O}_\chi}(\Psi)$ is $\mathscr{O}_\chi$-free, it is enough to show that \begin{equation}\label{t1}
\left(\mathbf{E}_{\mathscr{O}_\chi}(\Psi)/\mathfrak{p}_\chi\mathbf{E}_{\mathscr{O}_\chi}(\Psi)\right)\otimes_{\mathscr{O}_\chi}\mathscr{K}_\chi/\mathscr{O}_\chi=0.\end{equation} 
Now, $\mathfrak{p}_\chi$ acts on the $\mathscr{O}_\chi$-free module 
$\mathbf{E}_{\mathscr{O}_\chi}(\Psi)$ as multiplication by $1-\chi(\gamma)$, and since 
$\chi(\gamma)$ is a primitive $p^n$-root of unity, and $n\ges 1$, the quotient $\mathbf{E}_{\mathscr{O}_\chi}(\Psi)/\mathfrak{p}_\chi\mathbf{E}_{\mathscr{O}_\chi}(\Psi)$ is finite, and \eqref{t1} follows.  

We have therefore a commutative diagram with exact rows 
\[\xymatrix{
0\ar[r]&0\ar[r]\ar[d]& 
\left(\mathbf{E}_{\mathscr{O}_\chi}(\Psi_n)_{\varepsilon,\mathrm{div}}\right)[\mathfrak{P}_\chi]\ar@{=}[r]\ar@{^(->}[d]& 
{\left(\mathbf{E}_{\mathscr{O}_\chi}(\Psi_n)_{\varepsilon,\mathrm{div}}\right)[\mathfrak{P}_\chi]}
\ar[r]\ar@{^(->}[d]&0\\
0\ar[r]&C\ar[r]& 
\left(\mathbf{E}_{\mathscr{O}_\chi}(\Psi_n)_{\varepsilon,\mathrm{div}}\right)[\mathfrak{P}_\chi]\oplus \left(\mathbf{E}_{\mathscr{O}_\chi}(\Psi_n)_{-\varepsilon,\mathrm{div}}\right)[\mathfrak{P}_\chi]\ar[r]& 
\left(\mathbf{E}_{\mathscr{O}_\chi}(\Psi_n)_\mathrm{div}\right)[\mathfrak{P}_\chi]\ar[r]&0
}
\]  where $C$ is defined before, and the middle vertical arrow is the map $x\mapsto (x,0)$. By the snake lemma we obtain an exact sequence 
\[0\longrightarrow C\longrightarrow \left(\mathbf{E}_{\mathscr{O}_\chi}(\Psi_n)_{-\varepsilon,\mathrm{div}}\right)[\mathfrak{P}_\chi]\longrightarrow \frac{\left(\mathbf{E}_{\mathscr{O}_\chi}(\Psi_n)_{\mathrm{div}}\right)[\mathfrak{P}_\chi]}{\left(\mathbf{E}_{\mathscr{O}_\chi}(\Psi_n)_{\varepsilon,\mathrm{div}}\right)[\mathfrak{P}_\chi]}\longrightarrow 0.\]
The Pontryagin dual of the middle term is $\Lambda_{\mathscr{O}_\chi}/(\omega_n^{-\varepsilon},\mathfrak{p}_\chi)$, because the Pontryagin dual of 
 $\mathbf{E}_{\mathscr{O}_\chi}(\Psi_n)_{-\varepsilon,\mathrm{div}}$ is $\Lambda_{\mathscr{O}_\chi}/(\omega_n^{-\varepsilon})$; since 
$\Lambda_{\mathscr{O}_\chi}/\mathfrak{P}_\chi\cong\mathscr{O}_\chi$, 
the length of the middle term is equal to the length of  
$\mathscr{O}_\chi/\chi(\omega_n^{-\varepsilon})$. 
Similarly, if $p$ is split, the length of $C=\left(\mathbf{E}_{\mathscr{O}_\chi}(\Psi)_\mathrm{div}\right)[\mathfrak{P}_\chi]$ is equal to the length of
$\mathscr{O}_\chi/\chi(\gamma-1)$ and therefore the length of the quotient is equal to the length of $\mathscr{O}_\chi/\chi(\tilde\omega_n^{-\varepsilon})$, 
completing the proof in this case. If $p$ is inert, then $C$ is trivial. If $\varepsilon=-1$ (the non-exceptional case), then $\tilde\omega_n^+=\omega_n^+$, 
and the length of the last term is equal to the length of  
$\mathscr{O}_\chi/\chi(\omega_n^{+})=\mathscr{O}_\chi/\chi(\tilde\omega_n^{+})$, 
while if $\varepsilon=+1$ (the exceptional case) then the length is 
$\mathscr{O}_\chi/\chi(\omega_n^{-})$, completing the proof. 
\end{proof}

\begin{proposition}\label{prop comparison} 
In the exceptional case, assume that $n\neq 0$.   
The discrete Selmer groups $\mathrm{Sel}_\varepsilon(K,A_f(\chi))$ and 
$\mathrm{Sel}(K,A_f(\chi))$ have the same $\mathscr{O}_\chi$-corank. Moreover, 
\begin{enumerate}
\item If $p$ is split in $K$ or $p$ is inert in $K$ and $\varepsilon=-1$ (the non-exceptional case), 
\[\mathrm{length}_{\mathscr{O}_\chi}\left(\frac{\mathrm{Sel} (K,A_f(\chi))}{\mathrm{Sel}_\varepsilon(K,A_f(\chi))}\right)=
2\cdot\ord_\chi(\tilde\omega_n^{-\varepsilon}).\]
\item If $p$ is inert in $K$ and $\varepsilon=+1$ (the exceptional case),  
\[\mathrm{length}_{\mathscr{O}_\chi}\left(\frac{\mathrm{Sel} (K,A_f(\chi))}{\mathrm{Sel}_+(K,A_f(\chi))}\right)=
2\cdot\ord_\chi(\omega_n^{-}).\]
\end{enumerate}
\end{proposition}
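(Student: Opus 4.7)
The plan is to reduce the comparison of $\mathrm{Sel}_\varepsilon(K,A_f(\chi))$ and $\mathrm{Sel}(K,A_f(\chi))$ to a purely local question at the primes of $K$ above $p$, since the defining local conditions of the two Selmer groups agree at every other place. The starting point is the left-exact sequence
\[
0 \to \mathrm{Sel}_\varepsilon(K,A_f(\chi)) \to \mathrm{Sel}(K,A_f(\chi)) \xrightarrow{\,r\,} \bigoplus_{\mathfrak{p}\mid p} \frac{H^{1}_{\mathrm{fin}}(K_\mathfrak{p},A_f(\chi))}{H^{1}_{\mathrm{fin},\varepsilon}(K_\mathfrak{p},A_f(\chi))}
\]
coming from the inclusion $H^{1}_{\mathrm{fin},\varepsilon}(K_\mathfrak{p},A_f(\chi))\subseteq H^{1}_{\mathrm{fin}}(K_\mathfrak{p},A_f(\chi))$ at each $\mathfrak{p}\mid p$.

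Next I would identify each local quotient using the Kummer-theoretic descriptions of Section \ref{efinsub} together with the control result of Proposition \ref{hypext}. Concretely, Proposition \ref{hypext} and the flatness of $\mathscr{O}_\chi/\Z_p$ identify $H^{1}_{\mathrm{fin}}(K_\mathfrak{p},A_f(\chi))$ and $H^{1}_{\mathrm{fin},\varepsilon}(K_\mathfrak{p},A_f(\chi))$ with $(\mathbf{E}_{\mathscr{O}_\chi}(\Phi_n)_{\mathrm{div}})[\mathfrak{P}_\chi]$ and $(\mathbf{E}_{\mathscr{O}_\chi}(\Phi_n)_{\varepsilon,\mathrm{div}})[\mathfrak{P}_\chi]$ respectively, so that Lemma \ref{lemma comparison} computes each local quotient as a finite $\mathscr{O}_\chi$-module of length $[\Phi:\Q_p]\cdot\ord_\chi(\tilde\omega_n^{-\varepsilon})$ in the non-exceptional case and $[\Phi:\Q_p]\cdot\ord_\chi(\omega_n^{-})$ in the exceptional case with $n\neq 0$. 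Summing over the primes of $K$ above $p$ (two primes of residue degree $1$ if $p$ is split, one prime of residue degree $2$ if $p$ is inert) yields exactly the totals $2\cdot\ord_\chi(\tilde\omega_n^{-\varepsilon})$ and $2\cdot\ord_\chi(\omega_n^{-})$ appearing in the statement. Because each local quotient is finite, the equality of $\mathscr{O}_\chi$-coranks of $\mathrm{Sel}_\varepsilon(K,A_f(\chi))$ and $\mathrm{Sel}(K,A_f(\chi))$ then follows at once from the displayed exact sequence.

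The main obstacle is the last step: to prove that $r$ is surjective, so that the image of $r$ saturates the sum of local lengths and the displayed bound becomes an equality. By Poitou--Tate global duality (the Greenberg--Wiles exact sequence), the cokernel of $r$ is canonically Pontryagin-dual to the quotient $\mathfrak{Sel}^\varepsilon(K,T_f(\bar\chi))/\mathfrak{Sel}(K,T_f(\bar\chi))$ of compact Selmer groups, so the desired surjectivity is equivalent to showing that these two compact Selmer groups coincide. I would establish this last equality by a local-global argument exploiting the Weil-pairing identification $A_f(\chi)\cong T_f(\bar\chi)$ and the self-duality of the local Kummer images at $\mathfrak{p}\mid p$ (in the spirit of Step 5 of the proof of Theorem \ref{BSD0}), combined with the freeness of $\mathfrak{Sel}^\varepsilon(K,\mathbf{T}_{f,\mathscr{O}_\chi})$ given by Proposition \ref{freenes-Sel(T)} and the injectivity of the specialization map provided by Proposition \ref{speinj}. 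The delicate point, and the one that concentrates most of the technical difficulty, is to control the interaction between the $\varepsilon$-structure of local points from Theorem \ref{prop:formal-group} and the global compact Selmer groups, excluding in particular the exceptional case $n=0$ where the local $+$-quotient of Lemma \ref{lemma comparison}(2)(a) is infinite and the argument breaks down.
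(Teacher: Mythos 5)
Your proposal correctly lays out the skeleton of the paper's argument: you start from the same left-exact sequence (which is the start of the Poitou--Tate exact sequence), you feed in Lemma \ref{lemma comparison} for the lengths of the local quotients, and you correctly recognize that the remaining issue is surjectivity of the localization map $r$, which by the full Poitou--Tate sequence amounts to showing that the inclusion $\mathfrak{Sel}(K,T_f(\bar\chi))\hookrightarrow\mathfrak{Sel}^\varepsilon(K,T_f(\bar\chi))$ is an equality. Up to that point you match the paper.

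Where the proposal is weak is precisely at the step you flag as ``delicate.'' You propose to settle it ``by a local-global argument exploiting the Weil-pairing identification $A_f(\chi)\cong T_f(\bar\chi)$ and the self-duality of the local Kummer images \ldots\ in the spirit of Step 5 of the proof of Theorem \ref{BSD0},'' citing Propositions \ref{freenes-Sel(T)} and \ref{speinj}. This does not identify the actual mechanism, and it is unlikely to work as described. Step 5 of Theorem \ref{BSD0} is a cardinality argument in the \emph{definite} case, where both Selmer groups in play are finite; here one is comparing two free $\mathscr{O}_\chi$-modules of positive rank, so a cardinality count is unavailable. What the paper actually does is show that the cokernel of $\mathfrak{Sel}(K,T_f(\bar\chi))\hookrightarrow\mathfrak{Sel}^\varepsilon(K,T_f(\bar\chi))$ is $\mathscr{O}_\chi$-torsion-free, and then combine this with the finiteness of the local quotient (your Lemma \ref{lemma comparison} computation) to force the cokernel of $r$ to be simultaneously finite and $p$-divisible, hence trivial. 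The torsion-freeness step is local: if $\varpi_\chi^M x$ lies in $\mathfrak{Sel}$, then for any $y\in H^{1}_{\mathrm{fin}}(K_\mathfrak{p},A_f(\chi))$ one has $\langle \mathrm{res}_\mathfrak{p}(x),\varpi_\chi^M y\rangle=\langle\mathrm{res}_\mathfrak{p}(\varpi_\chi^M x),y\rangle=0$; since $H^{1}_{\mathrm{fin}}(K_\mathfrak{p},A_f(\chi))$ is \emph{co-free} over $\mathscr{O}_\chi$ (Proposition \ref{freeloc}), it is $\varpi_\chi$-divisible, so the map $y\mapsto\langle\mathrm{res}_\mathfrak{p}(x),y\rangle$ already vanishes on all of $H^{1}_{\mathrm{fin}}$, forcing $x\in\mathfrak{Sel}$. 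The cofreeness of the local finite subspace (Proposition \ref{freeloc}) is the crucial input you omit; the $\mathscr{O}_\chi$-freeness of $\mathfrak{Sel}^\varepsilon(K,T_f(\bar\chi))$ used in the argument is a soft consequence of the irreducibility of $\bar\varrho_{E,p}$ (so that $H^1(K,T_f(\bar\chi))$ is $\mathscr{O}_\chi$-torsion-free), and Proposition \ref{speinj} does not play a role in this particular proof.

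Your observation about excluding the exceptional case $n=0$, because the relevant local quotient becomes infinite (Lemma \ref{lemma comparison}(2)(a)), is correct and matches the paper.
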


\begin{proof}
We have the Poitou--Tate exact sequence
\[\begin{split}
0\longrightarrow
\mathrm{Sel}_\varepsilon(K,A_f(\chi))\longrightarrow{\mathrm{Sel}} (K,A_f(\chi))\longrightarrow 
\prod_{\mathfrak{p}\mid p}\frac{\left(\mathbf{E}_{\mathscr{O}_\chi}(\Psi)_{\mathrm{div}}\right)[\mathfrak{P}_\chi]}{\left(\mathbf{E}_{\mathscr{O}_\chi}(\Psi_n)_{\varepsilon,\mathrm{div}}\right)[\mathfrak{P}_\chi]}\longrightarrow\\
\longrightarrow\left(\mathfrak{Sel}^\varepsilon(K,T_f(\bar\chi))\right)^\vee\longrightarrow 
\left(\mathfrak{Sel} (K,T_f(\bar\chi))\right)^\vee\longrightarrow 0.\end{split}
\] 
Now $\mathfrak{Sel}^\varepsilon(K,T_f(\bar\chi))$ is $\mathscr{O}_\chi$-free, and 
therefore $\left(\mathfrak{Sel}^\varepsilon(K,T_f(\bar\chi))\right)^\vee$ is $p$-divisible. If we show that the kernel of the map $\left(\mathfrak{Sel}^\varepsilon(K,T_f(\bar\chi))\right)^\vee\rightarrow \left(\mathfrak{Sel}(K,T_f(\bar\chi))\right)^\vee$ 
is divisible, then, since the local quotient in the middle of the exact sequence above is finite by Lemma \ref{lemma comparison}, 
we have an exact sequence 
\[
0\longrightarrow
\mathrm{Sel}_\varepsilon(K,A_f(\chi))\longrightarrow{\mathrm{Sel}} (K,A_f(\chi))\longrightarrow 
\prod_{\mathfrak{p}\mid p}\frac{\left(\mathbf{E}_{\mathscr{O}_\chi}(\Psi)_{\mathrm{div}}\right)[\mathfrak{P}_\chi]}{\left(\mathbf{E}_{\mathscr{O}_\chi}(\Psi_n)_{\varepsilon,\mathrm{div}}\right)[\mathfrak{P}_\chi]}
\longrightarrow0\]
and the result follows from Lemma \ref{lemma comparison}. So to complete the proof we need to show that the kernel of the (surjective) map 
 \[\xymatrix{
 \left(\mathfrak{Sel}^\varepsilon(K,T_f(\bar\chi))\right)^\vee\ar@{->>}[r]& \left(\mathfrak{Sel}(K,T_f(\bar\chi))\right)^\vee}
 \] 
is divisible. For this, it is enough to show that the cokernel of the (injective) map 
\begin{equation}\label{cok}
\xymatrix{
\mathfrak{Sel}(K,T_f(\bar\chi))\ar@{^(->}[r]&  \mathfrak{Sel}^\varepsilon(K,T_f(\bar\chi))}\end{equation}
is torsion free. Let $x\in \mathfrak{Sel}^\varepsilon(K,T_f(\bar\chi))$ and let $M\ges 1$ be such that $\varpi_\chi^M\cdot x\in \mathfrak{Sel}(K,T_f(\bar\chi))$; to conclude that 
the cokernel of the map \eqref{cok} is torsion-free, it is then enough to show that $x\in\mathfrak{Sel}(K,T_f(\bar\chi))$. Since 
$\varpi_\chi^M\cdot x\in  \mathfrak{Sel}(K,T_f(\bar\chi))$, we  
have (writing $\langle-,-\rangle$ for the local Tate pairing $\langle-,-\rangle_{\mathfrak{P}_\chi,\mathfrak{p}}$ to simplify the notation) 
$\langle \mathrm{res}_\mathfrak{p}(\varpi_\chi^M\cdot x),y\rangle=0$ for all $y\in H^1_{\mathrm{fin}}(K_\mathfrak{p},A_f(\chi))$, and since 
 $\langle \mathrm{res}_\mathfrak{p}(\varpi_\chi^M\cdot x),y\rangle=
 \langle \mathrm{res}_\mathfrak{p}(x),\varpi_\chi^M \cdot y\rangle$, we also have 
 $ \langle \mathrm{res}_\mathfrak{p}(x),\varpi_\chi^M \cdot y\rangle=0$ for all 
 $y\in H^1_{\mathrm{fin}}(K_\mathfrak{p},A_f(\chi))$. 
 Recall that 
 $H^1_{\mathrm{fin}}(K_\mathfrak{p},A_f(\chi))$ is co-free over $\mathscr{O}_\chi$ 
 by Proposition \ref{freeloc}, hence $H^1_{\mathrm{fin}}(K_\mathfrak{p},A_f(\chi))$ is $\varpi_\chi$-divisible.
So the function $y\mapsto \langle \mathrm{res}_\mathfrak{p}(x),y\rangle$ is zero 
on  $H^1_{\mathrm{fin}}(K_\mathfrak{p},A_f(\chi))$ and therefore $x$ belongs to 
$\mathfrak{Sel}(K,T_f(\bar\chi))$, concluding the proof.   
\end{proof}

\subsection{Proof of Theorem B} \label{sec:proof_B}
Recall $L_{p,n}(f)=\mathcal{L}_{f,n}\cdot \left(\mathcal{L}_{f,n}\right)^\iota\in\Z_p[G_n]$.
By Remark \ref{remark:n=0} we may
assume $n\neq 0$ in the exceptional case. 

\emph{Step 1.} We first show that 
\[\mathrm{length}_{\mathscr{O}_{\chi}}\big(\mathrm{Sel} (K,A_{f}(\chi))\big)
            \les\ord_\chi\big(\chi(L_{p,n}(f))\big)\] with equality in the non-exceptional case.                          
Take $g=f_k$ for $L=\emptyset$ in Theorem \ref{BSD0}. 
By Theorem \ref{BSD0}, $\mathrm{Sel}_{\varepsilon}(K,A_{f,k}(\chi))$ is 
finite, of order bounded independently of $k$, so 
$\mathrm{Sel}_{\varepsilon}(K,A_{f}(\chi))$ is finite, and by Proposition \ref{prop comparison}
the Selmer group $\mathrm{Sel} (K,A_f(\chi))$ is finite.  
Let $t_{\bar\chi}^\varepsilon(f)=\ord_{\bar\chi}\big(\bar\chi(\mathcal L_f^\varepsilon)\big)$ and choose 
\[k>\max\left\{\mathrm{length}_{\mathscr{O}_{\chi}}\big(\mathrm{Sel}_{\varepsilon}(K,A_{f}(\chi))\big), t_{\bar\chi}^\varepsilon(f),\ord_\chi(\tilde\omega_n^{-\varepsilon})\right\}.\] 
For such a $k$, we have $\mathrm{Sel}_{\varepsilon}(K,A_{f}(\chi))\cong 
\mathrm{Sel}_{\varepsilon}(K,A_{f,k}(\chi))$, 
and, by Proposition \ref{prop comparison},
\begin{itemize}
\item $\mathrm{length}_{\mathscr{O}_\chi}\left(\mathrm{Sel} (K,A_f(\chi))\right)=
\mathrm{length}_{\mathscr{O}_\chi}\left(\mathrm{Sel}_{\varepsilon}(K,A_{f,k}(\chi))\right)+2\cdot\ord_\chi(\tilde\omega_n^{-\varepsilon})$ in the non-exceptional case; 
\item $\mathrm{length}_{\mathscr{O}_\chi}\left(\mathrm{Sel} (K,A_f(\chi))\right)=
\mathrm{length}_{\mathscr{O}_\chi}\left(\mathrm{Sel}_{\varepsilon}(K,A_{f,k}(\chi))\right)+2\cdot\ord_\chi(\omega_n^{-\varepsilon})$ in the exceptional case. 
\end{itemize}  
By \cite[Proposition 2.6]{Be-Da1},
$(\mathcal{L}_{f}^\varepsilon)^\iota=\pm\gamma_{\infty}\mathcal{L}_{f}^\varepsilon$,
for some $\gamma_{\infty}\in{}G_{\infty}$, and therefore 
$t_{\chi}^\varepsilon(f)=t_{\bar{\chi}}^\varepsilon(f)$.
We thus have 
$\ord_\chi\big(\chi(L_p^\varepsilon(f))\big)
=2\cdot t_{\bar{\chi}}^\varepsilon(f)$.
If $p$ is split in $K$ or $p$ is inert in $K$ and $\varepsilon=-1$ (non-exceptional case) we have 
$\tilde\omega_n^{-\varepsilon}\cdot \mathcal{L}_{f,n}^\varepsilon\equiv \pm\mathcal{L}_{f,n}$ modulo $\omega_{n}$, and therefore  
\[\begin{split}
\mathrm{length}_{\mathscr{O}_\chi}\left(\mathrm{Sel} (K,A_f(\chi))\right)&=
\mathrm{length}_{\mathscr{O}_\chi}\left(\mathrm{Sel}_{\varepsilon}(K,A_{f,k}(\chi))\right)+2\cdot\ord_\chi(\tilde\omega_n^{-\varepsilon})\\
& =2\cdot\ord_\chi\left(\mathcal{L}_{f,n}^\varepsilon(\bar\chi)\right)+2\cdot\ord_\chi(\tilde\omega_n^{-\varepsilon})\\
&=2\cdot\ord_\chi\left(\mathcal{L}_{f,n}(\bar\chi)\right),
\end{split}\]
where the second equality follows from Theorem \ref{BSD0}. 
If $p$ is inert in $K$ and $\varepsilon=+1$ (exceptional case), we have $\omega_n^{-}\cdot \mathcal{L}_{f,n}^+\equiv \pm\mathcal{L}_{f,n}$ modulo $\omega_{n}$, and therefore again by Theorem \ref{BSD0}
\[\begin{split}
\mathrm{length}_{\mathscr{O}_\chi}\left(\mathrm{Sel} (K,A_f(\chi))\right)&=
\mathrm{length}_{\mathscr{O}_\chi}\left(\mathrm{Sel}_{\varepsilon}(K,A_{f,k}(\chi))\right)+2\cdot\ord_\chi(\omega_n^{-}) \\
& \les{}2\cdot\ord_\chi\left(\mathcal{L}_{f,n}^+(\bar\chi)\right)+2\cdot\ord_\chi(\omega_n^{-})\\
&\les{}2\cdot\ord_\chi\left(\mathcal{L}_{f,n}(\bar\chi)\right).
\end{split}\]

\emph{Step 2.} We now use explicit formulas for special values to conclude the proof of Theorem B. 
Recall that Gross's formula gives the equality 
\[\frac{L(E/K,\chi,1)}{\Omega}C_\chi=\chi(L_p(f))\]
where $C_\chi=u^2{\sqrt{|D_K|}p^n}$ with $u=\sharp(\mathcal{O}_{p^n}^\times)/2$, and $\Omega$ is Gross's period, defined in  \cite[Lemma 2.5]{Vat-mu}. 
We refer to \cite[Proposition 7.7]{Gross}, \cite[\S2.3]{Vat-mu} and \cite[Theorem 1.2]{CST} for this result; in particular, in the notation of \cite{CST}, 
$\Omega=\frac{8\pi^2\langle f, f\rangle_{\Gamma_0(N)}}{\langle \phi,\phi\rangle}$, where $\langle f, f\rangle_{\Gamma_0(N)}$ denotes Petersson inner product, $\phi$ is the ($p$-adically normalized) 
Jacquet--Langland lift of $f$ to the definite quaternion algebra of discriminant $N^-$ which we use to define $L_p(f)$, and $\langle\phi,\phi\rangle$ is the height pairing defined in loc. cit. ; see also \cite[Theorem 3.11]{Ch-Hs}. From Theorem A and Gross's formula we see that 
$\mathrm{Sel}(K,A_f(\chi))$ is finite if and only if  
$L(E/K,\chi,1)\neq0$ and 
\[\mathrm{length}_{\mathscr{O}_\chi}\left(\mathrm{Sel}_\mathrm {}(K,A_f(\chi)\right)\leqslant\ord_\chi\left(\frac{L(E/K,\chi,1)\cdot C_\chi}{\Omega}\right)\] 
with equality in the non-exceptional case, which gives Theorem B for $C=\Omega/C_\chi$. 

\subsection{Proof of Theorem C} \label{sec:proof_C}
As noted in Remark \ref{remark:n=0} we may assume that $n\neq 0$ in the exceptional case. We define the \emph{regulator} 
\[\mathrm{Reg}_\chi(E/K)=\frac{h_\mathrm{NT}(P_{\bar\chi})}{ 2\cdot\mathrm{length}_{\mathscr{O}_\chi}
\left(\mathfrak{Sel}(K,T_f(\chi))/\mathscr{O}_\chi \cdot\tilde\kappa_{\bar\chi}\right)}\]
and the \emph{Shafarevich--Tate group} 
\[\sha(K,A_f(\chi))=\mathrm{Sel}(K,A_f(\chi))_{/\mathrm{div}}.\]

\emph{Step 1.}  We first show that 
\[\mathrm{length}_{\mathscr{O}_\chi}\left(\mathrm{Sel}_\mathrm {}(K,A_f(\chi))_{/\mathrm{div}}\right)\les\mathrm{length}_{\mathscr{O}_\chi}
\left(\mathfrak{Sel}(K,T_f(\chi))/\mathscr{O}_\chi \cdot \tilde\kappa_{\bar\chi})\right)\]
with equality in the non-exceptional case.  
Combining Theorem \ref{BSD-Indef} and Proposition \ref{prop comparison}, 
if $p$ is split in $K$ or $p$ is inert in $K$ and $\varepsilon=-1$ (non-exceptional case) we have 
\begin{equation}\label{length0}
\mathrm{length}_{\mathscr{O}_\chi}\left(\mathrm{Sel}_\mathrm {}(K,A_f(\chi))_{/\mathrm{div}}\right)=
2\cdot\mathrm{length}_{\mathscr O_\chi}\left( \left(\mathfrak{Sel}^{\varepsilon}(K,\mathbf{T}_{f,\mathscr{O}_\chi})\otimes_{\Lambda_{\mathscr{O}_\chi}}\Lambda_{\mathscr{O}_\chi}/\mathfrak{P}_{\bar\chi}\right)
/\left(\mathscr O_\chi\cdot \kappa_{\bar\chi}^\varepsilon\right)\right)+2\cdot\ord_\chi(\tilde\omega_n^{-\varepsilon}).\end{equation}
From \eqref{length0}, 
using Proposition \ref{speinj}  and Proposition \ref{prop comparison}(2), we obtain 
\begin{equation}\label{length0-bis}
\mathrm{length}_{\mathscr{O}_\chi}\left(\mathrm{Sel}_\mathrm {}(K,A_f(\chi))_{/\mathrm{div}}\right)=
2\cdot\mathrm{length}_{\mathscr O_\chi}
\left(
\mathfrak{Sel}(K,{T}_{f}(\chi))
/\left(\mathscr O_\chi\cdot \kappa_{\bar\chi}^\varepsilon\right)
\right)
+2\cdot\ord_\chi(\tilde\omega_n^{-\varepsilon}).\end{equation}
Now 
$\tilde\kappa_{\bar\chi}=\chi(\tilde\omega_n^{-\varepsilon})\cdot\kappa_{\bar\chi}^\varepsilon$, and the result follows. If $p$ is inert in $K$ and $\varepsilon=+1$ (the exceptional case), combining Theorem \ref{BSD-Indef} and Proposition \ref{prop comparison}, 
we have 
\begin{equation}\label{length0}
\mathrm{length}_{\mathscr{O}_\chi}\left(\mathrm{Sel}_\mathrm {}(K,A_f(\chi))_{/\mathrm{div}}\right)\les
2\cdot\mathrm{length}_{\mathscr O_\chi}\left( \left(\mathfrak{Sel}^{\varepsilon}(K,\mathbf{T}_{f,\mathscr{O}_\chi})\otimes_{\Lambda_{\mathscr{O}_\chi}}\Lambda_{\mathscr{O}_\chi}/\mathfrak{P}_{\bar\chi}\right)
/\left(\mathscr O_\chi\cdot \kappa_{\bar\chi}^+\right)\right)+2\cdot\ord_\chi(\omega_n^{-}).\end{equation}
From \eqref{length0}, 
using Proposition \ref{speinj}  and Proposition \ref{prop comparison}(2), we obtain 
\begin{equation}\label{length0-bis}
\mathrm{length}_{\mathscr{O}_\chi}\left(\mathrm{Sel}_\mathrm {}(K,A_f(\chi))_{/\mathrm{div}}\right)\leqslant
2\cdot\mathrm{length}_{\mathscr O_\chi}
\left(
\mathfrak{Sel}(K,{T}_{f}(\chi))
/\left(\mathscr O_\chi\cdot \kappa_{\bar\chi}^+\right)
\right)
+2\cdot\ord_\chi(\omega_n^{-}).\end{equation}
Now
$\tilde\kappa_{\bar\chi}=\chi(\omega_n^{-\varepsilon})\cdot\kappa_{\bar\chi}^\varepsilon$, and the result follows. 

\emph{Step 2.} We now use Gross-Zagier formulas to conclude the proof of Theorem C. 
By the Gross-Zagier formula we know that 
\[ {L^\prime(E/K,\chi,1)} = \frac{8\pi ^2\langle f,f\rangle_{\Gamma_0(N)}}{\deg(\phi)C_\chi}\cdot h_\mathrm{NT}(P_{\bar\chi})\]
where $\deg(\phi)$ is the degree of a modular parametrization $\phi:X_0(N)\rightarrow E$, and, with the same notation introduced in \S\ref{sec:proof_B},
$C_\chi=u^2{\sqrt{|D_K|}p^n}$ with $u=\sharp(\mathcal{O}_{p^n}^\times)/2$ and $\langle f,f\rangle_{\Gamma_0(N)}$ denotes the Petersson inner product. 
We refer to  \cite[Theorem 6.3]{Gross-Zagier} and \cite[Theorem 1.1]{CST} for the result in this form; see also 
\cite[Theorem 1.3.1]{YZZ}, \cite[Theorem 1.2.1]{SZhang-As}, \cite[Theorem 8.1]{Zh-2001}.
%
Combing Step 1 with the definitions of $\mathrm{Reg}_\chi(E/K)$ and $\sha(K,A_f(\chi))$ 
introduced above and the Gross--Zagier formula we obtain
\[\begin{split}
\mathrm{length}_{\mathscr{O}_\chi}(\sha(K,A_f(\chi)))&=
\mathrm{length}_{\mathscr{O}_\chi}\left(\mathrm{Sel}_\mathrm {}(K,A_f(\chi))_{/\mathrm{div}}\right)\\
&\les 2\cdot\mathrm{length}_{\mathscr{O}_\chi}
\left(\mathfrak{Sel}(K,T_f(\chi))/\mathscr{O}_\chi \cdot \tilde\kappa_{\bar\chi})\right)\\&\les
\ord_\chi\left(\frac{L^\prime(E/K,\chi,1)\deg(\phi)C_\chi}{8\pi^2\langle f,f\rangle_{\Gamma_0(N)}\cdot \mathrm{Reg}_\chi(E/K)}\right)\end{split} \]
with equality in the non-exceptional case, which gives Theorem C for $C=8\pi^2\langle f,f\rangle_{\Gamma_0(N)}/(\deg(\phi)C_\chi)$.

\section{Statements and declarations}

\begin{itemize}

\item[$\cdot$] On behalf of all authors, the corresponding author states that there is no conflict of interest. 

\item[$\cdot$] Data sharing not applicable to this article as no datasets were generated or analysed during the current study.

\end{itemize}

\bibliography{myref1}{}

@incollection{BBV,
	author = {Berti, Andrea and Bertolini, Massimo and Venerucci, Rodolfo},
	booktitle = {Elliptic curves, modular forms and {I}wasawa theory},
	date-added = {2025-08-16 18:19:13 +0200},
	date-modified = {2025-08-16 18:19:30 +0200},
	doi = {10.1007/978-3-319-45032-2\_1},
	mrclass = {11G05 (11G18)},
	mrnumber = {3629647},
	mrreviewer = {Feng-Wen An},
	pages = {1--31},
	publisher = {Springer, Cham},
	series = {Springer Proc. Math. Stat.},
	title = {Congruences between modular forms and the {B}irch and {S}winnerton-{D}yer conjecture},
	url = {https://doi.org/10.1007/978-3-319-45032-2_1},
	volume = {188},
	year = {2016},
	bdsk-url-1 = {https://doi.org/10.1007/978-3-319-45032-2_1},
	bdsk-url-2 = {https://doi.org/10.1007/978-3-319-45032-2%5C_1}}

@article{Ch-Hs,
	author = {Chida, Masataka and Hsieh, Ming-Lun},
	date-added = {2025-02-03 11:55:19 +0100},
	date-modified = {2025-02-03 11:55:28 +0100},
	doi = {10.1515/crelle-2015-0072},
	fjournal = {Journal f\"{u}r die Reine und Angewandte Mathematik. [Crelle's Journal]},
	issn = {0075-4102},
	journal = {J. Reine Angew. Math.},
	mrclass = {11F67},
	mrnumber = {3836144},
	mrreviewer = {Neil P. Dummigan},
	pages = {87--131},
	title = {Special values of anticyclotomic {$L$}-functions for modular forms},
	url = {https://doi.org/10.1515/crelle-2015-0072},
	volume = {741},
	year = {2018},
	bdsk-url-1 = {https://doi.org/10.1515/crelle-2015-0072}}

@article{CCSS,
	author = {Castella, Francesc and Ciperiani, Mirela and Skinner, Christopher and Sprung, Florian},
	date-added = {2025-02-03 11:11:55 +0100},
	date-modified = {2025-02-03 11:13:08 +0100},
	journal = {arXiv:1804.10993},
	title = {On the Iwasawa main conjectures for modular forms at non-ordinary primes}}

@article{BSTW,
	author = {Burungale, Ashay and Skinner, Christopher and Tian, Ye and Wan, Xin},
	date-added = {2025-02-03 11:10:26 +0100},
	date-modified = {2025-02-03 11:11:44 +0100},
	journal = {arXiv:2409.01350},
	title = {Zeta elements for elliptic curves and applications}}

@article{CST,
	author = {Cai, Li and Shu, Jie and Tian, Ye},
	date-added = {2024-11-16 12:11:30 +0100},
	date-modified = {2024-11-16 12:11:35 +0100},
	doi = {10.2140/ant.2014.8.2523},
	fjournal = {Algebra \& Number Theory},
	issn = {1937-0652},
	journal = {Algebra Number Theory},
	mrclass = {11G40},
	mrnumber = {3298547},
	mrreviewer = {Steven Joel Miller},
	number = {10},
	pages = {2523--2572},
	title = {Explicit {G}ross-{Z}agier and {W}aldspurger formulae},
	url = {https://doi.org/10.2140/ant.2014.8.2523},
	volume = {8},
	year = {2014},
	bdsk-url-1 = {https://doi.org/10.2140/ant.2014.8.2523}}

@article{Weiz,
	author = {Zhang, Wei},
	date-added = {2024-11-16 12:06:19 +0100},
	date-modified = {2024-11-16 12:06:49 +0100},
	doi = {10.4310/CJM.2014.v2.n2.a2},
	fjournal = {Cambridge Journal of Mathematics},
	issn = {2168-0930},
	journal = {Camb. J. Math.},
	mrclass = {11G05 (14G25)},
	mrnumber = {3295917},
	mrreviewer = {Robert Juricevic},
	number = {2},
	pages = {191--253},
	title = {Selmer groups and the indivisibility of {H}eegner points},
	url = {https://doi.org/10.4310/CJM.2014.v2.n2.a2},
	volume = {2},
	year = {2014},
	bdsk-url-1 = {https://doi.org/10.4310/CJM.2014.v2.n2.a2}}

@article{KPW,
	author = {Kim, Chan-Ho and Pollack, Robert and Weston, Tom},
	date-added = {2024-11-16 12:03:47 +0100},
	date-modified = {2024-11-16 12:04:07 +0100},
	doi = {10.1142/S1793042117500804},
	fjournal = {International Journal of Number Theory},
	issn = {1793-0421},
	journal = {Int. J. Number Theory},
	mrclass = {11R23 (11F33)},
	mrnumber = {3656202},
	mrreviewer = {Byoung Du Kim},
	number = {6},
	pages = {1443--1455},
	title = {On the freeness of anticyclotomic {S}elmer groups of modular forms},
	url = {https://doi.org/10.1142/S1793042117500804},
	volume = {13},
	year = {2017},
	bdsk-url-1 = {https://doi.org/10.1142/S1793042117500804}}

@article{BBL,
	author = {Burungale, Ashay and B\"{u}y\"{u}kboduk, K\^{a}z\i m and Lei, Antonio},
	date-added = {2024-11-14 09:40:27 +0100},
	date-modified = {2024-11-14 09:41:19 +0100},
	doi = {10.1016/j.aim.2023.109465},
	fjournal = {Advances in Mathematics},
	issn = {0001-8708},
	journal = {Adv. Math.},
	mrclass = {11R23 (11G05 11R20)},
	mrnumber = {4690501},
	pages = {Paper No. 109465, 63},
	title = {Anticyclotomic {I}wasawa theory of abelian varieties of {$\rm GL_2$}-type at non-ordinary primes},
	url = {https://doi.org/10.1016/j.aim.2023.109465},
	volume = {439},
	year = {2024},
	bdsk-file-1 = {YnBsaXN0MDDSAQIDBFxyZWxhdGl2ZVBhdGhZYWxpYXNEYXRhXxDOLi4vLi4vLi4vTGlicmFyeS9DbG91ZFN0b3JhZ2UvR29vZ2xlRHJpdmUtbWF0dGVvLmxvbmdvQHVuaXBkLml0L0lsIG1pbyBEcml2ZS9sZXR0ZXJhdHVyYS9CdXJ1bmdhbGUtQnV5dWtib2R1ay1MZWkgLSBBbnRpY3ljbG90b21pYyBJd2FzYXdhIFRoZW9yeSBvZiBhYmVsaWFuIHZhcmlldGllcyBvZiBHTF8yLXR5cGUgYXQgbm9uLW9yZGluYXJ5IHByaW1lcy5wZGZPEQN+AAAAAAN+AAIAAAxNYWNpbnRvc2ggSEQAAAAAAAAAAAAAAAAAAADi1SUtQkQAAf////8fQnVydW5nYWxlLUJ1eXVrYm9kI0ZGRkZGRkZGLnBkZgAAAAAAAAAAAAAAAAAAAAAAAAAAAAAAAAAAAAAAAAAA/////+NY4BwAAAAAAAAAAAADAAYAAAogY3UAAAAAAAAAAAAAAAAAC2xldHRlcmF0dXJhAAACANkvOlVzZXJzOm1hdHRlb2xvbmdvOkxpYnJhcnk6Q2xvdWRTdG9yYWdlOkdvb2dsZURyaXZlLW1hdHRlby5sb25nb0B1bmlwZC5pdDpJbCBtaW8gRHJpdmU6bGV0dGVyYXR1cmE6QnVydW5nYWxlLUJ1eXVrYm9kdWstTGVpIC0gQW50aWN5Y2xvdG9taWMgSXdhc2F3YSBUaGVvcnkgb2YgYWJlbGlhbiB2YXJpZXRpZXMgb2YgR0xfMi10eXBlIGF0IG5vbi1vcmRpbmFyeSBwcmltZXMucGRmAAAOAOwAdQBCAHUAcgB1AG4AZwBhAGwAZQAtAEIAdQB5AHUAawBiAG8AZAB1AGsALQBMAGUAaQAgAC0AIABBAG4AdABpAGMAeQBjAGwAbwB0AG8AbQBpAGMAIABJAHcAYQBzAGEAdwBhACAAVABoAGUAbwByAHkAIABvAGYAIABhAGIAZQBsAGkAYQBuACAAdgBhAHIAaQBlAHQAaQBlAHMAIABvAGYAIABHAEwAXwAyAC0AdAB5AHAAZQAgAGEAdAAgAG4AbwBuAC0AbwByAGQAaQBuAGEAcgB5ACAAcAByAGkAbQBlAHMALgBwAGQAZgAPABoADABNAGEAYwBpAG4AdABvAHMAaAAgAEgARAASANdVc2Vycy9tYXR0ZW9sb25nby9MaWJyYXJ5L0Nsb3VkU3RvcmFnZS9Hb29nbGVEcml2ZS1tYXR0ZW8ubG9uZ29AdW5pcGQuaXQvSWwgbWlvIERyaXZlL2xldHRlcmF0dXJhL0J1cnVuZ2FsZS1CdXl1a2JvZHVrLUxlaSAtIEFudGljeWNsb3RvbWljIEl3YXNhd2EgVGhlb3J5IG9mIGFiZWxpYW4gdmFyaWV0aWVzIG9mIEdMXzItdHlwZSBhdCBub24tb3JkaW5hcnkgcHJpbWVzLnBkZgAAEwABLwAAFQACABL//wAAAAgADQAaACQA9QAAAAAAAAIBAAAAAAAAAAUAAAAAAAAAAAAAAAAAAAR3},
	bdsk-url-1 = {https://mathscinet.ams.org/mathscinet-getitem?mr=4690501}}

@article{FW,
	author = {Fouquet, Olivier and Wan, Xin},
	date-added = {2024-11-13 15:28:41 +0100},
	date-modified = {2024-11-13 15:30:03 +0100},
	journal = {preprint https://arxiv.org/abs/2107.13726},
	title = {The Iwasawa Main Conjecture for universal families of modular motives},
	year = {2022}}

@article{BCK,
	author = {Burungale, Ashay and Castella, Francesc and Kim, Chan-Ho},
	journal = {Algebra Number Theory},
	number = {7},
	pages = {1627-1653},
	title = {A proof of {P}errin-{R}iou's {H}eegner point main conjecture},
	volume = {12},
	year = {2021}}

@article{BKO1,
	author = {Burungale, Ashay and Kobayashi, Shinichi and Ota, Kazuto},
	date-added = {2021-11-08 08:53:56 +0000},
	date-modified = {2021-11-08 08:54:08 +0000},
	doi = {10.4007/annals.2021.194.3.8},
	fjournal = {Annals of Mathematics. Second Series},
	issn = {0003-486X},
	journal = {Ann. of Math. (2)},
	mrclass = {11G07 (11G15 11R23)},
	mrnumber = {4334980},
	number = {3},
	pages = {943--966},
	title = {Rubin's conjecture on local units in the anticyclotomic tower at inert primes},
	url = {https://doi.org/10.4007/annals.2021.194.3.8},
	volume = {194},
	year = {2021},
	bdsk-url-1 = {https://mathscinet.ams.org/mathscinet-getitem?mr=4334980}}

@incollection{Zh-2001,
	author = {Zhang, Shou-Wu},
	booktitle = {Current developments in mathematics, 2001},
	mrclass = {11G40 (11G05 11G15 11G18 11R23)},
	mrnumber = {1971380},
	mrreviewer = {Massimo Bertolini},
	pages = {179--219},
	publisher = {Int. Press, Somerville, MA},
	title = {Elliptic curves, {$L$}-functions, and {CM}-points},
	year = {2002}}

@book{YZZ,
	author = {Yuan, Xinyi and Zhang, Shou-Wu and Zhang, Wei},
	isbn = {978-0-691-15592-0},
	mrclass = {11G18 (11F70 14G35)},
	mrnumber = {3237437},
	mrreviewer = {Ernest Hunter Brooks},
	pages = {x+256},
	publisher = {Princeton University Press, Princeton, NJ},
	series = {Annals of Mathematics Studies},
	title = {The {G}ross-{Z}agier formula on {S}himura curves},
	volume = {184},
	year = {2013}}

@book{PR-SMF,
	author = {Perrin-Riou, Bernadette},
	isbn = {0-8218-1946-1},
	mrclass = {11G40 (11F33 11G45 11R23 11R42 11S25)},
	mrnumber = {1743508},
	note = {Translated from the 1995 French original by Leila Schneps and revised by the author},
	pages = {xx+150},
	publisher = {American Mathematical Society, Providence, RI; Soci\'et\'e Math\'ematique de France, Paris},
	series = {SMF/AMS Texts and Monographs},
	title = {{$p$}-adic {$L$}-functions and {$p$}-adic representations},
	volume = {3},
	year = {2000}}

@book{NSW,
	author = {Neukirch, J{\"u}rgen and Schmidt, Alexander and Wingberg, Kay},
	doi = {10.1007/978-3-540-37889-1},
	edition = {Second},
	isbn = {978-3-540-37888-4},
	mrclass = {11R34 (11-02 11G45 11R23 11S20 11S25 11S31 12G05)},
	mrnumber = {2392026},
	pages = {xvi+825},
	publisher = {Springer-Verlag, Berlin},
	series = {Grundlehren der Mathematischen Wissenschaften [Fundamental Principles of Mathematical Sciences]},
	title = {Cohomology of number fields},
	url = {http://dx.doi.org/10.1007/978-3-540-37889-1},
	volume = {323},
	year = {2008},
	bdsk-url-1 = {http://dx.doi.org/10.1007/978-3-540-37889-1}}

@article{DT,
	author = {Diamond, Fred and Taylor, Richard},
	journal = {Inventiones mathematicae},
	number = {3},
	pages = {435-462},
	title = {Non-optimal levels of mod l modular representations.},
	url = {http://eudml.org/doc/144177},
	volume = {115},
	year = {1994},
	bdsk-url-1 = {http://eudml.org/doc/144177}}

@article{How-HK,
	author = {Howard, Benjamin},
	doi = {10.1112/S0010437X04000569},
	fjournal = {Compositio Mathematica},
	issn = {0010-437X},
	journal = {Compos. Math.},
	mrclass = {11G05 (11R23)},
	mrnumber = {2098397},
	mrreviewer = {Henri Darmon},
	number = {6},
	pages = {1439--1472},
	title = {The {H}eegner point {K}olyvagin system},
	url = {http://dx.doi.org/10.1112/S0010437X04000569},
	volume = {140},
	year = {2004},
	bdsk-url-1 = {http://dx.doi.org/10.1112/S0010437X04000569}}

@article{BD-Kim,
	author = {Kim, Byoung Du},
	doi = {10.1112/S0010437X06002569},
	fjournal = {Compositio Mathematica},
	issn = {0010-437X},
	journal = {Compos. Math.},
	mrclass = {11G05 (11G40 11R23)},
	mrnumber = {2295194},
	mrreviewer = {Benjamin V. Howard},
	number = {1},
	pages = {47--72},
	title = {The parity conjecture for elliptic curves at supersingular reduction primes},
	url = {http://dx.doi.org/10.1112/S0010437X06002569},
	volume = {143},
	year = {2007},
	bdsk-url-1 = {http://dx.doi.org/10.1112/S0010437X06002569}}

@book{Rub-eul,
	author = {Rubin, Karl},
	doi = {10.1515/9781400865208},
	isbn = {0-691-05075-9; 0-691-05076-7},
	mrclass = {11R23 (11G40 11R34 11R42)},
	mrnumber = {1749177},
	mrreviewer = {Jan Nekov{\'a}{\v{r}}},
	note = {Hermann Weyl Lectures. The Institute for Advanced Study},
	pages = {xii+227},
	publisher = {Princeton University Press, Princeton, NJ},
	series = {Annals of Mathematics Studies},
	title = {Euler systems},
	url = {http://dx.doi.org/10.1515/9781400865208},
	volume = {147},
	year = {2000},
	bdsk-url-1 = {http://dx.doi.org/10.1515/9781400865208}}

@article{Be-Da-der1,
	author = {Bertolini, Massimo and Darmon, Henri},
	coden = {DUMJAO},
	doi = {10.1215/S0012-7094-94-07604-7},
	fjournal = {Duke Mathematical Journal},
	issn = {0012-7094},
	journal = {Duke Math. J.},
	mrclass = {11G40 (19F27)},
	mrnumber = {1301187},
	mrreviewer = {Fernando Rodr{\'{\i}}guez Villegas},
	number = {1},
	pages = {75--111},
	title = {Derived heights and generalized {M}azur-{T}ate regulators},
	url = {http://dx.doi.org/10.1215/S0012-7094-94-07604-7},
	volume = {76},
	year = {1994},
	bdsk-url-1 = {http://dx.doi.org/10.1215/S0012-7094-94-07604-7}}

@article{Io-Po,
	author = {Iovita, Adrian and Pollack, Robert},
	coden = {JRMAA8},
	doi = {10.1515/CRELLE.2006.069},
	fjournal = {Journal f\"ur die Reine und Angewandte Mathematik. [Crelle's Journal]},
	issn = {0075-4102},
	journal = {J. Reine Angew. Math.},
	mrclass = {11R23 (11G05)},
	mrnumber = {2270567},
	mrreviewer = {Henri Darmon},
	pages = {71--103},
	title = {Iwasawa theory of elliptic curves at supersingular primes over {$\Bbb Z_p$}-extensions of number fields},
	url = {http://dx.doi.org/10.1515/CRELLE.2006.069},
	volume = {598},
	year = {2006},
	bdsk-url-1 = {http://dx.doi.org/10.1515/CRELLE.2006.069}}

@article{Serre-finord,
	author = {Serre, Jean-Pierre},
	fjournal = {Inventiones Mathematicae},
	issn = {0020-9910},
	journal = {Invent. Math.},
	mrclass = {14G25 (14K15)},
	mrnumber = {0387283},
	mrreviewer = {J. W. S. Cassels},
	number = {4},
	pages = {259--331},
	title = {Propri\'et\'es galoisiennes des points d'ordre fini des courbes elliptiques},
	volume = {15},
	year = {1972}}

@article{Da-Io,
	author = {Darmon, Henri and Iovita, Adrian},
	doi = {10.1017/S1474748008000042},
	fjournal = {Journal of the Institute of Mathematics of Jussieu. JIMJ. Journal de l'Institut de Math\'ematiques de Jussieu},
	issn = {1474-7480},
	journal = {J. Inst. Math. Jussieu},
	mrclass = {11R23 (11G05)},
	mrnumber = {2400724},
	mrreviewer = {Jan Nekov{\'a}{\v{r}}},
	number = {2},
	pages = {291--325},
	title = {The anticyclotomic main conjecture for elliptic curves at supersingular primes},
	url = {http://dx.doi.org/10.1017/S1474748008000042},
	volume = {7},
	year = {2008},
	bdsk-url-1 = {http://dx.doi.org/10.1017/S1474748008000042}}

@article{SZhang-As,
	author = {Zhang, Shou-Wu},
	doi = {10.4310/AJM.2001.v5.n2.a1},
	fjournal = {The Asian Journal of Mathematics},
	issn = {1093-6106},
	journal = {Asian J. Math.},
	mrclass = {11G18 (11F67 11G50 14G35)},
	mrnumber = {1868935},
	mrreviewer = {Douglas L. Ulmer},
	number = {2},
	pages = {183--290},
	title = {Gross-{Z}agier formula for {${\rm GL}_2$}},
	url = {http://dx.doi.org/10.4310/AJM.2001.v5.n2.a1},
	volume = {5},
	year = {2001},
	bdsk-url-1 = {http://dx.doi.org/10.4310/AJM.2001.v5.n2.a1}}

@incollection{Gross,
	author = {Gross, Benedict H.},
	booktitle = {Number theory ({M}ontreal, {Q}ue., 1985)},
	mrclass = {11F67 (11G40 11R52)},
	mrnumber = {894322},
	mrreviewer = {Lawrence Washington},
	pages = {115--187},
	publisher = {Amer. Math. Soc., Providence, RI},
	series = {CMS Conf. Proc.},
	title = {Heights and the special values of {$L$}-series},
	volume = {7},
	year = {1987}}

@article{Ne-BD,
	author = {Nekov{\'a}{\v{r}}, Jan},
	coden = {CJMAAB},
	doi = {10.4153/CJM-2011-077-6},
	fjournal = {Canadian Journal of Mathematics. Journal Canadien de Math\'ematiques},
	issn = {0008-414X},
	journal = {Canad. J. Math.},
	mrclass = {11G40 (11F41 11G18)},
	mrnumber = {2962318},
	mrreviewer = {Henri Darmon},
	number = {3},
	pages = {588--668},
	title = {Level raising and anticyclotomic {S}elmer groups for {H}ilbert modular forms of weight two},
	url = {http://dx.doi.org/10.4153/CJM-2011-077-6},
	volume = {64},
	year = {2012},
	bdsk-url-1 = {http://dx.doi.org/10.4153/CJM-2011-077-6}}

@article{Honda,
	author = {Honda, Taira},
	fjournal = {Journal of the Mathematical Society of Japan},
	issn = {0025-5645},
	journal = {J. Math. Soc. Japan},
	mrclass = {14.50 (20.00)},
	mrnumber = {0255551},
	mrreviewer = {P. Samuel},
	pages = {213--246},
	title = {On the theory of commutative formal groups},
	volume = {22},
	year = {1970}}

@article{Kobss,
	author = {Kobayashi, Shinichi},
	coden = {INVMBH},
	doi = {10.1007/s00222-002-0265-4},
	fjournal = {Inventiones Mathematicae},
	issn = {0020-9910},
	journal = {Invent. Math.},
	mrclass = {11R23 (11G05)},
	mrnumber = {1965358},
	mrreviewer = {Anupam Saikia},
	number = {1},
	pages = {1--36},
	title = {Iwasawa theory for elliptic curves at supersingular primes},
	url = {http://dx.doi.org/10.1007/s00222-002-0265-4},
	volume = {152},
	year = {2003},
	bdsk-url-1 = {http://dx.doi.org/10.1007/s00222-002-0265-4}}

@article{Rub-EllHeeg,
	author = {Rubin, Karl},
	coden = {INVMBH},
	doi = {10.1007/BF01388915},
	fjournal = {Inventiones Mathematicae},
	issn = {0020-9910},
	journal = {Invent. Math.},
	mrclass = {11R23 (11G05 11G16 11G40 14G25)},
	mrnumber = {880958},
	mrreviewer = {Jean-Pierre Wintenberger},
	number = {2},
	pages = {405--422},
	title = {Local units, elliptic units, {H}eegner points and elliptic curves},
	url = {http://dx.doi.org/10.1007/BF01388915},
	volume = {88},
	year = {1987},
	bdsk-url-1 = {http://dx.doi.org/10.1007/BF01388915}}

@article{Be-Da1,
	author = {Bertolini, M. and Darmon, H.},
	journal = {Invent. Math.},
	number = {3},
	page = {413-456},
	title = {\emph{Heegner points on Mumford-Tate curves}},
	volume = {126},
	year = {1996}}

@article{How-heeg,
	author = {Howard, B.},
	journal = {J. Reine Angew. Math.},
	page = {1-25},
	title = {\emph{Bipartite Euler systems}},
	volume = {597},
	year = {2006}}

@article{G-P,
	author = {Gross, B. and Parson, J.},
	journal = {Number theory, analysis and geometry, Springer, New York},
	page = {215-241},
	title = {\emph{On the local divisibility of Heegner points}},
	year = {2012}}

@article{Gross-Zagier,
	author = {Gross, B. and Zagier, D.},
	journal = {Invent. Math.},
	number = {2},
	page = {225-320},
	title = {\emph{Heegner points and derivatives of $L$-series}},
	volume = {86},
	year = {1986}}

@article{Pollack,
	author = {Pollack, R.},
	journal = {Duke Math.},
	number = {3},
	page = {523-528},
	title = {\emph{On the $p$-adic $L$-function of a modular form at a supersingular prime}},
	volume = {118},
	year = {2002}}

@article{P-W,
	author = {Pollack, R. and Weston, T.},
	journal = {Compositio Math.},
	number = {5},
	page = {1353?1381},
	title = {\emph{On anticyclotomic $\mu$-invariants of modular forms.}},
	volume = {147},
	year = {2011}}

@article{Vat-mu,
	author = {Vatsal, V.},
	journal = {Duke Math. J.},
	number = {2},
	page = {219-261},
	title = {\emph{Special values of anticyclotomic L-functions}},
	volume = {116},
	year = {2003}}

@article{Cor,
	author = {Cornut, C.},
	journal = {Invent. Math.},
	number = {3},
	page = {495?523},
	title = {\emph{Mazur's conjecture on higher Heegner points}},
	volume = {148},
	year = {2002}}

@booh{MR-KolSys,
	author = {Mazur, B. and Rubin, K.},
	journal = {{M}em. {A}mer. {M}ath. {S}oc.},
	number = {799},
	title = {Kolyvagin Systems},
	volume = {168},
	year = {2004}}

@article{PR-Heeg,
	author = {Perrin-Riou, B.},
	journal = {Bull. Soc. Math. Fr.},
	title = {\emph{Fonctions L $p$-adiques, th\'eorie d'Iwasawa et points de Heegner}},
	volume = {115},
	year = {1987}}

@article{New,
	author = {Newman, D.J.},
	journal = {Amer. Math. Monthly},
	number = {3},
	page = {693-696},
	title = {\emph{Simple analytic proof of the prime number theorem}},
	volume = {87},
	year = {1980}}

@article{Be-Da-main,
	author = {Bertolini, M. and Darmon, H.},
	journal = {Annals of Mathematics},
	page = {1-64},
	title = {\emph{Iwasawa's Main Conjecture for Elliptic Curves over Anticyclotomic $\mathbb{Z}_p$-extensions }},
	volume = {162},
	year = {2005}}

@article{S-U,
	author = {Skinner, C. and Urban, E.},
	journal = {Invent. Math.},
	number = {1},
	page = {1-277},
	title = {\emph{The Iwasawa main conjecture for $\mathrm{GL}_{2}$}},
	volume = {195},
	year = {2014}}

@article{K-1,
	author = {Kato, K.},
	journal = {J. Fac. Sci. Univ. Tokyo},
	page = {303-376},
	title = {\emph{ A generalization of local class field theory by using $K$-groups I}},
	volume = {26},
	year = {1979}}

@book{Mi,
	author = {Milne, J.S.},
	publisher = {Kea Books},
	title = {Arithmetic Duality Theorems},
	year = {2004}}

@book{Sil-2,
	author = {Silverman, J.},
	publisher = {Springer-Verlag New York, Inc.},
	title = {Advanced Topics in the arithmetic of elliptic curves},
	year = {1994}}

@book{Gr-1,
	author = {Greenberg, R.},
	publisher = {Springer-Verlag New York, Inc.},
	title = {Iwasawa theory for elliptic curves},
	year = {1997}}

@inproceedings{Car,
	author = {Carayol, H.},
	booktitle = {P-adic monodromy and the Birch and Swinnerton-Dyer conjecture},
	editor = {Mazur, B and Stevens, G.},
	publisher = {American Mathematical Society},
	title = {\emph{Formes modulaires et repr\'esentations galoisiennes \`a valeurs dans un anneau local complet. }},
	year = {1994}}

@inproceedings{DDT,
	author = {Darmon, H. and Diamond, F. and Taylor, R.},
	booktitle = {Elliptic curves, modular forms and Fermat's last theorem},
	editor = {J. Coates and S.-T. Yau},
	publisher = {International Press},
	title = {\emph{Fermat's Last Theorem}},
	year = {1995}}

@article{A,
	title = {\emph{Altezze $p$-adiche in teoria di Hida}}}
\bibliographystyle{alpha}

\end{document}